\newcommand\shorttitle{Legendrian non-isotopic unit conormal bundles in high dimensions}
\newcommand\authors{Yukihiro Okamoto}
\theoremstyle{definition}
\newtheorem{defi}{Definition}[section]
\newtheorem{rem}[defi]{Remark}
\newtheorem{ex}[defi]{Example}
\theoremstyle{plain}
\newtheorem{thm}[defi]{Theorem}
\newtheorem{prop}[defi]{Proposition}
\newtheorem{lem}[defi]{Lemma}
\newcommand{\Hom}{\operatorname{Hom}}
\renewcommand{\ker}{\operatorname{Ker}}
\newcommand{\coker}{\operatorname{Coker}}
\renewcommand{\Re}{\operatorname{Re}}
\renewcommand{\Im}{\operatorname{Im}}
\newcommand{\C}{\mathbb{C}}
\newcommand{\R}{\mathbb{R}}
\newcommand{\Z}{\mathbb{Z}}
\newcommand{\N}{\mathbb{N}}
\renewcommand{\H}{\mathbb{H}}
\newcommand{\ind}{\operatorname{ind}}
\newcommand{\pr}{\operatorname{pr}}
\newcommand{\id}{\operatorname{id}}
\newcommand{\Dbar}{\bar{\partial}}
\renewcommand{\tilde}{\widetilde}
\newcommand{\sing}{\mathrm{sing}}
\newcommand{\ev}{\operatorname{ev}}
\newcommand{\len}{\mathrm{Len}}
\renewcommand{\epsilon}{\varepsilon}
\newcommand{\la}{\langle}
\newcommand{\ra}{\rangle}
\newcommand{\str}{\mathrm{string}}
\newcommand{\codim}{\operatorname{codim}}
\newcommand{\tb}{\operatorname{tb}}
\newcommand{\Span}{\operatorname{Span}}
\newcommand{\vdim}{\operatorname{f.dim}}
\newcommand{\rest}[2]{\left.#1\right|_{#2}}
\newcommand{\ftimes}[2]{\ {}_{#1}\!\times_{#2}}
\newcommand{\M}{\mathcal{M}}
\newcommand{\W}{\mathcal{W}}
\renewcommand{\N}{\mathcal{N}}
\newcommand{\T}{\mathcal{T}}
\newcommand{\LCH}{\operatorname{LCH}}
\renewcommand{\mathring}[1]{\accentset{\circ}{#1}}
\renewcommand{\sp}{\operatorname{sp}}
\begin{document}

\title{Legendrian non-isotopic unit conormal bundles in high dimensions}
\author{Yukihiro Okamoto }
\date{}
\maketitle

\begin{abstract}
\noindent
For any compact connected submanifold $K$ of $\R^n$, let $\Lambda_K$ denote its unit conormal bundle, which is a Legendrian submanifold of the unit cotangent bundle of $\mathbb{R}^n$.
In this paper, we give examples of pairs $(K_0,K_1)$ of compact connected submanifolds of $\mathbb{R}^n$ such that $\Lambda_{K_0}$ is not Legendrian isotopic to $\Lambda_{K_1}$, although they cannot be distinguished by classical invariants. Here, $K_1$ is the image of an embedding $\iota_f \colon K_0 \to \mathbb{R}^n$ which is regular homotopic to the inclusion map of $K_0$ and the codimension in $\mathbb{R}^n$ is greater than or equal to $4$.
As non-classical invariants, we define the strip Legendrian contact homology and a coproduct on it under certain conditions on Legendrian submanifolds.
Then, we give a purely topological description of these invariants for $\Lambda_K$ when the codimension of $K$ is greater than or equal to $4$.
The main examples $\Lambda_{K_0}$ and $\Lambda_{K_1}$ are distinguished by the coproduct, which is computed by using an idea of string topology.
\end{abstract}

\section{Introduction}

\noindent
\textbf{Backgrounds.}
Let $Q$ be an $n$-dimensional $C^{\infty}$ manifold with a fixed Riemannian metric. Its unit cotangent bundle $U^*Q$ has a canonical contact form  defined as the pullback of the canonical Liouville $1$-form on $T^*Q$.
In this paper, we consider compact Legendrian submanifolds of this contact manifold $U^*Q$.

Two Legendrian submanifolds $\Lambda_0$ and $\Lambda_1$ of $U^*Q$ are called \textit{Legendrian isotopic} if there exists a $C^{\infty}$ isotopy $(\Lambda_t)_{t\in [0,1]}$ of $C^{\infty}$ submanifolds from $\Lambda_0$ to $\Lambda_1$ such that $\Lambda_t$ is a Legendrian submanifold for every $t\in [0,1]$.
There are three invariants under Legendrian isotopies:
(i) the Thurston-Bennequin number,
(ii) the Legendrian regular homotopy class and
(iii) the $C^{\infty}$ isotopy class as an $(n-1)$-dimensional submanifold.
For the definitions of (i) and (ii), see Subsection \ref{subsubsec-Maslov-TB}. 
In this section, let us call them the \textit{classical invariants} of Legendrian submanifolds.

Let $K$ be a $C^{\infty}$ submanifold of $\R^n$. 
We define its \textit{unit conormal bundle} by
\[\Lambda_K\coloneqq \{(q,p)\in U^*Q \mid q\in K,\ p(v)=0\text{ for every }v\in T_qK\}.\]
Then, $\Lambda_K$ is a Legendrian submanifold of $U^*Q$.
Obviously, for any two submanifolds $K_0$ and $K_1$ of $Q$,
\[
  K_0 \text{ is }C^{\infty} \text{ isotopic to } K_1 \text{ in }Q 
 \Rightarrow  \Lambda_{K_0} \text{ is Legendrian isotopic to } \Lambda_{K_1} \text{ in }U^*Q.
\]
Then, it is natural to ask the following (see, for example, \cite[Section 1]{S}): 
\textit{If} $\Lambda_{K_0}$ \textit{is Legendrian isotopic to} $\Lambda_{K_1}$ \textit{in} $U^*Q$, \textit{ is }$K_0$ $C^{\infty}$\textit{ isotopic to }$K_1$ \textit{in} $Q$\textit{?}

There are several preceding researches on this subject.  Shende \cite{S} and Ekholm-Ng-Shende \cite{ENS} observed the case where $Q=\R^3$ and $K$ is a knot or link in $\R^3$.
\cite{S} used the derived category of sheaves with microsupport in  $\{ (q,p) \in T^*\R^3 \mid p= 0 \text{ or } (q, \frac{p}{|p|}) \in \Lambda_K \text{ if }p\neq 0\}$, and \cite{ENS} used the enhanced knot contact homology of $K$, in order to show the following:
For any knots $K_0$ and $K_1$ in $\R^3$, if $\Lambda_{K_0}$ is Legendrian isotopic to $\Lambda_{K_1}$ in $U^*\R^3$, then $K_0$ is isotopic to either $K_1$ or the mirror of $K_1$ \cite[Theorem 2]{S}, \cite[Theorem 1.1]{ENS}.

In higher dimension, such a strong answer has not been discovered.
By \cite[Theorem 7]{S}, for any compact submanifolds $K_0,K_1$ of a non-compact manifold $Q$, if $\Lambda_{K_0}$ is Legendrian isotopic to $\Lambda_{K_1}$, then there exists an isomorphism between the integral group rings $\Z[\pi_1(Q \setminus K_0)] \cong \Z[ \pi_1(Q \setminus K_1)]$.
Asplund \cite{Asp} observed higher dimensional case by using wrapped Floer cohomology stopped by a unit conormal bundle.  For $n=5$ or $n\geq 7, $ let $U$ be the unknot in $S^n$ of codimension $2$. By \cite[Theorem 2]{Asp}, there exists of an embedded sphere $K\subset S^n$ of codimension $2$ such that
$\Lambda_{K\sqcup \{x_0\}}$ is not Legendrian isotopic to $\Lambda_{U\sqcup \{x_0\}}$ and $\pi_1(S^n \setminus K)\cong \pi_1(S^n \setminus U)\cong \Z$. Here, $x_0$ is a point in $S^n\setminus (U\cup K)$. If we regard $U$ and $K$ as submanifolds of $S^{n}\setminus \{x_0\} \cong \R^n$, then $\Lambda_U$ is not Legendrian isotopic to $\Lambda_K$ in $U^*\R^n$.

In order to distinguish unit conormal bundles up to Legendrian isotopy, the classical invariants are not sufficient. 
For instance, for any pair $(K_0, K_1)$  of knots in $\R^3$, $\Lambda_{K_0}$ and $\Lambda_{K_1}$ have the same classical invariants.
An effective invariant would be the \textit{Chekanov-Eliashberg differential graded algebra (DGA)} defined for Legendrian submanifolds $\Lambda$ under certain conditions. Its homology is called the \textit{Legendrian contact homology of }$\Lambda$. Let us denote it by $\LCH_*(\Lambda)$ with coefficients in $\Z/2$. It was introduced by Chekanov \cite{C} for Legendrian knots in $\R^3$ and by Eliashberg \cite{El} including higher dimensional case. A rigorous definition and a proof of invariance was given by \cite{EES-R, EES}.
For example, by computing the DGA, \cite{EES-non} gave an infinite family of pairwise non-isotopic Legendrian submanifolds of $\R^{2n+1}$ for $n\geq 2$ (with a standard contact structure) whose classical invariants are the same.
However, it is difficult in general to compute the Legendrian contact homology, especially 
when the dimension of a contact manifold is greater than $3$.

\

\noindent
\textbf{Main results.}
In this paper, we give new examples of pairs of compact connected submanifolds of $\R^n$ whose unit conormal bundles are not Legendrian isotopic in $U^*\R^n$. The differences from the examples in previous works \cite{S, ENS, Asp} are that the codimension is greater than or equal to $4$ and that they are not spheres but various topological types are allowed.

For $n,d \in \Z$ with $\frac{n}{2}\geq d\geq 2$, let $M$ be a compact connected submanifold in $\R^{n-d}$ of codimension $d-1$.
By taking the direct product with the unit sphere $S^{d-1}\subset \R^{d}$, we obtain a compact submanifold of $\R^{n}$
\[K_0\coloneqq S^{d-1}\times M \subset \R^{d}\times  \R^{n-d}= \R^{n}\]
whose codimension is $d$.
In Section \ref{sec-application}, we explicitly construct a $C^{\infty}$ embedding $\iota_f \colon K_0 \to \R^{n}$ as a connected sum of $K_0$ and a specific embedding $f\colon S^{n-d}\to \R^n$. Then, we define
\[K_1 \coloneqq \iota_f (K_0) \subset \R^{n} . \]
This is inspired by the construction of the \textit{Hudson torus} in \cite[Example 2.10]{Sk}, which corresponds to the case where $n=p+2d-1$ and $M=S^p\subset \R^{p+ d-1}$ for some $p\in \Z_{\geq 1}$.

We consider the unit conormal bundles $\Lambda_{K_0}$ and $\Lambda_{K_1}$.
From the construction of $K_0$ and $K_1$, we can check that they cannot be distinguished up to Legendrian isotopy by the classical invariants. (In order to define Thurston-Bennequin number as in Definition \ref{def-TB}, we need a topological condition on $K_0$, which is satisfied when $d$ is an even number.)
In addition, if $d\geq 3$, then $\pi_1(\R^n\setminus K_i)\cong \{1\}$ for $i=0,1$, hence they cannot be distinguished by using \cite[Theorem 7]{S}.
%
The main result of this paper is the following.
\begin{thm}[Theorem \ref{thm-Leg-non-isotopic}]\label{thm-1}
If $d\geq 4$ and $H_k(M;\Z/2)\neq 0$ for some $k\in \Z$ with $1\leq k\leq n-2d$, then
$\Lambda_{K_0}$ is not Legendrian isotopic to $\Lambda_{K_1}$ in $U^*\R^n$.
\end{thm}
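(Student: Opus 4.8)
The plan is to use that, by the invariance results proved earlier, the strip Legendrian contact homology $\LCH^{\mathrm{strip}}_*(\Lambda)$ together with its coproduct $\vee$ is an invariant of the Legendrian isotopy class of $\Lambda$ (over $\Z/2$, under the standing conditions, which both $\Lambda_{K_0}$ and $\Lambda_{K_1}$ satisfy). Hence it suffices to produce a difference between $\big(\LCH^{\mathrm{strip}}_*(\Lambda_{K_0}),\vee\big)$ and $\big(\LCH^{\mathrm{strip}}_*(\Lambda_{K_1}),\vee\big)$. Since the classical invariants coincide and, for $d\ge 3$, the complements $\R^n\setminus K_i$ have the same fundamental group (so Shende's group-ring invariant does not separate them either), everything hinges on extracting the distinction from $\vee$.

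First I would feed in the purely topological description of these invariants in codimension $\ge 4$ established earlier in the paper: under this identification $\LCH^{\mathrm{strip}}_*(\Lambda_K)$ is computed by a string-topological complex associated to the pair $(\R^n,K)$ --- morally the homology of a space of paths in $\R^n$ with endpoints on $K$ together with their framings along $K$, filtered by the length (action) of binormal chords --- and $\vee$ is a string-topology-type splitting coproduct on it. Because $K_0$ and $K_1$ are abstractly diffeomorphic with isomorphic (trivial) normal bundles and their complements have the same $\Z/2$-homology by Alexander duality, I expect the underlying graded groups $\LCH^{\mathrm{strip}}_*(\Lambda_{K_0})$ and $\LCH^{\mathrm{strip}}_*(\Lambda_{K_1})$ to be isomorphic; so the separation must come entirely from the coproduct, which is where the embedding enters nontrivially.

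Next comes the computation, carried out on each side. For $K_0=S^{d-1}\times M$ I would exploit the product structure: the binormal chords of $K_0$ decompose according to those of $S^{d-1}\subset\R^d$ (only the diameters, of length $2$) and of $M\subset\R^{n-d}$, so the topological model for $\LCH^{\mathrm{strip}}_*(\Lambda_{K_0})$ is a K\"unneth-type tensor product of the corresponding models, and $\vee_{K_0}$ is the induced coproduct; one then checks that the specific component which would detect classes coming from $H_k(M;\Z/2)$ in the range $1\le k\le n-2d=\dim M-1$ vanishes. For $K_1=\iota_f(K_0)$ I would analyse how the ambient connected sum with the embedded sphere $f(S^{n-d})$ modifies the relevant moduli of strips (equivalently, the path spaces in the topological model): the summed-in sphere contributes extra short binormal chords and extra rigid splitting configurations, producing an additional term in $\vee_{K_1}$ equal to a "linking class" of $f$ evaluated against $H_*(M;\Z/2)$. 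Granted $H_k(M;\Z/2)\ne 0$ for some $1\le k\le n-2d$, this extra term is nonzero in a slot where $\vee_{K_0}$ is zero, so $\vee_{K_1}\ne\vee_{K_0}$ under any isomorphism of the underlying groups, and the theorem follows; the hypothesis $d\ge 4$ is used both to have the topological description available and to rule out spurious low-area strips that could interfere.

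I expect the main obstacle to be this last step: controlling how the string-topological coproduct changes under the ambient connected sum $\iota_f$ and proving that the correction term is genuinely nonzero and is not cancelled, i.e.\ that it survives on the relevant page of the action spectral sequence. This requires a careful comparison of the moduli spaces --- or, on the topological side, of the chain-level coproducts on the path spaces --- for the standard embedding and for $\iota_f$, followed by a homological identification of the correction as the image of $H_k(M;\Z/2)$ under the map induced by the "dragging" map used to form the connected sum; the codimension hypothesis $d\ge 4$ and the homological range $1\le k\le n-2d$ are exactly what place this correction in a degree where it cannot disappear.
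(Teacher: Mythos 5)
You follow the same overall route as the paper: invariance of the strip Legendrian contact homology together with its coproduct under Legendrian isotopy, its purely topological incarnation as the coproduct $\delta_K$ on $H_*(K\times K,\Delta_K)$ in codimension $\geq 4$ (Theorem \ref{thm-Leg-delta-K}), and then a comparison of $\delta_{K_0}$ with $\delta_{K_1}$. However, the decisive steps are left as expectations rather than proofs, and the mechanisms you propose for them are weaker than what the argument needs. For $K_0$ you only claim vanishing of ``the specific component which would detect $H_k(M;\Z/2)$''; to get a contradiction from Theorem \ref{thm-Leg-delta-K} this is not enough, because the intertwining isomorphism $\Theta^1$ is unknown and, while degree-preserving, need not respect any K\"unneth-type decomposition into ``slots'' --- a nonzero value of $\delta_{K_1}$ in one slot could a priori be matched by a nonzero value of $\delta_{K_0}$ elsewhere in the same degree. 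The paper removes this ambiguity by showing $\delta_{K_0}$ is identically zero (Proposition \ref{prop-coprod-zero}), via an elementary push-off: replacing $K_0$ by $K_{0,\sigma}=\{|v|=1+\frac{r}{4}\}\times M$, every linear chord between points of $K_0$ stays in $\{|v|\leq 1\}\times \R^{n-d}$ and hence misses $K_{0,\sigma}$, so Proposition \ref{prop-geometric-shift} gives $\delta_{K_0}=0$ on all classes; no K\"unneth analysis of binormal chords is needed.

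For $K_1$ you propose to establish a ``connected-sum correction term'' for the coproduct and to show it survives on a page of an action spectral sequence; neither a formula for how $\delta_K$ changes under ambient connected sum nor such a spectral-sequence framework is available in the paper, and as stated this is the missing core of your proof. The paper instead evaluates $\delta_{K_1}$ directly on one explicit cycle (Lemma \ref{lem-intersection-K1}, Proposition \ref{prop-coprod-nonzero}): $P=S^{d-1}\times Q\times Q$ with $c(v,z_1,z_2)=((v,b(z_1)),(e_1,b(z_2)))$, where $b\colon Q\to M$ realizes a nonzero class of $H_k(M;\Z/2)$ supported away from the surgery region. After an admissible normal push-off of $K_1$, the only linear chords of $c$ meeting the pushed-off $K_1$ are those with $v=-e_1$ and $\tau=\tfrac12$, crossing the disk of the summed-in sphere, so $\delta_{K_1}(c_*([P]))=(\sp'_c)_*([Q\times Q])$; nonvanishing is then proved by projecting to $(M\times M,\Delta_M)^{\times 2}$ and applying the K\"unneth formula together with $(\pr_1)_*-(\pr_2)_*$. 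Without carrying out a computation of this concrete kind (or actually justifying your connected-sum formula and its nondegeneracy), your argument does not yet establish that $\delta_{K_1}$ and $\delta_{K_0}$ differ under every admissible isomorphism, which is what the theorem requires.
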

Note that $n-2d = \dim M-1$. The lowest dimensional case is when $n=9$, $d=4$ and $M\ncong S^2$ is a compact surface embedded in $\R^5$. 
For a possible approach using \cite[Theorem 1]{Asp}, see Remark \ref{rem-Asp}.

We also compute the dimension of Legendrian contact homology with coefficients in $\Z/2$ in low degrees.
\begin{thm} [Theorem \ref{thm-LCH-conormal}]\label{thm-2}
If $d\geq 4$ and $H_k(M;\Z/2)\neq 0$ for some $k\in \Z$ with $1\leq k\leq \min \{n-2d, \frac{d-3}{2}\}$, then
\[\dim_{\Z/2} \LCH_{2k +2d-4}(\Lambda_{K_0}) > \dim_{\Z/2} \LCH_{2k+2d-4}(\Lambda_{K_1}) . \]
\end{thm}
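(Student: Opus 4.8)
The plan is to reduce the computation of $\dim_{\Z/2}\LCH_*(\Lambda_{K_i})$ in low degrees to a purely topological computation, and then to read off the strict inequality from the difference between $K_0$ and $K_1$ that is already detected by the coproduct on strip Legendrian contact homology. First I would invoke the promised topological description of the (strip) Legendrian contact homology of a unit conormal bundle $\Lambda_K$ when $\codim K\geq 4$: in that range the relevant moduli spaces of holomorphic disks in $U^*\R^n$ have no obstructions coming from self-intersections, and the Chekanov--Eliashberg algebra of $\Lambda_K$ can be expressed via the chains on the space of geodesic chords, i.e. in terms of a based/free loop space model for $K$ and its complement. Concretely, the Reeb chords of $\Lambda_K$ correspond to geodesic strings hitting $K$ orthogonally at both endpoints, and in the high-codimension regime the low-degree part of $\LCH_*(\Lambda_K)$ becomes computable from the homology of $K$ together with the normal data of the embedding — this is exactly the ``purely topological description'' advertised in the introduction. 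I would isolate the precise degree window: the hypothesis $1\le k\le \min\{n-2d,\tfrac{d-3}{2}\}$ is chosen so that (a) $H_k(M;\Z/2)\ne 0$ contributes a genuine class, and (b) the degree $2k+2d-4$ is low enough that no further Reeb chords from the higher strata of $K$ (or from the ambient curvature terms) interfere, so the LCH in that degree is literally a single explicitly identifiable summand plus a contribution governed by the embedding $\iota_f$.

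Next I would compute both sides. For $\Lambda_{K_0}$ with $K_0=S^{d-1}\times M$, the product structure makes the relevant chord complex split, and in degree $2k+2d-4$ one gets a contribution of the form $H_{k}(M;\Z/2)\otimes(\text{fixed }S^{d-1}\text{-factor})$ together with possibly a ``diagonal'' term; the key point is that for the \emph{standard} embedding this term is present with its full rank. For $\Lambda_{K_1}=\Lambda_{\iota_f(K_0)}$, the connected-sum-with-$f$ modification changes the normal framing / linking data in a controlled way, and the effect on the differential of the Chekanov--Eliashberg algebra is to introduce exactly one new boundary relation in the appropriate degree — the same relation that was used when proving Theorem \ref{thm-1} that the coproduct distinguishes the two. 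This extra relation kills (at least) one class of the $H_k(M;\Z/2)$-contribution in homology, so $\dim_{\Z/2}\LCH_{2k+2d-4}(\Lambda_{K_1})$ drops by at least one relative to $\dim_{\Z/2}\LCH_{2k+2d-4}(\Lambda_{K_0})$, giving the strict inequality. Here I would reuse the string-topology computation of the coproduct: the class in $H_k(M;\Z/2)$ produces a nonzero coproduct output for $\Lambda_{K_0}$ but a zero (or smaller-rank) output for $\Lambda_{K_1}$, and translating this statement from the coproduct back to the underlying chain complex is precisely the statement that a generator becomes a boundary.

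The main obstacle I expect is \emph{bookkeeping of degrees and the verification that no other chords land in degree $2k+2d-4$}: one must check carefully, using the codimension $\ge 4$ and the upper bound $k\le\tfrac{d-3}{2}$, that the topological model has nothing else in that degree, so that a single rank-one change in the differential really changes the homology. A second, more technical point is ensuring that the change from $K_0$ to $K_1$ affects only the claimed part of the differential and does not also \emph{create} new cycles elsewhere in the same degree that would cancel the drop; this is where the explicit nature of the embedding $\iota_f$ (connected sum with the specific $f\colon S^{n-d}\to\R^n$) and the fact that $\iota_f$ is regular homotopic to the inclusion (so the classical data, and hence the rest of the complex, is unchanged) are essential. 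Once these are pinned down, the inequality follows formally: $\dim H_*$ of a complex strictly decreases when a boundary relation of rank one is added while everything else is held fixed.
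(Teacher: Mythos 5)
Your overall strategy matches the paper's: express $\dim_{\Z/2}\LCH_p(\Lambda_K)$ in the low-degree window via a purely topological formula involving the coproduct $\delta_K$ on $H_*(K\times K,\Delta_K)$ (this is Theorem \ref{thm-LCH-deltaK}, obtained from Proposition \ref{prop-LCH-dim} combined with the isomorphism diagram (\ref{diagram-coproduct-homology-2})), and then compare $K_0$ and $K_1$ by examining whether $\delta_K$ vanishes. However, there is a genuine error in your comparison that reverses the inequality. You assert that ``the class in $H_k(M;\Z/2)$ produces a nonzero coproduct output for $\Lambda_{K_0}$ but a zero (or smaller-rank) output for $\Lambda_{K_1}$''. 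This is backwards: the paper shows $\delta_{K_0}=0$ identically (Proposition \ref{prop-coprod-zero}, because the thickened tube around $K_0 = S^{d-1}\times M$ can be pushed off the linear paths) while $\delta_{K_1}\neq 0$ in degree $2k+d-1$ (Proposition \ref{prop-coprod-nonzero}, where the connected-sum sphere $S$ is what the linear paths between pieces of $K_1$ must hit). Your earlier sentence, that the connected sum ``introduces exactly one new boundary relation'' for $K_1$, is the correct picture, and you thus contradict yourself in the final paragraph. If one follows the stated direction consistently one would deduce $\dim\LCH_*(\Lambda_{K_0})<\dim\LCH_*(\Lambda_{K_1})$, the wrong inequality.

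A second, smaller gap: the heuristic that ``$\dim H_*$ of a complex strictly decreases when a boundary relation of rank one is added'' is not automatic and, without the precise mechanism, would not rule out compensating changes in the kernel. What makes the paper's argument airtight is the exact sequence (\ref{ex-seq-LCH}), which yields the identity
\begin{equation*}
\dim_{\Z/2}\LCH_p(\Lambda) = \dim_{\Z/2}\ker\bigl((\delta_J)_*\colon HL_p\to HL^2_{p-1}\bigr) + \dim_{\Z/2}\coker\bigl((\delta_J)_*\colon HL_{p+1}\to HL^2_p\bigr)
\end{equation*}
valid for $1\le p\le 3e-1$ with $e=d-2$. Since the modules $HL_*$ and $HL^2_*$ are identified with the topological groups $H_*(K\times K,\Delta_K)$ and $H_*((K\times K,\Delta_K)^{\times 2})$, whose dimensions are the same for $K_0$ and $K_1$ (they are diffeomorphic), the right-hand side is maximal precisely when $\delta_K=0$ and drops strictly as soon as $\delta_K\neq 0$ in the relevant degree. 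The bound $k\le\tfrac{d-3}{2}$ is exactly what ensures $p=2k+2d-4\le 3d-7$ so the identity applies; it is not, as you suggest, chosen to make the complex concentrate in a single identifiable summand. Once you state the formula and correct the coproduct direction, the rest of your argument is sound.
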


Let us explain the strategy of the proofs of Theorem \ref{thm-1}. For any compact submanifold $K$ of $\R^n$ of codimension $d$, let us abbreviate in this section
\[ \H_*(K) \coloneqq H_{*+1-d}(K\times K,\Delta_K ;\Z/2) , \]
where $\Delta_K\coloneqq \{(q,q') \in K\times K \mid q=q'\}$ is the diagonal.
In Subsection \ref{subsubsec-coproduct-singular}, we will define a $\Z/2$-linear map
\[ \delta_K\colon \H_*(K) \to \H_*(K)^{\otimes 2} \]
in a topological way.
We regard it as a coproduct on $\H_*(K)$.
In Subsection \ref{subsec-coproduct-Morse}, we also give its another description using Morse theory on $K\times K$.
As explained in Remark \ref{rem-string}, the construction is inspired by \textit{string topology} on the path space $\{\gamma \colon [0,1]\to \R^n\colon C^{\infty} \mid \gamma(0),\gamma(1)\in K\}$.

As examples, we show in Subsection \ref{subsubsec-Leg-non-isotopic} that $\delta_{K_0} =0 $ for any $M$ and $\delta_{K_1}\neq 0$ if $H_k(M;\Z/2) \neq 0$ for some $k\in \Z$ with $1\leq k\leq n-2d$. To deduce Theorem \ref{thm-1},
the key result in general case is the following:
\begin{thm}[Theorem \ref{thm-Leg-delta-K}]\label{thm-3}
Let $K$ and $K'$ be compact connected submanifolds of $\R^n$ whose codimensions are greater than or equal to $4$. If $\Lambda_{K}$ is Legendrian isotopic to $\Lambda_{K'}$ in $U^*\R^n$, then there exists a $\Z/2$-linear isomorphism $\Theta \colon \H_*(K) \to \H_*(K')$ such that the following diagram commutes:
\[\xymatrix{
\H_*(K) \ar[r]^-{\delta_{K}} \ar[d]_-{\Theta}^-{\cong} & \H_*(K)^{\otimes 2} \ar[d]_-{\Theta\otimes \Theta}^-{\cong} \\
\H_*(K') \ar[r]^-{\delta_{K'} }& \H_*(K')^{\otimes 2} .
}\]
\end{thm}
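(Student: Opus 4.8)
The plan is to deduce the statement from two results established earlier in the paper: (a) the Legendrian-isotopy invariance of the strip Legendrian contact homology $\LCH^{\mathrm{st}}_*(-)$ together with its coproduct $\delta^{\mathrm{st}}_{-}$; and (b) the purely topological identification of $(\LCH^{\mathrm{st}}_*(\Lambda_K),\delta^{\mathrm{st}}_{\Lambda_K})$ with $(\H_*(K),\delta_K)$ whenever $\codim K\geq 4$. Granting these, one has grading-preserving $\Z/2$-linear isomorphisms $\psi_K\colon \LCH^{\mathrm{st}}_*(\Lambda_K)\xrightarrow{\cong}\H_*(K)$ and $\psi_{K'}\colon \LCH^{\mathrm{st}}_*(\Lambda_{K'})\xrightarrow{\cong}\H_*(K')$ intertwining the respective coproducts, and an isomorphism $\Phi\colon \LCH^{\mathrm{st}}_*(\Lambda_K)\xrightarrow{\cong}\LCH^{\mathrm{st}}_*(\Lambda_{K'})$ which is a morphism of coalgebras; then $\Theta\coloneqq \psi_{K'}\circ\Phi\circ\psi_K^{-1}$ does the job, since the square to be verified is the outer boundary of three rectangles that commute by (b) for $K$, by the coalgebra property of $\Phi$, and by (b) for $K'$. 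The degree shift in the definition of $\H_*$ is the same for $K$ and $K'$, so $\Theta$ is genuinely graded and the diagram makes sense.

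So the real work is to produce $\Phi$ with its coalgebra property. Since $\codim K,\codim K'\geq 4$, both $\Lambda_K$ and $\Lambda_{K'}$ lie in the class of Legendrians for which $\LCH^{\mathrm{st}}$ and $\delta^{\mathrm{st}}$ are defined (by the earlier sections, using in particular the structural features of unit conormal bundles such as the exact Lagrangian filling $N^*K$ of $\Lambda_K$). I would then fix a generic Legendrian isotopy from $\Lambda_K$ to $\Lambda_{K'}$, realize it by an exact Lagrangian cobordism $L$ in the symplectization, and let $\Phi$ be the induced chain map on strip complexes; invariance of $\LCH^{\mathrm{st}}$ as a graded group is the standard continuation/cobordism argument. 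The point to establish is the identity
\[(\Phi\otimes\Phi)\circ\delta^{\mathrm{st}}_{\Lambda_K}\;=\;\delta^{\mathrm{st}}_{\Lambda_{K'}}\circ\Phi\]
on homology. This is an SFT-type compactness-and-gluing argument: one considers the one-dimensional moduli spaces over $L$ of the pseudoholomorphic curves that define a coproduct-type operation (disks with one positive and two negative ends, possibly carrying a marked point) and reads off the codimension-one boundary. The boundary strata split into those computing $(\Phi\otimes\Phi)\circ\delta^{\mathrm{st}}_{\Lambda_K}$ and $\delta^{\mathrm{st}}_{\Lambda_{K'}}\circ\Phi$, together with strata of the form $\partial\circ h+h\circ\partial$ for an explicit degree-one operator $h$; summing to zero over $\Z/2$ gives the identity on homology. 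If the "certain conditions" in the definitions are not automatically preserved along an arbitrary isotopy, one first argues that the isotopy may be chosen to remain in the admissible class (equivalently, that those conditions are $C^\infty$-stable).

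I expect the main obstacle to be precisely this coalgebra compatibility, i.e. the control of the moduli of curves over the cobordism $L$: ruling out unexpected degenerations — breaking off of the incoming strip, one-parameter families of Reeb chords degenerating to zero length, or disk bubbling along $L$ — so that the boundary count is exactly $(\Phi\otimes\Phi)\circ\delta^{\mathrm{st}}_{\Lambda_K}-\delta^{\mathrm{st}}_{\Lambda_{K'}}\circ\Phi$ modulo $\partial h+h\partial$. This is where the hypothesis $\codim\geq 4$ and the conditions built into the definitions of $\LCH^{\mathrm{st}}$ and $\delta^{\mathrm{st}}$ do real work (transversality for the relevant low-dimensional moduli spaces and a priori bounds forbidding short chords); once that analysis is in place, the rest is the formal diagram chase of the first paragraph.
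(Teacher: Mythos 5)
Your proposal is correct and follows essentially the same route as the paper: Theorem \ref{thm-Leg-delta-K} is obtained there by composing the isomorphisms of diagram (\ref{diagram-coproduct-homology-2}) (the identification of $(HL_*(\Lambda_K),(\delta_J)_*)$ with $(H_*(K\times K,\Delta_K),\delta_K)$ via Morse theory) with the quasi-isomorphism $\varphi$ of Theorem \ref{thm-Leg-invariant}, exactly your $\Theta=\psi_{K'}\circ\Phi\circ\psi_K^{-1}$ with commutativity by pasting squares. The only packaging difference is that the paper gets the coalgebra compatibility of $\Phi$ formally, from the exact-Lagrangian-cobordism DGA map $\Phi_L$ preserving the filtration $\mathcal{F}^m$ (using $|a|\geq 2$, i.e.\ condition ($\bigstar$), which is needed only at the two ends of the isotopy, so your worry about staying in the admissible class along the isotopy does not arise), rather than from a separately constructed coproduct-type moduli count over the cobordism as you sketch.
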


Let us outline the proof.
This theorem is proved via a chain complex $(CL_*(\Lambda),d_J)$ of $\Z/2$-vector spaces and a chain map
\[ \delta_J \colon CL_{*+1}(\Lambda) \to CL_*(\Lambda)^{\otimes 2}\]
defined for a Legendrian submanifold $\Lambda$ of $U^*\R^n$ and an almost complex structure $J$ on $\R\times U^*\R^n$ satisfying certain conditions (Subsection \ref{subsubsec-degree-bound}).
$d_J$ is defined by counting $J$-holomorphic strips in $\R\times U^*\R^n$ with boundary in $\R\times \Lambda$, so we call the homology of $(CL_*(\Lambda),d_J)$ the \textit{strip Legendrian contact homology} of $\Lambda$. $\delta_J$ is defined by counting $J$-holomorphic curves with a single positive puncture and two negative punctures in $\R\times U^*\R^n$. A condition in ($\bigstar$), introduced in Subsection \ref{subsubsec-degree-bound}, on the Conley-Zehnder index of the Reeb chords of $\Lambda$ is crucial for the construction of $((CL_*(\Lambda),d_J) , \delta_J)$. This is an analogy to the condition on a contact manifold  to be \textit{index-positive} \cite[Definition 3.10]{U}.
Under Legendrian isotopy, these are invariant up to chain homotopy.

For any compact connected submanifold $K$ of $\R^n$, $((CL_*(\Lambda_K),d_J) , \delta_J)$ can be defined if $\codim K\geq 4$ (Subsection \ref{subsec-LCH-conormal}).
In Subsection \ref{subsec-isom-chain-cpx}, we construct an isomorphism from the strip Legendrian contact homology of $\Lambda_K$ to $\H_{*+1}(K)$. Moreover, in Subsection \ref{subsec-compatibility}, we show that this isomorphism intertwines $\delta_K$ and the map on the homology induced by $\delta_J$. (In fact, the isomorphism factors through the Morse homology of $(K\times K,\Delta_K)$. See Theorem \ref{thm-isom-LM} and Theorem \ref{thm-diagram}.)
The idea of the proof is derived from a generalization of some arguments in \cite{CELN}  to higher dimensional cases. To construct the isomorphism, we will use moduli spaces of pseudo-holomorphic curves in $T^*\R^n$ with switching Lagrangian boundary conditions: the conormal bundle of $K$ and the image of the zero section. See Subsection \ref{subsec-switching}. Such moduli spaces and their compactification have been studied in \cite{CEL, CELN}.

\

\noindent
\textbf{Possible extension.}
If $K$ is a compact oriented submanifold of $\R^n$, the definition of $\delta_K$ in Subsection \ref{subsubsec-coproduct-singular} can be lifted to a $\Z$-linear map
\[ \delta^{\Z}_K\colon H_*(K\times K;\Delta_K ;\Z) \to H_{*+1-d}((K\times K;\Delta_K)^{\times 2} ;\Z) \]
by using the Thom class of the oriented normal bundle of $K$.
The author expects that Theorem \ref{thm-3} can be enhanced to a statement about the invariance of $\delta^{\Z}_K$ under Legendrian isotopies of $\Lambda_K$.
We do not pursue it here because it requires more elaborations to enhance the argument outlined above to $\Z$-coefficients, involving discussion on orientations of moduli spaces.
More precisely, we will need to fix an appropriate spin structure on $\Lambda_K$, and then define the strip Legendrian contact homology for $\Lambda_K$ and a coproduct on it over $\Z$ (one may refer to \cite[Section 4.1]{EES-ori} for the definition of the Chekanov-Eliashberg DGA of $\Lambda_K$ over $\Z$ or the group ring $\Z[H_1(\Lambda_K)]$).
Accordingly, an enhancement of Theorem \ref{thm-3} would be about Legendrian isotopies preserving the spin structures.


\

\noindent
\textbf{Relation to \cite{O}.}
In \cite{O}, the author defined a graded $\R$-algebra $H^{\str}_*(Q,K)$ for any oriented manifold $Q$ and its compact oriented submanifold $K$. It is an extension of the $0$-th degree part of the string homology (with the coefficient ring reduced from $\Z[H_1(\Lambda_K)]$ to $\R$) defined by \cite{CELN} when $\dim Q=3$ and $K$ is a knot in $Q$.
The author made a conjecture \cite[Conjecture 1.4]{O} that when $Q=\R^n$, $H^{\str}_*(\R^n,K)$ is isomorphism to the Legendrian contact homology of $\Lambda_K$ with coefficients in $\R$.

$H^{\str}_*(Q,K)$ is constructed from a chain complex whose differential involves a chain-level coproduct operation which comes from the same idea as Remark \ref{rem-string} in string topology.
There is a spectral sequence induced by a filtration on this chain complex such that the differential of the second page is given by the homology-level coproduct \cite[Section 4.1]{O}. From a simple argument on the grading, it is possible to see that when $\codim K  = d \geq 3$, $\dim_{\R} H^{\str}_p(Q,K)$ for $p\leq 3d-7$ can be computed from the second page.
Therefore, if the conjecture in \cite{O} is true, we can claim as follows: When $\codim K\geq 3$, the Legendrian contact homology of $\Lambda_K$ with coefficients in $\R$ in degree less than or equal to $3d-7$ can be computed in terms of the coproduct in homology-level.
In fact, Theorem \ref{thm-2} is a consequence of Theorem \ref{thm-LCH-deltaK} which proves a variant of this claim in $\Z/2$-coefficient when $\codim K\geq 4$.

\

\noindent
\textbf{Organization of paper.}
In Section \ref{sec-Leg-submfd}, we define the Chekanov-Eliashberg differential graded algebra for a Legendrian submanifold $\Lambda$ in a contact manifold satisfying several conditions. Under the condition ($\bigstar$) on $\Lambda$ introduced in Subsection \ref{subsubsec-degree-bound}, we define the strip Legendrian contact homology and a coproduct on it, and show their invariance under Legendrian isotopy.
In Section \ref{sec-Morse}, for any compact submanifold $K$ of $\R^n$, we define coproducts on both the Morse homology and the singular homology of $(K\times K,\Delta_K)$. The coproduct on singular homology is denoted by $\delta_K$. We also check the coincidence of the two definitions.
In Section \ref{sec-Floer-Morse}, we consider the case where $K$ is connected and its codimension is greater than or equal to $4$. We give an isomorphism from the strip Legendrian contact homology of $\Lambda_K$ to the Morse homology of $(K\times K,\Delta_K)$. The important result is that it intertwines the coproducts on the two homology.
In Section \ref{sec-application}, we first prove Theorem \ref{thm-3} in a general case. Next, we explicitly define a pair $(K_0,K_1)$ of compact submanifolds of $\R^n$ and compute $\delta_{K_0}$ and $\delta_{K_1}$ to prove the main results Theorem \ref{thm-1} and Theorem \ref{thm-2}.
Appendix \ref{appendix} gives proofs of several fundamental results.  

\

\noindent
\textbf{Notations and conventions.}
Throughout this paper, any manifold and its submanifold are of class $C^{\infty}$, unless otherwise specified. Let us use the following notations:
\begin{itemize}
\item For any pair of maps $f\colon X\to Z$ and $g\colon Y\to Z$, we denote their fiber product over $Z$ by $X\ftimes{f}{g} Y\coloneqq \{(x,y) \in X\times Y \mid f(x)=g(y)\}$.
\item For any finite set $S$, let $\#_{\Z/2}S\in \Z/2$ denote the cardinality of $S$ modulo $2$.
\end{itemize}

\noindent
\textbf{Acknowledgements.}
The author would like to thank Kei Irie for reading the
draft and making suggestions for improvements. He would also like to thank Kaoru Ono and Kai Cieliebak for  making valuable comments.
This work, especially on Subsection \ref{subsec-coproduct-Morse} describing $\delta_K$ using Morse theory, has been encouraged by Mari\'{a}n Poppr's research, which focuses on the case where $K$ has codimension $1$.
The author would like to thank the referee for many valuable comments and suggestions.
This work was supported by JSPS KAKENHI Grant Number JP23KJ1238.

\section{Legendrian submanifold and isotopy invariants}\label{sec-Leg-submfd}

\subsection{Preliminaries}

\subsubsection{Setup and notations}
For $n\in \Z_{\geq 1}$, let $Y$ be a ($2n-1$)-dimensional manifold. A $1$-form $\alpha\in \Omega^1(Y)$ is called a \textit{contact form} on $Y$ if the $(2n-1)$-form $\alpha\wedge (d\alpha)^{n-1}$ is nowhere vanishing. We call the pair $(Y,\alpha)$ a contact manifold with a contact form $\alpha$.
The subbundle $\xi \coloneqq \ker\alpha$ of $TY$ is a contact distribution on $Y$. The bilinear form $\rest{d \alpha}{\xi}$ gives $\xi$ a structure of a symplectic vector bundle.

The \textit{Reeb vector filed} $R_{\alpha}$ of $(Y,\alpha)$ is a vector field on $Y$ characterized by
\[ \begin{array}{cc}
 (d\alpha)_x(R_{\alpha}, \cdot )=0 , & \alpha_x(R_{\alpha})=1,
\end{array}\]
for every $x\in Y$. We denote its flow by $(\varphi^t_{\alpha})_{t\in \R}$ when $R_{\alpha}$ is a complete vector field.

Next, let $\Lambda$ be an $(n-1)$-dimensional submanifold of $Y$.
$\Lambda$ is called a \textit{Legendrian submanifold} of $(Y,\alpha)$ if $\rest{\alpha}{\Lambda}=0 \in \Omega^1(\Lambda)$. We define the set of \textit{Reeb chords} of $\Lambda$ by
\[\mathcal{R}(\Lambda) \coloneqq \{a \colon [0,T]\to Y \mid T>0,\ \textstyle{\frac{da}{dt}(t)} = (R_{\alpha})_{a(t)} \text{ for every }t\in [0,T], \text{ and }a(0),a(T)\in \Lambda\}.\]
A Reeb chord $(a\colon [0,T]\to Y)\in \mathcal{R}(\Lambda)$ is called \textit{non-degenerate} if 
$ d\varphi^T_{\alpha}(T_{a(0)}\Lambda) $ is transversal to $T_{a(T)}\Lambda$ in $\xi_{a(T)}$.

Let us also fix several notations. 
On the complex vector space $\C^n$ with the coordinate $(z_1,\dots ,z_n)$, we fix the symplectic form $\frac{1}{2}\sum_{i=1}^n dz_i\wedge d\bar{z_i}$.
Let $\mathcal{L}(\C^n)$ denote the space of all Lagrangian subspaces of $\C^n$.
For any continuous path $\Gamma\colon [0,T]\to \mathcal{L}(\C^n)$ with $T>0$ and $V\in \mathcal{L}(\C^n)$, the Maslov index $\mu(\Gamma,V) \in \frac{1}{2}\Z$ is defined by \cite[Lemma 2.1, Lemma 2.2]{RS}.
When $\Gamma(0)=\Gamma(T)$, we get a continuous loop $\Gamma \colon S^1 \cong \R/ T\Z \to \mathcal{L}(\C^n)$, and its Maslov index $\mu(\Gamma) \coloneqq \mu(\Gamma, V) \in \Z$ is independent of $V$ \cite[Remark 3.6]{RS}.

\subsubsection{Maslov class and Thurston Bennequin number}\label{subsubsec-Maslov-TB}
We define two invariants of Legendrian submanifolds. Hereafter, we abbreviate $(Y,\alpha)$ by $Y$.
First, referring to \cite[Section 2.2]{EES}, we define the Maslov class for a Legendrian submanifold.
\begin{defi}\label{def-Maslov-class}
Suppose that $2c_1(\xi)=0 \in H^2(Y;\Z)$ and $H_1(Y;\Z)=0$.
For any Legendrian submanifold $\Lambda$ of $Y$,
its \textit{Maslov class} $\mu_{\Lambda}\in \Hom (H_1(\Lambda;\Z) ;\Z) \cong H^1(\Lambda;\Z)$ is defined as follows:
For any loop $\gamma\colon S^1 \to \Lambda$, since $H_1(Y;\Z)=0$, there exists a compact oriented surface $\Sigma$ and a continuous map $u\colon \Sigma\to Y$ such that $\partial \Sigma \cong S^1$ and $\rest{u}{\partial \Sigma}=\gamma$. We choose a symplectic trivialization $\Phi \colon u^*\xi \to \C^{n-1}$, then we obtain a loop
\[ \Gamma\colon S^1 \to \mathcal{L}(\C^{n-1})\colon t\mapsto (\rest{\Phi}{S^1})_t(T_{\gamma(t)}\Lambda).\]
We define $\mu_{\Lambda}([\gamma]) \coloneqq \mu(\Gamma) \in \Z$ for $[\gamma]\in H_1(\Lambda ;\Z)$. From the condition that $2c_1(\xi)=0$, this definition is independent of $u$ and $\Phi$.
\end{defi}

Next, we consider a natural exact sequence for the pair $(Y,\Lambda)$
\begin{align}\label{ex-seq-Hn}
\xymatrix{
H_{n}(Y;\Z) \ar[r]^-{j} & H_n(Y,\Lambda;\Z) \ar[r]^-{\partial} & H_{n-1}(\Lambda;\Z) \ar[r]^-{i} & H_{n-1}(Y;\Z)
}
\end{align}
and the intersection product
\[H_{n}(Y,\Lambda;\Z)\otimes H_{n-1}(Y\setminus \Lambda ;\Z) \to \Z\colon a\otimes b\mapsto a\bullet b,\]
where $Y$ is oriented so that $\alpha \wedge (d \alpha)^{n-1}$ is a positive volume form. 
Referring to \cite[Section 3.4]{EES-non}, we give the following definition.
\begin{defi}\label{def-TB}
Let $\Lambda$ be a compact Legendrian submanifold of $Y$.
We suppose that  $H_n(Y;\Z)=0$ and that $\Lambda$ is an oriented submanifold with $i ([\Lambda]) =0$ for the fundamental class $[\Lambda]\in H_{n-1}(\Lambda;\Z)$. By (\ref{ex-seq-Hn}), there exists a unique homology class $D_{\Lambda}\in H_n(Y,\Lambda;\Z)$ such that $\partial (D_{\Lambda}) = [\Lambda]$.
We take $\epsilon>0$ such that $\varphi^t_{\alpha}(\Lambda) \subset Y\setminus \Lambda$ for every $t\in (0,\epsilon]$. Then, the \textit{Thurston-Bennequin number} of  $\Lambda$ is defined by
\[\tb(\Lambda) \coloneqq D_{\Lambda} \bullet [\varphi^{\epsilon}_{\alpha}(\Lambda) ] \in \Z . \]
This is independent of $\epsilon>0$ and the orientation of $\Lambda$ if $\Lambda$ is connected.
\end{defi}

The next proposition is a standard fact about the invariance of the Maslov class and the Thurston-Bennequin number.
As a terminology, a $C^{\infty}$ family $(f_t\colon N\to Y)_{t\in [0,1]}$ of immersions of an ($n-1$)-dimensional manifold $N$ such that $f_t^*\alpha=0$ for every $t\in [0,1]$ is called a \textit{Legendrian regular homotopy}.
\begin{prop}
Let $N$ be an ($n-1$)-dimensional manifold. Let $(f_t\colon N\to Y)_{t\in [0,1]}$ be a Legendrian regular homotopy and $f_t$ is an embedding for $t\in \{0,1\}$. Then
$f_0^*(\mu_{f_0(N)}) = f_1^*(\mu_{f_1(N)}) $
in $H^1(N;\Z)$.
Moreover, suppose that $N$ is a compact oriented manifold and $(f_0)_*([N])=0 \in H_{n-1}(Y;\Z)$. If $f_t$ is an embedding for every $t\in [0,1]$, then
$\tb(f_0(N)) = \tb (f_1(N))$.
\end{prop}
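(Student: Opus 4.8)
<br>

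The plan is to prove the two invariance statements separately, both by the same basic mechanism: a Legendrian regular homotopy gives a homotopy (resp. isotopy) through which the relevant topological/symplectic data varies continuously, and since the target invariants are discrete, they must be constant.

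\textbf{Invariance of the Maslov class.} First I would recall that $\mu_{f_t(N)}$, pulled back along $f_t$, is an element of $H^1(N;\Z)$, so it suffices to show $t\mapsto f_t^*(\mu_{f_t(N)})([\gamma])\in\Z$ is locally constant for every loop $\gamma\colon S^1\to N$. Fix such a $\gamma$. Because $H_1(Y;\Z)=0$, I would pick a compact oriented surface $\Sigma$ with $\partial\Sigma\cong S^1$ and a single continuous map $u\colon\Sigma\to Y$ with $\rest{u}{\partial\Sigma}=f_0\circ\gamma$; composing with the homotopy $f_t\circ\gamma$ (which is a homotopy of loops in $Y$, all nullhomotopic since $H_1(Y)=0$, in fact the same free homotopy class) I can build, for each $t$, a capping surface $u_t\colon\Sigma\to Y$ depending continuously on $t$, with $\rest{u_t}{\partial\Sigma}=f_t\circ\gamma$. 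Pulling back $\xi$ gives a continuous family of symplectic vector bundles $u_t^*\xi$ over $\Sigma$; choosing a trivialization at $t=0$ and extending continuously (possible since $\Sigma\times[0,1]$ deformation retracts onto $\Sigma\times\{0\}$, so the family of trivializations exists), I get a continuous family of Lagrangian loops $\Gamma_t\colon S^1\to\mathcal{L}(\C^{n-1})$, $\Gamma_t(s)=(\rest{\Phi_t}{S^1})_s(T_{f_t\gamma(s)}f_t(N))$ — here I use that $f_t$ is an \emph{immersion} with $f_t^*\alpha=0$, so $df_t(T_sN)$ is an isotropic, hence Lagrangian, subspace of $\xi_{f_t\gamma(s)}$. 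The Maslov index $\mu(\Gamma_t)$ is a homotopy invariant of the loop $\Gamma_t$ in $\mathcal{L}(\C^{n-1})$ (by \cite[Remark 3.6]{RS}), so it is constant in $t$. Finally, Definition \ref{def-Maslov-class} guarantees $f_t^*(\mu_{f_t(N)})([\gamma])=\mu(\Gamma_t)$ is independent of the choices $(u_t,\Phi_t)$, so the value is well-defined and constant; hence $f_0^*(\mu_{f_0(N)})=f_1^*(\mu_{f_1(N)})$.

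\textbf{Invariance of the Thurston--Bennequin number.} Now assume $N$ is compact oriented, $(f_0)_*[N]=0$, and every $f_t$ is an embedding, so we have a smooth isotopy $(\Lambda_t\coloneqq f_t(N))_{t\in[0,1]}$ through Legendrian submanifolds. First I would note $i_*([\Lambda_t])=(f_t)_*[N]=0$ in $H_{n-1}(Y;\Z)$ for all $t$ (it is constant along the homotopy), so, using $H_n(Y;\Z)=0$ and the exact sequence (\ref{ex-seq-Hn}), the class $D_{\Lambda_t}\in H_n(Y,\Lambda_t;\Z)$ with $\partial D_{\Lambda_t}=[\Lambda_t]$ exists and is unique. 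The isotopy $f_t$ extends (by the isotopy extension theorem) to an ambient isotopy $(\phi_t)_{t\in[0,1]}$ of $Y$ with $\phi_0=\id$ and $\phi_t(\Lambda_0)=\Lambda_t$; then $(\phi_t)_*$ identifies the exact sequences for $(Y,\Lambda_0)$ and $(Y,\Lambda_t)$ compatibly, so $(\phi_t)_*(D_{\Lambda_0})=D_{\Lambda_t}$, and likewise $(\phi_t)_*$ preserves the intersection pairing $H_n(Y,\Lambda;\Z)\otimes H_{n-1}(Y\setminus\Lambda;\Z)\to\Z$. It remains to compare $[\varphi^\epsilon_\alpha(\Lambda_t)]$ with $(\phi_t)_*[\varphi^\epsilon_\alpha(\Lambda_0)]$ in $H_{n-1}(Y\setminus\Lambda_t;\Z)$. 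For this I would argue that for a sufficiently small $\epsilon>0$ (uniform in $t$, using compactness of $N\times[0,1]$ and non-degeneracy of the contact structure so that the short-time Reeb push-off of $\Lambda_t$ stays disjoint from $\Lambda_t$), the push-off $\varphi^\epsilon_\alpha(\Lambda_t)$ is a section of a tubular neighborhood of $\Lambda_t$ determined up to homotopy by the Legendrian normal framing, and this framing varies continuously in $t$; hence $[\varphi^\epsilon_\alpha(\Lambda_t)]$ and $(\phi_t)_*[\varphi^\epsilon_\alpha(\Lambda_0)]$ are homologous in $Y\setminus\Lambda_t$ (the homotopy $\varphi^s_\alpha$, $s\in(0,\epsilon]$, together with $(\phi_t)_*$ of the $t=0$ push-off, traces out a cobordism in $Y\setminus\Lambda_t$). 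Therefore $\tb(\Lambda_t)=D_{\Lambda_t}\bullet[\varphi^\epsilon_\alpha(\Lambda_t)]=(\phi_t)_*D_{\Lambda_0}\bullet(\phi_t)_*[\varphi^\epsilon_\alpha(\Lambda_0)]=D_{\Lambda_0}\bullet[\varphi^\epsilon_\alpha(\Lambda_0)]=\tb(\Lambda_0)$, constant in $t$.

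\textbf{Main obstacle.} The genuinely delicate point is the Thurston--Bennequin part: controlling the Reeb push-off $\varphi^\epsilon_\alpha(\Lambda_t)$ uniformly in $t$ and showing that its homology class in $Y\setminus\Lambda_t$ is carried along correctly by the ambient isotopy. One must choose $\epsilon$ small enough that $\varphi^s_\alpha(\Lambda_t)\cap\Lambda_t=\emptyset$ for all $s\in(0,\epsilon]$ and all $t$ — this is where the compactness of $N$ and the Legendrian (hence transverse-to-$R_\alpha$) condition enter — and then identify, up to homotopy, the push-off with the boundary of a tubular neighborhood framing that depends continuously on $t$; the Maslov-class argument, by contrast, is essentially a formal consequence of the homotopy invariance of the Maslov index of a loop of Lagrangians plus the nullhomotopy of loops afforded by $H_1(Y;\Z)=0$.
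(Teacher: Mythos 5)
The paper states this proposition as ``a standard fact'' and gives no proof at all, so there is nothing to compare against; judged on its own, your argument is essentially correct and is the standard one. The Maslov part is fine as written: you only ever use $df_t(T_xN)\subset\xi_{f_t(x)}$ (so no Maslov \emph{class} of the possibly non-embedded $f_t(N)$ is needed), the cappings and trivializations extend over $\Sigma\times[0,1]$, homotopy invariance of the Maslov index of a loop of Lagrangians gives constancy, and the paper's Definition \ref{def-Maslov-class} (via $2c_1(\xi)=0$, $H_1(Y;\Z)=0$) makes the endpoint values independent of these choices.

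For the Thurston--Bennequin part, your reduction via the isotopy extension theorem and the identification $(\phi_t)_*D_{\Lambda_0}=D_{\Lambda_t}$ (uniqueness from $H_n(Y;\Z)=0$, orientations transported by $\phi_t\circ f_0=f_t$, and $\phi_t$ orientation-preserving so intersection numbers are preserved) is correct. The one step you should tighten is the comparison $[\varphi^{\epsilon}_{\alpha}(\Lambda_t)]=(\phi_t)_*[\varphi^{\epsilon}_{\alpha}(\Lambda_0)]$ in $H_{n-1}(Y\setminus\Lambda_t;\Z)$: ``the framing varies continuously'' is the right idea but is not yet an argument, and the parenthetical cobordism traced by $\varphi^{s}_{\alpha}$, $s\in(0,\epsilon]$, does not by itself connect the two cycles. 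The clean way to finish is to transport everything to the fixed submanifold: set $w_t\coloneqq (\phi_t^{-1})_*\bigl(\rest{R_{\alpha}}{\Lambda_t}\bigr)$, a vector field along $\Lambda_0$ which is nowhere tangent to $\Lambda_0$ precisely because each $\Lambda_t$ is Legendrian ($T\Lambda_t\subset\xi$, $R_{\alpha}\notin\xi$). The family $(w_t)_{t\in[0,1]}$ is itself a homotopy through nowhere-tangent fields from $\rest{R_{\alpha}}{\Lambda_0}$ to $(\phi_1^{-1})_*(\rest{R_{\alpha}}{\Lambda_1})$, so the corresponding small push-offs of $\Lambda_0$ are homotopic inside $Y\setminus\Lambda_0$, hence homologous; applying $\phi_1$ and using that the time-$\epsilon$ Reeb push-off of a compact Legendrian is, for small $\epsilon$, isotopic in the complement to the exponential push-off along the normal projection of $R_{\alpha}$, yields the desired identity. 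This also makes transparent why the hypothesis that \emph{every} $f_t$ is an embedding \emph{and} Legendrian is needed (for a merely smooth isotopy, $R_{\alpha}$ can become tangent to $\Lambda_t$ and the framing comparison, hence tb-invariance, fails), a point you correctly flagged. Your uniform choice of $\epsilon$ via compactness of $N\times[0,1]$ is fine, though it is not strictly necessary since $\tb$ is independent of $\epsilon$ and one may argue locally in $t$.
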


\subsection{Conley-Zehnder index of Reeb chord}

\subsubsection{Definition via capping path}\label{subsubsec-capping}

We assume that $2c_1(\xi)=0$ and $H_1(Y;\Z)=0$ as in Definition \ref{def-Maslov-class}.
Let us consider a Legendrian submanifold $\Lambda$ of $Y$ such that
$\Lambda$ is  connected and $\mu_{\Lambda}=0\in H^1(\Lambda;\Z)$.
For any Reeb chord $(a\colon [0,T]\to Y)\in \mathcal{R}(\Lambda)$,
we choose a continuous path $\gamma_a\colon [0,1]\to \Lambda$ such that $\gamma_a(0)= a(T)$ and $\gamma_a(1)=a(0)$.
Since $H_1(Y;\Z)=0$, there exists a continuous map $f \colon \Sigma \to Y$ on a compact oriented surface $\Sigma$ which bounds the loop
\[ l_a\colon \R/(T+1)\Z \to Y \colon t\mapsto \begin{cases}  \gamma_a(t)& \text{ if }0\leq t\leq 1, \\ a(t-1) & \text{ if }1\leq t\leq T+1. \end{cases}\]
We also choose a symplectic trivialization $\Psi\colon (l_a)^* \xi \to \C^{n-1}$ over $\R/ (T+1)\Z$ such that:
\begin{itemize}
\item $\Psi_{t+1} = \Psi_0 \circ d\varphi_{\alpha}^{T-t} \colon \xi_{a(t)} \to \C^{n-1}$
for every $t\in [0,T]$.
\item $\Psi$ extends to a symplectic trivialization of $f^* \xi$ on $\Sigma$.
\end{itemize}
Then, we obtain a path
\[  \Gamma_a\colon [0,1]\to \mathcal{L}(\C^{n-1}) \colon t\mapsto  \Psi_t(T_{\gamma_a(t)}\Lambda) .\]
from $\Gamma_a(0)=\Psi_0(T_{a(T)}\Lambda)$ to $\Gamma_a(1)= \Psi_0\circ d\varphi^T_{\alpha}(T_{a(0)}\Lambda)$.
Note that the Reeb chord $a$ is non-degenerate if and only if $\Gamma_a(0)$ is transversal to $\Gamma_a(1)$.
\begin{defi}\label{def-ind-cap}
For any $a\in \mathcal{R}(\Lambda)$, 
we define its \textit{Conley-Zehnder index} $\mu (a) \in \frac{1}{2} \Z$ by 
\[ \mu(a) \coloneqq \mu ( \Gamma_a, \Gamma_a(0)) + \frac{n-1}{2}. \]
By \cite[Theorem 2.4]{RS}, $\mu(a)$ is an integer if $a$ is non-degenerate.
From the condition that $2c_1(\xi)=0$ and $\mu_{\Lambda}=0$,
$\mu(a)$ is independent of the choice of $\gamma_a$, $f$ and $\Psi$.  
\end{defi}

\begin{rem}
Take a path $\lambda$ from $\Gamma_a(1)$ to $\Gamma_a(0)$ by a positive rotation as in \cite[Remark A.1]{CEL} (see also \cite[Subsection 2.2]{EES-R}). Its Maslov index is $\mu(\lambda,\Gamma_a(0))= \frac{n-1}{2}$. Let $\Gamma_a * \lambda$ be the loop obtained by concatenating $\Gamma_a$ and $\lambda$. Then, $\mu(a)$ is equal to
\[ \mu(\Gamma_a,\Gamma_a(0)) + \mu(\lambda,\Gamma_a(0)) = \mu(\Gamma_a*\lambda)\]
by \cite[Theorem 2.3]{RS}.
The right-hand side coincides with the definition of the Conley-Zehnder index in \cite[Subsection 2.2]{EES-R}.
\end{rem}

\subsubsection{Example: Index of Reeb chords and Morse index of binormal chords}\label{subsubsec-Morse index}

We focus on an example which is of main interest in this paper. The Euclidean space $\R^n$ with the coordinate $(q_1,\dots ,q_n)$ is equipped with the standard metric $\la\cdot,\cdot\ra \coloneqq \sum_{i=1}^n dq_i\otimes dq_i$.
We identify its cotangent bundle $T^*\R^n$ with $\R^n \times \R^n$ by the diffeomorphism
\begin{align}\label{identify-TRn}
 \R^n\times \R^n \to T^*\R^n \colon (q, (p_1,\dots ,p_n)) \mapsto \sum_{i=1}^n p_i (dq_i)_{q}.
 \end{align}
We denote the canonical Liouville form on $T^*\R^n$ by $\lambda \coloneqq \sum_{i=1}^np_i dq_i$.
Consider the unit cotangent bundle of $\R^n$
\[U^*\R^n \coloneqq \{(q,p)\in T^*\R^n \mid |p|=1\}.\]
Then, $\alpha \coloneqq \rest{\lambda}{U^*\R^n}$ is a canonical contact form on $U^*\R^n$.

Let $K$ be a compact $C^{\infty}$ submanifold of $\R^n$. The conormal bundle of $K$ is defined by
\[L_K\coloneqq \{(q,p) \in T^*\R^n \mid q\in K ,\ \la p ,v \ra =0 \text{ for every }v\in T_qK\}.\]
It is a Lagrangian submanifold of $T^*\R^n$ with $\rest{\lambda}{L_K}=0$.
We call its unit sphere bundle
\[\Lambda_K\coloneqq L_K\cap U^*\R^n\]
the \textit{unit conormal bundle} of $K$. Since $\rest{\alpha}{\Lambda_K}=0$, $\Lambda_K$ is a Legendrian submanifold of $U^*\R^n$.

Next, consider a $C^{\infty}$ function
\begin{align}\label{energy-fcnl}
E\colon K\times K\to \R\colon (q,q') \mapsto \frac{1}{2}|q-q'|^2.
\end{align}
Obviously, $\min E=0$ and $E^{-1}(0)$ coincides with the diagonal
\[\Delta_K\coloneqq\{(q,q')\in K\times K \mid q=q'\}\]
Let $\mathcal{C}(K)$ denote the set of all critical points of $E$ in $(K\times K) \setminus \Delta_K$. For every $x\in \mathcal{C}(K)$, we denote its Morse index with respect to $E$ by $\ind x\in \Z_{\geq 0}$.
\begin{rem}\label{rem-sp-emb}
Given any compact submanifold $K$ of $\R^n$, consider the normal bundle $(TK)^{\perp}$ and fix $r_K>0$ such that $\{q+v \mid (q,v)\in (TK)^{\perp},\ |v|<r_K\}$ is a tubular neighborhood of $K$. We define $\mathcal{O}_K$ to be the set of $C^{\infty}$ maps $\sigma \colon K\to \R^n  $ such that $\sigma(q) \in (T_q K)^{\perp}$ and $|\sigma(q)| \leq \frac{r_K}{2}$ for every $q\in K$. $\mathcal{O}_K$ is equipped with the $C^{\infty}$ topology. For every $\sigma \in \mathcal{O}_K$, we define
\[K_{\sigma} \coloneqq \{q+\sigma(q) \mid q\in K\}. \]
Then, the subset $\{\sigma \in \mathcal{O}_K \mid \text{every }x\in \mathcal{C}(K_{\sigma}) \text{ is a non-degenerate critical point of }E\}$ is open dense in $\mathcal{O}_K$.
\end{rem}

$\mathcal{C}(K)$ coincides with the set
\[ \{ (q,q')\in (K\times K)\setminus \Delta_K \mid (q-q') \perp T_qK \text{ and }  (q-q') \perp T_{q'}K \}.\]
Here, we regard $q-q'$ as a vector in $\R^n$.
From this description, it is easy to see that we have a bijection
\begin{align}\label{bijection-RC}
\mathcal{R}(\Lambda_K) \to \mathcal{C}(K) \colon (a\colon[0,T]\to U^*\R^n) \mapsto x_a \coloneqq (\pi_{\R^n}\circ a(0), \pi_{\R^n} \circ a(T)) ,
\end{align}
where $\pi_{\R^n} \colon T^*\R^n \to \R^n$ is the bundle projection.
Let $d$ be the codimension of $K$ in $\R^n$.
\begin{prop}\label{prop-correspond}
$a\in \mathcal{R}(\Lambda_K)$ is non-degenerate if and only if $x_a\in \mathcal{C}(K)$ is a non-degenerate critical point of $E$.
Moreover, in such case, $\mu(a)=\ind x_a  + d-1 $.
\end{prop}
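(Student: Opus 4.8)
The plan is to relate the non-degeneracy and Conley–Zehnder index of a Reeb chord $a$ of $\Lambda_K$ to the Hessian of the energy function $E$ at the corresponding point $x_a = (q,q') \in \mathcal{C}(K)$, by exploiting the fact that Reeb chords of $\Lambda_K$ are precisely the geodesic chords of $\R^n$ (i.e. straight line segments) that hit $K$ orthogonally at both endpoints, and that these are in bijection with critical points of $E$ via \eqref{bijection-RC}. First I would set up explicit coordinates: writing $q, q' \in K$ with $v = q - q'$ orthogonal to $T_qK$ and $T_{q'}K$, the Reeb chord is the lift of the segment from $q'$ to $q$ traversed at unit speed, with $T = |v|$. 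I would trivialize the contact hyperplane $\xi$ along this chord using the flat connection on $T^*\R^n = \R^n \times \R^n$, so that $d\varphi^t_\alpha$ becomes an explicit linear map; concretely, parallel transport along a straight line in $\R^n$ is trivial on the base, and the linearized Reeb flow on $\xi \cong \R^{n-1}\oplus\R^{n-1}$ (positions transverse to the chord direction, and momenta) is the standard shear $(\delta q, \delta p) \mapsto (\delta q + t\,\delta p, \delta p)$ after identifying $\xi_{a(t)}$ with $\R^{n-1}\times\R^{n-1}$ appropriately.

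Next I would compute the two Lagrangian subspaces $T_{a(0)}\Lambda_K$ and $d\varphi^T_\alpha(T_{a(0)}\Lambda_K)$ inside $\xi_{a(T)}$ in these coordinates. The tangent space to $\Lambda_K = L_K \cap U^*\R^n$ at a point over $q$ decomposes using $T_qK$ and its normal directions: tangent vectors to $K$ at $q$ contribute "position" directions, while the fiber $S^{d-1}$ of the normal sphere contributes "momentum" directions orthogonal to $p$; the second fundamental form of $K$ couples these. Transversality of $d\varphi^T_\alpha(T_{a(0)}\Lambda_K)$ and $T_{a(T)}\Lambda_K$ in $\xi_{a(T)}$ should then translate, after the shear, into non-degeneracy of a bilinear form built from the two second fundamental forms of $K$ at $q$ and $q'$ together with the $\frac{1}{T}\mathrm{Id}$ term coming from the shear — and this is exactly (up to sign/normalization) the Hessian of $E$ at $x_a$. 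That gives the equivalence "$a$ non-degenerate $\iff$ $x_a$ non-degenerate". This kind of computation is classical for the geodesic/energy correspondence (Morse, and e.g. Abbondandolo–Schwarz type arguments for cotangent bundles), so I would cite the relevant references and present it cleanly rather than in full gory detail.

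For the index formula, I would use the capping-path description of $\mu(a)$ from Definition \ref{def-ind-cap}, namely $\mu(a) = \mu(\Gamma_a, \Gamma_a(0)) + \frac{n-1}{2}$, where $\Gamma_a$ runs from $T_{a(T)}\Lambda_K$ to $d\varphi^T_\alpha(T_{a(0)}\Lambda_K)$ along a path determined by a path $\gamma_a$ in $\Lambda_K$. I would choose $\gamma_a$ to project to a short path in $K$ connecting $q'$ to $q$ through the tubular neighborhood (or, more efficiently, split the Maslov index computation using concatenation and the homotopy invariance from \cite{RS}): the crossing form of $\Gamma_a$ is governed by the Hessian of $E$, so the Robbin–Salamon index $\mu(\Gamma_a,\Gamma_a(0))$ equals $\ind x_a$ plus a correction term coming from the shear/rotation in the momentum $S^{d-1}$-directions, which contributes the constant $\frac{d-1}{2} - \frac{n-1}{2}$ — assembling to $\mu(a) = \ind x_a + (d-1)$. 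To pin down the constant cleanly I would compare against a model case, e.g. $K$ a round sphere or $K$ a point together with a nearby parallel copy, where both $\mu(a)$ and $\ind x_a$ can be computed by hand, and invoke continuity/invariance to propagate the formula.

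The main obstacle I expect is bookkeeping the constant shift correctly: keeping track of the $\frac{n-1}{2}$ and $\frac{d-1}{2}$ half-integer contributions from the Robbin–Salamon signature conventions, the rotation path $\lambda$ in the Remark, and the splitting of $\xi$ into $T_qK$-directions versus normal-sphere directions, all with consistent orientations and symplectic trivializations. The homotopy-theoretic content — that crossings of $\Gamma_a$ correspond to eigenvalues of the Hessian of $E$ — is robust, but getting the additive constant exactly right (rather than off by $d-1$ or by a sign) is where care is needed; isolating it via an explicit model example is the safest route, and I would structure the proof so that the conceptual equivalence of non-degeneracy is proved first and the index formula is reduced to that normalization computation.
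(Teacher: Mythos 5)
Your approach is genuinely different from the paper's. The paper lifts the Reeb chord $a$ to a Hamiltonian chord $\hat{a}$ for $H(q,p)=\frac{1}{2}|p|^2$ with endpoints in $L_K$, carefully constructs trivializations $\Phi$ and $\hat\Psi$ matching Definition~\ref{def-ind-cap} to the conventions in \cite{APS}, shows $\mu(a)=\mu'(\hat a)+d-1$ (Lemmas~\ref{lem-triv-extend}--\ref{lem-hat-Gamma}), and then cites \cite[Corollary~4.2]{APS} as a black box for the identity $\mu'(\hat a)=\ind_{\mathcal E}c_a$, followed by the standard finite-dimensional reduction $\ind_{\mathcal E}c_a=\ind_E x_a$. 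You instead propose a direct computation of the linearized Reeb flow as a shear plus an explicit description of $T\Lambda_K$ via the second fundamental form, and a direct crossing-form computation of $\mu(\Gamma_a,\Gamma_a(0))$.

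The non-degeneracy equivalence (first assertion) in your sketch is sound and amounts to the same linear-algebraic computation the literature uses: the transversality of $d\varphi^T_\alpha(T_{a(0)}\Lambda_K)$ with $T_{a(T)}\Lambda_K$, after the shear, is equivalent to invertibility of the Hessian of $E$ at $x_a$, and this is independent of the capping path and trivialization.

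For the index formula there is a genuine gap. You assert that ``the crossing form of $\Gamma_a$ is governed by the Hessian of $E$.'' This is not true as stated: $\Gamma_a(t)=\Psi_t(T_{\gamma_a(t)}\Lambda_K)$ is a path over a capping path $\gamma_a$ lying \emph{inside} $\Lambda_K$ (so its projection to $\R^n$ is a path in $K$ from $q'$ to $q$), and the crossings of $\Gamma_a$ with the fixed Lagrangian $\Gamma_a(0)$ detect the geometry of $\Lambda_K$ \emph{along $\gamma_a$} --- the second fundamental form of $K$ at intermediate points of $\gamma_a$, the derivative of the fiber direction, the choice of trivialization $\Psi$, etc. --- and have no a priori relation to the Hessian of $E$ at $x_a$, which only sees the endpoints $q,q'$ and the straight chord between them. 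What is true is that $\mu(\Gamma_a,\Gamma_a(0))$ is independent of $\gamma_a$ modulo $\mu_\Lambda=0$, and that this number \emph{also} equals a spectral-flow index along the chord itself; it is this spectral-flow index, not the crossing form of $\Gamma_a$, that sees $\mathrm{Hess}\,E$. Establishing that equality is precisely a Morse index theorem for Hamiltonian chords with Lagrangian boundary conditions, i.e.\ the substance of \cite[Corollary~4.2]{APS}; your direct route would have to reprove a version of it rather than merely ``bookkeep the constant.''

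Your fallback --- verify the additive constant on a model example and propagate by invariance --- is a legitimate way to finish \emph{once} the non-degeneracy/nullity equivalence is established, since both $\mu(a)$ and $\ind x_a$ are locally constant on the non-degenerate locus and jump by the common nullity through a generic degeneration. But you would still need to argue that any germ of $(K,q,q',a)$ can be connected through such a path to the model (a localization statement you don't sketch), and you would still need the correct normalization of the Robbin--Salamon half-integer conventions to compute the model. In practice this is no easier than the reduction to \cite{APS} that the paper carries out in Appendix~\ref{subsec-A1}, which is why the paper takes that route.
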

For the proof, we refer to \cite[Corollary 4.2]{APS} about a relation between the Conley-Zehnder index and the Morse index. There is a gap between Definition \ref{def-ind-cap} and the definition in \cite{APS} of the Conley-Zehnder index of Hamiltonian chords of $L_K$. We fill this gap and prove Proposition \ref{prop-correspond} in Appendix \ref{subsec-A1}.

\subsection{Legendrian contact homology}\label{subsec-LCH}

Throughout this subsection, we require the following condition on $(Y,\alpha)$:
There exist a $(2n-2)$-dimensional Liouville manifold $(P,\lambda_P)$, where $\lambda_P\in \Omega^1(P)$ is a Liouville $1$-form, and a diffeomorphism $\varphi_P\colon Y\to \R \times P$ such that:
\begin{itemize}
\item $(P, \lambda_P)$ is the completion of a compact Liouville domain $(\bar{P},\lambda_P)$.
\item $2 c_1(TP)=0$ and $H_1(P;\Z)=0$.
\item $(\varphi_P)^*(dz+\lambda_P) =\alpha$. Here, $z$ is the $\R$-coordinate of $\R\times P$.
\end{itemize}
We note that $\partial_z = (\varphi_P)_*(R_{\alpha})$.
In particular, 
there is no periodic orbit along $R_{\alpha}$.

\subsubsection{Pseudo-holomorphic curves with boundary in $\R \times \Lambda$}\label{subsubsec-J-hol}

Let $D \coloneqq \{z\in \C \mid |z| \leq 1\}$. For any $m\in \Z_{\geq 0}$, fix boundary points $p_0,p_1,\dots ,p_m\in \partial D$ ordered clockwise  and define
\[D_{m+1}\coloneqq D\setminus \{p_0,p_1,\dots ,p_m\}.\]
Let $\mathcal{C}_{m+1}$ be the space of conformal structures on $D_{m+1}$ which can be smoothly extended to the disk $D$.
For any $\kappa \in \mathcal{C}_{m+1}$, let $j_{\kappa} \colon TD_{m+1} \to TD_{m+1}$ be a complex structure on $D_{m+1}$ associated to $\kappa$.
We take holomorphic strip coordinates
\[\begin{array}{cc}
\psi_0\colon [0,\infty)\times [0,1] \to D_{m+1}, & \psi_k\colon  (-\infty,0] \times [0,1] \to D_{m+1} \end{array}\]
near $p_0$ and $p_k$ for $k=1,\dots ,m$ respectively. Here, the complex structures on the domain of $\psi_0$ and $\psi_k$ are determined so that the map $ \R \times [0,1] \to \C \colon (s,t)\mapsto s+ \sqrt{-1} t$ is holomorphic.
For $m=0,1$, $\mathcal{C}_{m+1}$ consists of a single element and we denote by $\mathcal{A}_{m+1}$ the group of automorphisms on $D_{m+1}$.

For every $s\in \R$ , we define a transition map
\[\tau_s\colon  \R \times Y \to  \R \times Y \colon (r,x)\mapsto (r+s,x).\]
We consider almost complex structures $J$ of class $C^{\infty}$ on $\R\times Y$ satisfying the following conditions:
\begin{enumerate}
\item $J$ is compatible with $d (e^r \alpha)$, namely $d(e^r \alpha)(\cdot, J\cdot)$ is a Riemannian metric on $\R\times Y$.
\item $J(\xi)=\xi$ and $J(\partial_r)=R_{\alpha}$. Here, $r$ is the $\R$-coordinate of $\R\times Y$.
\item $\tau_s^*J=J$ for every $s\in \R$.
\end{enumerate}
Suppose that there exists $J_{\mathrm{fix}}$ satisfying the above three  conditions and
\begin{itemize}
\item There exists compact subsets $\bar{Y}_{l}$ ($l=1,2,\dots$) of $Y$ such that $\bar{Y}_1 \subset \bar{Y}_2 \subset \dots \subset Y= \bigcup_{l=1}^{\infty} \bar{Y}_l$ with the property as follows: For any $J_{\mathrm{fix}}$-holomorphic curve $u\colon \Sigma \to \R\times Y$ on a compact Riemann surface $(\Sigma,j)$ with $u(\partial \Sigma)\subset \R\times \bar{Y}_l$, its image $u(\Sigma)$ must be contained in $\R\times \bar{Y}_l$.
\end{itemize}
We define $\mathcal{J}_{\bar{Y}_l}$ to be the set of almost complex structures $J$ which satisfy the above three condition and $J=J_{\mathrm{fix}}$ outside $\bar{Y}_l$.
$\mathcal{J}_{\bar{Y}_l}$ is endowed with the $C^{\infty}$ topology.

Let $\Lambda$ be a compact connected Legendrian submanifold of $(Y,\alpha)$ with $\mu_{\Lambda}=0$. Take $l\in \Z_{\geq 1}$ such that $\Lambda$ is contained in the interior of $\bar{Y}_l$. Suppose that every Reeb chord of $\Lambda$ is non-degenerate.
For $J\in \mathcal{J}_{\bar{Y}_l}$, we will define a moduli space of $J$-holomorphic curves in $\R\times Y$.
Referring to \cite[Definition 3.7]{DR} (especially the case where the almost complex structure and the Lagrangian boundary condition are cylindrical), we first define a space
\begin{align}\label{moduli-hat}
\hat{\mathcal{M}}_{J}(a;b_1,\dots ,b_m)
\end{align}
for any Reeb chords $(a\colon [0,T]\to Y)$ and $(b_k\colon [0,T_k]\to Y) \in \mathcal{R}(\Lambda)$ for $k=1,\dots ,m$.
This consists of pairs $(u,\kappa)$ of $\kappa \in \mathcal{C}_{m+1}$ and a $C^{\infty}$ map
\[u\colon D_{m+1} \to \R\times Y\]
satisfying the following conditions:
\begin{itemize}
\item $du + J\circ du \circ j_{\kappa} =0$.
\item $u(\partial D_{m+1} ) \subset \R\times \Lambda$.
\item There exist $s_0, s_1,\dots ,s_m\in \R$ such that for $k=1,\dots ,m$,
\begin{align*}
& \tau_{-Ts} \circ u\circ \psi_0(s,\cdot) \to (s_0, a(T\cdot)) \text{ if }s\to \infty,  \\
& \tau_{-T_ks}\circ u\circ \psi_k(s,\cdot)\to (s_k, b_k(T_k\cdot) ) \text{ if }s\to -\infty ,
\end{align*}
in the $C^{\infty}$ topology on $[0,1]$.
\end{itemize}
The group $\R$ acts on (\ref{moduli-hat}) by $s\cdot (u,\kappa) \coloneqq (\tau_s\circ u, \kappa)$ for every $s\in \R$. In addition,
when $m=0,1$, the groups $\mathcal{A}_{m+1}$ acts on (\ref{moduli-hat}) by $(u,\kappa )\cdot \varphi\coloneqq (u\circ \varphi, \kappa)$ for every $\varphi\in \mathcal{A}_{m+1}$. The moduli space of $J$-holomorphic curves up to the $\R$-action and conformal equivalence is defined by
\begin{align}\label{moduli-cobordism}
\mathcal{M}_{J}(a;b_1,\dots ,b_m) \coloneqq \begin{cases}
\R \backslash \hat{\mathcal{M}}_{J}(a;b_1,\dots ,b_m) & \text{ if }m\geq 2, \\
\R \backslash \hat{\mathcal{M}}_{J}(a;b_1,\dots ,b_m)/\mathcal{A}_{m+1} & \text{ if }m=0,1.
\end{cases}
\end{align}
When $m=0$, we denote it by $\M_J(a;\emptyset)$.

The transversality for this moduli space can be achieved by a perturbation of $J$ as below. We refer to \cite[Proposition 3.13]{DR} and sketch the proof. In addition, let us define
\[|a| \coloneqq \mu(a) -1 \]
for every $a\in \mathcal{R}(\Lambda)$ in order to compute the dimension of the moduli space.
\begin{prop}\label{prop-transverse}
Suppose that $(b_1,\dots ,b_m)\neq (a)$.
For  generic $J\in \mathcal{J}_{\bar{Y}_l}$,  the moduli space $\mathcal{M}_{J}(a;b_1,\dots ,b_m) $ is cut out transversely and its dimension as a $C^{\infty}$ manifold is
\[|a| -1 - \sum_{k=1}^m |b_k|.\]
\end{prop}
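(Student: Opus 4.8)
\textbf{Proof proposal for Proposition \ref{prop-transverse}.}

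The plan is to set up a Banach manifold of maps and a Fredholm section whose zero set is $\hat{\mathcal{M}}_{J}(a;b_1,\dots ,b_m)$, compute the Fredholm index, and then achieve transversality by perturbing $J$ within $\mathcal{J}_{\bar{Y}_l}$ via a Sard--Smale argument. First I would fix the conformal structure $\kappa$ and introduce cylindrical ends on $D_{m+1}$ via the holomorphic strip coordinates $\psi_0,\dots,\psi_m$; the asymptotic convergence conditions toward the Reeb chords $a$ and $b_1,\dots,b_m$ are encoded by working in weighted Sobolev spaces $W^{k,p,\delta}$ with a small exponential weight $\delta>0$ at each puncture, together with a finite-dimensional family of cutoff functions carrying the Reeb-chord asymptotics and the $\R$-shift parameters $s_0,\dots,s_m$. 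Over this Banach manifold of $W^{k,p}$-maps satisfying $u(\partial D_{m+1})\subset \R\times\Lambda$, the assignment $u\mapsto \bar\partial_J u = \tfrac12(du+J\circ du\circ j_\kappa)$ defines a smooth section of a Banach bundle whose fiber is $W^{k-1,p,\delta}(\Lambda^{0,1}\otimes u^*T(\R\times Y))$; its linearization $D_u\bar\partial_J$ is a Cauchy--Riemann type operator with totally real boundary conditions along $\R\times\Lambda$ and nondegenerate asymptotics (guaranteed by the hypothesis that all Reeb chords are nondegenerate), hence Fredholm. The index computation is the standard one: using the capping-path conventions of Subsection \ref{subsubsec-capping} and the normalization $|a|=\mu(a)-1$, the index of $D_u\bar\partial_J$ on the fixed-$\kappa$ problem equals $(n-1)+\mu(a)-\sum_k\mu(b_k) - m(n-1)$ up to the usual constants; adding $\dim\mathcal{C}_{m+1} = m-2$ for $m\geq 3$ (and handling $m=0,1$ by quotienting by $\mathcal{A}_{m+1}$, $m=2$ by $\mathcal{C}_3$ being a point) and subtracting $1$ for the $\R$-action yields the claimed $|a|-1-\sum_{k=1}^m|b_k|$. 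I would simply cite \cite[Proposition 3.13]{DR} and \cite{EES-R,EES} for the bookkeeping rather than reproduce it.

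The substantive point, and the main obstacle, is transversality: one must show the universal moduli space $\{(u,\kappa,J)\mid \bar\partial_J u=0\}$ is a Banach manifold, for which the key input is that the linearization in the $J$-direction, combined with $D_u\bar\partial_J$, is surjective. This in turn rests on a unique-continuation/somewhere-injectivity argument: every element of $\hat{\mathcal{M}}_{J}(a;b_1,\dots,b_m)$ with $(b_1,\dots,b_m)\neq(a)$ has an injective point, i.e. a point $z\in \interior D_{m+1}$ with $du(z)\neq 0$ and $u(z)\notin \R\times\Lambda$ and $u^{-1}(u(z))=\{z\}$, away from which the $J$-variation can be localized; this uses that $u$ is not a trivial strip over a single Reeb chord (which is exactly where the hypothesis $(b_1,\dots,b_m)\neq(a)$ enters, ruling out the one configuration for which the map could be entirely non-injective or contained in $\R\times\Lambda$) together with the fact that $J$-holomorphic curves are either constant or have isolated critical and non-injective points. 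Granting surjectivity of the universal linearized operator, the universal moduli space is a Banach manifold, the projection to the space of $J$'s is Fredholm of the same index, and the Sard--Smale theorem gives a residual (hence dense) set of $J\in\mathcal{J}_{\bar{Y}_l}$ for which $\mathcal{M}_{J}(a;b_1,\dots,b_m)$ is a manifold of the stated dimension. One technical wrinkle to flag: since $J$ is constrained to agree with $J_{\mathrm{fix}}$ outside $\bar{Y}_l$, one needs the injective point to lie in $\R\times\interior\bar{Y}_l$; this follows from the confinement property of $J_{\mathrm{fix}}$ (any curve with boundary inside $\R\times\bar{Y}_l$ stays there) together with the maximum principle, which forces a non-constant curve to have interior points where the $P$-coordinate is in $\interior\bar{Y}_l$ unless it is a trivial strip — again excluded by our hypothesis. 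I expect the write-up to consist of a brief recollection of this standard package with precise pointers to \cite{DR} and \cite{EES-R}, emphasizing only the role of the hypothesis $(b_1,\dots,b_m)\neq(a)$ and the confinement condition on $J_{\mathrm{fix}}$.
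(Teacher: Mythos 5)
Your overall scheme (Banach set-up, Fredholm index, universal moduli space, Sard--Smale) is the standard one and is also what the paper invokes, but there is a genuine gap at the crucial step: your notion of injective point is too weak for the class of perturbations actually allowed. The almost complex structures in $\mathcal{J}_{\bar{Y}_l}$ are constrained to be cylindrical: $\tau_s^*J=J$, $J\xi=\xi$, $J\partial_r=R_\alpha$, and $J=J_{\mathrm{fix}}$ outside $\bar{Y}_l$. Consequently any admissible variation of $J$ is $\R$-invariant and acts only on $\xi$. A point $z$ with $du(z)\neq 0$, $u(z)\notin\R\times\Lambda$ and $u^{-1}(u(z))=\{z\}$ in $\R\times Y$ does not let you localize such a variation: a perturbation "supported near $u(z)$" must in fact be supported on an entire $\R$-invariant neighborhood, so all points $z'$ with the same $Y$-image $\bar{u}(z')=\bar{u}(z)$ (but a different $\R$-coordinate) interfere, and along the $\Span(\partial_r,R_\alpha)$-directions the perturbation has no effect at all. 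What the surjectivity argument needs is an injective point of the \emph{projected} curve: a point $z$ in the interior, away from $\bar{u}^{-1}(\partial\bar{Y}_l)$, where $\pr_{\xi}\circ(d\bar{u})_z$ is injective and $(\bar{u})^{-1}(\bar{u}(z))=\{z\}$ for the $Y$-component $\bar{u}$. Your unique-continuation remark ("non-constant curves have isolated critical and non-injective points") does not produce such points, and this is exactly where the hypothesis $(b_1,\dots,b_m)\neq(a)$ and the real work enter.

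The paper's proof is precisely the assertion that such $\xi$-projected injective points exist, and moreover can be found on the positive strip end $\psi_0([0,\infty)\times[0,1])$, quoting the proof of \cite[Proposition 3.13]{DR}; since near the positive puncture the curve is $C^\infty$-close to the trivial strip over $a$ and $\Lambda\subset\interior\bar{Y}_l$, these points automatically land in the region where $J$ may be perturbed — this replaces your somewhat vague appeal to the confinement property and the maximum principle. With that input, the rest of your outline (universal moduli space, Sard--Smale, index bookkeeping via the capping-path conventions, quotient by $\mathcal{C}_{m+1}$, $\mathcal{A}_{m+1}$ and the $\R$-action) matches the paper, which delegates the functional-analytic details to \cite[Subsection 8.3]{W} adapted to Lagrangian boundary conditions and the index to \cite{DR-lift,EES}. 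To repair your write-up, replace your injectivity condition by the $\xi$-projected one and prove (or cite) the existence of such points near the positive puncture for every solution with $(b_1,\dots,b_m)\neq(a)$.
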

\begin{proof}
Let $\pr_{\xi}\colon TY = \R\cdot R_{\alpha} \oplus \xi \to \xi$ be the projection. For any $[(u,\kappa)] \in \mathcal{M}_{J}(a;b_1,\dots ,b_m)$, let
 $\bar{u} \colon D_{m+1}\to Y$ be the $Y$-component of $u$
and $C_u$ be the subset of $D_{m+1}$ consisting of $z\in D_{m+1}\setminus (\partial D_{m+1}\cup \bar{u}^{-1}(\partial \bar{Y}_l))$ such that:
\begin{itemize}
\item $\pr_{\xi}\circ (d\bar{u})_z\colon T_z D_{m+1} \to \xi_{\bar{u}(z)}$ is a linear injective map.
\item $(\bar{u})^{-1}(\bar{u}(z)) =\{z\}$.
\end{itemize}
Then, $C_u \cap \psi_0([0,\infty) \times [0,1])$ is not the empty set. See the proof of \cite[Proposition 3.13]{DR}. From the existence of such injective points, the transversality of the moduli space for generic $J\in \mathcal{J}_{\bar{Y}_l}$ is proved by modifying the argument in \cite[Subsection 8.3]{W} to the present case with Lagrangian boundary condition.
For the dimension of the moduli space, see \cite[Section 4.2.4]{DR-lift} and  \cite[Proposition 2.3]{EES}.
\end{proof}

The moduli space $\M_J(a;b_1,\dots,b_m)$ admits a compactification, which we denote by
\[ \overline{\M}_J(a;b_1,\dots ,b_m) , \]
from the Gromov-Hofer compactness \cite[Section 3.3.2]{DR}.
It consists of $J$-holomorphic buildings as defined in \cite[Definition 3.12]{DR} with boundary in $\R\times \Lambda$. 
In particular, when $|b_1|+\dots +|b_m| = |a|-1$, $\M_J(a;b_1,\dots ,b_m)$ is a compact $0$-dimensional manifold for generic $J\in \mathcal{J}_{\bar{Y}_l}$. 

\subsubsection{Definition and invariance of Legendrian contact homology}\label{subsubsec-DGA}

We continue to consider a compact connected Legendrian submanifold $\Lambda$ such that $\mu_{\Lambda}=0$ and every Reeb chord is non-degenerate. 
We define $\mathcal{A}_*(\Lambda)$ to be the unital non-commutative graded $\Z/2$-algebra freely generated by $\mathcal{R}(\Lambda)$. Here, $a\in \mathcal{A}_{|a|}(\Lambda)$ for every $a\in \mathcal{R}(\Lambda)$.

For generic $J$ of Proposition \ref{prop-transverse},
we define a derivation $\partial_J \colon \mathcal{A}_*(\Lambda)\to \mathcal{A}_{*-1}(\Lambda)$  of degree $-1$  by
\[ \partial_J (a) \coloneqq \sum_{|b_1|+\dots +|b_m| =|a|-1} \#_{\Z/2} \mathcal{M}_{J} (a;b_1,\dots ,b_m) \cdot b_1\cdots b_m \]
for every $a\in \mathcal{R}(\Lambda)$
and extend it by the Leibniz rule.
The pair $(\mathcal{A}_*(\Lambda), \partial_J)$ has the following properties.
\begin{thm}
$(\mathcal{A}_*(\Lambda), \partial_J)$ is a differential graded $\Z/2$-algebra. Namely, $\partial_J \circ \partial_J =0$.
Moreover, the isomorphism class of its homology $\ker \partial_J/\Im \partial_J$ as a graded $\Z/2$-algebra is independent of the choice of $J$ of Proposition \ref{prop-transverse} and invariant under a Legendrian isotopy of $\Lambda$.
\end{thm}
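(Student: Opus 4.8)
The plan is to run the standard chain of pseudo-holomorphic curve arguments, now available in this generality in \cite{DR, EES, EES-R}, in the setting of Subsection~\ref{subsubsec-J-hol}; I will only indicate which features of that setting make each step go through. First one should check that $\partial_J$ is well defined: positivity of energy bounds the total action of $b_1,\dots,b_m$ in any curve of $\mathcal{M}_J(a;b_1,\dots,b_m)$ by the action of $a$, and non-degeneracy leaves only finitely many Reeb chords of $\Lambda$ below a given action, so $m$ is bounded and the defining sum is finite; by Proposition~\ref{prop-transverse} each relevant moduli space is a compact $0$-manifold for generic $J$, so the $\#_{\Z/2}$-count makes sense.

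To obtain $\partial_J\circ\partial_J=0$ I would analyse the $1$-dimensional moduli spaces $\mathcal{M}_J(a;c_1,\dots,c_l)$, i.e.\ those with $|a|-1-\sum_k|c_k|=1$. By the Gromov--Hofer compactness recalled after Proposition~\ref{prop-transverse}, $\overline{\M}_J(a;c_1,\dots,c_l)$ is a compact $1$-manifold with boundary. Two features of the present setup restrict its boundary: $\R\times\Lambda$ is exact Lagrangian in $(\R\times Y,d(e^r\alpha))$, since $\alpha|_\Lambda=0$ together with $\partial_r\in T(\R\times\Lambda)$ forces $e^r\alpha$ to vanish on $\R\times\Lambda$, which rules out disk and sphere bubbling; and there are no Reeb orbits, because $\partial_z=(\varphi_P)_*R_\alpha$. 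Hence the only codimension-one degeneration is a two-level $J$-holomorphic building consisting of a top curve in $\mathcal{M}_J(a;c_1,\dots,c_{j-1},b,c_{j+i+1},\dots,c_l)$ glued at a Reeb chord $b$ to a bottom curve in $\mathcal{M}_J(b;c_j,\dots,c_{j+i})$, both rigid. A gluing theorem (again in the form of \cite{DR}) identifies each such building with a unique boundary arc, so $\#_{\Z/2}\,\partial\overline{\M}_J(a;c_1,\dots,c_l)=0$. Since $\partial_J$ is a derivation, this count is exactly the coefficient of $c_1\cdots c_l$ in $\partial_J(\partial_J a)$; as the $c_k$ are arbitrary this gives $\partial_J^2a=0$ on generators, hence $\partial_J^2=0$ by the Leibniz rule, and $(\mathcal{A}_*(\Lambda),\partial_J)$ is a DGA.

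For independence of $J$ and invariance under Legendrian isotopy I would treat the two together, the former being the special case $\Lambda_0=\Lambda_1$. Realize a Legendrian isotopy from $\Lambda_0$ to $\Lambda_1$ by a compactly supported contact isotopy of $(Y,\alpha)$ and take its trace in the symplectization, which after the usual modification near the ends is an exact Lagrangian cobordism $L\subset\R\times Y$ from $\Lambda_0$ to $\Lambda_1$; choose a generic almost complex structure on $\R\times Y$ equal to admissible $J_0,J_1\in\mathcal{J}_{\bar{Y}_l}$ near $\mp\infty$. Counting rigid pseudo-holomorphic curves with boundary on $L$, one positive puncture at a Reeb chord of $\Lambda_1$ and finitely many negative punctures at Reeb chords of $\Lambda_0$, defines a graded $\Z/2$-algebra morphism $\Phi\colon(\mathcal{A}_*(\Lambda_1),\partial_{J_1})\to(\mathcal{A}_*(\Lambda_0),\partial_{J_0})$, and a boundary analysis of the associated $1$-dimensional cobordism moduli spaces — of the same flavour as above, with one level in $L$ and the rest in the symplectizations — shows $\Phi\circ\partial_{J_1}=\partial_{J_0}\circ\Phi$. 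Applying this to the isotopy and to its reverse, and using that the concatenated cobordism is exact Lagrangian isotopic to the trivial cylinder $\R\times\Lambda_0$, a further gluing argument shows the composite of the two induced morphisms is chain homotopic to the identity (equivalently, decompose the isotopy into handle slides and birth--death moments and check each yields a stable tame isomorphism). Hence $\Phi$ is a quasi-isomorphism of DGAs, so the isomorphism class of the homology as a graded $\Z/2$-algebra is invariant.

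I expect the main obstacle to be analytic bookkeeping rather than any new idea: achieving transversality simultaneously for the $0$- and $1$-dimensional symplectization moduli spaces and for the cobordism moduli spaces — perturbing $J$, or passing to domain-dependent almost complex structures, while keeping available the injectivity-of-points argument behind Proposition~\ref{prop-transverse} — proving the gluing theorem matching boundary arcs with broken buildings, and, the point special to this non-compact Liouville setting, ruling out escape of energy in the $P$-direction, which is precisely what the hypothesis on $J_{\mathrm{fix}}$ together with the exhausting sequence $\bar{Y}_1\subset\bar{Y}_2\subset\cdots$ is designed to secure via a maximum principle.
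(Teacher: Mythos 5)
Your plan is correct and follows essentially the same route as the paper, which handles this theorem by citation: $\partial_J\circ\partial_J=0$ via compactness and gluing of the $1$-dimensional moduli spaces (the paper points to \cite[Subsection 3.2.2]{DR}), and invariance via DGA morphisms induced by exact Lagrangian cobordisms obtained from the trace of the Legendrian isotopy, with the composite homotopic to the identity (the paper points to \cite[Theorem 3.5]{DR}, and the cobordism construction to \cite[Section 3.2]{Cha}). The only cosmetic difference is the direction convention of the induced DGA map, which does not affect the argument.
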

 For the proof that $\partial_J\circ \partial_J=0$, see \cite[Subsection 3.2.2]{DR}. For the proof of invariance, we refer to \cite[Theorem 3.5]{DR} using exact Lagrangian cobordisms associated to Legendrian isotopies.
$(\mathcal{A}_*(\Lambda), \partial_J)$ is called the \textit{Chekanov-Eliashberg differential graded algebra (DGA)} of $\Lambda$ with coefficients in $\Z/2$.
We denote its homology by
\[ \LCH_*(\Lambda) \coloneqq \ker \partial_J/\Im \partial_J.\]
It is called the \textit{Legendrian contact homology} of $\Lambda$ with coefficients in $\Z/2$.


\subsubsection{Strip Legendrian contact homology and coproduct operation}\label{subsubsec-degree-bound}
In this subsection, we observe the Chekanov-Eliashberg DGA of $\Lambda$ under the following condition:
\begin{itemize}
\item[($\bigstar$)] $\Lambda$ is a compact connected Legendrian submanifold of $(Y,\alpha)$ such that $\mu_{\Lambda}=0$ and every Reeb chord of $\Lambda$ is non-degenerate. Moreover, $|a|\geq 2$ for every $(a\colon [0,T] \to Y)\in \mathcal{R}(\Lambda)$ which is homotopic with end points in $\Lambda$ to a constant path in $\Lambda$.
\end{itemize}
See Remark \ref{rem-ind-pos} and \ref{rem-ind-pos-example} below for related notions in the preceding works.

We consider a filtration $(\mathcal{F}^m)_{m=0}^{\infty}$ of $\mathcal{A}_*(\Lambda)$ by $\Z/2$-linear subspaces
\[ \mathcal{F}^m \coloneqq \bigoplus_{m'=m}^{\infty} \left( \bigoplus_{a_1,\dots ,a_{m'}\in \mathcal{R}(\Lambda)} \Z/2 \cdot (a_1\cdots a_{m'}) \right).\]
It gives a decreasing sequence $\mathcal{A}_*(\Lambda) = \mathcal{F}^0 \supset \mathcal{F}^1 \supset \mathcal{F}^2 \supset \cdots$.
\begin{prop}\label{prop-filtration}
If $\Lambda$ satisfies the condition ($\bigstar$), then
the differential $\partial_J$ on $ \mathcal{A}_*(\Lambda)$ preserves the filtration $(\mathcal{F}^m)_{m=0}^{\infty}$.
\end{prop}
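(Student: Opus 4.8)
The plan is to show that if $u$ is a $J$-holomorphic curve contributing to $\partial_J(a)$, say $[(u,\kappa)]\in\mathcal{M}_J(a;b_1,\dots,b_m)$ with $|b_1|+\dots+|b_m|=|a|-1$, then the number of output Reeb chords $m$ is at least the length $m'$ of the word $w\in\mathcal{F}^{m'}$ that $a$ sits in; since $\partial_J$ is a derivation, it suffices to treat a single generator $a$, and the filtration $\mathcal{F}^m$ is defined purely in terms of word length, so the claim reduces to: every monomial $b_1\cdots b_m$ appearing in $\partial_J(a)$ has $m\ge 1$. Wait—more precisely, since $\mathcal{F}^0=\mathcal{A}_*(\Lambda)$ and $\partial_J$ lowers word length in general by producing $m$ letters from one, the only thing that can fail is that $\partial_J(a)$ might have a nonzero \emph{constant} term, i.e.\ that $\mathcal{M}_J(a;\emptyset)$ is nonempty and $0$-dimensional. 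So the heart of the matter is to rule out rigid $J$-holomorphic disks with a single positive puncture at $a$ and no negative punctures. First I would observe, via Stokes' theorem applied to $d(e^r\alpha)$, that the $d(e^r\alpha)$-area of such a disk equals $\int_{\partial}u^*(e^r\alpha)$, and since the boundary lies in $\R\times\Lambda$ where $\alpha|_\Lambda=0$, together with the asymptotic at the positive puncture this forces the action $\int_0^T a^*\alpha = \int_0^T\,dt = T$ of $a$ to be accounted for by the (nonnegative) area—fine, that alone does not give a contradiction.

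The real mechanism is the index/degree constraint from ($\bigstar$). For a rigid disk in $\mathcal{M}_J(a;\emptyset)$, Proposition~\ref{prop-transverse} gives $\dim\mathcal{M}_J(a;\emptyset)=|a|-1=0$, so $|a|=1$, i.e.\ $\mu(a)=2$. But a $J$-holomorphic disk $u\colon D_1\to\R\times Y$ with boundary on $\R\times\Lambda$ has boundary loop $u|_{\partial D_1}$ contained in $\R\times\Lambda$, and capping off the positive puncture this exhibits the Reeb chord $a$ as homotopic, with endpoints on $\Lambda$, to a path in $\Lambda$—indeed to a constant path, since $\Lambda$ is connected and the disk $D_1$ (minus one boundary point) is contractible rel the arc, so the concatenation of $a$ with the boundary arc of $u$ bounds a disk in $Y$ and hence $a$ is homotopic rel endpoints in $\Lambda$ to a constant. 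Therefore ($\bigstar$) applies and forces $|a|\ge 2$, contradicting $|a|=1$. Hence $\mathcal{M}_J(a;\emptyset)=\emptyset$ for every $a\in\mathcal{R}(\Lambda)$, so $\partial_J(a)$ has no constant term; since $\partial_J(a)\in\mathcal{F}^1$ always whenever it has no constant term, and a general element of $\mathcal{F}^m$ is a sum of monomials $a_1\cdots a_{m'}$ with $m'\ge m$, applying the Leibniz rule $\partial_J(a_1\cdots a_{m'})=\sum_i a_1\cdots\partial_J(a_i)\cdots a_{m'}$ shows each summand lies in $\mathcal{F}^{m'}\subset\mathcal{F}^m$ because replacing one letter $a_i$ by $\partial_J(a_i)\in\mathcal{F}^1$ keeps the total word length $\ge m'$.

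I expect the main obstacle to be the homotopy-theoretic bookkeeping in the previous paragraph: making precise that a rigid disk with one positive puncture at $a$ genuinely certifies that $a$ is null-homotopic rel endpoints in $\Lambda$, in the sense required by ($\bigstar$). Concretely one must check that the boundary arc of $u$, which maps into $\R\times\Lambda$, can be pushed to a path in $\Lambda$ (project away the $\R$-factor, using that $\R\times\Lambda$ deformation retracts onto $\Lambda$), and that the truncated disk $D_1$ provides the required homotopy in $Y$ between $a$ and that boundary path; this is where the hypothesis that $\Lambda$ is connected is used to upgrade "homotopic to a path in $\Lambda$" to "homotopic to a constant path in $\Lambda$" (any path in a connected manifold is homotopic rel nothing—but here rel endpoints we need the path composed with $a$ to be contractible, which the disk furnishes). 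Everything else—the derivation property, the word-length description of $\mathcal{F}^m$, and the dimension formula—is already in place from Proposition~\ref{prop-transverse} and the definitions, so once the disk-has-one-puncture case is excluded the proof is a one-line invocation of the Leibniz rule.
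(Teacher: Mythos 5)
Your proposal is correct and follows essentially the same route as the paper: the only possible failure of filtration-preservation is a constant term in $\partial_J(a)$, which is excluded because $\dim \mathcal{M}_J(a;\emptyset)=|a|-1\geq 1$ by ($\bigstar$), and the general case then follows from the Leibniz rule. In fact your explicit intermediate step — that nonemptiness of $\mathcal{M}_J(a;\emptyset)$ forces $a$ to be homotopic with endpoints in $\Lambda$ to a constant path (the truncated disk gives the homotopy to its boundary arc in $\R\times\Lambda$, which then contracts within $\Lambda$), so that the hypothesis $|a|\geq 2$ of ($\bigstar$) really does apply — makes precise a point the paper's one-line proof leaves implicit, since ($\bigstar$) only constrains the degree of such contractible chords.
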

\begin{proof}

 For every $a\in \mathcal{R}(\Lambda)$, $\dim \M_{J}(a,\emptyset)=|a|-1\geq 1$ by the condition that $|a| \geq 2$. Therefore, the moduli space $\M_{J}(a,\emptyset)$ does not contribute to $\partial_J(a)$, and thus  $\partial_J(a)\in \mathcal{F}^1$. Since $\partial_J$ satisfies the Leibniz rule, 
 $\partial_J(a_1\cdots a_m) \in \mathcal{F}^m$ for any $a_1,\dots ,a_m \in \mathcal{R}(\Lambda)$.
\end{proof}
The quotient complex $\mathcal{F}^1/\mathcal{F}^2$ is naturally isomorphic to a chain complex $(CL_*(\Lambda), d_J)$ defined by
\[\begin{array}{cc}
\displaystyle{
CL_p(\Lambda) \coloneqq \bigoplus_{|a|=p} \Z/2 \cdot a ,} & \displaystyle{ d_J (a) \coloneqq \sum_{|b|=|a|-1} \#_{\Z/2}\M_J (a;b) \cdot b }
\end{array}\]
for every $p\in \Z$ and $a\in \mathcal{R}(\Lambda)$.
Likewise, the quotient complex $\mathcal{F}^2/\mathcal{F}^3$ is naturally isomorphic to
\[ ( CL_*(\Lambda)^{\otimes 2}, d_J\otimes \id_{CL_*}+\id_{CL_*}\otimes d_J)\]
by the isomorphism $CL_*(\Lambda)^{\otimes 2} \to \mathcal{F}^2/\mathcal{F}^3 \colon a_1\otimes a_2 \mapsto a_1a_2 + \mathcal{F}^3$.
Moreover, the quotient complex $\mathcal{F}^1/\mathcal{F}^3$ is naturally isomorphic to
\begin{align}\label{C1-C2}
 \left( CL_*(\Lambda)\oplus CL_*(\Lambda)^{\otimes 2}, \begin{pmatrix}d_J & 0 \\
\delta_J &  d_J\otimes \id_{CL_*}+\id_{CL_*}\otimes d_J
\end{pmatrix} \right) ,
\end{align}
where  $\delta_J \colon CL_*(\Lambda )\to CL_*(\Lambda)^{\otimes 2}[-1]$ is a chain map defined by
\[
\delta_J(a) =  \sum_{|b_1|+|b_2|=|a|-1} \#_{\Z/2}\M_J (a;b_1,b_2) \cdot b_1\otimes b_2 \]
for every $a\in \mathcal{R}(\Lambda)$.

We denote the homology of $(CL_*(\Lambda),d_J)$ by $HL_*(\Lambda)$. Let us call it the \textit{strip Legendrian contact homology} of $\Lambda$.
In addition, we denote the homology of $(CL_*(\Lambda)^{\otimes 2} ,d_J \otimes \id_{CL_*} + \id_{CL_*} \otimes d_J) $ by $HL^2_*(\Lambda)$.
Moreover, $\delta_J$ induces a $\Z/2$-linear map
\[(\delta_J)_*\colon HL_*(\Lambda) \to HL^2_{*-1}(\Lambda). \]

\begin{rem}\label{rem-ind-pos}
The condition in ($\bigstar$) on the degree of Reeb chords corresponds to the \textit{index-positivity} condition in \cite[Definition 3.10]{U} on a contact manifold with a contact form (see also \cite[page 2105]{CO}). The fact here that we can define $d_J$ and prove $d_J\circ d_J=0$ is analogous to the case (i) in \cite[Corollary 3.7]{U}. \cite{Al} studied the strip Legendrian contact homology for a pair of disjoint Legendrian submanifolds $(\Lambda, \widehat{\Lambda})$ when there is no Reeb chord of $\Lambda$ and $\widehat{\Lambda}$ contractible in $\pi_1(Y,\Lambda)$ and $\pi_1(Y,\widehat{\Lambda})$ respectively. See \cite[Definition 1]{Al}.
In addition, using the terminology of \cite[Section 4.1]{EES}, the differential $\partial_J\colon \mathcal{A}_* \to \mathcal{A}_{*-1}$ is \textit{augmented} under the condition ($\bigstar$) and $HL_*(\Lambda)$ is the \textit{linearized homology} of the DGA $(\mathcal{A}_*(\Lambda),\partial_J)$.
\end{rem}

\begin{rem}\label{rem-ind-pos-example}
The condition ($\bigstar$) is a strict condition on $\Lambda$.
Aside from our example to be explained in Section \ref{subsec-LCH-conormal}, we can find the following examples:
\begin{itemize}
\item Let $(Q,g)$ be a Riemannian manifold whose sectional curvature is non-positive.
In its unit cotangent bundle $U^*Q$, consider $\Lambda \coloneqq T^*_q Q \cap U^*Q$ for any $q\in Q$.
Then, there is no Reeb chord of $\Lambda$ which is contractible with end points in $\Lambda$, since $(Q,g)$ has no non-constant contractible geodesic $c \colon [0,T] \to Q$ such that $c(0)=c(T)=q$ by the Cartan-Hadamard theorem.
\item
An index-positivity condition generalized to Morse-Bott cases (see \cite[Definition 2.14, Remark 2.17]{BK}) is satisfied for the real Legendrian submanifold $\mathcal{L}$ in the $(2n+1)$-dimensional $A_k$-type Brieskorn manifold when $n\geq 3$. See \cite[Section 3.5]{BK}. 
\end{itemize}
\end{rem}

The next theorem shows that $HL_*(\Lambda)$ together with $(\delta_J)_*$ is invariant under Legendrian isotopies between two Legendrian submanifolds satisfying ($\bigstar$).

\begin{thm}\label{thm-Leg-invariant}
Let $\Lambda_0$ and $\Lambda_1$ be Legendrian submanifolds of $Y$ satisfying the condition ($\bigstar$). 
For $i=0,1$, we take $J_i \in \mathcal{J}_{\bar{Y}_{l(i)}}$ of Proposition \ref{prop-transverse} for $\Lambda=\Lambda_i$ and $\bar{Y}_l = \bar{Y}_{l(i)}$ whose interior contains $\Lambda_i$.
If $\Lambda_0$ is Legendrian isotopic to $\Lambda_1$ in $Y$, then there exists a quasi-isomorphic chain map
\[ \varphi \colon CL_*(\Lambda_0 ) \to CL_*(\Lambda_1)\]
such that the following diagram commutes up to chain homotopy:
\begin{align}\label{diagram-isotopy}
\begin{split}
\xymatrix{
CL_{*+1}(\Lambda_0) \ar[r]^-{\delta_{J_0}} \ar[d]^-{\varphi} & CL_*(\Lambda_0)^{\otimes 2} \ar[d]^-{\varphi \otimes \varphi } \\
CL_{*+1}(\Lambda_1) \ar[r]^-{\delta_{J_1}} & CL_*(\Lambda_1)^{\otimes 2} .
}
\end{split}
\end{align}
\end{thm}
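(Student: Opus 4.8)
The plan is to upgrade the standard invariance proof for the Chekanov--Eliashberg DGA under Legendrian isotopy --- realized by an exact Lagrangian cobordism in the symplectization, as in \cite[Theorem 3.5]{DR} --- to a \emph{filtered} statement for the word-length filtration $(\mathcal{F}^m)$, and then to extract the theorem by restricting to the subquotient complex $\mathcal{F}^1/\mathcal{F}^3$ described in (\ref{C1-C2}). Concretely, a generic choice of Legendrian isotopy between $\Lambda_0$ and $\Lambda_1$, together with an almost complex structure on $\R\times Y$ that is cylindrical outside a compact region $\bar{Y}_l$ large enough to contain the isotopy and that restricts to $J_0$ and $J_1$ near the two cylindrical ends, produces an exact Lagrangian cobordism $L\subset\R\times Y$. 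Counting rigid $J$-holomorphic disks with boundary on $L$, one positive puncture, and arbitrarily many negative punctures defines, as in \cite{EES, DR}, a DGA morphism $\Phi\colon\mathcal{A}_*(\Lambda_0)\to\mathcal{A}_*(\Lambda_1)$; transversality and Gromov--Hofer compactness for the relevant moduli spaces are obtained exactly as in Subsection~\ref{subsubsec-J-hol}.

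The crucial new input is that $\Phi$ preserves $(\mathcal{F}^m)$. Since $\Phi$ is multiplicative it suffices to check $\Phi(a)\in\mathcal{F}^1$ for every generator $a\in\mathcal{R}(\Lambda_0)$, i.e.\ that $\Phi$ has no constant term. The constant term of $\Phi(a)$ counts rigid disks with boundary on $L$ having $a$ as their only (positive) puncture; the index formula (using $\mu_{\Lambda_0}=\mu_{\Lambda_1}=0$, $2c_1(\xi)=0$, and that $L$ is a topologically trivial cobordism with vanishing Maslov class) shows that such a moduli space has dimension $|a|$, hence is rigid only when $|a|$ is at most $1$. On the other hand, the existence of a disk with boundary on $L$ and a single positive puncture exhibits its asymptotic chord $a$ as homotopic, with endpoints in the Legendrian, to a constant path (read off near the relevant cylindrical end), so ($\bigstar$) forces $|a|\geq 2$. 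Thus no such disk is rigid, $\Phi(a)\in\mathcal{F}^1$, and $\Phi(\mathcal{F}^m)\subseteq\mathcal{F}^m$; this is the cobordism-level analogue of Proposition~\ref{prop-filtration}.

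Granting filtration-preservation, $\Phi$ induces a chain map on $\mathcal{F}^1/\mathcal{F}^3$, which under the $\Z/2$-module splitting $\mathcal{F}^1/\mathcal{F}^3\cong CL_*(\Lambda_i)\oplus CL_*(\Lambda_i)^{\otimes 2}$ is lower triangular: its diagonal entries are the length-one part $\varphi\colon CL_*(\Lambda_0)\to CL_*(\Lambda_1)$ of $\Phi$ (a count of rigid holomorphic strips on $L$) and, by multiplicativity, $\varphi\otimes\varphi$, while its lower-left entry is the length-two part $h\colon CL_*(\Lambda_0)\to CL_*(\Lambda_1)^{\otimes 2}$ of $\Phi$. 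Expanding $\Phi\circ d=d\circ\Phi$ entrywise then yields $\varphi d_{J_0}=d_{J_1}\varphi$ (so $\varphi$ is the claimed chain map), the chain-map identity for $\varphi\otimes\varphi$, and, from the lower-left corner, the relation
\[
(\varphi\otimes\varphi)\circ\delta_{J_0}+\delta_{J_1}\circ\varphi=(d_{J_1}\otimes\id+\id\otimes d_{J_1})\circ h+h\circ d_{J_0},
\]
which is precisely the assertion that (\ref{diagram-isotopy}) commutes up to the chain homotopy $h$. To see that $\varphi$ is a quasi-isomorphism, run the same construction for the reverse isotopy and stack the cobordisms: the two composite DGA morphisms are chain homotopic to the identity through homotopies that again preserve $(\mathcal{F}^m)$ under ($\bigstar$) (by the same no-constant-term argument), so on $\mathcal{F}^1/\mathcal{F}^2\cong CL_*$ the associated maps $\varphi$ compose to chain-homotopy equivalences.

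I expect the main obstacle to be the filtration-preservation step, specifically making rigorous that a $J$-holomorphic disk with boundary on the cobordism $L$ and a single positive puncture constrains that puncture's Reeb chord to the type controlled by ($\bigstar$). This requires care with the topology of $L$ (diffeomorphic to $\R\times\Lambda$, with both cylindrical ends included as homotopy equivalences) together with SFT-type control on where such a low-complexity curve can travel. Once this is in place, the remaining ingredients --- the cobordism and continuation maps, the stacking argument, and the entrywise linear algebra on $\mathcal{F}^1/\mathcal{F}^3$ --- parallel the unfiltered invariance proof already invoked in the excerpt.
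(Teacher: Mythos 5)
Your proposal follows essentially the same route as the paper: construct the exact Lagrangian cobordism $L$ from the isotopy, show the induced DGA map $\Phi_L$ preserves the word-length filtration via the dimension count on single-puncture disks with boundary on $L$, read off $\varphi$ and the chain homotopy $h$ from the lower-triangular map on $\mathcal{F}^1/\mathcal{F}^3$, and prove $\varphi$ is a quasi-isomorphism by stacking with the reverse cobordism and checking the stacking chain homotopy also preserves the filtration. You even spell out a detail the paper leaves implicit — that a single-puncture disk with boundary on $L$ forces the asymptotic chord to be nullhomotopic with endpoints in $\Lambda_0$, so ($\bigstar$) applies to give $|a|\geq 2$; note only the minor slip that ``rigid only when $|a|$ is at most $1$'' should read ``at most $0$'' in the cobordism setting (no $\R$-translation to quotient by), which does not affect the conclusion.
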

\begin{proof}
Suppose that there exists a Legendrian isotopy $(\Lambda_t)_{t\in [0,1]}$ from $\Lambda_0$ to $\Lambda_1$. Take $l\in \Z_{\geq 1}$ such that $l\geq \max \{l(0),l(1)\}$ and $\bigcup_{t\in [0,1]} \Lambda_t $ is contained in the interior of $\bar{Y}_l$.
We take an almost complex structure $\tilde{J}$ on $\R\times Y$ which is compatible with $d(e^r \alpha)$ and coincides with $J_0$ (resp. $J_1$) on $\R_{\geq r_0}\times Y$ (resp. $\R_{\leq r_1} \times Y$) for some $r_0,r_1\in \R$ with $r_1<r_0$ and moreover coincides with $J_{\mathrm{fix}}$ outside $\bar{Y}_l$.
Associated to $(\Lambda_t)_{t\in [0,1]}$, we can construct an exact Lagrangian cobordism, say $L$, in $\R\times Y$ from $\Lambda_1$ to $\Lambda_0$.
For the proof, we refer to \cite[Section 3.2]{Cha}. (Although \cite[Section 3.2]{Cha} only deals with the case $\dim Y=3$, its proof, using the embedding $\R\times Y \to T^*Y \colon (r,x) \mapsto e^r\cdot \alpha_x$, can be  generalized naturally to $Y$ of an arbitrary dimension.) For generic $\tilde{J}$, a DGA map
\[\Phi_L \colon \mathcal{A}_*(\Lambda_0) \to \mathcal{A}_*(\Lambda_1) \]
is defined by counting the modulo $2$ number of rigid $\tilde{J}$-holomorphic curves with boundary in $L$. See \cite[Section 3.2.3]{DR}.

We claim that $\Phi_L$ preserves the filtrations on $\mathcal{A}_*(\Lambda_0)$ and $\mathcal{A}_*(\Lambda_1)$ for a similar reason as Proposition \ref{prop-filtration}. It is proved as follows: For any $(a\colon [0,T] \to Y) \in \mathcal{R}(\Lambda)$, consider the moduli space of $\tilde{J}$-holomorphic curves $u \colon D_1\to \R \times Y$ such that $u(\partial D_1) \subset L$ and for some $s_0\in \R$,
\[\tau_{-Ts}\circ u\circ \psi_0(s,\cdot) \to (s_0, a(T\cdot)) \text{ if } s\to \infty\]
in the $C^{\infty}$ topology on $[0,1]$. For a regular $\tilde{J}$, it is a $C^{\infty}$ manifold of dimension $|a|\geq 2$. Therefore, this moduli space does not contribute to $\Phi_L(a)$.

It follows that $\Phi_L$ induces chain maps on the quotient complexes of the following forms:
\begin{align*} \varphi &\colon CL_*(\Lambda_0) \to CL_*(\Lambda_1)  ,\\
 \begin{pmatrix} \varphi & 0 \\ h & \varphi \otimes \varphi \end{pmatrix} &\colon CL_*(\Lambda_0) \oplus CL_*(\Lambda_0)^{\otimes 2} \to CL_*(\Lambda_1) \oplus CL_*(\Lambda_1)^{\otimes 2} .
 \end{align*}
 From the fact that the second one is a chain map, 
\[ h\circ d_{J_0} + (\varphi \otimes \varphi) \circ \delta_{J_0}=  \delta_{J_1} \circ \varphi + (d_{J_1}\otimes \id + \id \otimes d_{J_1}) \circ h . \]
This means that the diagram (\ref{diagram-isotopy}) commutes modulo the chain homotopy $h$.

It remains to show that $\varphi$ is a quasi-isomorphism.
There exists exact Lagrangian cobordisms $L'$ and $L''$ from $\Lambda_0$ to $\Lambda_1$ induced by the isotopy $(\Lambda_{1-t})_{t\in [0,1]}$ from $\Lambda_1$ to $\Lambda_0$ such that:
\begin{itemize}
\item  The concatenation of $L$ and $L'$ (see the proof of \cite[Theorem 3.5]{DR}) is an exact Lagrangian cobordsim which is isotopic to $\R\times \Lambda_1$ by a compactly supported Hamiltonian isotopy.
\item The concatenation of $L''$ and $L$ is an exact Lagrangian cobordsim which is isotopic to $\R\times \Lambda_0$ by a compactly supported Hamiltonian isotopy.
\end{itemize}
They induces a DGA map $\Phi_{L'}, \Phi_{L''} \colon \mathcal{A}_*(\Lambda_1) \to \mathcal{A}_*(\Lambda_0)$ which preserves the filtrations.
In the same way as $\varphi$, we obtain a chain map $\varphi', \varphi''\colon CL_*(\Lambda_1) \to CL_*(\Lambda_0)$ respectively.

From the proof of \cite[Theorem 3.5]{DR} and \cite[Lemma 3.14]{EHK},
$\Phi_{L}\circ \Phi_{L'}$ is chain homotopic to the identity map on $\mathcal{A}_*(\Lambda_1)$. Moreover,
the chain homotopy $\Theta \colon \mathcal{A}_*(\Lambda_1) \to \mathcal{A}_{*+1}(\Lambda_1)$ preserves the filtration. 
The reason is the same as above, that is, the moduli space of $\tilde{J}$-holomorphic curves on $D_1$ does not contribute to $\Theta$.
On the quotient complex, $\Theta$ induces a chain homotopy $\theta \colon CL_*(\Lambda_1)\to CL_{*+1}(\Lambda_1)$ between $\varphi \circ \varphi'$ and $\id_{CL_*(\Lambda_1)}$. Likewise, we can show that $\varphi'' \circ \varphi$ is  chain homotopic to $\id_{CL_*(\Lambda_0)}$. Hence $\varphi$ is a quasi-isomorphism.
\end{proof}


We continue to assume that $\Lambda$ satisfies the condition ($\bigstar$).
Let $e \geq 2$ be an integer such that $|a|\geq e$ for every $a\in \mathcal{R}(\Lambda)$. 
Then, $\mathcal{A}_p(\Lambda)$ in lower degree $p$ is written as
\begin{align*}
\mathcal{A}_0(\Lambda)&= (\Z/2)\cdot 1, \\
\mathcal{A}_p(\Lambda) &= CL_p\oplus (CL^{\otimes 2})_p \text{ if }1\leq p\leq 3e-1, \\
\mathcal{A}_{3e}(\Lambda)&= CL_{3e}\oplus (CL^{\otimes 2})_{3e} \oplus \bigoplus_{|a_1|=|a_2|=|a_3|=e} (\Z/2) \cdot a_1a_2a_3.
\end{align*}
We note that $\partial_J(a_1a_2a_3) =0$ if $|a_1|=|a_2|=|a_3|= e$.
It follows that if $1\leq p\leq 3e-1$, then $\LCH_p(\Lambda)$ is isomorphic as a $\Z/2$-vector space to the $p$-th degree part of the homology of the chain complex (\ref{C1-C2}).
From the short exact sequence of chain complexes
\[ \xymatrix{
0 \ar[r] & CL_*(\Lambda)^{\otimes 2} \ar[r]^-{i} & CL_*(\Lambda) \oplus CL_*(\Lambda)^{\otimes 2} \ar[r]^-{j} & CL_*(\Lambda) \ar[r] & 0,
}\]
where $i(y)=(0,y)$ and $j(x,y)=x$, 
we obtain an exact sequence of $\Z/2$-vector spaces
\begin{align}\label{ex-seq-LCH}
\begin{split}
\xymatrix@R=5pt{
 HL_{p+1}(\Lambda) 
 \ar[r]^-{(\delta_J)_*}   & HL^2_p(\Lambda) \ar[r] & \LCH_{p}(\Lambda) \ar[r] &  HL_{p}(\Lambda)
\ar[r]^-{(\delta_J)_*}   &  HL_{p-1}^2(\Lambda)
}
\end{split}
\end{align}
if $1\leq p\leq 3e-1$. As a consequence, we obtain the following computation.
\begin{prop}\label{prop-LCH-dim}
For $p\in \Z$ with $1\leq p\leq 3e-1$,
\begin{align*}
\dim_{\Z/2} \LCH_p(\Lambda) =  & \dim_{\Z/2} \ker \left( (\delta_J)_*\colon  HL_{p}(\Lambda) \to  HL^2_{p-1}(\Lambda) \right) \\
  &+  \dim_{\Z/2} \coker \left( (\delta_J)_*\colon  HL_{p+1}(\Lambda) \to  HL^2_p(\Lambda) \right) .
\end{align*}
\end{prop}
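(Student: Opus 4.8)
The plan is to read the formula straight off the exact sequence (\ref{ex-seq-LCH}), using only that a short exact sequence of $\Z/2$-vector spaces is additive on dimensions. First I would record the five-term fragment of (\ref{ex-seq-LCH}) around $\LCH_p(\Lambda)$, valid for $1\le p\le 3e-1$:
\[
HL_{p+1}(\Lambda)\xrightarrow{\ (\delta_J)_*\ }HL^2_p(\Lambda)\xrightarrow{\ \iota\ }\LCH_p(\Lambda)\xrightarrow{\ \pi\ }HL_p(\Lambda)\xrightarrow{\ (\delta_J)_*\ }HL^2_{p-1}(\Lambda).
\]
Exactness at $HL^2_p(\Lambda)$ gives $\ker\iota=\im\big((\delta_J)_*\colon HL_{p+1}(\Lambda)\to HL^2_p(\Lambda)\big)$, hence $\im\iota\cong\coker\big((\delta_J)_*\colon HL_{p+1}(\Lambda)\to HL^2_p(\Lambda)\big)$; exactness at $\LCH_p(\Lambda)$ gives $\im\iota=\ker\pi$; and exactness at $HL_p(\Lambda)$ gives $\im\pi=\ker\big((\delta_J)_*\colon HL_p(\Lambda)\to HL^2_{p-1}(\Lambda)\big)$. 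Therefore
\[
0\longrightarrow\coker\big((\delta_J)_*\colon HL_{p+1}\to HL^2_p\big)\xrightarrow{\ \iota\ }\LCH_p(\Lambda)\xrightarrow{\ \pi\ }\ker\big((\delta_J)_*\colon HL_p\to HL^2_{p-1}\big)\longrightarrow 0
\]
is a short exact sequence of $\Z/2$-vector spaces, and comparing dimensions yields the claimed identity.

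The step that actually carries content is not this linear-algebra manipulation but the production of (\ref{ex-seq-LCH}): one needs that for $1\le p\le 3e-1$ the graded piece $\mathcal{A}_p(\Lambda)$ is exactly $CL_p(\Lambda)\oplus\big(CL_*(\Lambda)^{\otimes 2}\big)_p$ — no word of length $\ge 3$ occurs there, since such a word has degree $\ge 3e$ — and that $\partial_J$ preserves the word-length filtration (Proposition \ref{prop-filtration}, where ($\bigstar$) is used, together with $\partial_J(a_1a_2a_3)=0$ for $|a_i|=e$). These facts identify $\LCH_p(\Lambda)$ in the stated range with the degree-$p$ homology of the truncated complex (\ref{C1-C2}), and (\ref{ex-seq-LCH}) is then the long exact sequence of the short exact sequence of complexes $0\to CL^{\otimes 2}\to CL\oplus CL^{\otimes 2}\to CL\to 0$, whose connecting homomorphism is computed (by lifting a $d_J$-cycle $x$ to $(x,0)$ and applying the differential) to be $(\delta_J)_*$.

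Since all of this is already established in the paragraphs immediately preceding the statement, I expect the proof itself to be essentially two lines, and I do not anticipate any genuine obstacle beyond keeping careful track of the degree constraint $p\le 3e-1$, which is exactly what guarantees both the decomposition of $\mathcal{A}_p(\Lambda)$ above and the agreement of $\LCH_p(\Lambda)$ with the homology of (\ref{C1-C2}).
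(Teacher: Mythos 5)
Your proof is correct and takes essentially the same approach as the paper: the paper also derives the dimension formula directly from the exact sequence (\ref{ex-seq-LCH}) (which it has already established for $1\le p\le 3e-1$ from the short exact sequence of quotient complexes), stating the proposition as an immediate consequence without further elaboration.
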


\section{Coproduct on $H_{*}(K\times K, \Delta_K)$ from string topology}\label{sec-Morse}


\noindent
In this section, the coefficient of every singular homology group is $\Z/2$, unless otherwise specified.

\subsection{Morse homology of $(K\times K,\Delta_K)$}\label{subsubsec-Morse-complex}

Let $K$ be a compact submanifold of $\R^n$ of codimension $d$.

\subsubsection{Definition of Morse chain complex}
We define the Morse homology associated to the $C^{\infty}$ function $E$ of (\ref{energy-fcnl}).
Suppose that every $x \in \mathcal{C}(K)$ is non-degenerate.
We fix $\epsilon_0>0$ such that
$E(x) >\epsilon_0$ for every $x\in \mathcal{C}(K)$.
Let $\mathcal{G}_K$ denote the set of Riemannian metrics on $K\times K$ of class $C^{\infty}$. It is equipped with the $C^{\infty}$ topology.

For any $g\in \mathcal{G}_K$, we define the negative gradient vector field $V_g$ on $K\times K$ with respect to $E$ by $-dE= g(V_g,\cdot)$.
Let $(\varphi^t_g)_{t\in \R}$ denote the flow of $V_g$. For every $x,x'\in \mathcal{C}(K)$, we define
\begin{align*}
\mathcal{W}^s_g(x) & \coloneqq \{ y\in K\times K \mid \lim_{t\to \infty}\varphi^t_g(y)=x \}, \\
\mathcal{W}^u_g(x) & \coloneqq \{ y\in K\times K \mid \lim_{t\to -\infty}\varphi^t_g(y) =x\}, \\
\mathcal{T}_g(x;x') & \coloneqq (\mathcal{W}^u_g(x) \cap \mathcal{W}^s_g(x')) /\R . 
\end{align*}
Here, the group $\R$ acts on $\mathcal{W}^u_g(x) \cap \mathcal{W}^s_g(x')$ by $y\cdot t \coloneqq \varphi^{-t}_g(y)$ for every $t\in \R$.
For generic $g\in \mathcal{G}_K$, the \text{Morse-Smale condition} on $V_g$ is achieved. Namely, $\mathcal{W}^u_g(x) $ intersects $\mathcal{W}^s_g(x')$ transversely for every $x,x'\in \mathcal{C}(K)$. In such case, $\mathcal{T}_g(x;x')$ is a $C^{\infty}$ manifold of dimension $\ind x -\ind x'-1$.
We note that for every $x\in \mathcal{C}(K)$,
\begin{align}\label{stable-contained}
 \mathcal{W}^s_g(x) \subset E^{-1}([\epsilon_0,\infty)) .
 \end{align}

We define $CM_*$ to be the graded $\Z/2$-vector space spanned by $\mathcal{C}(K)$. Here, $x\in CM_{\ind x}$ for every $x\in \mathcal{C}(K)$. We define a $\Z/2$-linear map $d_g \colon CM_*\to CM_{*-1}$ by
\[d_g(x) \coloneqq \sum_{x'\in \mathcal{C}(K) \colon \ind x' = \ind x -1}  \#_{\Z/2} \mathcal{T}_g(x,x') \cdot x'\]
for every $x\in \mathcal{C}(K)$.
It is a standard fact that $d_g \circ d_g =0$ \cite[Theorem 7]{S-Morse}. Therefore, we get a chain complex $(CM_*, d_g)$. 
We denote its homology by $HM_*(g)$.
In addition, we denote the homology of $(CM_*^{\otimes 2} , d_g \otimes \id_{CM_*} + \id_{CM_*}\otimes d_g )$ by $HM^2_*(g)$.

Let us introduce several notations.
For each $x\in \mathcal{C}(K)$, we define two sets 
\begin{align*}
 \overline{\W}^s_g(x) & \coloneqq \W^s_g(x) \sqcup \coprod_{k\geq 1} \coprod_{x_1,\dots x_k \in \mathcal{C}(K)} \W^s_g(x_k) \times \mathcal{T}_g(x_k;x_{k-1})\times \dots \times \mathcal{T}_g(x_2;x_1) \times \mathcal{T}_g(x_1;x)  ,\\
\overline{\W}^u_g(x) & \coloneqq \W^u_g(x) \sqcup \coprod_{k\geq 1} \coprod_{x_1,\dots ,x_k\in \mathcal{C}(K)} \mathcal{T}_g(x;x_1)\times \mathcal{T}_g(x_1;x_2) \times \dots \times \mathcal{T}_g(x_{k-1};x_k) \times \W^u_g(x_k) .
\end{align*}
We refer to \cite[(11.42)]{Ab} for their topology.
We define two maps
\[\begin{array}{cc}
i^s \colon \overline{\W}^s_g(x)  \to K \times K, &  i^u \colon  \overline{\W}^u_g(x) \to K\times K ,
\end{array}
\]
as the continuous extensions of the inclusion maps $\W^s_g(x) \to K\times K$ and $ \W^u_g(x) \to K\times K$. Explicitly, $i^s$ maps
$(y, [y_k],\dots ,[y_1]) \in  \W^s_g(x_k) \times \mathcal{T}_g(x_k;x_{k-1})\times \dots \times \mathcal{T}_g(x_1;x)$ to $y$, and $i^u$ is given similarly.
In addition, for any $x_1,x_2\in \mathcal{C}(K)$, we denote the product of $i^s \colon \overline{\W}^s_g(x_j) \to K\times K$ for $j=1,2$ by
\[ i^s_2 \colon \overline{\W}^s_g(x_1) \times \overline{\W}^s_g(x_2) \to (K\times K) \times (K\times K) .\]
\begin{rem} $\overline{\W}^s_g(x)$ is compact and $i^s(\overline{\W}^s_g(x)) \subset E^{-1}([\epsilon_0,\infty))$.
On the other hand, $\overline{\W}^u_g(x)$ can be non-compact since there can exist a trajectory $(\varphi^t_g(y))_{t\in \R}$ in $\W^u_g(x)$ which converges to a point in $\Delta_K= E^{-1}(0)$ when $t\to \infty$. However,
\[ \overline{\W}^u_g(x)^{\geq \epsilon_0} \coloneqq (E\circ i^u)^{-1}([\epsilon_0,\infty)) \]
is a compact subset of $\overline{\W}^u_g(x)$ which contains $\W^u_g(x)^{\geq \epsilon_0} \coloneqq \W^u_g(x) \cap E^{-1}([\epsilon_0,\infty))$ as an open subset.
\end{rem}


Let us also consider 
\begin{align*}
\overline{\W}^{s,1}_g(x) & \coloneqq \W^s_g(x) \sqcup \coprod_{\ind x' = \ind x +1}   \W^s_g(x') \times \mathcal{T}_g(x',x) , \\
  \overline{\W}^{u,1}_g(x) & \coloneqq \W^u_g(x) \sqcup \coprod_{\ind x' = \ind x-1 } \mathcal{T}_g(x,x') \times  \W^u_g(x') ,
 \end{align*}
 for every $x\in \mathcal{C}(K)$.
$\overline{\W}^{s,1}_g(x) $ is a union of some strata in $\overline{\W}^s_g(x)$ of  codimension $0$ and $1$, and similar for $\overline{\W}^{u,1}_g(x)$.
$\overline{\W}^{s,1}_g(x)$ is an open subset of $\overline{\W}^s_g(x)$ and has a structure of $C^{\infty}$ manifold with the boundary $\coprod_{\ind x' = \ind x+1}   \W^s_g(x') \times \mathcal{T}_g(x',x)$, and $i^s \colon \overline{\W}^{s,1}_g(x) \to K\times K$ is a $C^{\infty}$ map.
Similarly, $\overline{\W}^{u,1}_g(x)$ is an open subset of $\overline{\W}^u_g(x)$ and has a structure of $C^{\infty}$ manifold with the boundary $ \coprod_{\ind x' = \ind x-1} \mathcal{T}_g(x,x') \times  \W^u_g(x')$, and the map $i^u \colon \overline{\W}^{u,1}_g(x) \to K\times K$ is a $C^{\infty}$ map.
For the proof, see, for instance, \cite[Lemma 4.2]{Sc-eq}.


\subsubsection{Isomorphism from singular homology to $HM_*$}\label{subsubsec-singular-Morse}
Let us fix some notations. For $p\in \Z_{\geq 1}$, we define the standard $p$-simplex by $\Delta^p\coloneqq \{(x_0,\dots ,x_p)\in (\R_{\geq 0})^{p+1} \mid \sum_{i=0}^px_i= 1\}$.
For any subset $I\subset \{0,\dots ,p\}$, we define a subcomplex $\Delta^p(I)\coloneqq \{ (x_0,\dots ,x_p) \mid x_i=0 \text{ if }i\in I\}$.
For any topological space $X$, let $(C_*(X),\partial^{\sing})$ be the singular chain complex with coefficients in $\Z/2$. For the pair of spaces $(K\times K, E^{-1}([0,\epsilon_0]))$, let us denote the quotient complex by
\[ (CS_* \coloneqq C_*(K\times K) / C_*(E^{-1}([0,\epsilon_0]))  ,\partial^{\sing} ) . \]
Its homology is the relative singular homology $H_*(K\times K , E^{-1}([0,\epsilon_0]))$.

Let $CS_p(g)$ denote the $\Z/2$-linear subspace of $CS_p$ spanned by all chains represented by singular simplices
\begin{align*}
c \colon \Delta^p\to K\times K 
\end{align*}
satisfying the following condition:
\begin{itemize}
\item[(T)] For any subset $I\subset \{0,\dots ,p\}$, the restriction of $c$ on $\Delta^p(I)$
\[\rest{c}{\Delta^p(I)} \colon \Delta^p(I) \to K\times K \]
is a $C^{\infty}$ map which is transversal to $\W^s_g(x)$ for every $x\in \mathcal{C}(K)$.
\end{itemize}
Then, $(CS_*(g),\partial^{\sing})$ is a subcomplex of $(CS_*,\partial^{\sing})$. Since this condition is satisfied for generic chains, the inclusion map $I_g\colon CS_*(g) \to CS_*$ induces an isomorphism
\[ (I_g)_* \colon H_*(CS_*(g),\partial^{\sing}) \to H_*(K\times K , E^{-1}([0,\epsilon_0])). \]

For any chain $c\colon \Delta^p \to K\times K$ satisfying (T), the fiber product
\[ \Delta^p \ftimes{c}{i^s} \W^s_g(x)\]
is a $C^{\infty}$ manifold of dimension $p-\ind x$ with boundaries and corners.
It has a compactification
\[\Delta^p\ftimes{c}{i^s}\overline{\W}^s_g(x).\]
If $\ind x=p$,  $\Delta^p\ftimes{c}{i^s}\overline{\W}^s_g(x) = \Delta^p \ftimes{c}{i^s} \W^s_g(x)$ and it is a compact $0$-dimensional manifold. If $\ind x= p-1$, $\Delta^p\ftimes{c}{i^s}\overline{\W}^s_g(x)$ is a compact $1$-dimensional manifold with boundary
\begin{align}\label{boundary-Delta-stable}
\begin{split}
& \coprod_{j\in \{0,\dots ,p\}} \Delta^p(\{j\})\ftimes{c}{i^s}\W^s_g(x)\\
\sqcup & \coprod_{\ind x'= p} (\Delta^p \ftimes{c}{i^s} \W^s_g(x')) \times \mathcal{T}_g(x';x)  .
\end{split}
\end{align}
See \cite[Lemma 3.14]{Ab}.

Now we define a $\Z/2$-linear map
$F_g \colon CS_*(g)\to CM_*$ which maps those chains represented by $c\colon \Delta^p \to K \times K$ satisfying (T) to
\[ \sum_{\ind x=p}  \#_{\Z/2} \left( \Delta^p \ftimes{c}{i^s} \W^s_g(x) \right) \cdot x . \]
This is well-defined since $\Delta^p \ftimes{c}{i^s} \W^s_g(x)=\emptyset$ if  $c \in C_*(E^{-1}([0,\epsilon_0]))$.
Since the modulo $2$ number of (\ref{boundary-Delta-stable}) is equal to $0$, we have
\[F_g \circ \partial^{\sing} +d_g\circ F_g =0. \]
Therefore, $F_g$ is a chain map and it induces
\[(F_g)_*\colon H_*(CS_*(g),\partial^{\sing}) \to HM_*(g) .\]
Hereafter, we abbreviate
\[\Psi_g \coloneqq (F_g)_* \circ (I_g)^{-1}_* \colon H_*(K\times K, E^{-1}([0,\epsilon_0])) \to HM_*(g).\]

We substitute $(K\times K, E^{-1}([0,\epsilon_0]))$ by the pair $(K\times K,E^{-1}([0,\epsilon_0]) )^{\times 2} = ((K\times K)^{\times 2} ,U_{\epsilon_0})$, where
\[U_{\epsilon_0} \coloneqq  \left( (K\times K)\times E^{-1}([0,\epsilon_0]) \right) \cup \left( E^{-1}([0,\epsilon_0]) \times (K\times K) \right) , \]
and consider the intersection of singular chains with $\W^s_g(x_1)\times \W^s_g(x_2)$ for every $x_1,x_2\in \mathcal{C}(K)$. Then, in a parallel way as $\Psi_g$, we define
\[ \Psi_g^2 \colon H_*((K \times K)^{\times 2},U_{\epsilon_0}) \to HM^2_*(g).\]
\begin{prop}\label{prop-Morse-singular}
The following hold:
\begin{itemize}
\item $\Psi_g$ and $\Psi_g^2$ are isomorphisms.
\item Let $P$ be a compact $p$-dimensional manifold with boundary and $c\colon P\to K\times K$ be a $C^{\infty}$ map such that $c(\partial P)\subset E^{-1}([0,\epsilon_0])$. For the homology class $c_*([P])\in H_p(K\times K, E^{-1}([0,\epsilon_0]))$, if $c$ is transversal to $\W^s_{g}(x)$ for every $x\in \mathcal{C}(K)$, then $\Psi_g(c_*([P]))\in HM_p(g)$ is represented by
\[ \sum_{\ind x=p}\#_{\Z/2}\left( P\ftimes{c}{i^s}\W^s_g(x) \right) \cdot x.\] 
\item Let $Q$ be a compact $q$-dimensional manifold with boundary and $s\colon Q\to (K\times K)^{\times 2}$ be a $C^{\infty}$ map such that $c(\partial Q) \subset U_{\epsilon_0} $. For the homology class $s_*([Q])\in H_q((K\times K)^{\times 2},U_{\epsilon_0})$, if $s$ is transversal to $\W^s_{g}(x_1)\times \W^s_{g}(x_2)$ for every $x_1,x_2\in \mathcal{C}(K)$, then $\Psi^2_g(s_*([Q]))\in HM^2_q(g)$ is represented by
\[ \sum_{\ind x_1+ \ind x_2=q}\#_{\Z/2}\left( Q\ftimes{s}{i^s_2} (\W^s_g(x_1) \times \W^s_g(x_2)) \right) \cdot x_1\otimes x_2 .\] 
\end{itemize}
\end{prop}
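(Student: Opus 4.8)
The overall plan is to prove the first assertion by producing an explicit chain-level homotopy inverse to the map $F_g\colon CS_*(g)\to CM_*$, and then to read off the last two assertions from the way $\Psi_g$ and $\Psi_g^2$ are built. The map $F_g$ is the standard ``intersect a chain with the stable manifolds'' comparison map, and that it is a quasi-isomorphism is the by-now-classical equivalence of Morse and singular homology (\cite{Ab, Sc-eq, S-Morse}); I would nonetheless recall the argument in the present truncated setting, since the cutoff at level $\epsilon_0$ needs a small amount of care. First I would define a $\Z/2$-linear map $G_g\colon CM_*\to CS_*(g)$ by fixing, for each $x\in\mathcal{C}(K)$, a smooth triangulation of the compact manifold-with-corners $\overline{\W}^u_g(x)^{\geq\epsilon_0}$ — perturbed rel boundary, coherently in $x$, so that together with $i^u$ it satisfies condition (T) — and declaring $G_g(x)$ to be the image under $i^u$ of its fundamental chain. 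Two observations make $G_g$ a chain map with values in the quotient complex $CS_*(g)$: the codimension-one ``level'' face $(E\circ i^u)^{-1}(\epsilon_0)$ of $\overline{\W}^u_g(x)^{\geq\epsilon_0}$ is mapped into $E^{-1}([0,\epsilon_0])$ and hence represents $0$ in $CS_*$, while the remaining codimension-one boundary is, by the manifold-with-boundary structure of $\overline{\W}^{u,1}_g(x)$ recalled above, the disjoint union $\coprod_{\ind x'=\ind x-1}\mathcal{T}_g(x;x')\times\overline{\W}^u_g(x')^{\geq\epsilon_0}$, whose fundamental chain represents $\sum_{\ind x'=\ind x-1}\#_{\Z/2}\mathcal{T}_g(x;x')\cdot G_g(x')=G_g(d_g x)$.

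Next I would verify $F_g\circ G_g=\id_{CM_*}$ on the chain level. For $\ind x'=\ind x$ the relevant $0$-dimensional fiber product $\overline{\W}^u_g(x)^{\geq\epsilon_0}\ftimes{i^u}{i^s}\W^s_g(x')$ receives no contribution from the lower strata of $\overline{\W}^u_g(x)^{\geq\epsilon_0}$ for dimension reasons, and $\W^u_g(x)\cap\W^s_g(x')$ is empty when $x'\neq x$ (it is a $0$-dimensional $\R$-invariant set on which $\R$ acts freely) and equals the single transverse point $x$ when $x'=x$ (Morse--Smale transversality, using that there is no nonconstant gradient trajectory from $x$ to $x$); a sufficiently small perturbation does not change this count. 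Hence $(F_g)_*$ is split surjective. For the reverse composite I would use the standard ``flow the chain'' homotopy $T_g\colon CS_*(g)\to CS_{*+1}$: for $c\colon\Delta^p\to K\times K$ satisfying (T), let $T_g(c)$ be the fundamental chain of the trace of $c$ under the negative gradient flow of $E$, truncated at the level $\epsilon_0$ and compactified. Its codimension-one boundary consists of the time-zero face (giving $c$), the fully-broken face at $t=+\infty$ (giving $G_g\circ F_g(c)$), the face over $\partial\Delta^p$ (giving $T_g(\partial^{\sing}c)$), and the level face (lying in $E^{-1}([0,\epsilon_0])$), so that $\partial^{\sing}T_g+T_g\partial^{\sing}=\id+G_g\circ F_g$ in $CS_*(g)$. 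Thus $(F_g)_*$ is an isomorphism with inverse $(G_g)_*$, and therefore so is $\Psi_g=(F_g)_*\circ(I_g)_*^{-1}$. The statement for $\Psi_g^2$ is proved identically, working in $(K\times K)^{\times2}$ with $U_{\epsilon_0}$ in place of $E^{-1}([0,\epsilon_0])$ (a product of two unstable trajectories, truncated as soon as either factor reaches level $\epsilon_0$, automatically lands in $U_{\epsilon_0}$), with $\W^s_g(x_1)\times\W^s_g(x_2)$ and $i^s_2$ in place of $\W^s_g(x)$ and $i^s$; that $\Psi_g^2$ corresponds to $\Psi_g\otimes\Psi_g$ under the Künneth isomorphisms $HM^2_*(g)\cong HM_*(g)^{\otimes2}$ and $H_*((K\times K)^{\times2},U_{\epsilon_0})\cong H_*(K\times K,E^{-1}([0,\epsilon_0]))^{\otimes2}$, valid over the field $\Z/2$, is built into the definitions.

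For the second assertion, given $c\colon P\to K\times K$ with $c(\partial P)\subset E^{-1}([0,\epsilon_0])$ and $c$ transverse to every $\W^s_g(x)$, I would choose a smooth triangulation of $P$ extending one of $\partial P$ and perturb it rel $\partial P$ so that it meets (T); the resulting chain lies in $CS_*(g)$, represents $c_*([P])$ in $H_p(K\times K,E^{-1}([0,\epsilon_0]))$ (the $\partial P$-part maps into the subspace and so vanishes in the quotient), and applying $F_g$ to it yields $\sum_{\ind x=p}\#_{\Z/2}\big(P\ftimes{c}{i^s}\W^s_g(x)\big)\cdot x$: the $0$-dimensional fiber products over the individual $p$-simplices assemble to the fiber product over $P$, which is a finite set by the transversality hypothesis, and since $c$ is already transverse to $\W^s_g(x)$ the count is unchanged by the perturbation. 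Hence $\Psi_g(c_*([P]))$ has the asserted representative. The third assertion is the same argument applied to $s\colon Q\to(K\times K)^{\times2}$, using transversality of $s$ to the products $\W^s_g(x_1)\times\W^s_g(x_2)$ and the map $i^s_2$.

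The only genuinely analytic input, and thus the main obstacle, is the construction and boundary analysis of the homotopy $T_g$ together with the verification that the compactified truncated spaces $\overline{\W}^u_g(x)^{\geq\epsilon_0}$, and their fiber products with chains satisfying (T), carry the expected manifold-with-corners structure so that ``boundary of a triangulation $=$ triangulation of the boundary'' holds on the nose; this rests on the transversality and gluing results underlying \cite{Ab, Sc-eq}, which I would cite rather than reprove. A smaller but real point is the bookkeeping around the cutoff $\epsilon_0$: one must fix $\epsilon_0$ to be a regular value of $E$ that is below every critical value and small enough that $E^{-1}([0,\epsilon_0])$ deformation-retracts onto $\Delta_K$, and then consistently discard all ``$E=\epsilon_0$'' boundary faces because they lie in $E^{-1}([0,\epsilon_0])$, respectively in $U_{\epsilon_0}$, and so die in the quotient complexes $CS_*$ and $C_*((K\times K)^{\times2})/C_*(U_{\epsilon_0})$.
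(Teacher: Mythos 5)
Your proposal is correct, but it takes a genuinely different route from the paper. You construct an explicit chain-level inverse $G_g\colon CM_*\to CS_*(g)$ by pushing forward (perturbed, coherent) triangulations of the compactified truncated unstable manifolds $\overline{\W}^u_g(x)^{\geq\epsilon_0}$, verify $F_g\circ G_g=\id$ directly, and exhibit the homotopy $T_g$ from $G_g\circ F_g$ to $\id$ by flowing singular chains by $-\nabla E$. This is the ``Abbondandolo--Schwarz style'' comparison of singular and Morse chain complexes, and it indeed works here once one handles the $\epsilon_0$-cutoff and the manifold-with-corners structure of $\overline{\W}^u_g(x)^{\geq\epsilon_0}$ and of the flow traces, which you correctly flag as the main analytic burden. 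The paper instead proves the first assertion \emph{homologically}, avoiding any chain homotopy: it filters both $CS_*(g)$ and $CM_*$ by the value of $E$, chooses regular levels $\epsilon_0=l_0<l_1<\dots<l_m$ isolating one critical value per interval, identifies the associated graded piece $H_*(E^{-1}([0,l_{i+1}]),E^{-1}([0,l_i]))$ with the free $\Z/2$-module on the local disks $D_x=\W^u_g(x)\cap E^{-1}([l_i,l_{i+1}])$, checks that $(F_g)_*$ sends $[D_x]\mapsto x$ on the graded pieces (precisely because $D_x\cap\W^s_g(x')$ is a single transverse point when $x'=x$ and empty otherwise, the same count you use for $F_g\circ G_g=\id$), and concludes by induction on $i$ via the five lemma. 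The two proofs are therefore genuinely different in structure: yours produces a chain homotopy equivalence (stronger, but requiring compactness and gluing for the flow trace and coherent triangulations of the compactified unstable manifolds), whereas the paper's reduces the global statement to a local Morse lemma computation on each level interval, at the cost of only obtaining the isomorphism at the level of homology. The treatment of the second and third assertions — triangulate $P$ (resp.\ $Q$) so that (T) holds and observe that the fiber-product counts over simplices reassemble to the fiber-product count over $P$ (resp.\ $Q$) — is identical in both.

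One small point of care, which does not affect correctness but is worth making explicit: when you perturb triangulations of $\overline{\W}^u_g(x)^{\geq\epsilon_0}$ rel boundary ``coherently in $x$'' so that (T) holds, the compatibility needed is that the perturbation of the broken face $\mathcal{T}_g(x;x')\times\overline{\W}^u_g(x')^{\geq\epsilon_0}$ of $\overline{\W}^u_g(x)^{\geq\epsilon_0}$ agrees with the product of the already-chosen triangulation of $\overline{\W}^u_g(x')^{\geq\epsilon_0}$ with a triangulation of $\mathcal{T}_g(x;x')$; this forces you to build the triangulations inductively by increasing Morse index. This is routine but should be said when $G_g$ is introduced, since otherwise the identity $\partial^{\sing}G_g=G_g d_g$ is a chain-level, not merely homology-level, statement.
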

The proof is standard and not the heart of this paper, so we leave it to 
Appendix \ref{subsec-A2}.

\subsection{Coproduct}\label{subsec-coprod}
To simplify the discussion, we assume that $K$ has the codimension $d\geq 2$.
In this subsection, we define $\Z/2$-linear maps $(\delta_{g,g'})_* \colon HM_*(g) \to HM_{*+1-d}^2(g')$ on Morse homology and $\delta_K\colon H_*(K\times K,\Delta_K) \to H_{*+1-d}((K\times K,\Delta_K)^{\times 2})$ on singular homology. These are derived from the same idea in string topology.
We will also check their coincidence.

\subsubsection{Definition of coproduct on Morse chain complex}\label{subsec-coproduct-Morse}

Let us consider two maps:
\[
\ev \colon  (K\times K) \times [0,1] \to \R^n \colon ((q,q'),\tau) \mapsto (1-\tau )\cdot q+\tau \cdot q' \]
and
\[ \sp \colon  \ev^{-1}(K) \to (K\times K)\times (K\times K) \colon ((q,q'),\tau) \mapsto ((q,\ev((q,q'),\tau)), (\ev((q,q'),\tau),q') ) . 
\]
($\ev$ stands for ``evaluation'' and $\sp$ stands for ``splitting''.)
They are derived from an operation in string topology explained in the Remark \ref{rem-string} below.

\begin{figure}
\centering
\begin{overpic}[width=15cm]{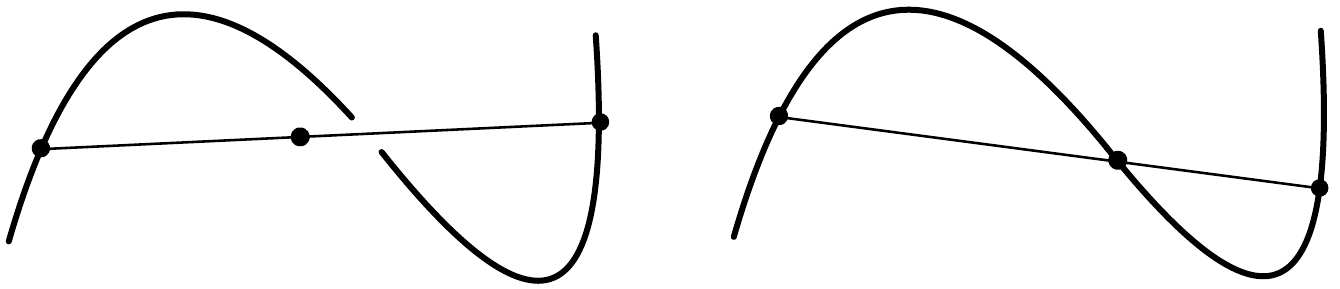}
\put(12,17.5){$K$}
\put(67,18){$K$}
\put(1,11){$q'$}
\put(42,14){$q$}
\put(15,8){$\ev((q,q'),\tau)$}
\put(56.5,14){$q'$}
\put(96,9){$q$}
\put(70,7){$\ev((q,q'),\tau)$}
\end{overpic}
\caption{$\ev((q,q'),\tau)$ is a point on the linear path from $q$ to $q'$ (left-hand side). If $((q,q'),\tau) \in \ev^{-1}(K)$ as in the right-hand side, then we split the linear path at $\tau\in [0,1]$.}\label{figure-evaluation}
\end{figure}

\begin{rem}\label{rem-string}
Let $\mathcal{P}_K\coloneqq \{ \gamma\colon [0,1] \to \R^n \colon C^{\infty} \mid \gamma(0),\gamma(1)\in K \}$. We have two maps:
\begin{align*}
\ev_{\mathcal{P}} &\colon \mathcal{P}_K \times [0,1] \to \R^n \colon (\gamma,\tau) \mapsto \gamma(\tau), \\
\sp_{\mathcal{P}} &\colon (\ev_{\mathcal{P}})^{-1}(K) \to \mathcal{P}_K\times \mathcal{P}_K \colon (\gamma,\tau) \mapsto (\gamma^1,\gamma^2),
\end{align*}
where $\gamma^1(t) \coloneqq \gamma (\tau t)$ and $\gamma^2(t)\coloneqq \gamma (\tau+ (1-\tau)t )$ for every $t\in [0,1]$. The first map is the evaluation map. The second map splits $\gamma\in \mathcal{P}_K$ into two paths $\gamma^1,\gamma^2$ at $\tau \in [0,1]$ where $\gamma$ intersects $K$. 
We identify $K\times K$ with the subspace of $\mathcal{P}_K$ consisting of linear paths by the map
\[K\times K \to \mathcal{P}_K \colon (q,q') \mapsto ([0,1] \to \R^n \colon t\mapsto (1-t)\cdot q+ t \cdot q'). \]
Then, $K\times K$ is a strong deformation retract of $\mathcal{P}_K$.
Moreover, $\ev$ can be seen as the restriction of $\ev_{\mathcal{P}}$ on $(K\times K)\times [0,1]$, and $\sp$ can be seen as the restriction of $\sp_{\mathcal{P}}$ on $\ev^{-1}(K)$. See Figure \ref{figure-evaluation}.
\end{rem}
\begin{rem}
The idea of the splitting operation as above for chains in $\mathcal{P}_K$ can be found in \cite[Section 1]{Su}. A chain-level definition of this operation was used in \cite[Section 5]{CELN} to define the \textit{string homology} for framed knots in oriented $3$-dimensional manifolds. When $K$ is a knot in $\R^3$, \cite[Section 2.3]{An} considered Morse theory similar to this subsection in order to define the cord algebra of $K$, which is isomorphic to the $0$-th degree part of the string homology.
In the same spirit, the \textit{Goresky-Hingston coproduct} $\vee \colon H_*(\mathcal{L}(M),M;\Z) \to H_{*+1-n}((\mathcal{L}(M),M)^{\times 2};\Z)$ was defined by \cite[\S 8.4]{GH} for a closed oriented $n$-dimensional manifold $M$ and its free loop space $\mathcal{L}(M)$. Here, $M$ is identified with the subspace of $\mathcal{L}(M)$ consisting of constant loops.
\end{rem}

Suppose that every $x\in \mathcal{C}(K)$ is non-degenerate.
Choose $g,g' \in \mathcal{G}_K$.
For every $x\in \mathcal{C}(K)$, we consider the fiber product over $K\times K$
\[ \W^u_g(x) \ftimes{i^u}{\pr}\ev^{-1}(K),\]
where $\pr \colon \ev^{-1}(K) \to K\times K \colon (y,\tau) \mapsto y$ is the projection map.
 We define a map on it 
\[ \sp_x \colon \W^u_g(x) \ftimes{i^u}{\pr}\ev^{-1}(K) \to (K\times K)^{\times 2} \colon (y, (y,\tau)) \mapsto \sp(y,\tau).\]
Then, for every $x_1,x_2\in \mathcal{C}(K)$, we define the fiber product over $(K\times K)^{\times 2}$
\[  \mathcal{T}_{g,g'}(x;x_1,x_2)\coloneqq \left( \W^u_g(x) \ftimes{i^u}{\pr}\ev^{-1}(K) \right) \ftimes{\sp_x}{i^s_2} (\W^s_{g'}(x_1) \times \W^s_{g'}(x_2)). \]
Let us denote the restriction of $\ev$ on the open subset $((K\times K)\setminus \Delta_K) \times (0,1)$ by
\[ \mathring{\ev} \coloneqq \rest{\ev}{((K\times K)\setminus \Delta_K) \times (0,1)}.\]
For any $(y,\tau)\in (\Delta_K\times [0,1])\cup ((K\times K)\times \{0,1\})$, $\sp(y,\tau) $ is contained in $(\Delta_K \times (K\times K))\cup ((K\times K)\times \Delta_K)$, which is disjoint from $\W^s_{g'}(x_1)\times \W^s_{g'}(x_2)$. Therefore,
\[ \mathcal{T}_{g,g'}(x;x_1,x_2) = \left( \W^u_g(x) \ftimes{i^u}{\pr}\mathring{\ev}^{-1}(K) \right) \ftimes{\sp_x}{i^s_2} (\W^s_{g'}(x_1) \times \W^s_{g'}(x_2)).  \]

\begin{defi}\label{def-admissible}
We call a triple $(K,g,g')$ \textit{admissible} if $K$, $g$ and $g'$ satisfy the following conditions:
\begin{enumerate}
\item Every $x \in \mathcal{C}(K)$ is non-degenerate.
\item For any $x=(q,q')\in \mathcal{C}(K)$, $(1- \tau)\cdot q + \tau \cdot q' \notin K$ for every $\tau \in \R\setminus \{0,1\}$.
\item $\mathring{\ev} \colon ((K\times K)\setminus \Delta_K) \times (0,1) \to \R^n$ is transversal to $K$.
\item $V_g$ and $V_{g'}$ satisfy the Morse-Smale condition.
\item For every $x\in \mathcal{C}(K)$, $\pr \colon \mathring{\ev}^{-1}(K) \to K\times K$ is transversal to  $\W^u_g(x)$.
\item For every $x, x_1,x_2 \in \mathcal{C}(K)$,
 $\sp_x$ is transversal to  $\W_{g'}^s(x_1)\times \W_{g'}^s(x_2)$.
\end{enumerate}
\end{defi}

Recall the space $\mathcal{O}_K$ in Remark \ref{rem-sp-emb} and the perturbation $K_{\sigma}$ of $K$ for every $\sigma\in \mathcal{O}_K$.

\begin{lem}\label{lem-admissible}
For generic $\sigma \in \mathcal{O}_K$ and generic $g,g'\in \mathcal{G}_{K_{\sigma}}$, $(K_{\sigma},g,g')$ is admissible.
\end{lem}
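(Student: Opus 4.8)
The plan is to realize each of the six conditions in Definition \ref{def-admissible} as an open dense (or at least generic in the Baire sense) condition on the pair $(\sigma, (g,g'))$ in the parameter space $\mathcal{O}_K \times \mathcal{G}_{K_\sigma} \times \mathcal{G}_{K_\sigma}$, and then intersect. Since a finite (or countable) intersection of residual sets is residual, it suffices to treat the conditions one at a time. Conditions (1) and (2) depend only on $\sigma$, conditions (4) depends only on $(g,g')$, and conditions (3), (5), (6) are cut out by transversality of explicit smooth maps; the strategy in each case is the parametric transversality theorem (Sard--Smale), choosing the ambient perturbation parameter large enough to kill the relevant obstruction.

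\textbf{Conditions depending only on $\sigma$.} Condition (1) is exactly the statement quoted in Remark \ref{rem-sp-emb}: the set of $\sigma\in\mathcal{O}_K$ for which every $x\in\mathcal{C}(K_\sigma)$ is a non-degenerate critical point of $E$ is open and dense. For condition (2), I would argue that the ``bad'' set---pairs $(q,q')\in\mathcal{C}(K_\sigma)$ together with a third point $(1-\tau)q+\tau q'\in K_\sigma$ for some $\tau\notin\{0,1\}$---is the image of a map from a manifold of dimension strictly less than $\dim(\mathcal{O}_K)$ once we add $\sigma$ as a parameter, because asking a secant line of $K_\sigma$ through a binormal chord to meet $K_\sigma$ a third time is $d$ independent conditions on a $(1{+}\dim K)$-dimensional family (the chord contributes no extra freedom, and $d\ge 2$), so for generic $\sigma$ this locus is empty; formally, one applies parametric transversality to the evaluation map $((K\times K)\setminus\Delta_{K})\times\R\times\mathcal{O}_K\to\R^n\times\mathcal{O}_K$ sending $((q,q'),\tau,\sigma)$ to the third point, transverse to $\{$ point of $K_\sigma\}$, and notes the expected dimension is negative. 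Condition (3), transversality of $\mathring{\ev}$ to $K$, is again parametric transversality: perturbing $\sigma$ moves the three relevant points of $K_\sigma$ essentially independently, so the universal map is a submersion and Sard--Smale gives a residual set of good $\sigma$.

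\textbf{Conditions depending on the metrics.} Having fixed a good $\sigma$ (so that $K_\sigma$ has only non-degenerate binormal chords and satisfies (2),(3)), condition (4) is the classical Morse--Smale genericity: $V_g$ is Morse--Smale for generic $g\in\mathcal{G}_{K_\sigma}$, and likewise $g'$, so this holds on a residual subset of $\mathcal{G}_{K_\sigma}\times\mathcal{G}_{K_\sigma}$. For condition (5), fix a good Morse--Smale $g'$-independent datum and vary $g$: the map $\pr\colon\mathring{\ev}^{-1}(K_\sigma)\to K_\sigma\times K_\sigma$ must be transverse to the unstable manifolds $\W^u_g(x)$. The standard device (cf.\ the treatment of moduli of flow lines with incidence conditions, e.g.\ \cite[Lemma 4.2]{Sc-eq} or \cite[Section 11]{Ab}) is to note that $\W^u_g(x)$ varies with $g$ in a way that is ``large'' away from the critical point $x$ itself---concretely, one works on the sphere of directions at $x$ and flows, and perturbations of $g$ supported away from $\mathcal{C}(K_\sigma)$ suffice to achieve transversality; parametric transversality over $g$ then yields a residual set. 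Condition (6) is the same kind of argument, now varying $g'$: $\sp_x$ must be transverse to $\W^s_{g'}(x_1)\times\W^s_{g'}(x_2)$, and since the $\W^s_{g'}(x_i)$ depend on $g'$ and can be perturbed independently away from the critical points, Sard--Smale applies. Throughout one must be a little careful that $\sp_x$ is only defined where the chain actually meets $K_\sigma$ transversally, but condition (3) already guarantees $\mathring{\ev}^{-1}(K_\sigma)$ is a manifold, and the remark preceding Definition \ref{def-admissible} shows the relevant fiber products avoid the diagonal strata, so no boundary issues arise.

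\textbf{Main obstacle.} The routine parts---(1), (2), (3), (4)---are standard. The technical heart is establishing conditions (5) and (6) as \emph{generic}, because the perturbation parameters $g,g'$ enter the stable and unstable manifolds only indirectly (through the gradient flow), so one must verify that the derivative of the universal evaluation map with respect to $g$ (resp.\ $g'$) is surjective---i.e.\ that varying the metric genuinely moves $\W^u_g(x)$ and $\W^s_{g'}(x_i)$ enough at the relevant intersection points. This is where I would spend the real work, adapting the argument that stable/unstable manifolds form a ``full'' family under metric perturbations (the key point being that any point on $\W^u_g(x)\setminus\{x\}$ lies on a flow line whose backward limit is $x$, and a localized metric perturbation along that flow line, supported away from all critical points, changes the flow line's position to first order in any prescribed transverse direction). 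Once surjectivity of the universal map is in hand, Sard--Smale delivers a residual set of $(g,g')$, and intersecting over all finitely many $x,x_1,x_2\in\mathcal{C}(K_\sigma)$ and with the residual sets from the earlier conditions completes the proof; finally, since a residual subset of a Baire space is dense, ``generic'' here can be taken to mean ``residual and dense,'' which is the statement of Lemma \ref{lem-admissible}.
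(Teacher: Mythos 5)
Your broad strategy — realize each condition as a residual (or open dense) property of $(\sigma, g, g')$ and intersect — matches the paper's, and conditions (1), (3), (4) are indeed routine. But you have misidentified where the real work lives, and in the process left a genuine gap.

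The paper's technical centerpiece is \emph{not} the metric-perturbation argument for (5) and (6), which it treats as standard. It is Lemma~\ref{lem-disjoint-line}: for generic $\sigma\in\mathcal{O}_K$, the chord-lines $l_x$ for $x\in\mathcal{C}(K_\sigma)$ are pairwise disjoint when $\overline{x}\neq\overline{y}$. This forces $l_x\cap c(K_\sigma)=\{q,q'\}$, which is then used twice in an essential way. First, it means the extra intersections $l_x\cap K_\sigma\setminus\{q,q'\}$ lie \emph{away} from $c(K_\sigma)$, so the further perturbation of $\sigma$ that achieves condition (2) can be taken to fix a neighborhood of $c(K_\sigma)$ — otherwise you would be moving the binormal chords you are trying to control, and your parametric-transversality count for (2), while plausible, does not by itself show you can decouple the chord from the secant. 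Second — and this is the step you skip — once (2) holds, the image of $\pr\colon\mathring{\ev}^{-1}(K)\to K\times K$ is disjoint from $\mathcal{C}(K)$, and the image of $\sp$ is disjoint from $(\mathcal{C}(K)\times(K\times K))\cup((K\times K)\times\mathcal{C}(K))$. (For $\pr$: if $(q,q')\in\mathcal{C}(K)$ and $\tau\in(0,1)$, condition (2) gives $(1-\tau)q+\tau q'\notin K$. For $\sp$: if $(q,m)\in\mathcal{C}(K)$ with $m=\ev((q,q'),\tau)$ then $q,m,q'$ are three distinct collinear points of $K$ on a binormal chord line, contradicting (2).)

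This second point is exactly what your argument for (5) and (6) silently assumes. You invoke ``perturbations of $g$ supported away from $\mathcal{C}(K_\sigma)$ suffice, since any point of $\W^u_g(x)\setminus\{x\}$ lies on a flow line,'' which is fine — but it requires knowing \emph{a priori} that the intersection points of $\W^u_g(x)$ (resp.\ $\W^s_{g'}(x_i)\times\W^s_{g'}(x_j)$) with the image of $\pr$ (resp.\ $\sp_x$) never occur at critical points. If an intersection did occur at $x\in\mathcal{C}(K)$ itself, the tangent space $T_x\W^u_g(x)$ is governed by $g$ at $x$, and a perturbation supported away from $\mathcal{C}(K_\sigma)$ does nothing there. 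The ``remark preceding Definition~\ref{def-admissible}'' that you cite only guarantees the fiber products avoid $\Delta_K$ and $\partial I$; it says nothing about avoiding $\mathcal{C}(K)$. So the logical order that the paper uses is: Lemma~\ref{lem-disjoint-line} $\Rightarrow$ condition (2) achievable by a perturbation fixing $c(K_\sigma)$ $\Rightarrow$ images of $\pr,\sp$ avoid $\mathcal{C}(K)$ $\Rightarrow$ the flow-line perturbation of $g,g'$ outside $\mathcal{C}(K)$ is actually adequate for (5), (6). Your proposal jumps to the last step without establishing the antecedents, and omits Lemma~\ref{lem-disjoint-line} entirely — which is where the paper spends its effort (an inductive perturbation argument separating all the lines $l_{x_i}$ one pair at a time, using $n\geq 3$).
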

To prove this, let us prepare a lemma.
In addition, recall the condition that $\codim K \geq 2$ imposed in the beginning of Section \ref{subsec-coprod}. In particular, the dimension $n$ of $\R^n$ is greater than $2$.

We define $\overline{\mathcal{C}}(K) \coloneqq \mathcal{C}(K)/\sim$, where $\sim$ is the equivalence relation generated by $(q,q')\sim (q',q)$ for every $(q,q') \in \mathcal{C}(K)$.
For any $x= (q,q') \in \mathcal{C}(K)$
we let $\overline{x} \in \overline{\mathcal{C}}(K)$ denote the equivalence class of $x\in \mathcal{C}(K)$ and define
\[ l_{x} \coloneqq \{ (1-\tau) \cdot q + \tau \cdot q' \in \R^n \mid \tau \in \R\}. \]
\begin{lem}\label{lem-disjoint-line}
Let $\mathcal{O}'_K$ be a subset of $\mathcal{O}_K$ consisting of $\sigma \in \mathcal{O}_K$ such that:
\begin{itemize}
\item Every $x\in \mathcal{C}(K_{\sigma})$ is non-degenerate.
\item For any  $x, y\in \mathcal{C}(K_{\sigma})$, $l_{x} \cap  l_{y}= \emptyset $ if $\overline{x}\neq \overline{y}$.
\end{itemize}
Then, $\mathcal{O}'_K$ is an open dense subset of $\mathcal{O}_K$.
\end{lem}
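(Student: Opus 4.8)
The plan is to separate the two defining properties of $\mathcal{O}'_K$. Non-degeneracy of every critical point of $E$ on $(K_\sigma\times K_\sigma)\setminus\Delta_{K_\sigma}$ already cuts out an open dense subset $\mathcal{U}\subset\mathcal{O}_K$, by Remark \ref{rem-sp-emb} applied to $K_\sigma$. So it suffices to prove that
\[ \mathcal{A}\coloneqq\{\sigma\in\mathcal{U}\mid l_x\cap l_y=\emptyset\ \text{whenever}\ x,y\in\mathcal{C}(K_\sigma),\ \overline{x}\neq\overline{y}\} \]
is open and dense in $\mathcal{U}$.

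For density I would run a parametric transversality (Sard--Smale) argument over $\mathcal{O}_K$, passing to $C^k$-completions and recovering the $C^\infty$ statement as in Remark \ref{rem-sp-emb}. Using the tubular neighbourhood of $K$, an ordered pair of binormal chords of $K_\sigma$ is encoded by a quadruple of base points $(q_1,q_1',q_2,q_2')\in K^4$, the actual endpoints being $q_i+\sigma(q_i)$, etc., and binormality of each chord is the vanishing of a section of a rank-$2(n-d)$ bundle. I stratify by the combinatorial type of the configuration (which base points coincide, and whether the four endpoints are collinear or the two chord-lines parallel). On the main stratum — four distinct base points in generic position — I augment the data by $(t,s)\in\R^2$ and impose $\widehat{q_1}+t(\widehat{q_1'}-\widehat{q_1})=\widehat{q_2}+s(\widehat{q_2'}-\widehat{q_2})$, which expresses $l_x\cap l_y\neq\emptyset$; perturbing $\sigma$ independently near the four distinct points, together with the $(t,s)$-freedom, makes the total defining section transverse (here $d\geq 2$ makes the normal spaces at least $2$-dimensional, and $n>2$ is used). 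The resulting universal space is then a Banach manifold whose projection to $\mathcal{O}_K$ is Fredholm of index $\big(4(n-d)+2\big)-\big(4(n-d)+n\big)=-(n-2)<0$. On the stratum where the two chords share exactly one base point $p$ — sharing two base points would force $\overline{x}=\overline{y}$ — the lines automatically meet at $\widehat p$, and the universal space of such configurations, cut out in $\mathcal{O}_K\times(K^3\setminus\mathrm{diagonals})$ by the four binormality equations, projects to $\mathcal{O}_K$ with Fredholm index $3(n-d)-4(n-d)=-(n-d)<0$. The remaining strata (collinear endpoints, a coincidence together with collinearity, parallel lines) are of even higher codimension and treated identically. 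By Sard--Smale the set of common regular values of these finitely many projections is residual, hence dense, in $\mathcal{O}_K$, and over such a $\sigma$ every fibre is a manifold of negative dimension, hence empty; so every $\sigma\in\mathcal{U}$ which is a common regular value lies in $\mathcal{A}$.

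For openness, fix $\sigma_0\in\mathcal{O}'_K$. Since every critical point of $E$ is non-degenerate, $K$ is compact, and the lengths of binormal chords stay bounded below near $\sigma_0$ (using the $\epsilon_0$ of \S\ref{subsubsec-Morse-complex}), a standard application of the implicit function theorem shows that for $\sigma$ in a $C^1$-neighbourhood of $\sigma_0$ the set $\mathcal{C}(K_\sigma)$ consists of a fixed finite number of points depending smoothly on $\sigma$; hence the finitely many lines $l_x$, $x\in\mathcal{C}(K_\sigma)$, vary continuously with $\sigma$. Because two disjoint lines in $\R^n$ have positive Euclidean distance, and this distance is continuous in the lines, the finitely many quantities $\mathrm{dist}(l_x,l_y)$ over pairs with $\overline{x}\neq\overline{y}$ remain positive in a neighbourhood of $\sigma_0$; so $\sigma\in\mathcal{O}'_K$ there, proving openness.

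The main obstacle is achieving transversality of the universal sections on the ``shared-endpoint'' stratum: the perturbation of $\sigma$ near the common point $p$ must simultaneously correct both conditions $\Pi_{T_{\widehat p}K_\sigma}(\widehat q-\widehat p)=0$ and $\Pi_{T_{\widehat p}K_\sigma}(\widehat{q'}-\widehat p)=0$, which a rotation of the tangent plane at $\widehat p$ can accomplish precisely when $\widehat q-\widehat p$ and $\widehat{q'}-\widehat p$ are linearly independent in the (at least $2$-dimensional) normal space — forcing one to split off the collinear locus as a separate, higher-codimension stratum. Apart from this the argument is a routine dimension count resting on the single geometric fact that two generic lines in $\R^n$ are disjoint because $n>2$.
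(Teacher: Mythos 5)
Your proof is correct in outline, but it follows a genuinely different route from the paper. You treat the lemma as a parametric transversality statement: you set up a universal moduli space of ``two intersecting binormal chords'' over $\mathcal{O}_K$, stratify by the combinatorics of coinciding and collinear endpoints, check that each stratum has negative Fredholm index over $\mathcal{O}_K$ (with $n>2$ and $d\geq 2$ providing the negativity), and invoke Sard--Smale together with the usual $C^k$-completion trick. The paper instead gives a hands-on, purely finite construction: starting from an arbitrary $\sigma_0$ making all binormal chords non-degenerate, it orders the finitely many chords $x_1,\dots,x_{2N}$, and then in $N-1$ steps perturbs $K_\sigma$ locally near one free endpoint of each chord to push $l_{x_{k+1}}$ off the already mutually disjoint lines $l_{x_1},\dots,l_{x_k}$, using at each step only the elementary fact that for $n\geq 3$ a generic line near $l_{x_i}$, constrained to stay perpendicular to $K_\sigma$ at a fixed endpoint, misses a given line. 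No Banach manifolds, no Fredholm theory, and no stratification bookkeeping. Your approach is more systematic and would scale better if the lemma were replaced by a more elaborate transversality statement; the paper's approach is shorter and self-contained because the problem is finite. One point you flag yourself (the need to split off the collinear sub-stratum before transversality on the shared-endpoint stratum can be achieved) is exactly the sort of bookkeeping the paper's explicit construction sidesteps. Your openness argument coincides in substance with the paper's, which states it as a one-line consequence of continuity of the finitely many binormal chords.
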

\begin{proof}
Let $\mathcal{O}'' \coloneqq \{\sigma \in \mathcal{O}_K \mid \text{every }x\in \mathcal{C}(K) \text{ is non-degenerate} \}$. It is an open dense subset of $ \mathcal{O}_K$.
Fix $\sigma_0 \in \mathcal{O}''$ and order the set $\mathcal{C}(K_{\sigma_0})$ to be $\mathcal{C}(K_{\sigma_0}) = \{x_1,\dots ,x_{2N}\}$ such that $\overline{x_i} = \overline{x_{i+N}}$ in $\overline{\mathcal{C}}(K)$ for $i=1,\dots ,N$.
There exists a neighborhood $U_0\subset \mathcal{O}''$ of $\sigma_0$ such that  for any $\sigma \in U_0$,
\[\mathcal{C}(K_{\sigma}) = \{x_i(\sigma) \mid i=1,\dots ,2N \},\]
where $x_i(\sigma)$  varies  continuously on $\sigma \in U_0$ and $x_i(\sigma_0)=x_i$  for $i=1,\dots ,2N$, and $\overline{x_i(\sigma)} = \overline{x_{i+N}(\sigma)}$ for $i=1,\dots ,N$.
Let us write $x_i(\sigma) =(q_i(\sigma),q'_i(\sigma))\in K\times K$.

If $\sigma_0$ satisfies the second condition, any $\sigma \in U_0$ sufficiently close to $\sigma_0$ also satisfies the second condition. Hence, $\mathcal{O}'_K$ is open in $\mathcal{O}''$.
It remains to show that $\mathcal{O}'_K$ is dense in $\mathcal{O}''$.

Let $\sigma_0$ be an arbitrary element of $\mathcal{O}''$.
We may assume that $N\geq 2$.
Consider
\[x_1=(q_1(\sigma_0),q'_1(\sigma_0)),\ x_2=(q_2(\sigma_0),q'_2(\sigma_0)) \in \mathcal{C}(K)\]
and the intersection $l_{x_1} \cap l_{x_2}$.
Since $\overline{x_1} \neq \overline{x_2}$, either $q_1(\sigma_0)$ or $q'_1(\sigma_0)$ are not in $\{q_2(\sigma_0),q'_2(\sigma_0)\}$.
Let us consider the case where $q_1(\sigma_0) \notin \{q_2(\sigma_0),q'_2(\sigma_0)\}$ (the remaining case is parallel). 
We consider a deformation from $\sigma_0$ to $\sigma_1\in U_0$ such that
$K_{\sigma_1}$ agrees with $K_{\sigma_0}$ outside a neighborhood of $q_1(\sigma_0)$ disjoint from $\{ q'_1(\sigma_0), q_2(\sigma_0) ,  q'_2(\sigma_0)  \}$.
Then,
$q_2(\sigma_0) = q_2(\sigma_1)$ and $q'_2(\sigma_0) = q'_2(\sigma_1)$, and
in particular, $l_{x_2}= l_{x_2(\sigma_1)}$.
For any line $l$ in $\R^n$ which is sufficiently close to $l_{x_1}$ and perpendicular to $K_{\sigma_0}$ at a point near $q'_0(\sigma_0)$, we can take $\sigma_1$ so that $l_{x_1(\sigma_1)} = l$.
See Figure \ref{figure-binormal}. 
Noting that $n\geq 3$, we can choose $\sigma_1$ arbitrarily close to $\sigma_0$ such that $l_{x_1(\sigma_1)} \cap l_{x_2}= \emptyset$.

\begin{figure}
\centering
\begin{overpic}[height=3cm]{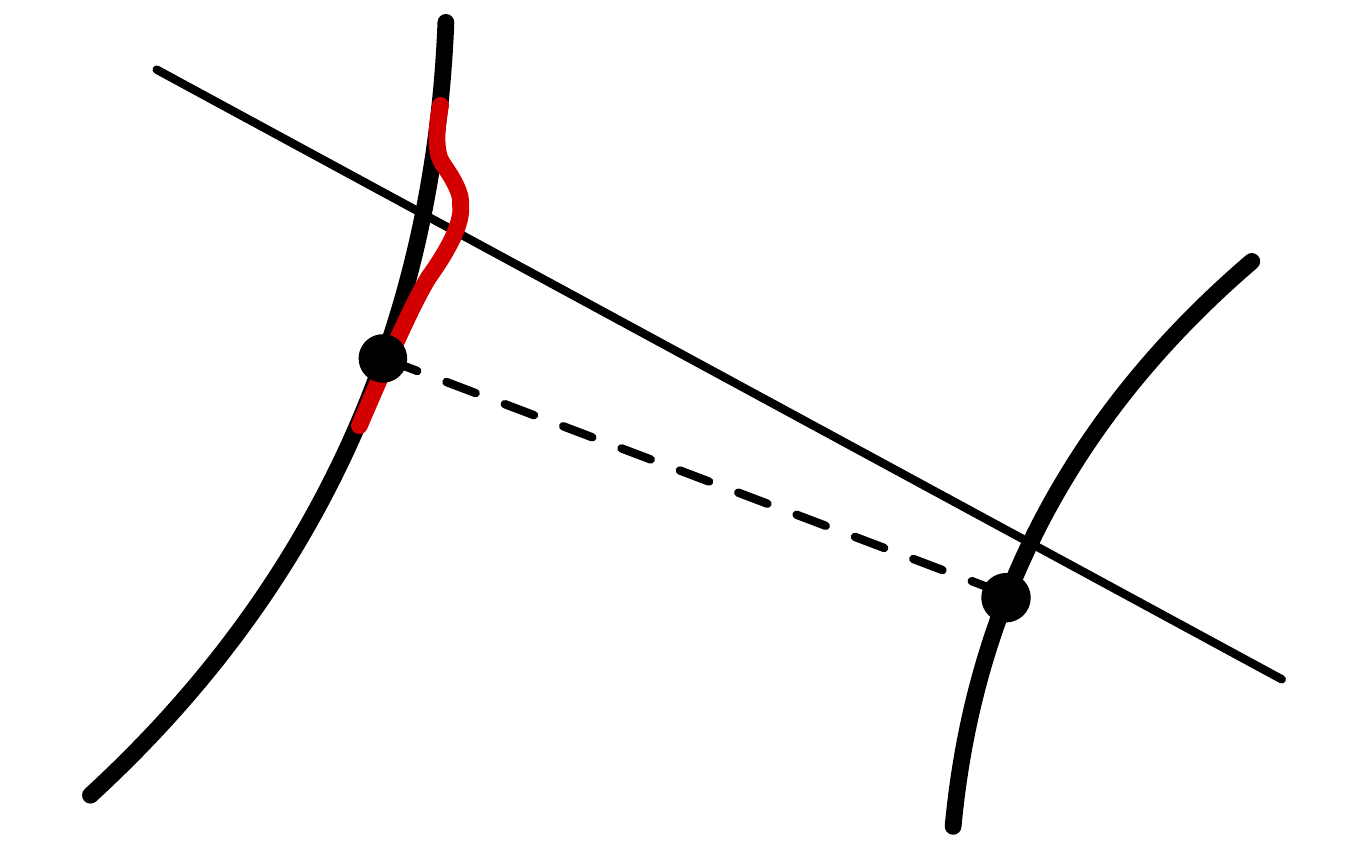}
\put(5.5,35){$q_1(\sigma_0)$}
\put(51,13.5){$q_1'(\sigma_0)$}
\put(98,10){$l$}
\put(13.5,4){$K_{\sigma_0}$}
\put(34,49){$\color{red} K_{\sigma_1}$}
\end{overpic}
\caption{$l$ is a line in $\R^n$ which is close to $l_{x_1(\sigma_0)}$ and perpendicular to $K_{\sigma_0}$ at a point near $q'_1(\sigma_0)$.
$K_{\sigma_1}$ is obtained from $K_{\sigma_0}$ by a perturbation in a neighborhood of $q_1(\sigma_0)$ and perpendicular to $l$ in this neighborhood.
If $\sigma_1$ is sufficiently close to $\sigma_0$, $l = l_{x_1(\sigma_1)}$.}\label{figure-binormal}
\end{figure}

We take $\sigma_1,\sigma_2,\dots ,\sigma_{N-1}\in U_0$ inductively.
For $\sigma_{k-1}$ with $1\leq k\leq N-2$, suppose that
\[l_{x_i(\sigma_{k-1})}\cap l_{x_j(\sigma_{k-1})}=\emptyset  \text{ for any } i,j \in \{1,\dots ,k \}\text{ with }i\neq j.\]
Then, by a deformation similar to the above one from $\sigma_0$ to $\sigma_1$, there exists $\sigma_k\in U_0$ such that:
\begin{itemize}
\item $x_i(\sigma_k)$ is so close to $x_i(\sigma_{k-1})$ for every $i\in\{1,\dots ,k\}$ that 
\[l_{x_i(\sigma_{k})}\cap l_{x_j(\sigma_{k})}=\emptyset  \text{ for any } i,j\in \{1,\dots ,k\} \text{ with }i\neq j.\]
\item $x_{k+1}(\sigma_k) = x_{k+1}(\sigma_{k-1})$ and thus, $ l_{x_{k+1}(\sigma_k)} =  l_{x_{k+1}(\sigma_{k-1})}$.
\item For every $i\in \{1,\dots ,k\}$, $ l_{x_i(\sigma_k)} \cap l_{x_{k+1}(\sigma_k)} = \emptyset$.
\end{itemize}
When $k=N-1$, $\sigma_{N-1}$ belongs to $\mathcal{O}'_K$.
Since $\sigma_1,\sigma_2,\dots ,\sigma_{N-1}$ can be chosen arbitrarily close to $\sigma_0$, this shows that $\mathcal{O}'_K$ is dense in $\mathcal{O}''$.
\end{proof}

Let us set $c(K) \coloneqq \bigcup_{(q,q')\in\mathcal{C}(K)} \{ q , q' \} \subset K$.

\begin{proof}[Proof of Lemma \ref{lem-admissible}]
We use the open dense subset $\mathcal{O}'_K$ of Lemma \ref{lem-disjoint-line}.
Suppose that $\sigma \in \mathcal{O}'_K$. Then, the first condition of Definition \ref{def-admissible} is satisfied for the submanifold $K_{\sigma}$.
Moreover, for any $x=(q,q')\in \mathcal{C}(K_{\sigma})$, $l_x \cap c(K_{\sigma}) = \{q,q'\}$. Therefore, the set
\[l_x\setminus \{q,q'\} = \{ (1-\tau) \cdot q + \tau \cdot q' \in \R^n \mid \tau \in \R\setminus \{0,1\}\}\]
can intersect $K_{\sigma}$ only outside $c(K_{\sigma})$.
Noting that $\codim K \geq 2$, by a generic perturbation of $\sigma$ which fixes a neighborhood of $c(K_{\sigma})$, the second condition of Definition \ref{def-admissible} is achieved.
It is also standard to show that for generic $\sigma \in \mathcal{O}'_K$, the third condition of Definition \ref{def-admissible} are satisfied for  $K_{\sigma}$.

If $K$ satisfies the first three conditions of Definition \ref{def-admissible}, then from the second condition of  Definition \ref{def-admissible}, the image of $\pr \colon \ev^{-1}(K) \to K\times K$ is disjoint from $\mathcal{C}(K)$ and moreover, the image of the map $\sp \colon \mathring{\ev}^{-1}(K) \to (K\times K)\times (K\times K)$ is disjoint from $(\mathcal{C}(K)\times (K\times K) ) \cup ( (K\times K) \times \mathcal{C}(K) )$.
In this case, by generic perturbations of $g,g'\in \mathcal{G}_K$ outside $\mathcal{C}(K)$, we can achieve the latter three conditions of Definition \ref{def-admissible}.
\end{proof}


If $(K,g,g')$ is admissible, $ \W^u_g(x) \ftimes{i^u}{\pr} \mathring{\ev}^{-1}(K) $ is a $C^{\infty}$ manifold of dimension
\[(\dim \W_g^u(x) +1)-\codim K = \ind x +1-d .\]
Therefore, the dimension of $\mathcal{T}_{g,g'}(x;x_1,x_2)$ is computed by
\[\ind x + 1 -d - (\ind x_1 + \ind x_2 ).\]

In addition,
for $\epsilon_0>0$ fixed in Subsection \ref{subsubsec-Morse-complex},
we choose $0<\tau_0<\frac{1}{2}$ such that
\[ \sup_{(q,q')\in K\times K} \frac{1}{2}\tau_0^2|q-q'|^2<\epsilon_0.
\]
Take  $(y, (y,\tau)) \in \W^u_g(x) \ftimes{i^u}{\pr} \ev^{-1}(K)$ arbitrarily. Suppose that either $E(y)<\epsilon_0$ or $\tau \in [0,\tau_0]\cup [1-\tau_0,1]$ holds. Then, $\sp_x(y,(y,\tau))= \sp(y,\tau)$ is contained in the subset
$ U_{\epsilon_0} = \{(y_1,y_2) \mid \min \{ E(y_1), E(y_2)\}\leq \epsilon_0 \}$. 
By (\ref{stable-contained}), $\W^s_{g'}(x_1)\times \W^s_{g'}(x_2)$ is disjoint from this subset.
Therefore, if we define $\ev_{[\tau_0,1-\tau_0]}$ to be the restriction of $\ev$ on $(K\times K)\times [\tau_0,1-\tau_0]$, then
\begin{align}\label{T-VV'-another}
 \mathcal{T}_{g,g'}(x;x_1,x_2) =  \left( \W^u_g(x)^{\geq \epsilon_0} \ftimes{i^u}{\pr} (\ev_{[\tau_0,1-\tau_0]})^{-1}(K) \right) \ftimes{\sp_x}{i^s_2} (\W^s_{g'}(x_1) \times \W^s_{g'}(x_2)) .
 \end{align}
We note that $(\ev_{[\tau_0,1-\tau_0]})^{-1}(K)$ is compact.
This description of $\mathcal{T}_{g,g'}(x;x_1,x_2)$ leads us to the following compactness theorem.
\begin{lem}\label{lem-boundary-T}
If $\ind x_1+\ind x_2 = \ind x -d+1$, then $\mathcal{T}_{g,g'}(x;x_1,x_2)$ is a compact $0$-dimensional manifold. If $\ind x_1+\ind x_2 = \ind x -d$, $\mathcal{T}_{g,g'}(x;x_1,x_2)$ is compactified to a compact $1$-dimensional manifold whose boundary is
\begin{align}\label{boundary-T}
\begin{split}
&\coprod_{\ind x'=\ind x-1} \mathcal{T}_g(x,x')\times \mathcal{T}_{g,g'}(x';x_1,x_2) \\
\sqcup & \coprod_{\ind x'_1=\ind x_1+1} \mathcal{T}_{g,g'}(x;x'_1,x_2)\times \mathcal{T}_{g'}(x'_1,x_1) \\
\sqcup & \coprod_{\ind x'_2=\ind x_2+1}\mathcal{T}_{g,g'}(x;x_1,x'_2)\times \mathcal{T}_{g'}(x'_2,x_2),
\end{split}
\end{align}
\end{lem}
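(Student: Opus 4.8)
The plan is to realize $\mathcal{T}_{g,g'}(x;x_1,x_2)$, via the description (\ref{T-VV'-another}), as an iterated fiber product of compact pieces up to the usual Morse breaking, and then apply Morse-theoretic compactness and gluing. I would first collect the structural facts already recorded in Subsection \ref{subsubsec-Morse-complex}: $\overline{\W}^u_g(x)^{\geq\epsilon_0}$ and each $\overline{\W}^s_{g'}(x_i)$ are compact; $\overline{\W}^{u,1}_g(x)$ is a $C^{\infty}$ manifold with boundary $\coprod_{\ind x'=\ind x-1}\mathcal{T}_g(x,x')\times\W^u_g(x')$ on which $i^u$ is smooth, and symmetrically $\overline{\W}^{s,1}_{g'}(x_i)$ is a $C^{\infty}$ manifold with boundary $\coprod_{\ind x'_i=\ind x_i+1}\W^s_{g'}(x'_i)\times\mathcal{T}_{g'}(x'_i,x_i)$ (see \cite[Lemma 4.2]{Sc-eq}); and, by condition (3) of Definition \ref{def-admissible}, the part of $(\ev_{[\tau_0,1-\tau_0]})^{-1}(K)$ lying over $(K\times K)\setminus\Delta_K$ (the only part entering the fiber product, since $\W^u_g(x)$ is disjoint from $\Delta_K$) is a compact $C^{\infty}$ manifold with boundary the locus $\tau\in\{\tau_0,1-\tau_0\}$. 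Since the dimension formula $\dim\mathcal{T}_{g,g'}(x;x_1,x_2)=\ind x+1-d-\ind x_1-\ind x_2$ has already been established, the lemma splits into the cases $\dim=0$ and $\dim=1$.

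For the compactness assertion, take a sequence of tuples $(y,\tau,y_1,y_2)$ in $\mathcal{T}_{g,g'}(x;x_1,x_2)$, where $y=(q,q')\in\W^u_g(x)^{\geq\epsilon_0}$, $\tau\in(0,1)$, $\ev(y,\tau)\in K$, and $\sp_x(y,\tau)=(y_1,y_2)$ with $y_i\in\W^s_{g'}(x_i)$. Since $E(y_1)=\frac{1}{2}\tau^2|q-q'|^2$ and $E(y_2)=\frac{1}{2}(1-\tau)^2|q-q'|^2$, the defining property $\frac{1}{2}\tau_0^2|q-q'|^2<\epsilon_0$ of $\tau_0$ together with (\ref{stable-contained}) forces $\tau\in(\tau_0,1-\tau_0)$ and $E(y)>\epsilon_0$. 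After passing to a subsequence, $\tau$ converges in $[\tau_0,1-\tau_0]$, $y$ converges in the compact set $\overline{\W}^u_g(x)^{\geq\epsilon_0}$ (possibly to a broken trajectory), and each $y_i$ converges in $\overline{\W}^s_{g'}(x_i)$, so the limit lies in the compactified fiber product
\[
\overline{\mathcal{T}}_{g,g'}(x;x_1,x_2):=\left(\overline{\W}^u_g(x)^{\geq\epsilon_0}\ftimes{i^u}{\pr}(\ev_{[\tau_0,1-\tau_0]})^{-1}(K)\right)\ftimes{\overline{\sp}_x}{i^s_2}\left(\overline{\W}^s_{g'}(x_1)\times\overline{\W}^s_{g'}(x_2)\right),
\]
where $\overline{\sp}_x$ is the evident continuous extension of $\sp_x$. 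By conditions (4)--(6) of Definition \ref{def-admissible}, applied at all critical points, every stratum of this space in which at least one trajectory breaks is cut out transversely and has dimension strictly less than $\dim\mathcal{T}_{g,g'}(x;x_1,x_2)$; when the latter equals $0$ these strata are empty, while the limiting $\tau$ is interior to $(\tau_0,1-\tau_0)$ and $E(y)>\epsilon_0$, so the limit lies in $\mathcal{T}_{g,g'}(x;x_1,x_2)$ itself.

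For the case $\dim\mathcal{T}_{g,g'}(x;x_1,x_2)=1$, I would show that $\overline{\mathcal{T}}_{g,g'}(x;x_1,x_2)$ is a compact $1$-manifold with boundary and identify that boundary with the union of its codimension-one faces. By transversality these come only from: (a) the boundary of $\overline{\W}^{u,1}_g(x)$, yielding $\coprod_{\ind x'=\ind x-1}\mathcal{T}_g(x,x')\times\mathcal{T}_{g,g'}(x';x_1,x_2)$; (b) the boundaries of $\overline{\W}^{s,1}_{g'}(x_i)$, yielding $\coprod_{\ind x'_1=\ind x_1+1}\mathcal{T}_{g,g'}(x;x'_1,x_2)\times\mathcal{T}_{g'}(x'_1,x_1)$ and $\coprod_{\ind x'_2=\ind x_2+1}\mathcal{T}_{g,g'}(x;x_1,x'_2)\times\mathcal{T}_{g'}(x'_2,x_2)$; and (c) the faces $\tau\in\{\tau_0,1-\tau_0\}$ of $(\ev_{[\tau_0,1-\tau_0]})^{-1}(K)$ and $E=\epsilon_0$ of $\overline{\W}^u_g(x)^{\geq\epsilon_0}$, which are empty by the same $\frac{1}{2}\tau^2|q-q'|^2<\epsilon_0$ computation. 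All deeper strata have dimension $\leq-1$ and are empty, so the boundary is exactly (\ref{boundary-T}). The remaining point — that these codimension-one strata are attached as genuine boundary intervals, i.e.\ that near each one $\overline{\mathcal{T}}_{g,g'}(x;x_1,x_2)$ looks like a half-open interval times the stratum — is the standard Morse gluing statement, obtained here by adapting \cite[Lemma 3.14]{Ab} (used in Subsection \ref{subsubsec-singular-Morse} for fiber products with simplices) to the present double fiber product, the extra evaluation factor causing no difficulty because it is transverse to the gradient data and its $\tau$-boundary has been excluded.

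The step I expect to be the main obstacle is this gluing: building the manifold-with-corners structure of the doubly iterated fiber product and checking that each listed codimension-one stratum appears as an actual boundary face requires gluing broken trajectories at $x$ (in the unstable manifold) and at $x_1,x_2$ (in the stable manifolds) simultaneously with the constraint $\ev(y,\tau)\in K$. Everything else — the dimension count, the exclusion of the $\tau$- and $E$-boundaries, and the emptiness of the deeper strata — follows routinely from the admissibility hypotheses.
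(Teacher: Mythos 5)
Your argument is correct and follows essentially the same route as the paper: both start from the description (\ref{T-VV'-another}), compactify by replacing $\W^u_g(x)^{\geq\epsilon_0}$ and the $\W^s_{g'}(x_i)$ with their compactifications, use the choice of $\tau_0$ and (\ref{stable-contained}) to rule out the $\tau\in\{\tau_0,1-\tau_0\}$ and $E=\epsilon_0$ faces, and identify the remaining codimension-one strata with (\ref{boundary-T}). The only place you diverge is the step you single out as the main obstacle, and there no extra work is needed: the facts you already quoted --- that $\overline{\W}^{u,1}_g(x)$ and $\overline{\W}^{s,1}_{g'}(x_i)$ are $C^{\infty}$ manifolds with boundary on which $i^u$ and $i^s$ are $C^{\infty}$ maps (\cite[Lemma 4.2]{Sc-eq}) --- already contain all the gluing. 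The paper's proof simply covers $\overline{\mathcal{T}}_{g,g'}(x;x_1,x_2)$ by three open subsets, in each of which exactly one factor is compactified up to codimension one (either $\overline{\W}^{u,1}_g(x)$, or $\overline{\W}^{s,1}_{g'}(x_1)$, or $\overline{\W}^{s,1}_{g'}(x_2)$) while the remaining stable/unstable factors are left uncompactified; since conditions (4)--(6) of Definition \ref{def-admissible} hold at every critical point, each of these subsets is a transverse fiber product of one manifold with boundary with boundaryless data, hence a $1$-dimensional manifold with boundary, and the union of the three boundaries is exactly (\ref{boundary-T}). In particular, your concern about gluing broken trajectories at $x$ and at $x_1,x_2$ \emph{simultaneously} does not arise: a simultaneous breaking is a codimension-two corner, which is empty when $\ind x_1+\ind x_2=\ind x-d$, so no adaptation of \cite[Lemma 3.14]{Ab} beyond the quoted structure of the compactified stable and unstable manifolds is required.
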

\begin{proof}
From the equation (\ref{T-VV'-another}), the compactification of $\mathcal{T}_{g,g'}(x;x_1,x_2)$ is given by
\begin{align}\label{compact-T}
\begin{split}
 \overline{\mathcal{T}}_{g,g'}(x;x_1,x_2) 
\coloneqq & \left( \overline{\W}^u_g(x) \ftimes{i^u}{\pr} \ev^{-1}(K) \right) \ftimes{\sp_x}{i^s_2} (\overline{\W}^s_{g'}(x_1) \times \overline{\W}^s_{g'}(x_2)) \\
= & \left( \overline{\W}^u_g(x)^{\geq \epsilon_0} \ftimes{i^u}{\pr} (\ev_{[\tau_0,1-\tau_0]})^{-1}(K) \right) \ftimes{\sp_x}{i^s_2} (\overline{\W}^s_{g'}(x_1) \times \overline{\W}^s_{g'}(x_2)) .
 \end{split}
\end{align}
Its compactness follows from the second line.
Let us observe the case where the dimension of $\mathcal{T}_{g,g'}(x;x_1,x_2)$ is $0$ or $1$.
If $\ind x_1+\ind x_2 =\ind x -d +1$, $\overline{\mathcal{T}}_{g,g'}(x;x_1,x_2)  =\mathcal{T}_{g,g'}(x;x_1,x_2)$.
Next, suppose that $\ind x_1+\ind x_2 = \ind x -d$.
By using the $C^{\infty}$ manifolds  $\overline{\W}^{u,1}_g(x)$, $\overline{\W}^{s,1}_{g'}(x_1)$ and $\overline{\W}^{s,1}_{g'}(x_2)$ with boundary,
$ \overline{\mathcal{T}}_{g,g'}(x;x_1,x_2)  $ is given as the union of its open subsets
\begin{align*}
  \left( \overline{\W}^{u,1}_g(x) \ftimes{i^u}{\pr} \ev^{-1}(K) \right) \ftimes{\sp_x}{i^s_2} \left( \W^s_{g'}(x_1) \times \W^s_{g'}(x_2)\right) ,\\
 \left( \W^{u}_g(x) \ftimes{i^u}{\pr} \ev^{-1}(K) \right) \ftimes{\sp_x}{i^s_2} \left( \overline{\W}^{s,1}_{g'}(x_1) \times \W^s_{g'}(x_2)\right) , \\
  \left( \W^{u}_g(x) \ftimes{i^u}{\pr} \ev^{-1}(K) \right) \ftimes{\sp_x}{i^s_2} \left( \W^s_{g'}(x_1) \times \overline{\W}^{s,1}_{g'}(x_2)\right) ,
\end{align*}
each of which is a $1$-dimensional manifold with boundary.
Here, note that $i^u$ and $i^s_2$ are of class $C^{\infty}$.
Moreover, the union of their boundaries agrees with (\ref{boundary-T}).
%
\end{proof}

Now we define the coproduct operation on the Morse chain complex.
\begin{defi}
We define a $\Z/2$-linear map $\delta_{g,g'}\colon CM_{*+d-1} \to CM_*^{\otimes 2}$ by
\[ \delta_{g,g'}(x) \coloneqq \sum_{x_1,x_2 \in \mathcal{C}(K) \colon \ind x_1 + \ind x_2= \ind x -d+1} \#_{\Z/2} \mathcal{T}_{g,g'}(x;x_1,x_2) \cdot x_1\otimes x_2 \]
for every $x\in \mathcal{C}(K)$.
\end{defi}
Lemma \ref{lem-boundary-T} implies that for every $x\in \mathcal{C}(K)$,
\[ \delta_{g,g'}\circ d_g(x) + (d_{g'}\otimes \id_{CM_*} +  \id_{CM_*}\otimes d_{g'})\circ \delta_{g,g'}(x) =0.\]
Therefore, $\delta_{g,g'}$ is a chain map from $CM_{*+d-1}$ to $CM_*^{\otimes 2}$. This induces a map on the homology
\[ (\delta_{g,g'})_* \colon HM_{*+d-1}(g) \to HM^2_*(g') . \]

\subsubsection{Definition of coproduct $\delta_K$ on singular homology}\label{subsubsec-coproduct-singular}

In this subsection, we consider $K$ in a general position and do not require the conditions of Definition \ref{def-admissible}. We denote $I\coloneqq [0,1]$.

Consider the normal bundle $(TK)^{\perp} \to K$ of $K$ in $\R^n$ and its open disk bundle of radius $r>0$. If $r>0$ is sufficiently small, the disk bundle is identified with a tubular neighborhood of $K$ defined by
\[ N\coloneqq \{ q+v \in \R^n \mid (q,v)\in (T K)^{\perp},\ |v|<r \}  \]
via the map $\{(q,v)\in (TK)^{\perp} \mid |v|<r\} \to N\colon (q,v) \mapsto q+v$.
Let us also set
\[ \textstyle{ N'\coloneqq \{ q+v \in \R^n \mid (q,v)\in (T K)^{\perp},\ |v|<\frac{r}{2} \}  } .\]
Then, we have the projection $\pi_K\colon N\to K\colon q+v \mapsto q$ and the cohomology class
\[\eta \in H^d(N,N\setminus N')\]
corresponding to the Thom class of $(TK)^{\perp}$.

On the open subset $\ev^{-1}(N)$ of $(K\times K)\times I$, we define
\[\begin{array}{rcl}
\widehat{\sp} \colon \ev^{-1}(N) & \to & (K\times K)\times (K\times K) \\
((q,q'),\tau) & \mapsto & ((q, \pi_{K}\circ \ev((q,q'),\tau)) , (\pi_K\circ \ev ((q,q'),\tau) , q') ) .
\end{array}
\]

\begin{defi}\label{def-delta-K}
We define
\[ \delta_K \colon H_*(K\times K,\Delta_K ) \to H_{*+1-d}((K\times K,\Delta_K)^{\times 2})\]
to be the composite map
\[\xymatrix@C=40pt@R=10pt{
H_*(K\times K,\Delta_K ) \ar[r]^-{  \times [I]} & H_{*+1}((K\times K) \times I, Z_K) & \\
\ar[r]^-{\ev^*(\eta) \cap  } & H_{*+1-d} (\ev^{-1}(N) , Z_K) 
\ar[r]^-{\widehat{\sp}_*} & H_{*+1-d} ((K\times K, \Delta_K)^{\times 2}) .
}\]
Here, $Z_K\coloneqq (\Delta_K\times I) \cup ((K\times K)\times \partial I)$.
Namely, for every $x\in H_*(K\times K,\Delta_K)$,
\[ \delta_K (x) = \widehat{\sp}_*(\ev^*(\eta) \cap (x\times [I]) ) .\]
\end{defi}
\begin{rem}
Precisely, the second map $\ev^*(\eta)\cap$ is defined as follow: Let us abbreviate
$X =(K\times K)\times I$, $V= \ev^{-1}(\R^n \setminus N')$ and $W = \ev^{-1}(N\setminus N')$. Since $\ev(Z_K)= K$, $Z_K$ is disjoint from the closed subset $V$. Then, using the cohomology class $\ev^*(\eta) \in H^d(\ev^{-1}(N), W)$, we obtain a composite map
\[\xymatrix@C=30pt{
H_p(X,Z_K) \ar[r] & H_p(X, Z_K \sqcup V) \cong H_p(\ev^{-1}(N) , Z_K\sqcup W) \ar[r]^-{\ev^*(\eta) \cap } & H_{p+1-d}(\ev^{-1}(N),Z_K) ,
} \]
where the left map is induced by the inclusion map for pairs and the middle isomorphism is defined by the excision of $\ev^{-1}(\R^n \setminus N)$.
\end{rem}
It is easy to check that $\delta_K$ is independent of the choice of the radius $r>0$ of the disk bundle of $(TK)^{\perp}$.

\subsubsection{On invariance of $\delta_K$}\label{subsubsec-invariance}

Let $K_+$ and $K_-$ be compact submanifolds of $\R^n$ of codimension $d$.
Suppose that there exists a properly embedded submanifold $L$ of $\R^{n+1}=\R^n\times \R$ satisfying:
\begin{itemize}
\item $L\cap (\R^n\times \R_{\geq 1}) = K_+\times \R_{\geq 1}$ and $L\cap (\R^n\times \R_{\leq -1}) = K_-\times \R_{\leq -1}$.
\item $L\cap (\R^n\times [-1,1])$ is compact.
\end{itemize}
We denote the inclusion maps by $i_+\colon K_+ \to L\colon q\mapsto (q,1)$ and $i_-\colon K_-\to L\colon q\mapsto (q,-1)$.

For sufficiently small $r>0$, we define a tuple $(\tilde{N},\tilde{N}',\pi_L, \tilde{\eta})$ by
\begin{align*}
\tilde{N} & \coloneqq \{q+v \in \R^{n+1} \mid (q,v)\in (TL)^{\perp},\ |v|<r\},  \\
\tilde{N}' & \coloneqq \textstyle{ \{q+v \in \R^{n+1} \mid (q,v)\in (TL)^{\perp},\ |v|<\frac{r}{2} \} , }
\end{align*}
$\pi_L\colon \tilde{N} \to L\colon q+v\mapsto q$ and $\tilde{\eta} \in H^d(\tilde{N}, \tilde{N} \setminus \tilde{N}')$ corresponding to the Thom class of $(TL)^{\perp}$.
We also define
\begin{align*}
\tilde{\ev} &\colon (L\times L) \times I \to \R^{n+1} ((q,q'),\tau) \mapsto (1-\tau)\cdot q + \tau \cdot q, \\
\tilde{\sp} & \colon (\tilde{\ev})^{-1}(\tilde{N}) \to (L\times L)^{\times 2} \colon ((q,q'),\tau) \mapsto ( (q, \pi_L\circ \tilde{\ev} ((q,q'),\tau)), (\pi_L\circ \tilde{\ev} ((q,q'),\tau) , q'))  .
\end{align*}
Note that $N_{+}\coloneqq \tilde{N}\cap (\R^n\times \{ 1\})$ is a tubular neighborhood of $K_{+}$ in $\R^n \cong \R^n\times \{1\}$ and $(i'_{+})^*(\tilde{\eta})$ corresponds to the Thom class of $(TK_{+})^{\perp}$, where $i'_{+}\colon N_{+} \to \tilde{N}$ is the inclusion map. Likewise, we define $N_-\coloneqq \tilde{N}\cap (\R^n\times \{-1\})$ and $i'_-\colon N_-\to \tilde{N}$ to be the inclusion map.

Replacing $(K,Z_K, N , \eta )$ and $(\ev,\widehat{\sp})$ in Definition \ref{def-delta-K} by
\[(L , Z_L\coloneqq (\Delta_L \times I) \cup ( (L\times L)\times \partial I ) , \tilde{N} , \tilde{\eta} ) \text{ and } (\tilde{\ev},\tilde{\sp})\]
respectively, we define a $\Z/2$-linear map of degree $(1-d)$
\[\delta_L\colon H_*(L\times L,\Delta_L) \to H_{*+1-d}((L\times L,\Delta_L)^{\times 2}) \colon x \mapsto \tilde{\sp}_*( \tilde{\ev}^*(\tilde{\eta}) \cap (x\times [I]) ).\]

We claim that the following diagram commutes: 
\[\xymatrix{
H_*(K_+\times K_+,\Delta_{K_+}) \ar[r]^-{\delta_{K_+}} \ar[d]^-{(i_+^{\times 2})_*} & H_{*+1-d}((K_+\times K_+,\Delta_{K_+})^{\times 2}) \ar[d]^-{(i_+^{\times 4})_*} \\
H_*(L\times L,\Delta_L) \ar[r]^-{\delta_L} & H_{*+1-d}((L\times L,\Delta_L)^{\times 2}).
}\]
Indeed, for the map $\ev \colon (K_+\times K_+ ) \times I \to \R^n$, we have
\begin{align*}
& \tilde{\ev}\circ (i_+^{\times 2}\times \id_I) = i'_+ \circ \ev \colon\ev^{-1}(N_+) \to \tilde{N} ,\\
& \tilde{\sp}\circ  (i_+^{\times 2}\times \id_I) = i_+^{\times 4} \circ \widehat{\sp}  \colon \ev^{-1}(N_+) \to (L\times L)^{\times 2},
\end{align*}
and from the naturality of the cap product,
\begin{align*}
\left( \delta_L \circ (i_+^{\times 2})_*\right) (x) & =\tilde{\sp}_* (\tilde{\ev}^*(\tilde{\eta}) \cap (i_+^{\times 2}\times \id_I)_* (x \times [I])) \\
& = \left( \tilde{\sp}_* \circ (i_+^{\times 2}\times \id_I)_* \right) \left(  (i_+^{\times 2}\times \id_I)^*(\tilde{\ev}^* (\tilde{\eta})) \cap (x \times [I]) \right) \\
&= \left( (i_+^{\times 4})_*\circ \widehat{\sp}_* \right) \left( \ev^*((i'_+)^* \tilde{\eta}) \cap (x \times [I])\right) .
\end{align*}
for every $x \in H_*(K_+\times K_+ ,\Delta_{K_+})$. Since $(i'_+)^*(\tilde{\eta})$ corresponds to the Thom class of $(TK_{+})^{\perp}$, the commutativity is proved. The same assertion holds for $K_-$ and $i_-\colon K_- \to L$.
From this observation, we immediately obtain the following result.
\begin{thm}\label{thm-cobordism}
Assume that $i_+$ is a homotopy equivalence and let $p_+\colon L\to K_+$ be a homotopy inverse of $i_+$. Then the following diagram commutes:
\begin{align}\label{diagram-delta-K-pm}
\begin{split}
\xymatrix{
H_*(K_-\times K_-,\Delta_{K_-}) \ar[r]^-{\delta_{K_-}} \ar[d]^-{((p_+\circ i_-)^{\times 2})_*} & H_{*+1-d}((K_-\times K_-,\Delta_{K_-})^{\times 2}) \ar[d]^-{((p_+\circ i_-)^{\times 4})_*} \\
H_*(K_+\times K_+,\Delta_{K_+}) \ar[r]^-{\delta_{K_+}} & H_{*+1-d}((K_+\times K_+,\Delta_{K_+})^{\times 2}) .
}
\end{split}
\end{align}
\end{thm}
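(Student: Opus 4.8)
The plan is to deduce the commutativity of (\ref{diagram-delta-K-pm}) purely formally from the two commuting squares established in the discussion immediately above — the one relating $\delta_{K_+}$ to $\delta_L$ through $i_+^{\times 2}$, $i_+^{\times 4}$, and the analogous one for $\delta_{K_-}$ through $i_-^{\times 2}$, $i_-^{\times 4}$ — using the hypothesis that $i_+$ is a homotopy equivalence to invert the $i_+$-columns. The only preliminary input needed is homotopy-theoretic: if $H\colon L\times I\to L$ is a homotopy from $i_+\circ p_+$ to $\id_L$, then for every $s\in I$ the maps $H_s\times H_s$ and $H_s^{\times 4}$ preserve the diagonal $\Delta_L$ and the subspace $(\Delta_L\times(L\times L))\cup((L\times L)\times\Delta_L)$ respectively, because coordinates that agree stay in agreement along $H$; symmetrically for a homotopy $p_+\circ i_+\simeq\id_{K_+}$. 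Hence $i_+^{\times 2}$ and $i_+^{\times 4}$, viewed as maps of pairs $(K_+\times K_+,\Delta_{K_+})\to(L\times L,\Delta_L)$ and $(K_+\times K_+,\Delta_{K_+})^{\times 2}\to(L\times L,\Delta_L)^{\times 2}$, induce isomorphisms on the relevant relative homology, with inverses induced by $p_+^{\times 2}$ and $p_+^{\times 4}$; in particular $(p_+^{\times 4})_*\circ(i_+^{\times 4})_*=\id$ and $(i_+^{\times 2})_*\circ(p_+^{\times 2})_*=\id$.

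Granting this, I would first push the $K_+$-square forward by $p_+$: composing the identity $(i_+^{\times 4})_*\circ\delta_{K_+}=\delta_L\circ(i_+^{\times 2})_*$ on the left with $(p_+^{\times 4})_*$ and on the right with $(p_+^{\times 2})_*$, and using the two identities from the previous paragraph, gives $\delta_{K_+}\circ(p_+^{\times 2})_*=(p_+^{\times 4})_*\circ\delta_L$ on $H_*(L\times L,\Delta_L)$. Now stack this square on top of the $K_-$-square. The composite of the two left-hand columns is $(p_+^{\times 2})_*\circ(i_-^{\times 2})_*=((p_+\circ i_-)^{\times 2})_*$ and of the two right-hand columns is $(p_+^{\times 4})_*\circ(i_-^{\times 4})_*=((p_+\circ i_-)^{\times 4})_*$, by functoriality of singular homology; so the outer rectangle of the stacked diagram reads $\delta_{K_+}\circ((p_+\circ i_-)^{\times 2})_*=((p_+\circ i_-)^{\times 4})_*\circ\delta_{K_-}$, which is exactly the commutativity of (\ref{diagram-delta-K-pm}).

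The main obstacle — and in fact the only nontrivial point — is the bookkeeping in the first paragraph: one has to check that a homotopy inverse $p_+$ of $i_+$ may be applied to the various products while still respecting the diagonal and ``fat diagonal'' subspaces, so that the maps it induces on relative homology are genuine isomorphisms with the expected inverses. There is no analytic or geometric difficulty here: the two input squares were themselves obtained merely from the naturality of the cap product, and the rest is a diagram chase, which is why the theorem follows at once from the preceding observation.
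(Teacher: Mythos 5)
Your proposal is correct and is essentially the paper's own argument: the paper establishes exactly the two commuting squares for $i_+$ and $i_-$ (via naturality of the cap product) and then asserts the theorem follows "immediately," which is precisely the diagram chase you carry out, inverting the $i_+$-columns by $(p_+^{\times 2})_*$, $(p_+^{\times 4})_*$ after checking these are well-defined maps of pairs homotopy-inverse to $(i_+^{\times 2})_*$, $(i_+^{\times 4})_*$. The diagonal-preservation bookkeeping you flag is the right (and only) point to verify, and your verification of it is sound.
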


\begin{ex}[Invariance under $C^{\infty}$ isotopy]\label{ex-isotopy}
If $K_+$ is $C^{\infty}$ isotopic to $K_-$ in $\R^n$, then there exists a isotopy $(f_s\colon K_- \to \R^n)_{s\in [0,1]}$ such that $f_0(x) =x$ for every $x\in K_-$ and $f_1(K_-)=K_+$. Take a $C^{\infty}$ function $\sigma \colon \R\to [0,1]$ such that $\sigma(z) = 0 $ if $z\leq -1$ and $\sigma(z)=1$ if $z\geq 1$. Let us consider
$L\coloneqq \{(f_{\sigma(z)}(q),z) \mid q\in K_- , \ z\in \R \}$.
Then $p_+\colon L\to K_+\colon (f_{\sigma(z)}(q),z)\mapsto f_1(q)$ is a homotopy inverse of $i_+$. Since $p_+\circ i_- =f_1\colon K_-\to K_+$, the diagram (\ref{diagram-delta-K-pm}) shows the $C^{\infty}$ isotopy invariance of the coproduct.
\end{ex}


\subsection{Coincidence of two definitions}


Let $P$ be a $p$-dimensional compact $C^{\infty}$ manifold without boundary  and $c \colon P \to K\times K\colon z\mapsto (q_z,q'_z)$ be a $C^{\infty}$ map.
Consider a $C^{\infty}$ function
\[h\colon P\times [0,1] \to \R\colon (z,\tau) \mapsto \tau(1-\tau)\cdot E(c(z))\]
and take $\epsilon_1>0$ such that
\begin{align}\label{ineq-h}
 \textstyle{\min \{\frac{1}{2}\tau^2|q_z-q'_z|^2 , \frac{1}{2}(1-\tau)^2|q_z-q'_z|^2\} <\epsilon_0 }
 \end{align}
for every $(z,\tau)\in h^{-1}([0,\epsilon_1])$.

Suppose that $(K,g,g')$ is admissible.
We assume that $c$ is transversal to $\W^s_g(x)$ for every $x\in \mathcal{C}(K)$ and moreover transversal to the map $\pr \colon \mathring{\ev}^{-1}(K)\to K\times K$. Then, the map
\[ \ev_c\colon (P\times I) \setminus h^{-1}(0) =\{ z\in P \mid c(z) \notin \Delta_K \} \times (0,1)\to \R^n\colon (z,\tau) \mapsto \mathring{\ev} ( c(z),\tau)\]
is transversal to $K$. We assume that $\epsilon_1>0$ is a regular value of $h\colon \ev^{-1}_c(K)\to \R$. Then, $\ev_c^{-1}(K)^{\geq \epsilon_1} \coloneqq \ev^{-1}_c(K) \cap h^{-1}([\epsilon_1,\infty))$ is a compact $C^{\infty}$ manifold with boundary $\ev_c^{-1}(K)\cap h^{-1}(\epsilon_1)$.
On the manifold $\ev_c^{-1}(K)^{\geq \epsilon_1}$, we define a $C^{\infty}$ map
\[ \sp_c \colon \ev_c^{-1}(K)^{\geq \epsilon_1} \to (K\times K)^{\times 2} \colon (z,\tau) \mapsto \sp (c (z) , \tau) .\]
We can check from (\ref{ineq-h}) that $\sp_c$ maps the boundary of $\ev_c^{-1}(K)^{\geq \epsilon_1}$ to $U_{\epsilon_0}$. Hence, we obtain a homology class $(\sp_c)_*([\ev_c^{-1}(K)^{\geq \epsilon_1}]) \in H_{p+1-d}((K\times K)^{\times 2} , U_{\epsilon_0})$.
\begin{lem}\label{lem-geometric-intersection}
By generic perturbations of $g$ and $g'$ in $\mathcal{G}_K$ and $c$ if necessary, we have an equation
\begin{align}\label{chain-equal}
 (\delta_{g,g'})_* \circ \Psi_g(c_*([P])) = \Psi^2_{g'} ((\sp_c)_* ([\ev_c^{-1}(K)^{\geq \epsilon_1}])) 
\end{align}
in $HM^2_{p+1-d}(g')$.
\end{lem}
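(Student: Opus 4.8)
The plan is to interpolate between the two sides of (\ref{chain-equal}) at the chain level by flowing $c$ along the negative $E$-gradient of $g$ for a variable time before performing the evaluation and splitting. For an ordered pair $x_1,x_2\in\mathcal{C}(K)$ I would introduce
\[
\mathcal{N}(x_1,x_2)\coloneqq\{(z,s,\tau)\in P\times[0,\infty)\times(0,1)\mid \ev(\varphi^s_g(c(z)),\tau)\in K,\ \sp(\varphi^s_g(c(z)),\tau)\in\W^s_{g'}(x_1)\times\W^s_{g'}(x_2)\},
\]
intersected with the region $\{E(\varphi^s_g(c(z)))\geq\epsilon_0\}$ and a $\tau_0$-type truncation of the $\tau$-variable as in Subsection \ref{subsec-coprod}. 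As in the discussion preceding the lemma these truncations are inert: by (\ref{stable-contained}) any configuration whose image under $\sp$ meets $\W^s_{g'}(x_1)\times\W^s_{g'}(x_2)$ already lies in the interior of the truncated region, so no configuration sits on its new boundary. A transversality count gives $\dim\mathcal{N}(x_1,x_2)=p+2-d-(\ind x_1+\ind x_2)$; thus $\mathcal{N}(x_1,x_2)$ is a $1$-manifold exactly when $\ind x_1+\ind x_2=p+1-d$ and a finite set when $\ind x_1+\ind x_2=p+2-d$. By construction the hypersurface $\{s=0\}\subset\mathcal{N}(x_1,x_2)$ is precisely $\ev_c^{-1}(K)^{\geq\epsilon_1}\ftimes{\sp_c}{i^s_2}(\W^s_{g'}(x_1)\times\W^s_{g'}(x_2))$.

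First I would establish transversality. After a generic perturbation of $g$ and $g'$ in $\mathcal{G}_K$ supported away from $\mathcal{C}(K)$ — so that admissibility of $(K,g,g')$, which constrains $g,g'$ only through conditions (4)--(6) of Definition \ref{def-admissible}, is preserved — together with a generic perturbation of $c$, the map $(z,s,\tau)\mapsto\ev(\varphi^s_g(c(z)),\tau)$ becomes transverse to $K$ and the induced splitting map becomes transverse to every $\W^s_{g'}(x_1)\times\W^s_{g'}(x_2)$, so that each $\mathcal{N}(x_1,x_2)$ is a smooth manifold with boundary contained in $\{s=0\}$. This is the same Sard--Smale scheme already used for Lemma \ref{lem-admissible} and Proposition \ref{prop-Morse-singular}, now carrying the additional flow parameter $s$.

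The heart of the proof is the compactification and boundary analysis of the $1$-dimensional spaces $\mathcal{N}(x_1,x_2)$ with $\ind x_1+\ind x_2=p+1-d$, carried out exactly in the style of Lemma \ref{lem-boundary-T} and (\ref{compact-T}): one replaces $\W^u_g$ and $\W^s_{g'}$ by their compactifications $\overline{\W}^u_g$, $\overline{\W}^s_{g'}$ and uses $\overline{\W}^{u,1}_g$, $\overline{\W}^{s,1}_{g'}$ as in the proof of Lemma \ref{lem-boundary-T}. Compactness follows from the compact form of the truncated region, and $\partial\overline{\mathcal{N}(x_1,x_2)}$ splits into four families. The face $\{s=0\}$ recovers the right-hand side of (\ref{chain-equal}) by Proposition \ref{prop-Morse-singular}. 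At the ends where $s\to\infty$, Morse-theoretic compactness forces the $V_g$-trajectory issuing from $c(z)$ to break at a critical point $x$, with the evaluation/splitting event migrating into the $\W^u_g(x)$-piece; the two resulting factors $P\ftimes{c}{i^s}\W^s_g(x)$ and $\mathcal{T}_{g,g'}(x;x_1,x_2)$ have dimensions $p-\ind x$ and $\ind x-p$, so this end is nonempty only for $\ind x=p$ and equals $\coprod_{\ind x=p}(P\ftimes{c}{i^s}\W^s_g(x))\times\mathcal{T}_{g,g'}(x;x_1,x_2)$, which represents the left-hand side of (\ref{chain-equal}). The remaining two families are the ends where a $V_{g'}$-trajectory breaks off the first or the second branch, namely $\coprod_{\ind x_1'=\ind x_1+1}\mathcal{N}(x_1',x_2)\times\mathcal{T}_{g'}(x_1',x_1)$ and $\coprod_{\ind x_2'=\ind x_2+1}\mathcal{N}(x_1,x_2')\times\mathcal{T}_{g'}(x_2',x_2)$, whose first factors are the finite sets $\mathcal{N}(\cdot,\cdot)$ of index sum $p+2-d$. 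One must also rule out the other degenerations: $\tau\to0$ or $1$ and convergence of $\varphi^s_g(c(z))$ to $\Delta_K$ are excluded by the truncation together with condition (2) of Definition \ref{def-admissible}; multiple breakings of the $V_g$- or $V_{g'}$-trajectories are codimension $\geq2$; and there is no bubbling in this finite-dimensional Morse setting.

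Finally, set $m\coloneqq\sum_{\ind x_1+\ind x_2=p+2-d}\#_{\Z/2}\mathcal{N}(x_1,x_2)\cdot x_1\otimes x_2\in CM^{\otimes2}_{p+2-d}$. Counting $\partial\overline{\mathcal{N}(x_1,x_2)}$ modulo $2$ over all $x_1,x_2$ with $\ind x_1+\ind x_2=p+1-d$ yields the chain-level identity
\[
\Psi^2_{g'}((\sp_c)_*([\ev_c^{-1}(K)^{\geq\epsilon_1}]))+(\delta_{g,g'})_*\circ\Psi_g(c_*([P]))+(d_{g'}\otimes\id_{CM_*}+\id_{CM_*}\otimes d_{g'})(m)=0,
\]
where Proposition \ref{prop-Morse-singular} identifies the first summand with the stated representative of $\Psi^2_{g'}((\sp_c)_*([\ev_c^{-1}(K)^{\geq\epsilon_1}]))$, and, via the chain-map property of $\delta_{g,g'}$ together with Proposition \ref{prop-Morse-singular}, the second with a representative of $(\delta_{g,g'})_*\circ\Psi_g(c_*([P]))$. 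Passing to $HM^2_{p+1-d}(g')$ gives (\ref{chain-equal}). The main obstacle is the third step: making rigorous the mixed gluing picture near $s=\infty$ — negative $g$-gradient flow glued to the $\ev/\sp$ construction glued to negative $g'$-gradient flow — and carefully bookkeeping the cutoffs $\epsilon_0$, $\epsilon_1$, $\tau_0$ so that the truncated regions provably contribute nothing.
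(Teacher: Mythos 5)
Your proposal is correct and follows essentially the same route as the paper: your interpolation space $\mathcal{N}(x_1,x_2)$ is precisely the paper's parametrized space $\mathcal{S}_c(x_1,x_2)$ (your flow parameter $s$ is the paper's $\rho$), and the boundary analysis — the $\{s=0\}$ face giving the right-hand side, the $s\to\infty$ breaking through index-$p$ critical points giving $(P\ftimes{c}{i^s}\W^s_g(x))\times\mathcal{T}_{g,g'}(x;x_1,x_2)$, and the $g'$-trajectory breakings supplying the $(d_{g'}\otimes\id+\id\otimes d_{g'})$-term — matches the paper's proof of the same chain-level identity.
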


\begin{proof}

By the second assertion of Proposition \ref{prop-Morse-singular} and the definition of $\delta_{g,g'}$, the left-hand side of (\ref{chain-equal}) is represented by the chain
\begin{align}\label{chain-FIf}
\begin{split}
 &\delta_{g,g'} \left( \sum_{\ind x=p} \#_{\Z/2}\left( P\ftimes{c}{i^s} \W^s_g(x) \right) x \right) \\
 = &\sum_{ \ind x_1 +\ind x_2=p+1-d}  \left(\sum_{\ind x=p}  \#_{\Z/2}  \left( (P \ftimes{c}{i^s} \W^s_g(x)) \times \mathcal{T}_{g,g'}(x;x_1,x_2) \right) \right) x_1\otimes x_2  .
 \end{split}
\end{align}

We give another chain which is homologous to (\ref{chain-FIf}).
We consider the flow $\{\varphi^{\rho}_g\}_{\rho\geq 0}$ of $V_g$ and the fiber product over $K\times K$
\[P \ftimes{\varphi^{\rho}_g \circ c}{\pr} \ev^{-1}(K) \]
for every $\rho \in [0,\infty)$.
On this fiber product, we define a map
\[ \sp_{c,\rho}\colon P \ftimes{\varphi^{\rho}_g \circ c}{\pr} \ev^{-1}(K) \to (K \times K)^{\times 2} \colon (z, (y,\tau)) \mapsto \sp(y,\tau) .\]
Then, for any $x_1,x_2\in \mathcal{C}(K)$,
we define a set
\begin{align*}
\mathcal{S}_{c}(x_1,x_2) \coloneqq 
\coprod_{\rho \in [0,\infty)} \{\rho\}\times \left( (P \ftimes{\varphi^{\rho}_g \circ c}{\pr} \ev^{-1}(K) ) \ftimes{\sp_{c,\rho}}{i^s_2} (\W^s_{g'}(x_1) \times \W^s_{g'}(x_2) ) \right).
\end{align*}
It has the topology as a subspace of $[0,\infty) \times \left( P\times \ev^{-1}(K) \times (\W^s_{g'}(x_1) \times \W^s_{g'}(x_2) ) \right) $.
For generic $g$ and $g'$ and $c$, $\mathcal{S}_{c}(x_1,x_2)$ is transversely cut out and it is a $C^{\infty}$ manifold of dimension
\[p+ 2-d- (\ind x_1+\ind x_2). \]

We claim that $\mathcal{S}_{c}(x_1,x_2)$ admits a compactification as follows when its dimension is $0$ or $1$:
If $\ind x_1+\ind x_2 = p-d+2$, $\mathcal{S}_{c}(x_1,x_2)$ is a compact $0$-dimensional manifold.
If $\ind x_1+\ind x_2=p-d+1$, $\mathcal{S}_{c}(x_1,x_2)$ is compactified to a compact $1$-dimensional manifold whose boundary is 
\begin{align}\label{boundary-S}
\begin{split}
& \mathcal{S}_{c}(x_1,x_2) \cap \{ \rho=0\} \\
\sqcup & \coprod_{\ind x'_1=\ind x_1 +1}  \mathcal{S}_c(x'_1,x_2) \times \mathcal{T}_{g'}(x'_1,x_1) \\
\sqcup &  \coprod_{\ind x'_2=\ind x_2 +1} \mathcal{S}_c(x_1,x'_2) \times \mathcal{T}_{g'}(x'_2,x_2) \\
\sqcup & \coprod_{\ind x= p} ( P \ftimes{c}{i^s} \W^s_g(x)) \times \mathcal{T}_{g,g'}(x;x_1,x_2).
\end{split}
\end{align}
The proof is similar to Lemma \ref{lem-boundary-T}, except that
we have the boundary component in the fourth line of (\ref{boundary-S}) which consists of the limit of sequences $(\rho_n, Y_n)_{n=1,2,\dots }$ in $\mathcal{S}_{c}(x_1,x_2)$ such that $\lim_{n\to \infty} \rho_n = \infty$.
%
Moreover,
\begin{align}\label{S0}
\begin{split}
 \mathcal{S}_{c}(x_1,x_2) \cap \{ \rho=0\} & =  (P \ftimes{c}{\pr} \ev^{-1}(K) ) \ftimes{\sp_{c,0}}{i^s_2} (\W^s_{g'}(x_1) \times \W^s_{g'}(x_2) ) \\
 & \cong \ev_{c}^{-1}(K)^{\geq \epsilon_1} \ftimes{\sp_{c}}{i^s_2} (\W^s_{g'}(x_1) \times \W^s_{g'}(x_2) ) .
 \end{split}
 \end{align}
 
By (\ref{boundary-S}) and (\ref{S0}), the chain
\begin{align}\label{chain-FIf-homologous}
  \sum_{\ind x_1+\ind x_2=p-d+1}  \#_{\Z/2} \left( \ev_{c}^{-1}(K)^{\geq \epsilon_1} \ftimes{\sp_{c}}{i^s_2} (\W^s_{g'}(x_1) \times \W^s_{g'}(x_2) ) \right) \cdot x_1\otimes x_2 
 \end{align}
is equal to
\[ (\ref{chain-FIf}) + (d_{g'}\otimes \id_{CM_*} + \id_{CM_*} \otimes d_{g'})  \left( \sum_{\ind x'_1+\ind x'_2=p-d+2}
\#_{\Z/2} \mathcal{S}_c(x'_1,x'_2) \cdot  x'_1\otimes x'_2 \right),
\]
which is homologous to (\ref{chain-FIf}). Moreover, by the third assertion of Proposition \ref{prop-Morse-singular}, the chain (\ref{chain-FIf-homologous}) represents
\[  \Psi^2_{g'} ( (\sp_{c})_*([\ev_{c}^{-1}(K)^{\geq \epsilon_1}]) ) \in HM^2_{p+1-d}(g') \]
This proves the equality (\ref{chain-equal}) in $HM_{p+1-d}^{2}(g')$.
\end{proof}



We may take $\epsilon_1>0$ to be a regular value of $h$ so that $G\coloneqq h^{-1}([\epsilon_1,\infty))$ is a compact manifold with smooth boundary $ \partial G = h^{-1}(\epsilon_1)$. Then, its fundamental class $[G] \in H_{p+1}(G,\partial G)$ is defined.
In addition, let
\[j \colon (K\times K,\Delta_K) \to (K\times K, E^{-1}([0,\epsilon_0]))\]
denote the inclusion map for pairs. This is a homotopy equivalence. The product $j^{\times 2} \colon  (K\times K,\Delta_K)^{\times 2} \to ((K\times K)^{\times 2} ,U_{\epsilon_0}) $ is also a homotopy equivalence.
\begin{lem}\label{lem-homology-geometry}
By the map
\[ (j^{\times 2})_* \circ \delta_K\colon H_*(K\times K,\Delta_K) \to H_*((K\times K)^{\times 2}, U_{\epsilon_0} ),\]
$c_*([P])$ is mapped to $(\sp_c)_* ([\ev_c^{-1}(K)^{\geq \epsilon_1} ] )$.
\end{lem}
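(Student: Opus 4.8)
The plan is to unwind Definition~\ref{def-delta-K} by naturality, and then to recognize the cap product with the Thom class $\eta$ as a transverse geometric intersection; the one delicate point, the locus where the straight‑line evaluation map is not transverse to $K$, is handled by restricting everything to the compact set $G=h^{-1}([\epsilon_1,\infty))$ of the preceding paragraph. Write $E_c\colon P\times I\to\R^n$, $(z,\tau)\mapsto(1-\tau)q_z+\tau q'_z$, for the extension of $\ev_c$ to all of $P\times I$, and put $\widehat{\sp}_c\coloneqq\widehat{\sp}\circ(c\times\id_I)|_{E_c^{-1}(N)}$; since $\pi_K\circ E_c=E_c$ on $\ev_c^{-1}(K)$, there $\widehat{\sp}_c(z,\tau)=\sp(c(z),\tau)$, and in particular $\widehat{\sp}_c|_{\ev_c^{-1}(K)^{\ge\epsilon_1}}=\sp_c$. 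First I would check, using naturality of the homological cross product and of the relative cap product — in the precise form of the Remark after Definition~\ref{def-delta-K}, noting that $c\times\id_I$ and $\widehat{\sp}$ carry each auxiliary subset into its counterpart and applying the projection formula — that
\[
(j^{\times2})_*\,\delta_K(c_*([P]))=(j^{\times2}\circ\widehat{\sp}_c)_*\bigl(E_c^*(\eta)\cap[P\times I]\bigr),
\]
where $[P\times I]=[P]\times[I]\in H_{p+1}(P\times I,P\times\partial I)$ and $E_c^*(\eta)\cap[P\times I]\in H_{p+1-d}(E_c^{-1}(N),P\times\partial I)$ is formed by the same three steps (enlarge the pair by $E_c^{-1}(\R^n\setminus N')$, excise $E_c^{-1}(\R^n\setminus N)$, cap with $E_c^*\eta\in H^d(E_c^{-1}(N),E_c^{-1}(N\setminus N'))$).

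Next I would localize. Fix $\epsilon_0'$ with $\epsilon_0<\epsilon_0'<\min_{\mathcal C(K)}E$; since the critical values of $E$ are $0$ and the values $>\epsilon_0$, the inclusion $U_{\epsilon_0}\hookrightarrow U_{\epsilon_0'}$ is a homotopy equivalence, so it suffices to prove the displayed identity with $U_{\epsilon_0}$ replaced by $U_{\epsilon_0'}$. Take the tube radius $r$ small (depending only on $\epsilon_0,\epsilon_0'$) and $\epsilon_1$ a regular value of $h$, so that $G$ is a compact manifold with boundary $h^{-1}(\epsilon_1)$ disjoint from $P\times\partial I$ and $E_c^{-1}(K)\cap G=\ev_c^{-1}(K)^{\ge\epsilon_1}$. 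For $(z,\tau)\in E_c^{-1}(N)\cap h^{-1}([0,\epsilon_1])$, one of $\tfrac12\tau^2|q_z-q'_z|^2$ and $\tfrac12(1-\tau)^2|q_z-q'_z|^2$ — say the first — is $<\epsilon_0$ by (\ref{ineq-h}), and then $|q_z-\pi_K(E_c(z,\tau))|\le\tau|q_z-q'_z|+r<\sqrt{2\epsilon_0}+r$, so $E\bigl((q_z,\pi_K(E_c(z,\tau)))\bigr)<\tfrac12(\sqrt{2\epsilon_0}+r)^2<\epsilon_0'$; hence $\widehat{\sp}_c\bigl(E_c^{-1}(N)\cap h^{-1}([0,\epsilon_1])\bigr)\subset U_{\epsilon_0'}$. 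Therefore $(j^{\times2}\circ\widehat{\sp}_c)_*$, with target $H_{p+1-d}((K\times K)^{\times2},U_{\epsilon_0'})$, factors through the restriction map to $H_{p+1-d}\bigl(E_c^{-1}(N),E_c^{-1}(N)\cap h^{-1}([0,\epsilon_1])\bigr)$; and by naturality of the cap product under this enlargement of the second space, together with excision of $E_c^{-1}(N)\cap h^{-1}([0,\epsilon_1))$ and a collar homotopy at $h^{-1}(\epsilon_1)$, the image of $E_c^*(\eta)\cap[P\times I]$ there coincides with the correspondingly‑formed cap product of $E_c^*\eta$ with $[G]\in H_{p+1}(G,\partial G)$, now living in $H_{p+1-d}\bigl(G\cap E_c^{-1}(N),\,G\cap E_c^{-1}(N)\cap h^{-1}(\epsilon_1)\bigr)$.

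Finally I would apply the standard transverse‑intersection formula for the cap product with a Thom class. On $G$ the map $E_c$ is transverse to $K$ (since $G\cap h^{-1}(0)=\emptyset$), and $E_c|_{h^{-1}(\epsilon_1)}$ is transverse to $K$ (since $\epsilon_1$ is a regular value of $h|_{\ev_c^{-1}(K)}$), so $\ev_c^{-1}(K)^{\ge\epsilon_1}=E_c^{-1}(K)\cap G$ is a compact submanifold of $\interior G$ with boundary in $h^{-1}(\epsilon_1)$; choosing a tubular neighbourhood of it on which $E_c^*\eta$ becomes the Thom class of its normal bundle, one reads off that the cap product above is the image of the fundamental class of $\ev_c^{-1}(K)^{\ge\epsilon_1}$ under inclusion. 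Pushing this forward by $\widehat{\sp}_c=\sp_c$ and then by $j^{\times2}$ gives $(\sp_c)_*([\ev_c^{-1}(K)^{\ge\epsilon_1}])$, which combined with the first paragraph is the assertion. The hardest part will be the middle paragraph: making the factorization through the pair over $G$, the naturality of the three‑step relative cap product under this enlargement, and the attendant excisions and collar homotopies rigorous and mutually compatible; the naturality reduction of the first paragraph and the transverse‑intersection formula of the third are standard.
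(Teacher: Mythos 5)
Your argument is correct and follows essentially the same route as the paper's proof: pull everything back to $P\times I$ by naturality of the cap product, pass to the pair $(G,\partial G)$ by excision, and identify the cap with the Thom class as the fundamental class of the transverse preimage $\ev_c^{-1}(K)^{\geq\epsilon_1}$ before pushing forward by $\widehat{\sp}_c=\sp_c$. The only difference is bookkeeping: your detour through $U_{\epsilon_0'}$ (with $r$ small) makes explicit a containment that the paper handles implicitly by taking $r>0$ sufficiently small, so no substantive change of method.
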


\begin{proof}
Since $\ev_c$ is transversal to $K$, if we take $r>0$ sufficiently small, then $\ev^{-1}(N)\cap G$ with boundary $\ev^{-1}(N)\cap \partial G $ is a tubular neighborhood of $\ev_c^{-1}(K)^{\geq \epsilon_1} = \ev^{-1}(K)\cap G$ with boundary $\ev_c^{-1}(K)\cap \partial G$. Moreover, the cohomology class
\[(\rest{\ev_c}{G})^*(\eta) \in H^d( \ev_c^{-1}(N)\cap G , \ev^{-1}_c(N\setminus N') \cap \partial G ) \]
corresponds to the Thom class of the normal bundle of $\ev_c^{-1}(K)^{\geq \epsilon_1}$. Let us define
\[\widehat{\sp}_c \colon \ev_c^{-1}(N) \cap G\to (K\times K)^{\times 2}\colon (z,\tau) \mapsto \widehat{\sp}(c(z),\tau).\]

We claim that $\left( (j^{\times 2})_*\circ \delta_K\right) (c_*([P]))$ is equal to the image of $[P]\in H_p(P)$ by the following composite map:
\[\xymatrix@C=70pt@R=10pt{
H_p(P ) \ar[r]^-{  \times [I]} & H_{p+1}(P\times I , h^{-1}([0,\epsilon_1])) \cong H_{p+1}(G,\partial G)  \\
\ar[r]^-{(\rest{\ev_c}{G})^*(\eta)  \cap  } & H_{p+1-d} (\ev^{-1}_c(N) \cap G , \ev^{-1}_c(N) \cap \partial G) \\
\ar[r]^-{(\widehat{\sp}_c)_*} & H_{p+1-d} ((K\times K)^{\times 2},U_{\epsilon_0}) ,
}\]
where the isomorphism $H_{p+1}(P\times I , h^{-1}([0,\epsilon_1])) \cong H_{p+1}(G,\partial G)$ is given by the excision of $h^{-1}([0,\epsilon_1))$.
Indeed, $[P\times I]$ is mapped to $[G]$ by this excision and
\begin{align*}
\left( (j^{\times 2})_*\circ \delta_K\right) (c_*([P]))  & = (j^{\times 2} \circ \widehat{\sp})_*(\ev^*(\eta) \cap (c \times \id_{I})_*([P\times I])) \\
 &= \left( (j^{\times 2} \circ \widehat{\sp})_* \circ (c\times \id_I)_* \right) \left( (c\times \id_I)^*(\ev^*(\eta)) \cap [P\times I]\right) \\
 &=(\widehat{\sp}_c)_* ( (\rest{\ev_c}{G})^*(\eta) \cap [G] ).
 \end{align*}
Here, the second equality follows from the naturality of cap product.
By Poincar\'{e} duality,
\[
 (\rest{\ev_c}{G})^{*} (\eta) \cap [G] =  [\ev_c^{-1}(K)^{\geq \epsilon_1}]  \in H_{p+1-d} (\ev^{-1}_c(N) \cap G , \ev^{-1}_c(N) \cap \partial G ).
\]
Since $\widehat{\sp}_c$ coincides with $\sp_c$ on $\ev_c^{-1}(K)^{\geq \epsilon_1}$,
\[(\widehat{\sp}_c)_* ( (\rest{\ev_c}{G})^*(\eta) \cap [G] ) = (\sp_c)_* ([\ev_c^{-1}(K)^{\geq \epsilon_1} ] ).\]
This proves the lemma.
\end{proof}

We use Lemma \ref{lem-homology-geometry} to show the the coincidence of the two definitions of coproduct.
\begin{thm}\label{thm-morse-singular}
By generic perturbations of $g$ and $g'$ in $\mathcal{G}_K$ if necessary, the following diagram commutes:
\[\xymatrix{
H_*(K\times K,\Delta_K) \ar[r]^-{\delta_K} \ar[d]^-{\Psi_g \circ j_*}& H_{*+1-d}((K\times K,\Delta_K)^{\times 2}) \ar[d]^-{\Psi_{g'}^2 \circ (j^{\times 2})_*}\\
HM_*(g) \ar[r]^-{(\delta_{g,g'})_*} & HM^2_{*+1-d}(g')
}\]
\end{thm}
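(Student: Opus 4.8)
The plan is to reduce the statement to a comparison, chain by chain, of two geometric descriptions of the coproduct and then glue them with a continuation argument. First I would fix a cycle representing an arbitrary class in $H_*(K\times K,\Delta_K)$ by a compact manifold: by standard transversality one can represent any $\alpha\in H_p(K\times K,\Delta_K)$ by $c_*([P])$ for a closed $p$-manifold $P$ and a $C^\infty$ map $c\colon P\to K\times K$ with $c(P)$ away from $\Delta_K$ (since $\Delta_K$ has codimension $d\geq 2$, a generic representative misses it), and after a further generic perturbation I may assume $c$ is transversal to $\W^s_g(x)$ for every $x\in\mathcal C(K)$ and also transversal to $\pr\colon\mathring{\ev}^{-1}(K)\to K\times K$. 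This is exactly the setting of Lemma \ref{lem-geometric-intersection} and Lemma \ref{lem-homology-geometry}. Note the left and right vertical maps in the diagram are genuine isomorphisms: $j_*$ and $(j^{\times 2})_*$ are induced by the homotopy equivalences $(K\times K,\Delta_K)\hookrightarrow(K\times K,E^{-1}([0,\epsilon_0]))$ and its square, while $\Psi_g$, $\Psi^2_{g'}$ are isomorphisms by Proposition \ref{prop-Morse-singular}.

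The heart of the argument is then purely formal. By Lemma \ref{lem-homology-geometry},
\[
(j^{\times 2})_*\bigl(\delta_K(c_*([P]))\bigr)=(\sp_c)_*\bigl([\ev_c^{-1}(K)^{\geq\epsilon_1}]\bigr)\in H_{p+1-d}((K\times K)^{\times 2},U_{\epsilon_0}).
\]
Applying $\Psi^2_{g'}$ to both sides gives
\[
\Psi^2_{g'}\circ(j^{\times 2})_*\circ\delta_K(c_*([P]))=\Psi^2_{g'}\bigl((\sp_c)_*([\ev_c^{-1}(K)^{\geq\epsilon_1}])\bigr).
\]
On the other hand, Lemma \ref{lem-geometric-intersection} (after the generic perturbations it requires, which are compatible with those already made on $c$) says precisely that the right-hand side equals $(\delta_{g,g'})_*\circ\Psi_g(c_*([P]))$. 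Composing $\Psi_g$ with $j_*$ on the source costs nothing, since $\Psi_g$ was defined on $H_*(K\times K,E^{-1}([0,\epsilon_0]))$ and $\Psi_g\circ j_* = (F_g)_*\circ(I_g)^{-1}_*\circ j_*$ is the composite that sends $c_*([P])$ to the Morse cycle $\sum_{\ind x=p}\#_{\Z/2}(P\ftimes{c}{i^s}\W^s_g(x))\cdot x$ used in the proof of Lemma \ref{lem-geometric-intersection}. Chaining the three equalities yields
\[
\Psi^2_{g'}\circ(j^{\times 2})_*\circ\delta_K = (\delta_{g,g'})_*\circ\Psi_g\circ j_*
\]
on the class $c_*([P])$, and since every class is of this form, the diagram commutes.

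The main obstacle is bookkeeping of the genericity conditions: Lemma \ref{lem-geometric-intersection} asks for generic perturbations of $g$, $g'$, and $c$ (to make the one-parameter moduli spaces $\mathcal S_c(x_1,x_2)$ transversely cut out and their compactifications have the stated boundary), while Lemma \ref{lem-homology-geometry} is stated for a $c$ that is merely transverse to the $\W^s_g(x)$ and to $\pr$, and $\delta_K$ itself is perturbation-independent. One must check that the perturbations can be made in a single consistent step — e.g. first perturb $(g,g')$ so $(K,g,g')$ is admissible, then perturb $c$ within its homology class so that all transversality statements of both lemmas hold simultaneously (each is an open dense condition) — and that the resulting homology classes are unchanged along the way. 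This is routine but needs to be said carefully; once it is, the proof is the three-line composition above. I would close by remarking, as the theorem statement already hints, that the displayed square together with the fact that the vertical maps are isomorphisms shows $(\delta_{g,g'})_*$ is, up to the canonical identifications, independent of $g,g'$ and agrees with the topologically defined $\delta_K$.
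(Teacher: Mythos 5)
Your argument is essentially the paper's own proof: represent each class by $c_*([P])$ with $P$ closed, apply Lemma~\ref{lem-homology-geometry} to identify $(j^{\times 2})_*\circ\delta_K$ with $(\sp_c)_*([\ev_c^{-1}(K)^{\geq\epsilon_1}])$, apply Lemma~\ref{lem-geometric-intersection} to identify $\Psi^2_{g'}$ of that class with $(\delta_{g,g'})_*\circ\Psi_g(c_*([P]))$, and chain the equalities, handling genericity by fixing a finite set of representatives and perturbing once. One small slip worth correcting: you claim a generic $c$ can be taken with $c(P)$ disjoint from $\Delta_K$ ``since $\Delta_K$ has codimension $d\geq 2$,'' but $\Delta_K$ in $K\times K$ has codimension $n-d$, not $d$, and for $p\geq n-d$ a generic $p$-cycle need not avoid the diagonal. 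Fortunately this assumption is both false and unnecessary: neither Lemma~\ref{lem-geometric-intersection} nor Lemma~\ref{lem-homology-geometry} requires $c(P)\cap\Delta_K=\emptyset$, and the correct reason every class in $H_p(K\times K,\Delta_K)$ is of the form $c_*([P])$ with $P$ closed is the surjectivity $H_p(K\times K)\to H_p(K\times K,\Delta_K)$ coming from the retraction of $K\times K$ onto $\Delta_K$, combined with Thom's representability theorem --- which is exactly what the paper invokes. Dropping the spurious assumption, the rest of your argument is sound and coincides with the paper's.
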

\begin{proof}
Since the inclusion map $\Delta_K\to K\times K$ has a retraction $K\times K\to \Delta_K\colon (q,q')\mapsto (q,q)$,  
we have an exact sequence for the pair $(K\times K,\Delta_K)$
\begin{align}\label{ex-seq-diagonal}
 0 \to H_*(\Delta_K) \to H_*(K\times K) \to H_*(K\times K,\Delta_K) \to 0. 
\end{align}
In particular, $H_*(K\times K) \to H_*(K\times K,\Delta_K)$ is surjective. By \cite[Th\'{e}or\`{e}me III.2]{T}, every homology class of $H_p(K\times K)$ with coefficients in $\Z/2$ is represented by $c_*([P])$ for some closed $p$-dimensional manifold $P$ and a $C^{\infty}$ map $c \colon P\to K\times K$. We choose a finite set $\mathcal{B}$ of pairs $(P,c)$ as above and an admissible triple $(K,g,g')$ such that
$\{ c_*([P]) \mid (P,c)\in \mathcal{B}\}$ is a basis of $H_*(K\times K,\Delta_K)$ and the equality (\ref{chain-equal}) of Lemma \ref{lem-geometric-intersection} holds for every $(P,c)\in \mathcal{B}$. Then
\begin{align*}
\left( (\delta_{g,g'})_* \circ \Psi_g \circ j_*\right) (c_*([P])) =&  \left( (\delta_{g,g'})_* \circ \Psi_g\right) (c_*[P]) \\
=& \left( \Psi_g^2\circ (\sp_c)_*\right) ([\ev_c^{-1}(K)^{\geq \epsilon_1}] )\\
=&\left(  \Psi_g^2 \circ (j^{\times 2})_*\circ \delta_K\right) (c_*[P]).
\end{align*}
The second and the third equality follows from Lemma \ref{lem-geometric-intersection} and Lemma \ref{lem-homology-geometry} respectively. 
\end{proof}

Let us give a simple example such that $K$ is disconnected and $\delta_K$ is a non-zero map.
\begin{ex}
Consider the Hopf link in $\R^3$
\[K \coloneqq \{ (\cos \theta,\sin \theta,0) \in \R^3 \mid \theta\in [0,2\pi]\} \sqcup \{(  0, 1+\cos \varphi , \sin \varphi) \mid \varphi \in [0,2\pi]\}.\]
Let $c\colon S^1 = \R/ 2\pi \Z \to K\times K \colon \theta \mapsto ((1,0,0),  (\cos \theta,\sin \theta,0))$.
Note that $\{\theta \mid c(\theta) \notin \Delta_K\} = (0,2\pi)$.
Using the notation as before, $\ev_c$ is given by
\[ \ev_c  \colon (0,2\pi) \times (0,1) \to \R^n \colon (\theta,\tau) \mapsto (1-\tau + \tau \cos \theta, \tau \sin \theta ,0 ) \]
and $(\ev_c)^{-1}(K)^{\geq \epsilon_1} = \{(\pi,\frac{1}{2})\}$ for sufficiently small $\epsilon_1>0$.
See Figure \ref{figure-hopf}. Then,
\[\sp_c \colon (\ev_c)^{-1}(K)^{\geq \epsilon_1} \to (K\times K)^{\times 2} \colon \textstyle{(\pi,\frac{1}{2})} \mapsto ((1,0,0), p_0), (p_0,(-1,0,0)) ,\]
where $p_0=(0,0,0)\in K$. It is clear that $\delta_K(c_*([S^1])) = [\{\sp_c(\pi, \frac{1}{2})\}]\in H_0 ((K\times K,\Delta_K)^{\times 2})$ is non-zero since $(\pm 1,0,0)$ and $p_0$ are in different components of $K$.
\end{ex}

\begin{figure}
\centering
\begin{overpic}[height=5cm]{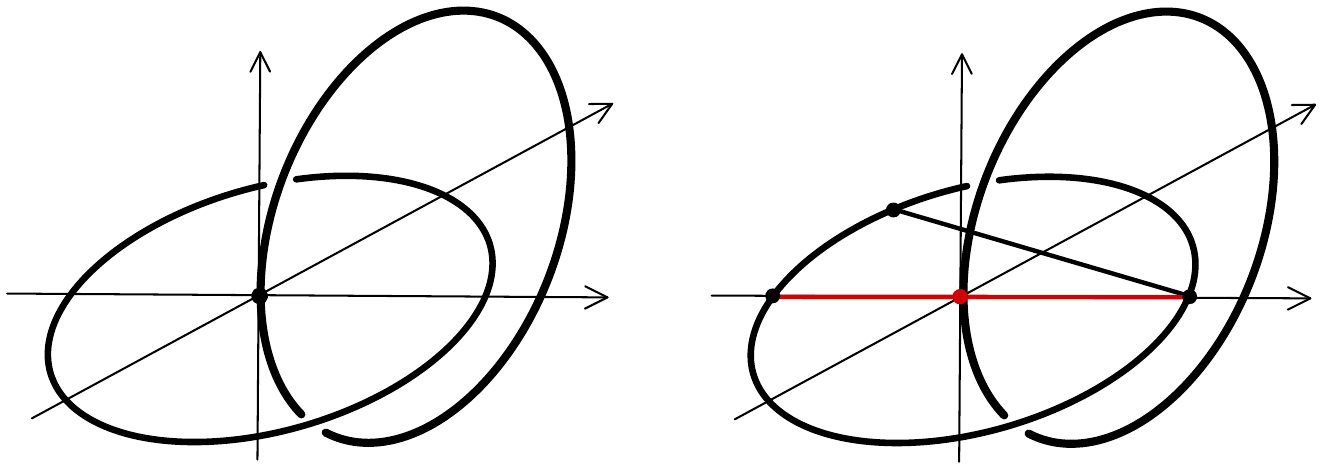}
\put(0,30){$\R^3$}
\put(38.5,16){$K$}
\put(44,10){$x_1$}
\put(45,24.5){$x_2$}
\put(15.5,30){$x_3$}
\put(16 ,14){$p_0$}
\put(53, 21){$(\cos \theta, \sin \theta ,0)$}
\put(48.5,14.5){$(-1,0,0)$}
\put(69,14){$p_0$}
\end{overpic}
\caption{$K$ is the Hopf link in $\R^3$ as in the left-hand side. Here, $(x_1,x_2,x_3)$ is the coordinate of $\R^3$ and $p_0=(0,0,0)$.
For every $\theta \in (0,2\pi)$, $\ev_c(\theta, \cdot) \colon (0,1) \to \R^3$ is extended to a linear path on $[0,1]$ from $(1,0,0)$ to $(\cos \theta,\sin \theta, 0)$, which is drawn by a black segment in the right-hand side.
The path $\ev_c(\theta, \cdot)$ intersects $K$ if and only if $\theta = \pi$ (see the red segment), and $\ev_c(\pi , \tau) \in K$ if and only if $\tau = \frac{1}{2}$.
}\label{figure-hopf}
\end{figure}

Lastly, let us show a variant of Lemma \ref{lem-homology-geometry} for $c\colon P\to K\times K\colon z\mapsto (q_z,q'_z)$,  which is used in Section \ref{sec-application} for concrete computations.
Here, we let $K$ be in general position and do not require the conditions in Definition \ref{def-admissible}.
Consider a $C^{\infty}$ section $\colon K\to (TK)^{\perp} \colon q\mapsto (q,\sigma(q))$ such that $|\sigma(q)|<\frac{r}{2}$ for every $q\in K$. Then, we obtain a submanifold
$K_{\sigma} = \{ q+\sigma(q) \in \R^n \mid q\in K\}$ in $N'$ as in Remark \ref{rem-sp-emb}. 
We define a map on $\ev_c^{-1}(K_{\sigma})$
\[ \sp_{c,\sigma}\colon \ev_c^{-1}(K_{\sigma}) \to (K\times K)^{\times 2} \colon (z,\tau) \mapsto \widehat{\sp}(c(z),\tau) . \]
\begin{prop}\label{prop-geometric-shift}
Assume that $\ev_c$ is transversal to $K_{\sigma}$ and $\{q_z ,q'_z\} \cap K_{\sigma}=\emptyset$ for every $z\in P$. Then
$\ev_c^{-1}(K_{\sigma})$ is a compact $C^{\infty}$ manifold without boundary. Moreover,
\[\delta_K(c_*([P])) = (\sp_{c,\sigma})_* ([\ev_c^{-1}(K_{\sigma})]) .\]
\end{prop}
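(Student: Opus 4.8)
The plan is to mirror the proof of Lemma \ref{lem-homology-geometry}, replacing the gradient-flow pushoff of $K$ by the explicit pushoff $K_{\sigma}$ and dropping the truncation to $\{h\geq\epsilon_1\}$, which is no longer needed because $K_{\sigma}$ is automatically disjoint from the relevant boundary and diagonal loci. Throughout I would write $\ev_c\coloneqq\ev\circ(c\times\id_I)\colon P\times I\to\R^n$, $(z,\tau)\mapsto(1-\tau)\cdot q_z+\tau\cdot q'_z$; this agrees with the $\ev_c$ of the previous subsection away from the diagonal locus, and replacing one by the other does not change $\ev_c^{-1}(K_\sigma)$, since on a fibre $\{z\}\times I$ with $c(z)\in\Delta_K$ the map $\ev_c$ is the constant $q_z=q'_z\notin K_\sigma$. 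First I would check that $\ev_c^{-1}(K_\sigma)$ is a compact manifold without boundary: the hypothesis $\{q_z,q'_z\}\cap K_\sigma=\emptyset$ forces $\ev_c^{-1}(K_\sigma)$ to miss $P\times\partial I$ and, by the remark just made, also to miss $\{z\mid c(z)\in\Delta_K\}\times I$; since $K_\sigma$ is compact, $\ev_c^{-1}(K_\sigma)$ is closed in $P\times I$, hence compact, and it lies in $P\times(0,1)$, so transversality of $\ev_c$ to $K_\sigma$ makes it a $C^{\infty}$ submanifold of $\interior(P\times I)$ of dimension $\dim P+1-d$. It also follows that $\sp_{c,\sigma}$ is defined on all of $\ev_c^{-1}(K_\sigma)$ (as $K_\sigma\subset N'\subset N$, the domain of $\widehat{\sp}$) and that $\sp_{c,\sigma}=\widehat{\sp}\circ(c\times\id_I)$ there.

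The heart of the argument is the Poincar\'e--Lefschetz duality identity
\[
\ev_c^*(\eta)\cap[P\times I]=[\ev_c^{-1}(K_\sigma)]
\]
in the appropriate relative homology group $H_{\dim P+1-d}(\ev_c^{-1}(N),\ev_c^{-1}(N\setminus N'))$. I would prove it exactly as the corresponding step in the proof of Lemma \ref{lem-homology-geometry}: the map $q\mapsto q+\sigma(q)$ is a section of the disk bundle $N\to K$ with image $K_\sigma\subset N'$, and it is fibrewise homotopic to the zero section through $q\mapsto q+t\sigma(q)$, $t\in[0,1]$; hence $[K_\sigma]$ is Lefschetz dual to the Thom class $\eta$, and intersecting a chain transverse to $K_\sigma$ computes $\cap\,\eta$. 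Concretely, since $\ev_c^{-1}(K_\sigma)\subset P\times(0,1)$, one takes a tubular neighbourhood of $\ev_c^{-1}(K_\sigma)$ inside $\ev_c^{-1}(N)$ on which $\ev_c^*(\eta)$ is identified with the Thom class of the normal bundle, and applies Poincar\'e duality there, just as in Lemma \ref{lem-homology-geometry}.

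Granting this, the proposition follows by naturality of the cap product. Writing $(c_*[P])\times[I]=(c\times\id_I)_*[P\times I]$ and using naturality,
\[
\delta_K(c_*([P]))=\widehat{\sp}_*\!\bigl(\ev^*(\eta)\cap(c\times\id_I)_*[P\times I]\bigr)=\bigl(\widehat{\sp}\circ(c\times\id_I)\bigr)_*\!\bigl(\ev_c^*(\eta)\cap[P\times I]\bigr),
\]
which by the displayed identity equals $\bigl(\widehat{\sp}\circ(c\times\id_I)\bigr)_*\!\bigl([\ev_c^{-1}(K_\sigma)]\bigr)=(\sp_{c,\sigma})_*([\ev_c^{-1}(K_\sigma)])$ by the first paragraph. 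I would then record the relative-homology bookkeeping entering Definition \ref{def-delta-K} (the pairs and excisions involving $P\times\partial I$, $\Delta_K$, $Z_K$, $\ev^{-1}(\R^n\setminus N)$, and so on), observing that $\ev_c^{-1}(K_\sigma)$, being a closed manifold contained in $P\times(0,1)$, maps into the open part of each pair, so its fundamental class pushes forward to a well-defined class in $H_{*+1-d}((K\times K,\Delta_K)^{\times2})$ compatible with the way $\delta_K$ is built; this is the same verification already carried out in Lemma \ref{lem-homology-geometry}.

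I expect the main obstacle to be precisely this bookkeeping with the several relative homology groups and excisions, together with phrasing the Poincar\'e--Lefschetz duality identity for a map into the disk bundle $N$ that is not proper onto $N$ (so that it applies verbatim to $\ev_c$). This is not a new idea, however: it is exactly the manipulation performed in the proof of Lemma \ref{lem-homology-geometry}, and once that template is in place the remaining work is routine.
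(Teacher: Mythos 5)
Your proposal is correct and follows essentially the same route as the paper: the paper also first notes $\ev_c^{-1}(K_{\sigma})$ is a closed manifold away from $P\times\partial I$ and the diagonal locus, then uses the fact that $K_{\sigma}$ is a section of $N$ homotopic to the zero section (formalized via a small tubular neighborhood $N_{\sigma}$ and the identification of $\eta$ with the Thom class $\eta_{\sigma}$ of $(TK_{\sigma})^{\perp}$), and concludes by naturality of the cap product and Poincar\'e duality exactly as in Lemma \ref{lem-homology-geometry}. The only difference is presentational: where you invoke Lefschetz duality of $[K_{\sigma}]$ against $\eta$ on a tubular neighborhood of $\ev_c^{-1}(K_{\sigma})$, the paper spells out the zig-zag $(N,N\setminus N')\to(N,N\setminus N'_{\sigma})\leftarrow(N_{\sigma},N_{\sigma}\setminus N'_{\sigma})$ and shrinks $r_1$ so that $\{q_z,q'_z\}\cap N_{\sigma}=\emptyset$, which is precisely the bookkeeping you flag as routine.
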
 
\begin{proof}
From the assumption on $K_{\sigma}$ and $c$, $\ev(c(z),\tau) $ is not contained in $K_{\sigma}$ for every $(z, \tau)\in h^{-1}(0) = (c^{-1}(\Delta_K)\times I) \cup (P\times \partial I )$. Therefore, if $\epsilon_1>0$ is sufficiently small, then the manifold $\ev_c^{-1}(K_{\sigma})$, whose boundary is empty, is contained in the compact set $h^{-1}([\epsilon_1,\infty))$. Hence $\ev_c^{-1}(K_{\sigma})$ is a compact $C^{\infty}$ manifold without boundary.

In order to prove the second assertion, 
we take $0< r_1 < \frac{r}{2}- \sup_{q\in K} |\sigma(q)|$ and define
\[N_{\sigma} \coloneqq \{ q + \sigma(q) + v \mid (q,v) \in (TK)^{\perp}, |v|<r_1\}\]
contained in $N'$.
Then, $N_{\sigma}$ is a tubular neighborhood of $K_{\sigma}$.
If $r_1>0$ is sufficiently small, then $\{q_z,q'_z\} \cap N_{\sigma} = \emptyset$ for every $z\in P$ and $\ev_c^{-1}(N_{\sigma})$ is a tubular neighborhood of $\ev_c^{-1}(K_{\sigma})$ in $P\times (0,1)$.
Let us also set
\[ \textstyle{ N'_{\sigma} \coloneqq \{ q + \sigma(q) + v \mid (q,v) \in (TK)^{\perp}, |v|<\frac{r_1}{2}\}. }\] 
We have the inclusion maps
$ (N,N\setminus N') \to (N,N\setminus N'_{\sigma}) \leftarrow (N_{\sigma},N_{\sigma}\setminus N'_{\sigma}) $
and both of them are homotopy equivalences. Via the induced isomorphisms on cohomology, $\eta \in H^d(N,N\setminus N')$ corresponds to the cohomology class
\[\eta_{\sigma} \in H^d(N_{\sigma},N_{\sigma}\setminus N'_{\sigma}),\]
which coincides with the Thom class of the normal bundle of $K_{\sigma}$.
Then, the following diagram commutes:
\[\xymatrix@C=45pt{
H_{p+1} (P\times I , P\times \partial I) \ar[r]^-{(\ev_c)^*(\eta) \cap} \ar[rd]_-{ (\ev_c)^*(\eta_{\sigma}) \cap}& H_{p+1-d}( \ev_c^{-1}(N) , P\times \partial I ) \ar[r]^-{(\widehat{\sp}_c)_*} & H_{p+1-d} ((K\times K,\Delta_K)^{\times 2}) \\
 & H_{p+1-d}( \ev_c^{-1}(N_{\sigma}) ,\emptyset ) , \ar[ur]_-{(\widehat{\sp}_{c,\sigma})_*} \ar[u] &
}\]
where $\widehat{\sp}_c \coloneqq \rest{\widehat{\sp} \circ (c\times \id_I)}{ \ev_c^{-1}(N) }$, $\widehat{\sp}_{c,\sigma} \coloneqq \rest{\widehat{\sp} \circ (c\times \id_I)}{ \ev_c^{-1}(N_{\sigma}) }$ and the middle map is induced by the inclusion map $(\ev_c^{-1}(N_{\sigma}),\emptyset) \to (\ev_c^{-1}(N),P\times \partial I)$.
For the map $(\ev_c)_*(\eta_{\sigma}) \cap$, we note that $(P\times \partial I) \cap (\ev_c)^{-1}( N_{\sigma}) =\emptyset$ since $\{q_z,q'_z\} \cap N_{\sigma} = \emptyset$ for every $z\in P$.

As in the proof of Lemma \ref{lem-homology-geometry},
we can check by the naturality of cap product that
\[\delta_K(c_*([P])) =(\widehat{\sp}_c)_*((\ev_c)^*(\eta) \cap [P\times I]) .\]
By Poincar\'{e} duality, $(\ev_c)^*(\eta_{\sigma}) \cap [P\times I] = [\ev_c^{-1}(K_{\sigma}) ]$. Since $\sp_{c,\sigma} = \rest{\widehat{\sp}_{c,\sigma}}{ \ev_c^{-1}(K_{\sigma})}$, the second assertion follows from the diagram.
\end{proof}

\section{Computation of $HL_*(\Lambda_K)$ and the coproduct $(\delta_J)_*$}\label{sec-Floer-Morse}

\subsection{Strip Legendrian contact homology of unit conormal bundles}\label{subsec-LCH-conormal}

Consider the diffeomorphism
\[U^*\R^n \to \R \times T^*S^{n-1} \colon (q,p) \mapsto (\la q, p\ra , (-p,q-\la q, p \ra \cdot p)).\]
Then, the contact form $\alpha= \rest{\sum_{i=1}^n p_idq_i}{U^*\R^n}$ coincides with $dz+\lambda_{\mathrm{can}}$ on $\R\times T^*S^{n-1}$, where $z$ is the coordinate of $\R$ and $\lambda_{\mathrm{can}}$ is the canonical Liouville form on $T^*S^{n-1}$.
The contact manifold $(U^*\R^n,\alpha)$ for $n\geq 3$ satisfies the condition imposed in Subsection \ref{subsec-LCH} for $(P,\lambda_P)=(T^*S^{n-1},\lambda_{\mathrm{can}})$.

Let $K$ be a compact connected submanifold of $\R^n$ of codimension $d$. After a perturbation by generic $\sigma \in \mathcal{O}_K$ if necessary, we may suppose that every $x\in \mathcal{C}(K)$ is non-degenerate (Remark \ref{rem-sp-emb}).
Its unit conormal bundle $\Lambda_K$ is a compact connected Legendrian submanifold of $U^*\R^n$. This satisfies $\mu_{\Lambda_K}=0$ since the Maslov class of the conormal bundle $L_K$ vanishes. 
By  Proposition \ref{prop-correspond},  every Reeb chord $a\in \mathcal{R}(\Lambda_K)$ is non-degenerate.
Moreover, if $d\geq 4$, then
\[  |a| = \ind (x_a) + (d-2) \geq 2 \]
for every $a\in \mathcal{R}(\Lambda_K)$.
Therefore, if $d\geq 4$, $\Lambda_K$ satisfies the condition ($\bigstar$) introduced in Subsection \ref{subsubsec-degree-bound}.

Consider the diffeomorphism
\begin{align}\label{diffeo-symp-cot}
F\colon \R\times U^*\R^n \to \{(q,p)\in T^*\R^n \mid p\neq 0\} \colon (r,(q,p)) \mapsto (q,e^r\cdot p).
\end{align}
We fix an almost complex structure $J_{\mathrm{fix}}$ on $\R\times U^*\R^n$ determined by $ (F_*J_{\mathrm{fix}}) (\partial_{q_i}) = - |p| \partial_{p_i}$ on $\{(q,p)\in T^*\R^n \mid p\neq 0\}$.
We define  $U^*B^n_l \coloneqq \{(q,p) \in U^*\R^n \mid |q| \leq l\}$ for every $l\in \Z_{\geq 1}$. It is easy to check the condition which we required on $J_{\mathrm{fix}}$ in Subsection \ref{subsubsec-J-hol} for $(\bar{Y}_l)_{l=1,2,\dots } = (U^*B^n_l)_{l=1,2,\dots }$. If we choose $R_K>0$ to be $K \subset \{q\in \R^n \mid |q|<R_K \}$,
then we can define the space $\mathcal{J}_{U^*B^n_l}$ for every $l \geq R_K$ as in Subsection \ref{subsubsec-J-hol}.

By Subsection \ref{subsubsec-DGA}, the DGA $(\mathcal{A}_*(\Lambda_K),\partial_J)$ is defined for $J\in \mathcal{J}_{U^*B^n_l}$ of Proposition \ref{prop-transverse}.
As we have discussed in Subsection \ref{subsubsec-degree-bound}, since $\Lambda_K$ satisfies ($\bigstar$), the chain complex
$ (CL_*(\Lambda_K), d_J)$
and the chain map
\[ \delta_J \colon CL_{*+1}(\Lambda_K)\to CL_*(\Lambda_K)^{\otimes 2} \]
are defined.
Throughout this section, we assume that $\codim K=d\geq 4$ and every $x\in \mathcal{C}(K)$ is non-degenerate, and abbreviate the chain complex $(CL_*(\Lambda_K),d_J)$ by
$ (CL_*,d_J)$.

\subsection{Pseudo-holomorphic curves with switching Lagrangian boundary conditions}\label{subsec-switching}


\subsubsection{Preliminaries}

First, let us fix an almost complex structure on $T^*\R^n$ explicitly.
We choose  a $C^{\infty}$ function $\rho \colon[0,\infty)\to (0,\infty)$ such that
$\rho(s)=1$ if $s \leq \frac{1}{2} $ and $\rho(s)=s$ if $s\geq 1$.
Define an almost complex structure $J_{\rho}$ on $T^*\R^n$ by
\[ (J_{\rho})_{(q,p)} = \begin{pmatrix}
0 & \rho(|p|)^{-1} I_n \\
- \rho(|p|) I_n & 0 
\end{pmatrix}\]
on $T_{(q,p)} (T^*\R^n )= \R^n \oplus \R^n$, where $I_n$ is the identity matrix of rank $n$.
We choose an almost complex structure $J'$ on $T^*\R^n$ satisfying the following conditions:
\begin{itemize}
\item $d\lambda (\cdot , J' \cdot)$ is a Riemannian metric on $T^*\R^n$.
\item  $J'$ coincides with $J_{\rho}$ on $\{(q,p)\in \R^n \mid |p|\leq \frac{1}{2} \text{ or }|q|\geq R_K\}$. 
\item
Via the diffeomorphism (\ref{diffeo-symp-cot}),
 $F^*J'$ preserves $\xi$ and $(F^*J)(\partial_r) = \frac{e^r}{ \rho(e^r)}  R_{\alpha}$. Moreover, there exists $J\in \mathcal{J}_{U^*B^n_l}$ such that $F^*J'=J$ on $U^*\R^n \times \R_{\geq 0}$.
\end{itemize}
It is straightforward to check that $J_{\rho}$ satisfies these conditions.

Let us prepare several notations about Riemann surfaces introduced in Subsection \ref{subsubsec-J-hol}. We rewrite $\partial_1 D_1 \coloneqq \partial D_1$. For $m\geq 1$, we write the boundary components of $D_{2m+1}$ so that
\[\partial D_{2m+1} = \coprod_{k=1}^{2m+1} \partial_k D_{2m+1} ,\]
where the closure of $\partial_k D_{2m+1}$ in $\partial D$ has $\{p_{k-1}, p_k \}$ as the boundary (When $k=2m+1$, we set $p_{2m+1} \coloneqq p_0$).
In addition, we fix a complex structure $j_0$ on $D_3$ associated to the unique element $\kappa_0 \in \mathcal{C}_3$. Then, there exists a biholomorphic map
\begin{align}\label{bihol}
  \psi \colon D_3 \to ([0,\infty) \times [0,1] )\setminus \{(0,0), (0,1)\} 
\end{align}
characterized by the following condition:
\begin{itemize}
\item We identify $ [0,\infty) \times [0,1]$ with a subset of $\C P^1=\C \sqcup \{z_{\infty}\}$ via the map $ [0,\infty) \times [0,1] \to \C \colon (s,t) \mapsto s+\sqrt{-1}t$. Then, $\psi$ is a biholomorphic map which extends continuously to a map $\hat{\psi} \colon D\to \C P^1$ such that $\hat{\psi}(p_0)=z_{\infty}$, $\hat{\psi}(p_1)=0$ and  $\hat{\psi}(p_2)=\sqrt{-1}$.
\end{itemize}

\subsubsection{Definition of the moduli space $\mathcal{M}_m(a)$}\label{subsubsec-def-Mm}
We identify the image of the zero section of $T^*\R^n$ with $\R^n$.
For any $(a\colon [0,T]\to U^*\R^n)\in \mathcal{R}(\Lambda_K)$ and $m\in \Z_{\geq 0}$, we define moduli spaces $\M_m(a)$ and $\tilde{\M}_{m,k}(a)$. We refer to \cite[Section 6.3]{CELN} and \cite[Section 4.2]{CEL} for more general cases.
First, we define the space $\hat{\mathcal{M}}_m(a)$ consisting of pairs $(u,\kappa)$ of  $\kappa \in \mathcal{C}_{2m+1}$ and a $C^{\infty}$ map
\[u \colon D_{2m+1} \to T^*\R^n\]
satisfying the following conditions:
\begin{itemize}
\item $du + J'\circ du \circ j_{\kappa} =0$.
\item For $k=1,\dots ,2m+1$, $u(\partial_k D_{2m+1}) \subset \begin{cases} L_K  & \text{ if }k \text{ is odd,} \\ \R^n & \text{ if }k \text{ is even.}\end{cases}$
\item  $u\circ \psi_0(s,t)$ is contained in $T^*\R^n \setminus \R^n$ if $s\in[0,\infty)$ is sufficiently large.
Moreover, there exist $s_0\in \R$ and $q_1,\dots ,q_{2m}\in K= \R^n \cap L_K$ such that
\[ \tau_{-Ts}\circ ( F^{-1} \circ u\circ \psi_0)(s,\cdot) \to (s_0 , a(T\cdot)) \text{ if }s\to \infty \]
in $\R\times U^*\R^n$ in the $C^{\infty}$ topology on $[0,1]$ and
\[ u\circ \psi_k(s,\cdot) \to q_k \text{ if }s\to -\infty \]
in $T^*\R^n$ in the $C^{\infty}$ topology on $[0,1]$ for every $k\in \{1,\dots ,2m\}$.
\end{itemize}
\begin{figure}
\centering
\begin{overpic}[height=5cm]{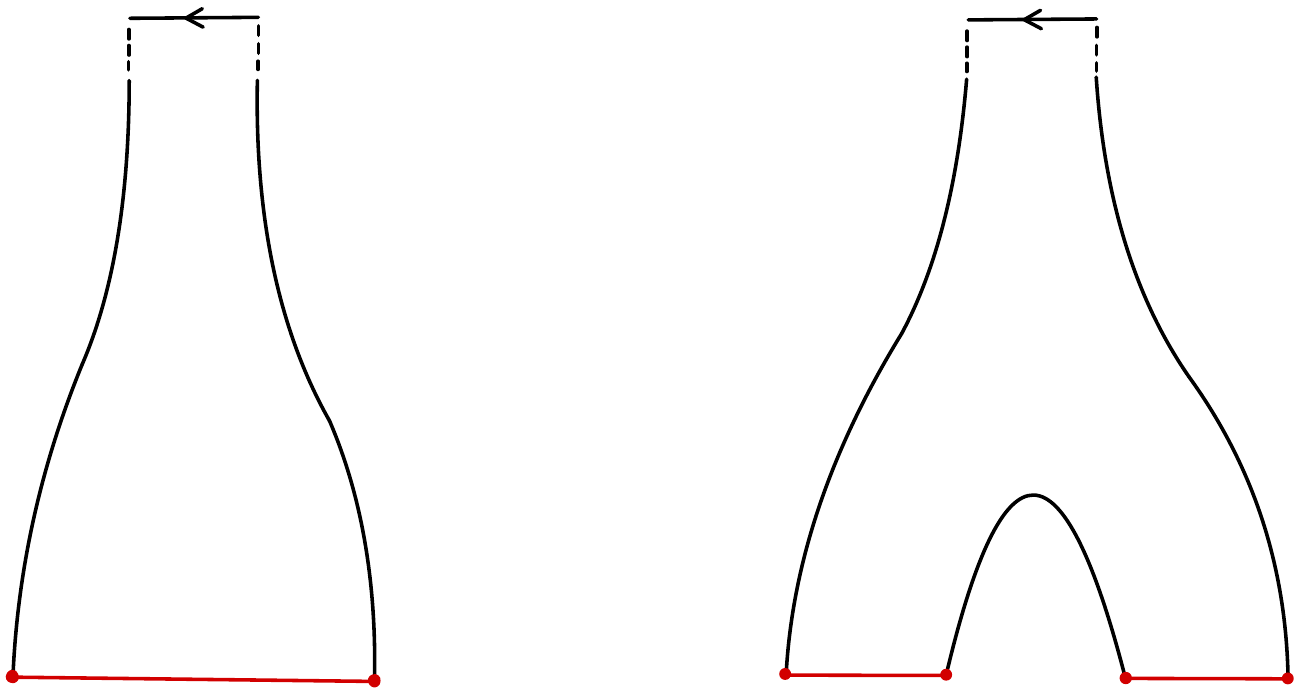}
\put(14,53.5){$a$}
\put(78.5,53){$a$}
\put(27,17){$L_K$}
\put(96.5,17){$L_K$}
\put(13,2.5){\color{red}$\R^n$}
\put(90,2.5){\color{red}$\R^n$}
\put(66,2.5){\color{red}$\R^n$}
\put(-4,-3){$u(p_2)$}
\put(25,-3){$u(p_1)$}
\put(95,-3){$u(p_1)$}
\put(82,-3){$u(p_2)$}
\put(69,-3){$u(p_3)$}
\put(55,-3){$u(p_4)$}
\end{overpic}
\caption{$J'$-holomorphic curves in $T^*\R^n$ with the red boundaries in  $\R^n$ and the black boundaries in $L_K$. The left-hand side belongs to $\M_1(a)$ and the right-hand side belongs to $\M_2(a)$.}\label{figure-switching}
\end{figure}
Figure \ref{figure-switching} describes the case where $m=1,2$.

In addition, for each $k\in \{1,\dots ,2m+1\}$, we consider the product space $\hat{\M}_{m}(a)\times \partial_k D_{2m+1}$.
When $m=0$, the action of $\mathcal{A}_1$ is defined by
$(u, \kappa) \cdot \varphi \coloneqq (u\circ \varphi, \kappa)$ on $\hat{\M}_0(a)$ and $((u,\kappa),z)\cdot \varphi \coloneqq ( (u\circ \varphi, \kappa) , \varphi^{-1}(z))$ on $\hat{\M}_0(a)\times \partial_1 D_1$ for every $\varphi \in \mathcal{A}_1$.
Now we define
\begin{align*}
 \M_m(a) & \coloneqq \begin{cases} \hat{\M}_m(a) & \text{ if }m\geq 1, \\
\hat{\M}_0(a)/\mathcal{A}_1 & \text{ if }m=0 ,\end{cases} \\
\tilde{\M}_{m,k}(a) & \coloneqq \begin{cases} \M_m(a) \times \partial_k D_{2m+1} & \text{ if }m\geq 1, \\
(\hat{\M}_0(a)\times \partial_1 D_1) /\mathcal{A}_1 & \text{ if }m=0 ,\end{cases}
\end{align*}
for every $k\in \{1,\dots ,2m+1\}$. 

For $(u,\kappa)\in \M_m(a)$, we denote $q_k=\lim_{s\to \infty} u\circ \psi_k(s,t)$ by $u(p_k)$ for every $k=1,\dots ,2m$. 
In particular, for $m\in \{1,2\}$, we define the following two maps:
\begin{align}\label{map-Gamma}
\begin{split}
\Gamma_1 &\colon \mathcal{M}_1(a) \to K\times K \colon (u,\kappa) \mapsto
 (u(p_1),u(p_2) ), \\
\Gamma_2 & \colon \mathcal{M}_2(a) \to (K\times K)^{\times 2} \colon (u,\kappa) \mapsto
 ((u(p_1),u(p_2))   , (u(p_3),u(p_4) ) ).
 \end{split}
\end{align}

Note that when $m=1$, $\M_1(a)$ contains
the trivial strip $u_a \colon D_3 \to T^*\R^n$ over $a$. It is defined as the composition of the biholomorphic map (\ref{bihol}) and the map
\begin{align}\label{eq-trivial-strip}
 [0,\infty)\times [0,1] \to T^*\R^n \colon (s,t) \mapsto (q_0+ Tt \cdot p_0, h(s) \cdot  p_0), 
 \end{align}
where $h\colon[0,\infty)\to [0,\infty)$ is a $C^{\infty}$ function characterized by $h(0)=0$ and $h'(s)= T \rho(h(s))$. It is a $J'$-holomorphic map from the third condition on $J'$.

\subsubsection{Moduli spaces of constant disks and evaluation map}

We define the moduli space $\M_{L_K,\R^n,L_K}$ (resp. $\M_{\R^n,L_K,\R^n}$) which consists of $C^{\infty}$ maps
\[v\colon D_3 \to T^*\R^n\]
satisfying the following conditions:
\begin{itemize}
\item $dv+J'\circ dv \circ j_{0}=0$.
\item For $k\in \{1,2,3\}$, $v(\partial_kD_{3}) \subset  \begin{cases} L_K  & \text{ if }k =1,3, \\ \R^n & \text{ if }k =2.\end{cases}  \left( \text{resp. } \begin{cases} \R^n  & \text{ if }k =1,3, \\ L_K & \text{ if }k =2.\end{cases} \right)$ 
\item There exits $q_0\in L_K$ (resp. $q_0\in \R^n$) and $q_1,q_2\in K=\R^n \cap L_K$ such that
\[ \begin{array}{cc}
 v\circ \psi_0(s,\cdot) \to q_0 \text{ if }s\to \infty, &
 v\circ \psi_k (s,\cdot) \to q_k \text{ if }s\to -\infty ,
\end{array}\]
in the $C^{\infty}$ topology on $[0,1]$ for $k=1,2$.
\end{itemize}
Let us denote $q_0=\lim_{s\to \infty} v \circ \psi_0(s,t)$ by $v(p_0)$ and $q_k= \lim_{s\to -\infty} v \circ \psi_k(s,t)$ by $v(p_k)$ for $k=1,2$.
We define the evaluation map at $p_0$ by
\begin{align*}
\ev_0 \colon & \M_{L_K,\R^n,L_K} \to L_K\colon v \mapsto v(p_0), \\
\ev_0 \colon & \M_{\R^n,L_K,\R^n} \to \R^n \colon v \mapsto v(p_0) .
\end{align*}
Since $\rest{\lambda}{\R^n}=0$ and $\rest{\lambda}{L_K}=0$, for any $v\in \M_{L_K,\R^n,L_K} \sqcup \M_{\R^n,L_K,\R^n}$,
\[\int_{D_3} v^* (d\lambda) =0\]
by Stokes' theorem.
This implies that both $ \M_{L_K,\R^n,L_K}$ and $ \M_{\R^n,L_K,\R^n} $ consist of constant disks in  $K=\R^n \cap L_K$, and thus the above evaluation maps are bijections onto $K$. Since
 $\M_{L_K,\R^n,L_K}$ and $\M_{\R^n,L_K,\R^n}$ are cut out transversely, each of them is a $C^{\infty}$ manifold of dimension $n-d$ and $\ev_0$ is a diffeomorphism onto $K$.

Next, for any $a\in \mathcal{R}(\Lambda_K)$, $m\in \Z_{\geq 0}$ and $k\in \{1,\dots , 2m+1\}$, we define the evaluation map
\[ \tilde{\ev}_k \colon \tilde{\M}_{m,k}(a) \to \begin{cases}
L_K & \text{ if }k \text{ is odd,} \\ \R^n & \text{ if }k \text{ is even.}\end{cases} \colon ((u,\kappa),z ) \mapsto u(z).\] 
%
We will later consider the fiber product over $L_K$
\begin{align}\label{curve-with-L_K-node}
  \tilde{\M}_{m,k}(a) \ftimes{\tilde{\ev}_k}{\ev_0} \M_{L_K,\R^n,L_K}
 \end{align}
when $k$ is an odd number and the fiber product over $\R^n$
\begin{align}\label{curve-with-Rn-node}
  \tilde{\M}_{m,k}(a) \ftimes{\tilde{\ev}_k}{\ev_0} \M_{\R^n,L_K,\R^n}
\end{align}
when $k$ is an even number. See Figure \ref{figure-bubble} for examples.
These are moduli spaces of $J'$-holomorphic curves with a boundary node and contained in the compactification of $\M_{m+1}(a)$. See \cite[Subsection 4.2.1]{CEL}.

\begin{figure}
\centering
\begin{overpic}[height=5cm]{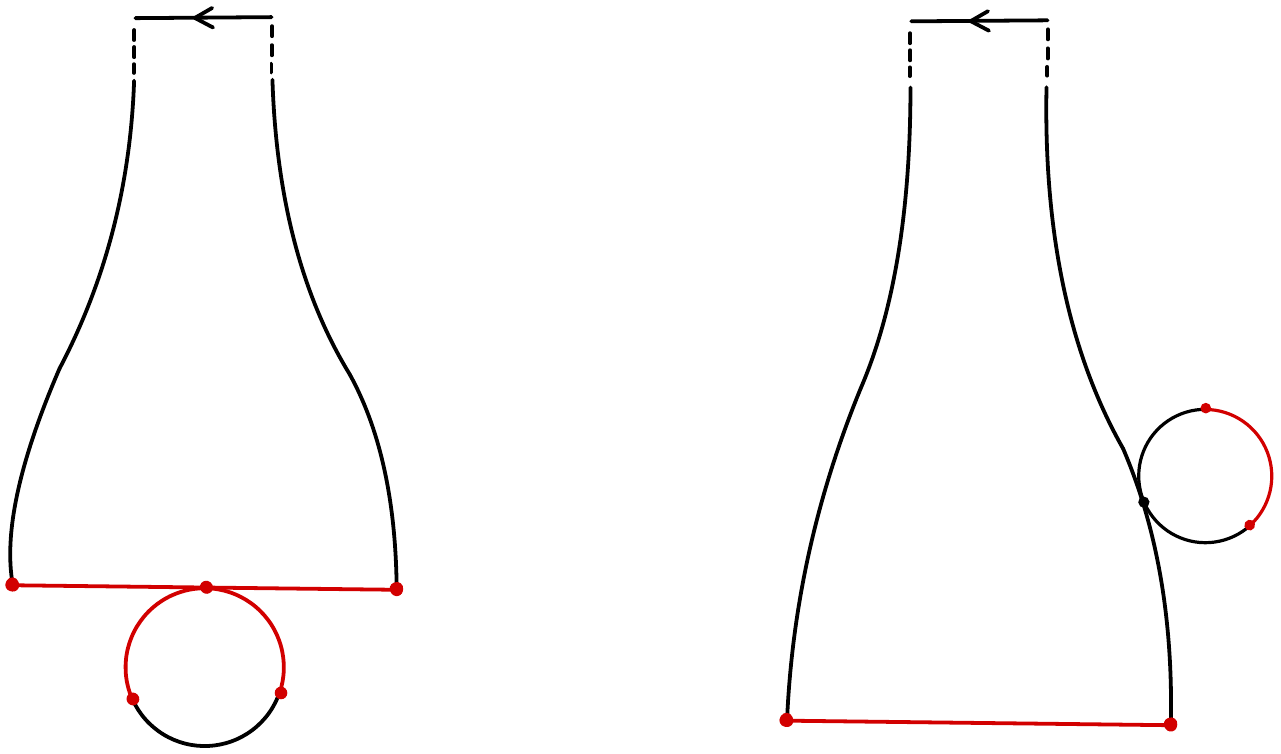}
\put(14.5,30){$u$}
\put(14.5,6){$v$}
\put(8,15){$ \tilde{\ev}_2(u,z) $}
\put(75,29){$u$}
\put(93,20){$v$}
\put(72.5,17){$\tilde{\ev}_1(u,z)$}
\put(14.5,59){$a$}
\put(75,59){$a$}
\end{overpic}
\caption{$J'$-holomorphic curves with a boundary node. The left-hand side belongs to $\tilde{\M}_{1,2}(a) \ftimes{\tilde{\ev}_2}{\ev_0} \M_{\R^n,L_K,\R^n}$ and the right-hand side belongs to $\tilde{\M}_{1,1}(a) \ftimes{\tilde{\ev}_1}{\ev_0} \M_{L_K,\R^n,L_K}$. Here, we omit writing $\kappa\in \mathcal{C}_3$.}\label{figure-bubble}
\end{figure}

\subsubsection{Basic properties}

Let us compute the formal dimension $\vdim \mathcal{M}_m(a)$ of $\mathcal{M}_m(a)$, which the Fredholm index of the linearlization of a section (the Cauchy-Riemann operator) of a Banach bundle whose zero locus is $\mathcal{M}_m(a)$. (More precisely, when $m=0$, we need to stabilize pseudo-holomorphic curves in $\M_0(a)$.)
\begin{prop}\label{prop-vdim}
$\vdim \mathcal{M}_m(a)= |a| - m(d-2)$. 
\end{prop}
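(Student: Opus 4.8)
The plan is to compute $\vdim \mathcal{M}_m(a)$ as the Fredholm index of the linearized Cauchy–Riemann operator on $D_{2m+1}$ with the switching Lagrangian boundary conditions, using the standard index formula for punctured discs with Lagrangian boundary conditions (as in \cite[Section 4.2.4]{DR-lift} and \cite[Proposition 2.3]{EES}) together with the index gluing/additivity principle along the $2m$ negative punctures that converge to points $q_k\in K=\R^n\cap L_K$. The key point is that each such point-puncture is modeled on a small holomorphic disc/strip with boundary switching between $\R^n$ and $L_K$, and contributes a fixed correction to the index that does not depend on $a$.

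First I would treat the case $m=1$ directly. Here $D_3$ has one positive puncture asymptotic to the Reeb chord $a$ and two negative punctures converging to points in $K$. The contribution of the positive puncture to the index is governed by the Conley–Zehnder index of $a$; by Definition \ref{def-ind-cap} and the normalization there, together with the conventions of \cite[Proposition 2.3]{EES}, a single strip-like end at $a$ contributes $\mu(a)$ (up to the shift already absorbed into $|a|=\mu(a)-1$), while the Euler-characteristic/boundary-Maslov terms and the two point-punctures at $L_K\cap\R^n$ account for the remaining $-1-(d-2)$. Concretely, each point-puncture where the boundary switches from $L_K$ to $\R^n$ behaves like an extra negative end whose "asymptotic operator" is degenerate along the $(n-d)$-dimensional $K$ and contributes $-(d-2)$ to the dimension relative to the $m=0$ count, after the appropriate normalization. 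I would pin this down by comparing with the constant-disc moduli spaces $\M_{L_K,\R^n,L_K}$ and $\M_{\R^n,L_K,\R^n}$, which are cut out transversely of dimension $n-d$, and with the trivial strip $u_a$ over $a$, which gives $\vdim\M_1(a)=|a|-(d-2)$ as a consistency check: indeed $\M_1(a)$ should have dimension $|a|-(d-2)$, and the rigid pieces match the expected index bookkeeping.

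For general $m$, I would argue by an index-additivity (degeneration) argument: a curve in $\M_m(a)$ can be degenerated, by pinching off arcs, into a configuration consisting of one "main" strip with the positive end at $a$ and $m$ arcs each attaching a small switching disc; equivalently, I would set up the linearized operator on $D_{2m+1}$ and use the linear gluing formula, which adds the indices of the constituent pieces minus the dimensions of the matching conditions along $K$. Each additional pair of negative punctures (one odd-labeled boundary on $L_K$, one even-labeled on $\R^n$) together with the matching over a copy of $K$ contributes exactly $-(d-2)$ to the total index, by the same local computation as in the $m=1$ case. Summing, $\vdim\M_m(a)=|a|-m(d-2)$. The formal-dimension statement does not require transversality, so no genericity of $J'$ is needed here; when $m=0$ one stabilizes as indicated in the parenthetical remark, which does not change the index count.

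The main obstacle I anticipate is bookkeeping the index contribution of the point-punctures at $K=L_K\cap\R^n$ precisely, i.e. justifying the $-(d-2)$ per puncture. This is the place where the switching boundary condition, the degeneracy of the asymptotic data along the positive-dimensional $K$, and the normalization conventions (the $\frac{n-1}{2}$ shift in Definition \ref{def-ind-cap}, the conventions of \cite{EES}, \cite{CEL}, \cite{DR-lift}) all interact; I would handle it by a careful local model computation near one puncture, cross-checked against the known dimensions of $\M_{L_K,\R^n,L_K}$, $\M_{\R^n,L_K,\R^n}$ and against the trivial strip $u_a\in\M_1(a)$, and then propagate it through the additivity formula. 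Everything else — the Riemann–Hurwitz/Euler characteristic terms and the contribution of the single positive puncture — is routine.
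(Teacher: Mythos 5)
Your proposal is correct and follows essentially the same route as the paper: the base case $\vdim \M_0(a)=|a|$ from the standard index formula, followed by index additivity under linear gluing of a constant switching triangle in $\M_{L_K,\R^n,L_K}$ attached along $K$, which is exactly the paper's inductive step computing $\vdim \M_{m+1}(a)-1=(\vdim \M_m(a)+1)+\dim \M_{L_K,\R^n,L_K}-n$. One bookkeeping caution: the correction $-(d-2)$ is per attached boundary arc on $\R^n$, i.e.\ per \emph{pair} of new switching punctures (as your general-$m$ paragraph correctly states), not per single point-puncture as your $m=1$ discussion suggests, since each step adds two punctures but only $-(d-2)$ in total.
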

\begin{proof}
We compute inductively on $m=0,1,2,\dots$. If $m=0$, then $\vdim \mathcal{M}_0(a) = |a|$. See, for instance, \cite[Section 4.2.4]{DR-lift}. Suppose that $\vdim \mathcal{M}_m(a) = |a|-m(d-2)$ for some $m\in \Z_{\geq 1}$. The boundary of the compactification of  $\M_{m+1}(a)$ contains the moduli space (\ref{curve-with-L_K-node}).
By a linear gluing argument, $\vdim \M_{m+1}(a) -1$ is equal to the formal dimension of (\ref{curve-with-L_K-node}). Therefore,
\begin{align*}
 \vdim \M_{m+1}(a) -1 &= (\vdim \M_{m}(a) +1) + \dim \M_{L_K,\R^n,L_K} -\dim \R^n \\
 &= |a|-(m+1)(d-2) -1.
 \end{align*}
This completes the induction on $m$.
\end{proof}

In the remaining part of this subsection, we observe the case of $m=0,1$.

\begin{prop}\label{prop-no-rigid}
For  generic $J$, the moduli space $\M_0(a)$ is cut out transversely and it is a $C^{\infty}$ manifold of dimension $|a|$. In particular, from the assumption that $d\geq 4$, $\M_0(a)$ is a $C^{\infty}$ manifold of positive dimension for every $a\in \mathcal{R}(\Lambda_K)$.
\end{prop}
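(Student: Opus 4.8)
The statement is a standard transversality result for the moduli space $\M_0(a)$ of $J'$-holomorphic strips in $T^*\R^n$ with boundary alternating on $L_K$ and the zero section $\R^n$, but with no interior switching punctures (the case $m=0$). The plan is to realize $\M_0(a)$ as the zero set of a Cauchy--Riemann section of a Banach bundle over a Banach manifold of maps $u\colon D_1 \to T^*\R^n$ with the asymptotic condition at $p_0$ given by $a$ and boundary on $L_K$, modulo the automorphism group $\mathcal{A}_1$. The formal dimension of this configuration space is $|a|$, as recorded in Proposition \ref{prop-vdim} for $m=0$ (referring to \cite[Section 4.2.4]{DR-lift}); so the content of the proposition is that for generic $J$ (equivalently, generic $J'$ in the class fixed in Subsection \ref{subsubsec-def-Mm}) the linearized operator $D_u$ is surjective at every $u\in \M_0(a)$, so that $\M_0(a)$ is a genuine $C^\infty$ manifold of dimension $|a|$. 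The final assertion about positive dimension is then immediate: by Proposition \ref{prop-correspond} and the computation in Subsection \ref{subsec-LCH-conormal}, $|a| = \ind(x_a) + (d-2) \geq d-2 \geq 2$ when $d\geq 4$.

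The first and main step is to establish the existence of injective points. For any $u\in \M_0(a)$, one must show that there is an interior point $z\in D_1$, away from the boundary $\partial D_1$ and away from $u^{-1}$ of the region where $J'=J_\rho$ is pinned (i.e.\ the set $\{|p|\le \tfrac12\} \cup \{|q|\ge R_K\}$ where $J'$ cannot be perturbed), such that $(du)_z$ is injective and $u^{-1}(u(z)) = \{z\}$. The argument parallels the proof of Proposition \ref{prop-transverse}: by exactness of $\lambda$ on $L_K$ and on the zero section, and the asymptotics at $a$, the curve $u$ is non-constant and carries positive $d(e^r\alpha)$-energy near $p_0$; one uses the strip-like end near $p_0$ together with unique continuation and the structure of the branch/multiple-cover locus to locate an injective point in the strip coordinate $\psi_0([0,\infty)\times[0,1])$, pushed far enough out that $u$ lands in the region where $J'$ is free to vary. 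This is the step I expect to be the principal obstacle, since one needs to verify that the injective point can be chosen in the open region where the almost complex structure is not constrained by the three conditions defining $J'$ — in particular outside $\{|q|\ge R_K\}$, where one uses that $K$ (hence $L_K$) is contained in $\{|q|<R_K\}$ and a maximum-principle / convexity argument for the $|q|$-coordinate, as in the setup for $\mathcal{J}_{U^*B^n_l}$.

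Given injective points, the second step is the standard Sard--Smale argument: the universal moduli space $\M_0^{\mathrm{univ}}(a) = \{(u,J)\}$ is a Banach manifold because the linearization of the section, enlarged by the $J$-variation, is surjective (using the injective point to make the perturbation term hit any given cokernel element via a suitable cutoff supported near $u(z)$), and then the projection to the space of admissible $J$ is Fredholm, so its regular values are comeager by Sard--Smale; a regular $J$ makes $\M_0(a)$ cut out transversely. One needs to also handle the case $m=0$ care about the $\mathcal{A}_1$-action: since $\M_0(a) = \hat\M_0(a)/\mathcal{A}_1$ and $\dim \mathcal{A}_1 = 3$ while the expected dimension $|a|\ge 2$, after fixing the three real parameters of $\mathcal{A}_1$ by a slicing condition the quotient is a manifold of dimension $|a|$, with no issue of a zero-dimensional orbit-space pathology. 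Finally, invoke Proposition \ref{prop-vdim} ($m=0$) for the dimension count and the inequality $|a|\ge 2$ from $d\ge 4$ to conclude. I would cite \cite[Subsection 8.3]{W} (as in the proof of Proposition \ref{prop-transverse}) and \cite[Section 4.2]{CEL} for the technical details of transversality with switching Lagrangian boundary conditions, and not reproduce them.
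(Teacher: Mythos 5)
Your proposal is correct and follows essentially the same route as the paper, whose own proof consists of exactly this: transversality achieved by a generic perturbation of $J$ via injective points near the positive puncture (where the almost complex structure is free to vary) and a Sard--Smale argument as in Proposition \ref{prop-transverse}, the dimension count from Proposition \ref{prop-vdim} with $m=0$, and positivity from $|a|=\ind x_a+(d-2)\geq 2$ when $d\geq 4$. The only slip is cosmetic: $\mathcal{A}_1$, the group of automorphisms of $D_1=D\setminus\{p_0\}$ (disk automorphisms fixing one boundary point), is $2$-dimensional rather than $3$-dimensional, which does not affect your argument.
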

The transversality is achieved by a generic perturbation of $J$ as in Proposition \ref{prop-transverse}. The positivity of the dimension of $\M_0(a)$, which follows from $|a|=\ind x_a +(d-2)$, is important in the later arguments.

Next, we consider a continuous function
\[ \len \colon K\times K \to \R \colon  (q,q') \mapsto |q-q'|.\]
Note that for any $C^{\infty}$ path $\gamma\colon [0,1]\to \R^n$ with $\gamma(0),\gamma(1)\in K$, its length $\int_{t=0}^1 \left| \frac{d \gamma}{dt}(t)\right|  dt$ is greater than or equal to $\len (\gamma(0),\gamma(1))$. The next proposition follows from \cite[Proposition 8.9]{CELN}.
\begin{prop}\label{prop-length}
For any $(a\colon [0,T]\to U^*\R^n)\in \mathcal{R}(\Lambda_K)$ and  $(u,\kappa_0) \in \mathcal{M}_1(a)$, 
\[  \len (\Gamma_1(u,\kappa_0)) \leq T .\]
The equality holds if and only if  $u$ is the trivial strip $u_a$.
\end{prop}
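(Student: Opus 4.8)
The plan is to obtain both inequalities by reducing the statement to the classical fact that a geodesic on $\R^n$ (a straight line segment) realizes the distance between its endpoints, together with a positivity-of-energy argument for $J'$-holomorphic curves. First I would recall that under the diffeomorphism $F$ of (\ref{diffeo-symp-cot}) the Reeb chord $a\colon[0,T]\to U^*\R^n$ lifts to the trivial strip $u_a$, whose projection to $\R^n$ is the straight segment $t\mapsto q_0 + Tt\cdot p_0$ of length exactly $T$ (since $|p_0|=1$); so $\len(\Gamma_1(u_a,\kappa_0)) = T$, which establishes the "if" direction of the equality claim and shows the bound $T$ is attained.

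For the general inequality, the key step is a symplectic-area / action computation. Given $(u,\kappa_0)\in\M_1(a)$, write $u\colon D_3\to T^*\R^n$ with boundary on $L_K$ along $\partial_1 D_3\cup\partial_3 D_3$ and on $\R^n$ along $\partial_2 D_3$, and with positive puncture asymptotic to $a$. Since $\rest{\lambda}{L_K}=0$ and $\rest{\lambda}{\R^n}=0$, Stokes' theorem gives
\[
\int_{D_3} u^*(d\lambda) \;=\; \int_{\partial_2 D_3} u^*\lambda \;+\; (\text{boundary terms on }L_K) \;-\; (\text{asymptotic contribution at the puncture}) \;=\; -T,
\]
because the only nonzero boundary contribution comes from the positive puncture, where the asymptotic action of the Reeb chord $a$ equals its length $T = \int_0^T \alpha(\dot a)\,dt$ (here $\alpha=\rest{\lambda}{U^*\R^n}$ and $|p|\equiv 1$ along $a$). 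On the other hand, since $d\lambda(\cdot,J'\cdot)$ is a Riemannian metric, the integrand $u^*(d\lambda)$ is pointwise nonnegative, so $\int_{D_3}u^*(d\lambda)\geq 0$; combined with a careful sign bookkeeping this forces the total "length defect" to be controlled by $T$. To extract the length bound on the projected path I would consider the boundary arc of $u$ along $\partial_2 D_3$, or rather the concatenated boundary path in $\R^n$ through the switching points, and use that its Euclidean length is an upper bound for $\len(\Gamma_1(u,\kappa_0))=|u(p_1)-u(p_2)|$ (straight segment minimizes); then bound that boundary length by the total action $T$ via the standard inequality relating $\int|\dot\gamma|$ to $\int\gamma^*\lambda$ for curves in $T^*\R^n$ with the round metric. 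This is exactly the mechanism of \cite[Proposition 8.9]{CELN}, which I would invoke, adapting the $n=3$ argument there to general $n$ (the argument is dimension-independent: it only uses $\rest{\lambda}{L_K}=0$, $\rest{\lambda}{\R^n}=0$, tameness of $J'$, and the asymptotic action formula).

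The equality case is the main subtlety. If $\len(\Gamma_1(u,\kappa_0))=T$, then every inequality above is an equality: in particular $\int_{D_3}u^*(d\lambda)=0$, so by tameness $du$ has rank $\leq 1$ everywhere, forcing the image of $u$ to lie in a single holomorphic "cylinder" over a Reeb trajectory; and the projected boundary path must itself be the straight segment traversed at constant speed. Unwinding these rigidity constraints — $u$ factors through the trivial cylinder over $a$, with the correct boundary/asymptotic normalization — identifies $u$ with $u_a$ up to the reparametrization fixed by our choice of $\psi$ in (\ref{bihol}); since $\kappa_0$ is the unique element of $\mathcal{C}_3$, this pins down $(u,\kappa_0)=(u_a,\kappa_0)$. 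The hard part will be making this rigidity argument airtight: one must rule out, for instance, a curve whose boundary arc on $\R^n$ is a nonconstant straight segment of the maximal length but whose interior is not the trivial strip, which requires the unique-continuation / positivity-of-intersection input underlying the strict inequality in \cite[Proposition 8.9]{CELN}. I would handle this by citing that proposition directly for the equality characterization as well, noting that its proof goes through verbatim in higher dimensions.
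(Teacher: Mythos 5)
Your approach is essentially the paper's: the paper proves this proposition simply by citing \cite[Proposition 8.9]{CELN} and noting that the argument extends beyond the three-dimensional case, which is exactly what you do for both the inequality and the equality characterization. One caution if you actually write out the adaptation: your intermediate Stokes computation cannot be right as stated, since with a positive puncture asymptotic to $a$ the $d\lambda$-energy $\int_{D_3}u^*(d\lambda)$ is divergent (and in any case could not equal $-T$ while the integrand is pointwise nonnegative); the actual mechanism of \cite[Section 8.2]{CELN} (also used in this paper's Appendix, Lemma \ref{lem-approx}) replaces $\lambda$ by a truncated form $\lambda_\tau=\frac{\tau(|p|)}{|p|}\,p\,dq$, for which the puncture contributes $+T$, the $L_K$-boundary contributes nothing, and a slice just inside $\partial_2D_3$ contributes approximately the length of the boundary arc in $\R^n$, which in turn dominates $\len(\Gamma_1(u,\kappa_0))=|u(p_1)-u(p_2)|$.
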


This theorem can be generalized to $\M_m(a)$ for $m\geq 2$ by \cite[Proposition 8.9]{CELN}, but the case of $m=1$ is sufficient for our purpose.

\subsection{Construction of an isomorphism of chain complex}\label{subsec-isom-chain-cpx}

For now,  we omit writing the conformal structure for each element of $\M_0(a)$ and $\M_1(a)$ for every $a \in \mathcal{R}(\Lambda_K)$, since both $\mathcal{C}_1$ and $\mathcal{C}_3$ consist of a single element.

\subsubsection{Moduli space cut out by stable manifold}
We take $g\in \mathcal{G}_K$. For any $a\in \mathcal{R}(\Lambda_K)$ and $x\in \mathcal{C}(K)$, we consider the fiber product over $K\times K$
\[ \mathcal{N}_g (a;x) = \mathcal{M}_1(a) \ftimes{\Gamma_1}{i^s} \W^s_g(x),\]
where $i^s \colon \W^s_g(x)\to K\times K$ is the inclusion map.
By Proposition \ref{prop-vdim}, its formal dimension is
\[|a|-d+2-\ind x .\]

Recall the bijection $\mathcal{R}(\Lambda_K) \to \mathcal{C}(K)\colon a \mapsto x_a$ of (\ref{bijection-RC}). 
\begin{lem}\label{lem-length-filtration}
Fix $(a\colon [0,T]\to U^*\R^n)\in \mathcal{R}(\Lambda_K)$. For any $x\in \mathcal{C}(K)$, the following hold:
\begin{itemize}
\item If $\len (x)>T$, then $\mathcal{N}_g (a;x) = \emptyset$.
\item Suppose that $\len (x)=T$. Then, $ \mathcal{N}_g (a;x) \neq \emptyset$ if and only if $x=x_a$. Moreover, if $x=x_a$, then $\mathcal{N}_1(a; x_a) $ consists only of $(u_a, x_a)$, where $u_a$ is the trivial strip over $a$.
\item For  generic $J$, $\mathcal{N}_g(a;x)$ is cut out transversely whenever  $\len (x)<T$.
\end{itemize}
\end{lem}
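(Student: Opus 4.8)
The plan is to feed the length estimate of Proposition~\ref{prop-length} into the monotonicity of $E$ along negative gradient trajectories, and to use somewhere-injectivity of non-trivial $J'$-holomorphic curves for the transversality assertion.

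For the first two bullets, recall that a point of $\mathcal{N}_g(a;x)$ is a pair $(u,y)$ with $u\in\mathcal{M}_1(a)$, $y\in\W^s_g(x)$ and $\Gamma_1(u)=y$. Since $\len=\sqrt{2E}$ and $\frac{d}{dt}E(\varphi^t_g(y))=dE(V_g)=-g(V_g,V_g)\le 0$, the function $E$ is non-increasing along the flow of $V_g$; as $\varphi^t_g(y)\to x$ when $t\to\infty$, this yields $E(y)\ge E(x)$, hence $\len(y)\ge\len(x)$. On the other hand, $\len(\Gamma_1(u))\le T$ by Proposition~\ref{prop-length}. Combining these,
\[ \len(x)\ \le\ \len(y)\ =\ \len(\Gamma_1(u))\ \le\ T, \]
which is the contrapositive of the first bullet. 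When $\len(x)=T$, every inequality above is an equality, so the equality case of Proposition~\ref{prop-length} forces $u=u_a$, the trivial strip. A direct computation then identifies $\Gamma_1(u_a)=x_a$: using the normalizations $\hat{\psi}(p_1)=0$, $\hat{\psi}(p_2)=\sqrt{-1}$ from \eqref{bihol} together with the formula \eqref{eq-trivial-strip}, one reads off $u_a(p_1)=q_0$ and $u_a(p_2)=q_0+Tp_0$, while the Reeb flow on $U^*\R^n$ being the geodesic flow gives $a(t)=(q_0+tp_0,p_0)$, so $x_a=(q_0,q_0+Tp_0)$ under the bijection \eqref{bijection-RC}. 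Hence $y=x_a\in\W^s_g(x)$; but $x_a$ is a critical point of $E$, and the only critical point of $E$ contained in $\W^s_g(x)$ is $x$ itself, so $x=x_a$. Conversely, $(u_a,x_a)\in\mathcal{N}_g(a;x_a)$ since $x_a\in\W^s_g(x_a)$ and $\Gamma_1(u_a)=x_a$, and the discussion above shows it is the only element; this proves the second bullet.

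For the third bullet, assume $\len(x)<T$. The computation in the equality case just performed shows that $\mathcal{N}_g(a;x)$ contains no pair of the form $(u_a,\cdot)$: if it did, then $\Gamma_1(u_a)=x_a$ would lie in $\W^s_g(x)$, forcing $x=x_a$ and $\len(x)=T$. Thus every $J'$-holomorphic curve $u$ occurring in $\mathcal{N}_g(a;x)$ is non-trivial, and such a curve carries an injective point in the strip-like end at $p_0$ — the same somewhere-injectivity exploited in the proofs of Proposition~\ref{prop-transverse} and Proposition~\ref{prop-no-rigid}, and in \cite[Section 4.2]{CEL}, \cite[Section 6.3]{CELN}. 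The standard Sard--Smale scheme then applies: one forms the universal moduli space over the relevant Banach manifold of admissible almost complex structures, verifies that it is a Banach manifold on which the evaluation $u\mapsto\Gamma_1(u)\in K\times K$ is a submersion, and concludes that for generic $J$ the restriction of $\Gamma_1$ to the non-trivial part of $\mathcal{M}_1(a)$ is transverse to the submanifold $\W^s_g(x)$ for every $x\in\mathcal{C}(K)$; this is exactly the statement that $\mathcal{N}_g(a;x)$ is cut out transversely whenever $\len(x)<T$.

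The main obstacle is this last transversality statement: because $\Gamma_1$ evaluates at the boundary punctures $p_1,p_2$, where $u$ is only asymptotic to a constant in $K=L_K\cap\R^n$, one has to ensure that these asymptotic evaluations can be made submersive by varying the almost complex structure — the injective point sits near the interior puncture $p_0$, so some care is required to propagate the freedom gained there to the evaluations at $p_1,p_2$. This is precisely the transversality package for pseudo-holomorphic curves with switching Lagrangian boundary conditions established in \cite{CEL,CELN}, which I would invoke rather than reprove.
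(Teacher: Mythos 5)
Your proof is correct and follows essentially the same route as the paper: the first two bullets combine Proposition \ref{prop-length} with the monotonicity of $E$ on $\W^s_g(x)$ exactly as in the paper's argument, and the third bullet uses non-triviality of the curves, somewhere-injectivity near the positive puncture, and a generic perturbation of $J$ (with the evaluation-constraint transversality deferred to \cite{CEL,CELN}, just as the paper defers it to the method of Proposition \ref{prop-transverse}). Only a cosmetic slip: $p_0$ is a boundary puncture, not an interior one.
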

\begin{proof}
The first assertion follows from the inequality of Proposition \ref{prop-length} and that $\len (y)\geq \len (x)$ for every $y\in \W^s_g(x)$. 

For the second assertion, suppose that $T=\len (x)$. For an arbitrary element $(u, y)\in \mathcal{N}_g(a;x)$, by Proposition \ref{prop-length},
\[\len(y)=\len(\Gamma_1(u)) \leq  T=\len (x).\]
Since $y\in \W^s_g(x)$, this means that $y=x$ and the equality $\len(\Gamma_1(u)) = T$ holds. Therefore, by Proposition \ref{prop-length}, $u=u_a$ and $x=y=\Gamma_1(u_a)=x_a$. 

For the third assertion, suppose that $\len (x) <T$. Then, $u\neq u_a$ by Proposition \ref{prop-length}.
On $\Sigma \coloneqq u^{-1} (\{(q,p) \in T^* \R^n \mid |p| >  1\})$, let $\bar{u}\colon \Sigma \to U^*\R^n$ be the $U^*\R^n$-component of the map $F^{-1}\circ u $. Then, in the same way as the proof of Proposition \ref{prop-transverse}, it is shown that
\[ \{z \in \Sigma \setminus \partial D_{2m+1} \mid (\bar{u})^{-1} (\bar{u}(z))=\{z\},\ \pr_{\xi} \circ  (d\bar{u})_z \text{ is injective}\}\]
is not the empty set 
and the transversality can be achieved by a generic perturbation of $J \in \mathcal{J}_{U^*B^n_l}$.
\end{proof}

It remains to check the transversality for $\mathcal{N}_g(a;x_a)$ at the trivial strip $u_a$.

\begin{lem}\label{lem-trivial-strip}
The moduli space $\mathcal{N}_g(a; x_a)$ is cut out transversely.
\end{lem}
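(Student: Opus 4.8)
The plan is to reduce the statement, via Lemma \ref{lem-length-filtration}, to two assertions about the linearized Cauchy--Riemann operator $D_{u_a}$ of $\M_1(a)$ at the trivial strip, and then to combine them with Proposition \ref{prop-length} to obtain the transversality of the evaluation map against the stable manifold. Since Lemma \ref{lem-length-filtration} already gives $\N_g(a;x_a)=\{(u_a,x_a)\}$, "cut out transversely" means precisely that the linearization at $(u_a,x_a)$ of the section defining $\N_g(a;x_a)$ is surjective. This holds provided: (i) $D_{u_a}$ is surjective, so that $\M_1(a)$ is a smooth manifold near $u_a$ of dimension $\vdim\M_1(a)=|a|-(d-2)=\ind x_a$ by Proposition \ref{prop-vdim} and Proposition \ref{prop-correspond}; and (ii) the differential $d\Gamma_1\colon T_{u_a}\M_1(a)\to T_{x_a}(K\times K)$ of the evaluation map (\ref{map-Gamma}) is injective, where $\Gamma_1(u_a)=x_a$. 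I will first explain how (i) and (ii) yield the lemma, and then indicate how to prove them.

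Granting (i) and (ii), $\Gamma_1$ is an immersion near $u_a$, so the image $S$ of a small neighborhood of $u_a$ in $\M_1(a)$ is a submanifold of $K\times K$ with $x_a\in S$, $\dim S=\ind x_a$ and $T_{x_a}S=d\Gamma_1(T_{u_a}\M_1(a))$. By Proposition \ref{prop-length}, $\len(\Gamma_1(u))\leq T$ for every $u\in\M_1(a)$; since $\len(q,q')^2=2E(q,q')$ and $E(x_a)=\tfrac12|q_0-(q_0+Tp_0)|^2=\tfrac12T^2$, this forces $S\subseteq\{E\leq E(x_a)\}$, so $x_a$ is an interior maximum of $E|_S$ and therefore $d^2E(v,v)\leq 0$ for every $v\in T_{x_a}S$. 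On the other hand, $x_a$ is a non-degenerate critical point of $E$ by Proposition \ref{prop-correspond}, so the Hessian form $d^2E$ on $T_{x_a}(K\times K)$ is non-degenerate of index $\ind x_a$, and the tangent space $T_{x_a}\W^s_g(x_a)$ is a positive-definite subspace for $d^2E$ of dimension $2(n-d)-\ind x_a$ (this is part of the stable manifold theorem and uses nothing about $g$ beyond smoothness, since $E$ is strictly increasing along nonconstant flow lines of $-\grad_g E$ ending at $x_a$). Hence any $v\in T_{x_a}S\cap T_{x_a}\W^s_g(x_a)$ satisfies $d^2E(v,v)\leq 0$ and $d^2E(v,v)\geq 0$, the latter with equality only for $v=0$, so the intersection is trivial. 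Since $\dim T_{x_a}S+\dim T_{x_a}\W^s_g(x_a)=\ind x_a+(2(n-d)-\ind x_a)=\dim T_{x_a}(K\times K)$, we get $T_{x_a}S\oplus T_{x_a}\W^s_g(x_a)=T_{x_a}(K\times K)$, i.e. $\Gamma_1\pitchfork\W^s_g(x_a)$ at $u_a$, which together with (i) is exactly the transversality of $\N_g(a;x_a)$.

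The remaining task, and the main obstacle, is to prove (i) and (ii) by a direct analysis of $D_{u_a}$. Using the explicit formula (\ref{eq-trivial-strip}) for $u_a$, I would trivialize $u_a^*T(T^*\R^n)$ and, in coordinates on $\R^n$ near $q_0$ and near $q_0+Tp_0$ adapted to $K$ with $p_0$ a normal coordinate direction at both ends, present $D_{u_a}$, up to deformation through Fredholm operators, as a direct sum of $\bar{\partial}$-type boundary value problems on $D_3$: one in the complex line spanned by the $p_0$-base and $p_0$-fibre directions, whose boundary condition alternates between $\R$ along the even arc (in $\R^n$) and $\sqrt{-1}\,\R$ along the odd arcs (in $L_K$), with a switch at each of $p_1,p_2$; together with problems in the directions tangent to $K$ and in the remaining normal directions, whose boundary conditions encode the second fundamental form of $K$ through the variation of $TL_K$ along the asymptotic rays. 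The asymptotic operator at the positive puncture $p_0$ is the one attached to the Reeb chord $a$, which is invertible precisely because $a$, equivalently $x_a$, is non-degenerate (Proposition \ref{prop-correspond}). Inspecting these model problems should give surjectivity of $D_{u_a}$ (whence $\dim\ker D_{u_a}=\vdim\M_1(a)=|a|-(d-2)$), together with the fact that a nonzero element of $\ker D_{u_a}$ cannot vanish at both $p_1$ and $p_2$: such an element would extend the boundary conditions across those punctures and be forced to vanish identically by unique continuation, giving (ii). This linearized computation is entirely parallel to the analysis of curves with switching Lagrangian boundary conditions carried out in \cite[Section 4.2]{CEL} and \cite[Section 8]{CELN}, to which I would refer for the analytic details.
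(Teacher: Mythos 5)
Your reduction to the two claims (i) surjectivity of $D_{u_a}$ and (ii) injectivity of $d\Gamma_1$, combined with Proposition~\ref{prop-length} and the Hessian comparison at $x_a$, is logically correct: since $\Gamma_1(\M_1(a))\subseteq E^{-1}([0,E(x_a)])$ forces $\mathrm{Hess}_{x_a}E$ to be negative semi-definite on $T_{x_a}S$, and $T_{x_a}\W^s_g(x_a)$ is a $\mathrm{Hess}_{x_a}E$-positive-definite subspace of complementary dimension $2(n-d)-\ind x_a$, the two tangent spaces indeed intersect trivially and the dimension count gives transversality. This is a clean way to package the input; the paper itself uses the same two ingredients (the length bound and the positivity of the Hessian on the stable manifold, the latter via $\rest{\frac{d^2 L}{d\epsilon^2}}{\epsilon=0}\ge 0$), so the observation is genuinely germane.

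The gap is that (i) and (ii) are not proved, only asserted as the "remaining task," and the sketch you give for them does not go through. For (i), the splitting of $D_{u_a}$ into $\bar\partial$-type model problems in complex lines is not clean: only the $J'$-complex plane $H$ spanned by the base and fibre directions of $p_0$ splits off (this is essentially the content of Lemma~\ref{lemA-trivial-strip}), while the second fundamental form of $K$ couples the $TK$- and $(TK)^\perp$-normal directions through the boundary conditions from $TL_K$, so a direct verification of surjectivity from model problems is not at hand. For (ii), the unique continuation argument is not valid as stated: a kernel element $\eta$ with $\eta(p_1)=\eta(p_2)=0$ decays exponentially at the two punctures, but vanishing at two boundary points of a half-infinite strip does not trigger unique continuation for a Cauchy--Riemann type operator (that would require vanishing on an open set or to infinite order at an interior point), and regularizing the corner boundary condition at $p_i$ does not change this. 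In short, the heart of the lemma has been pushed into (i) and (ii), which remain unproved and do not appear to admit the direct linear analysis you outline.

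The paper's proof is genuinely different in route. Rather than first establishing (i) and (ii), it treats the \emph{restricted} operator $D'\colon\{\eta=\zeta_{(\sigma,v_1,v_2)}\mid (v_1,v_2)\in T_{x_a}\W^s_g(x_a)\}\to H_2$, which has Fredholm index $0$, and shows directly that $\ker D'=0$. To do so it does not analyze $D_{u_a}$ in closed form; instead it uses a Kuranishi-chart construction (Lemma~\ref{lem-kuranishi}) to deform $u_a$ through a finite-dimensional family $u_\epsilon$ of maps that are $J'$-holomorphic outside a fixed compact rectangle, and then compares the boundary length $\mathcal{L}(\epsilon)=\len(c_\epsilon)$ with the chord length $L(\epsilon)=|c_\epsilon(1)-c_\epsilon(0)|$. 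The stable-manifold condition gives $\rest{\frac{d^2 L}{d\epsilon^2}}{\epsilon=0}\ge 0$, while Lemma~\ref{lem-approx} (using Lemma~\ref{lemA-trivial-strip} to locate a point where $\nabla_s\eta\notin\Im(du_a)$) gives the strict bound $\mathcal{L}(\epsilon)\le T-A\epsilon^2$; since $\mathcal{L}\ge L$, this is a contradiction. Both (i) and (ii) then follow a posteriori from $\ker D'=0$ together with the index computation, rather than being prerequisites. To turn your outline into a complete proof you would need to supply genuine arguments for (i) and (ii), which in practice seems to require the same variational machinery the paper deploys.
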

We will make some setup for analysis and give a detailed proof of this lemma in Appendix \ref{subsec-A3}. The proof is based on the ideas in \cite[Proposition 8.9]{CELN} and \cite[Proposition 2]{Asp}.

\begin{rem}\label{rem-cutout-transversely}
Using the map $\Gamma_1 \colon \M_1(a) \to K\times K$, we have
$\mathcal{N}_g(a;x) \cong \Gamma_1^{-1}(\W^s_g(x))$.
The condition that $\mathcal{N}_g(a;x)$ is cut out transversely is equivalent to satisfying the two conditions:
\begin{itemize}
\item $\M_1(a)$ is cut out transversely on a neighborhood $U_{a;x}$ of $\Gamma_1^{-1}(\W^s_g(x))$
\item The $C^{\infty}$ map $\Gamma_1\colon U_{a;x}\to K\times K$ is transversal to $\W^s_g(x)$.
\end{itemize}
\end{rem}

\subsubsection{Compactification of $\N_g(a;x)$ and construction of isomorphism}\label{subsubsec-compact-NV}
As a notation, for any $C^{\infty}$ path $\gamma \colon \partial_k D_{2m+1} \to \R^n$ with a finite length, let $\len(\gamma) \in \R_{\geq 0}$ denote the length of $\gamma$.
For any $u\in \M_1(a)$, if $\len \left( \rest{u}{\partial_2 D_3}\right)\leq \sqrt{2 \epsilon_0}$, then
\[ |u(p_1)-u(p_2)| \leq  \len \left( \rest{u}{\partial_2 D_3}\right)\leq \sqrt{2 \epsilon_0} , \]
and thus $E \circ \Gamma_1(u) \leq \epsilon_0$.
By (\ref{stable-contained}), this implies that $\Gamma_1(u) \notin \W^s_g(x)$ for every $x\in \mathcal{C}(K)$.
Therefore, if we define
\[\M_1(a)^{\geq \epsilon_0} \coloneqq \{u\in \M_1(a) \mid \len \left( \rest{u}{\partial_2 D_3} \right) \geq \sqrt{2\epsilon_0} \},\]
then
\[ \mathcal{N}_g(a;x) = \M_1(a)^{\geq \epsilon_0} \ftimes{\Gamma_1}{i^s} \W^s_g(x) . \]

By \cite[Theorem 1.1]{CEL}, $\M_m(a)$ for $m=0,1,\dots$ admits a compactification by adding pseudo-holomorphic buildings, possibly with boundary nodes.
See Figure \ref{figure-hol-bdry-1} in the case of $m=1$.
Let $\overline{\M}_1(a)^{\geq \epsilon_0}$ be the closure of $\M_1(a)^{\geq \epsilon_0}$ in this compactification.
In addition, let $\overline{\M}^1_1(a)^{\geq \epsilon_0}$ be the union of its strata of codimension $0$ and $1$.
Concretely, it is the disjoint union of $\M_1(a)^{\geq \epsilon_0}$ and
\begin{align}\label{building-M1}
\begin{split}
& \M_J(a;b_1,\dots ,b_m) \\
\times & \M_0(b_1)\times \dots \times  \M_0(b_{k-1}) \times \M_{1}(b_k)^{\geq \epsilon_0} \times \M_0(b_{k+1}) \times \dots \times \M_0(b_m)
\end{split}
\end{align}
for every $m,k\in \Z_{\geq 1}$ and $b_1,\dots ,b_m \in \mathcal{R}(\Lambda_K)$ such that $|a| - \sum_{i=1}^m|b_i| \geq 1$.

\begin{figure}
\centering
\begin{overpic}[height=4.5cm]{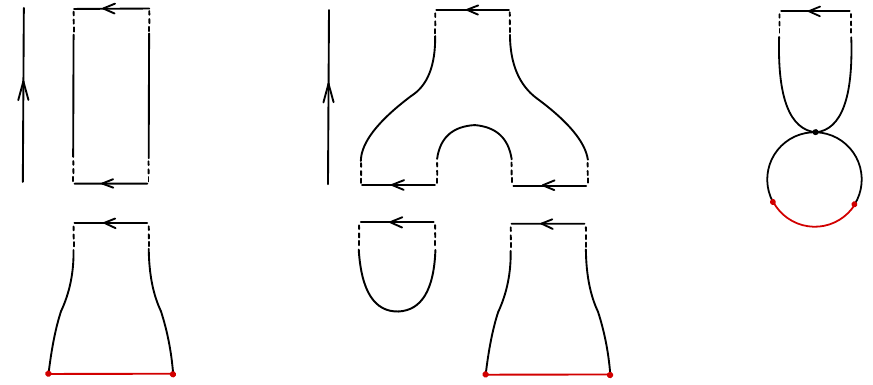}
\put(11,44){$a$}
\put(11,19.5){$b$}
\put(-15,33){$\R\times U^*\R^n$}
\put(-2,42){$\infty$}
\put(-5,22){$-\infty$}
\put(-7,10){$T^*\R^n$}
\put(52,44){$a$}
\put(44,19){$b'$}
\put(61.5,19){$b$}
\put(91,44){$a$}
\put(79,32){$T^*\R^n$}
\put(92,23){$v$}
\end{overpic}
\caption{$J'$-holomorphic buildings which appear in codimension $1$ strata of the compactification of $\M_1(a)$.
The red boundaries are mapped to $\R^n$ and the black boundaries are mapped to $\R\times \Lambda_K$ or $L_K$, depending on whether the target symplectic manifold is $\R\times U^*\R^n$ or $T^*\R^n$.
The right one describes a $J'$-holomorphic curve in $T^*\R^n$ with a boundary node. In fact, it is not contained in $\overline{\M}_1(a)^{\geq \epsilon_0}$ (see Remark \ref{rem-disjoint-component}). The middle one represents $J'$-holomorphic buildings which contain at least one $J'$-holomorphic curve in $\coprod_{b' \in \mathcal{R}(\Lambda_K)} \M_0(b')$.}\label{figure-hol-bdry-1}
\end{figure}

\begin{rem}\label{rem-disjoint-component}
The fiber product $\tilde{\M}_{0,1}(a)\ftimes{\tilde{\ev}_1}{\ev_0} \M_{L_K,\R^n,L_K}$, whose elements are illustrated by the right one in Figure \ref{figure-hol-bdry-1},
is contained in the codimension $1$ strata of the compactification of $\M_1(a)$.
However,  it is disjoint from $\overline{\M}_1(a)^{\geq \epsilon_0}$.
Indeed, the $J'$-holomorphic curve $v$ as in Figure \ref{figure-hol-bdry-1} is a constant disk in $T^*\R^n$ and its restriction on the red boundary component gives a constant path in $\R^n$.
Therefore, for any $u \in \M_1(a)$ sufficiently close to the strata $\tilde{\M}_{0,1}(a)\ftimes{\tilde{\ev}_1}{\ev_0} \M_{L_K,\R^n,L_K}$,
$\len (\rest{u}{\partial_2 D_3}) < \frac{1}{2} \epsilon_0$, and thus $\tilde{\M}_{0,1}(a)\ftimes{\tilde{\ev}_1}{\ev_0} \M_{L_K,\R^n,L_K}$ is disjoint from  $\overline{\M}_1(a)^{\geq \epsilon_0}$.
\end{rem}

We define $\overline{\Gamma}_1 \colon \overline{\M}_1(a)^{\geq \epsilon_0} \to K\times K$ to be the continuous extension of
$\Gamma_1\colon \M_1(a)^{\geq \epsilon_0} \to K\times K$.
Explicitly, it is defined on (\ref{building-M1}) as the composition of the projection map $ (\ref{building-M1}) \to \M_1(b_k)^{\geq \epsilon_0}$ and $\Gamma_1 \colon \M_1(b_k)^{\geq \epsilon_0} \to K\times K$.

\begin{lem}\label{lem-boundary-NV}
If $\ind x= |a|-d+2$, $\mathcal{N}_g(a;x)$ is a compact $0$-dimensional manifold. If $\ind x= |a|-d+1$,  $\mathcal{N}_g(a;x)$ is compactified to a compact $1$-dimensional manifold whose boundary is
\begin{align}\label{boundary-NV}
\coprod_{\ind x'=\ind x +1 }\mathcal{N}_g(a;x') \times \T_g(x';x) 
\sqcup \coprod_{|b|=|a|-1} \M_{J}(a;b) \times \mathcal{N}_g(b;x) .
\end{align}
\end{lem}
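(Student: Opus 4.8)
The plan is to identify $\overline{\mathcal N}_g(a;x)$ with a fiber product over $K\times K$ of the compactified moduli space $\overline{\M}_1(a)^{\geq\epsilon_0}$ with the compactified stable manifold $\overline{\W}^s_g(x)$, and then read off its boundary stratification from the known boundary structures of the two factors together with a transversality/gluing argument. First I would set up the dimension count: by Proposition \ref{prop-vdim} and Remark \ref{rem-cutout-transversely}, for generic $J$ the space $\mathcal N_g(a;x)=\M_1(a)^{\geq\epsilon_0}\ftimes{\Gamma_1}{i^s}\W^s_g(x)$ is a $C^\infty$ manifold of dimension $|a|-d+2-\ind x$ (Lemmas \ref{lem-length-filtration} and \ref{lem-trivial-strip} guarantee transversality also at the trivial strip, which occurs only when $\len(x)=T$ and $x=x_a$, giving the single point $(u_a,x_a)$; note this point is an interior point and does not affect the boundary analysis). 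So when $\ind x=|a|-d+2$ the space is $0$-dimensional, and when $\ind x=|a|-d+1$ it is $1$-dimensional.

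Next I would establish compactness via the fiber-product description over $\overline{\M}_1(a)^{\geq\epsilon_0}$ and $\overline{\W}^s_g(x)$. Since $\overline{\M}_1(a)^{\geq\epsilon_0}$ is compact (closure inside the Gromov--Hofer/SFT compactification of \cite[Theorem 1.1]{CEL}, restricted to the closed condition $\len(\rest{u}{\partial_2 D_3})\geq\sqrt{2\epsilon_0}$) and $\overline{\W}^s_g(x)$ is compact with $i^s(\overline{\W}^s_g(x))\subset E^{-1}([\epsilon_0,\infty))$, the fiber product
\[
\overline{\mathcal N}_g(a;x)\coloneqq \overline{\M}_1(a)^{\geq\epsilon_0}\ftimes{\overline{\Gamma}_1}{i^s}\overline{\W}^s_g(x)
\]
is compact. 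For $\ind x=|a|-d+2$ one checks that all the added strata have negative expected dimension (using Proposition \ref{prop-vdim} for the holomorphic pieces and $\dim\T_g(x';x)=\ind x'-\ind x-1$ for the Morse pieces), so $\overline{\mathcal N}_g(a;x)=\mathcal N_g(a;x)$ and it is a compact $0$-manifold. The key point that eliminates unwanted degenerations is Remark \ref{rem-disjoint-component}: the boundary-node strata $\tilde{\M}_{0,k}(a)\ftimes{}{}\M_{\ast}$ are disjoint from $\overline{\M}_1(a)^{\geq\epsilon_0}$ because such degenerations force $\len(\rest{u}{\partial_2 D_3})\to 0$. I would also invoke Proposition \ref{prop-no-rigid} ($\M_0(b)$ has positive dimension, using $d\geq 4$) to rule out configurations with a rigid disk in $T^*\R^n$ in the lowest stratum.

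For $\ind x=|a|-d+1$ the $1$-dimensional manifold $\overline{\mathcal N}_g(a;x)$ has boundary consisting exactly of the codimension-$1$ strata of each factor with the other factor at top dimension. From the Morse side, $\overline{\W}^{s,1}_g(x)$ contributes $\coprod_{\ind x'=\ind x+1}\W^s_g(x')\times\T_g(x';x)$, giving the first term $\coprod_{\ind x'=\ind x+1}\mathcal N_g(a;x')\times\T_g(x';x)$ after taking the fiber product. From the holomorphic side, $\overline{\M}^1_1(a)^{\geq\epsilon_0}$ contributes the SFT-type broken configurations (\ref{building-M1}); imposing that the remaining expected dimension be $0$ forces $m=1$, $k=1$, $b_1=b$ with $|b|=|a|-1$, yielding $\coprod_{|b|=|a|-1}\M_J(a;b)\times\mathcal N_g(b;x)$. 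A standard gluing argument (linear gluing for the holomorphic breakings, as in \cite{DR, CEL}; Morse gluing for the trajectory breaking, as in \cite{Ab, Sc-eq}) shows these strata are genuine boundary points and that $\overline{\mathcal N}_g(a;x)$ is a $1$-manifold with boundary near them. Putting the two contributions together gives (\ref{boundary-NV}).

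The main obstacle is the gluing/transversality analysis at the holomorphic breaking strata (\ref{building-M1}): one must check that near such a broken configuration the compactified moduli space is a manifold with boundary, which requires knowing that the relevant linearized operators are surjective after the generic perturbation of $J$ of Lemma \ref{lem-length-filtration}, and that the gluing map is a homeomorphism onto a half-neighborhood. This is where the injective-point argument from the proof of Proposition \ref{prop-transverse} (and its adaptation in Lemma \ref{lem-length-filtration}) does the real work; the Morse-side gluing and the compactness statements are comparatively routine. A secondary subtlety is confirming, via Remark \ref{rem-disjoint-component} and the length estimate of Proposition \ref{prop-length}, that no boundary-node or $\epsilon_0$-collapse configuration sneaks into $\overline{\M}_1(a)^{\geq\epsilon_0}$ along the $1$-dimensional components.
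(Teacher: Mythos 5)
Your proposal is correct and follows essentially the same route as the paper: the same compactification $\overline{\N}_g(a;x)=\overline{\M}_1(a)^{\geq\epsilon_0}\ftimes{\overline{\Gamma}_1}{i^s}\overline{\W}^s_g(x)$, the same exclusion of boundary-node and multi-level strata via Remark \ref{rem-disjoint-component} and the positive dimension of $\M_0(b)$, and the same identification of the two boundary terms from $\overline{\W}^{s,1}_g(x)$ and $\overline{\M}^1_1(a)^{\geq\epsilon_0}$. The gluing step you flag as the main obstacle is precisely what the paper carries out, adapting \cite[Section 10.4]{CELN} by building a uniformly bounded right inverse for the linearized operator on the fiber product with $\W^s_g(x)$ and applying Floer's Picard lemma.
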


\begin{proof}
The compactification of $\mathcal{N}_g(a;x)$ is obtained by
\begin{align}\label{compact-NV}
 \overline{\N}_g(a;x) \coloneqq \overline{\M}_1(a)^{\geq \epsilon_0} \ftimes{\overline{\Gamma}_1}{i^s} \overline{\W}^s_g(x). 
 \end{align}
 We observe the case where the dimension of $\N_g(a;x)$ is $0$ or $1$.
If $\ind x = |a|-d+2$, $\overline{\N}_g(a;x)= \N_g(a;x)$ and it is a compact $0$-dimensional manifold.
Next, suppose that $\ind x = |a|-d+1$.
Then, $\overline{\N}_g(a;x)$ is the union of its open subsets
\[ \begin{array}{cc}  \M_1(a)^{\geq \epsilon_0} \ftimes{\Gamma_1}{i^s} \overline{\W}^{s,1}_g(x) , &  \overline{\M}^1_1(a)^{\geq \epsilon_0} \ftimes{\overline{\Gamma}_1}{i^s} \W^s_g(x)  . \end{array}\]
It is easy to see that
\[ \M_1(a)^{\geq \epsilon_0} \ftimes{\Gamma_1}{i^s} \overline{\W}^{s,1}_g(x) = \N_g(a;x) \sqcup \coprod_{ \ind x'=\ind x+1} \N_g(a;x')\times \T_g(x';x). \]
On the other hand,
\begin{align}\label{boundary-N-1}
\overline{\M}^1_1(a)^{\geq \epsilon_0} \ftimes{\overline{\Gamma}_1}{i^s} \W^s_g(x)
= \N_g(a;x) \sqcup \coprod_{|a|-1=|b|} \M_{J}(a;b) \times \mathcal{N}_g(b;x) .
\end{align}
To see this, note that among the codimension $1$ components (\ref{building-M1})  of $\overline{\M}^1_1(a)^{\geq \epsilon_0}$ for every $m\in \Z_{\geq 1}$, only the case of $m=1$ contributes to (\ref{boundary-N-1}).
The case of $m\geq 2$, which corresponds to the middle one of Figure \ref{figure-hol-bdry-1}, does not contribute to  (\ref{boundary-N-1}) since $\M_0(b_i)$ in  (\ref{building-M1})  for $i\neq k$ is a manifold of positive dimension by Proposition \ref{prop-no-rigid}.

It remains to show that if $\ind x = |a|-d+1$, $\overline{\mathcal{N}}_g(a;x)$ is a $1$-dimensional manifold with boundary (\ref{boundary-NV}).

Recall that $\overline{\W}^{s,1}_g(x)$ is a $C^{\infty}$ manifold with boundary and $i^s$ is a $C^{\infty}$ map.
Lemma \ref{lem-length-filtration} shows that the fiber product
$\M_1(a)^{\geq \epsilon_0} \ftimes {\Gamma_1}{i^s} \left(\partial \overline{\W}^{s,1}_g(x) \right)$ is transversal (see Remark \ref{rem-cutout-transversely}).
Therefore,
$\M_1(a)^{\geq \epsilon_0} \ftimes {\Gamma_1}{i^s} \overline{\W}^{s,1}_g(x)$
is a $1$-dimensional manifold with boundary
\[\M_1(a)^{\geq \epsilon_0} \ftimes {\Gamma_1}{i^s} \left(\partial \overline{\W}^{s,1}_g(x) \right) = \coprod_{\ind x'=\ind x +1 }\mathcal{N}_g(a;x') \times \T_g(x';x) .\]

On the other hand,
take a boundary point
\[([v^1],v^2) \in \M_J(a;b) \times \M_1(b)^{\geq \epsilon_0} \subset \overline{\M}_1^1(a)^{\geq \epsilon_0}\]
such that $|b|=|a|-1$ and  $\Gamma_1(v^2) \in \W^s_g(x)$. (Here, we omit writing the conformal structures.)
We argue as in \cite[Section 10.4]{CELN} replacing $\R^3$ by $\R^n$.
Let us sketch the analytic setup. (For details, see \cite[page 774, 775]{CELN}.)
First, from $v^1$ and $v^2$, we construct a pre-gluing $w_{\rho}$ in a Banach manifold, where $\rho \gg 0$ is a gluing parameter.
Next, we take $\mathscr{W}_{\rho}$ to be a neighborhood of $w_{\rho}$ which is diffeomorphic to a neighborhood of $0$ in $\mathscr{H}_{2,\rho}(w_{\rho}) \times B_{\rho} $.
Here $\mathscr{H}_{2,\rho}(w_{\rho})$ is a weighted Sobolev space of vector fields along $w_{\rho}$ with boundary conditions and $B_{\rho}$ is a certain finite dimensional factor.
Then, we consider the Cauchy-Riemann operator $\Dbar_{J'} \colon \mathscr{W}_{\rho} \to \mathscr{E}_{1,\rho}$ such that $(\Dbar_{J'})^{-1}(0) = \M_1(a)\cap \mathscr{W}_{\rho}$.
Here $\mathscr{E}_{1,\rho}$ is also a weighted Sobolev space of $w_{\rho}^* T (T^*\R^n)$-valued $(0,1)$-forms.

By the same formula as $\Gamma_1$ in (\ref{map-Gamma}),
we define $\Gamma_{1,\rho} \colon \mathscr{W}_{\rho} \to K\times K $
by evaluating maps in $\mathscr{W}_{\rho}$ at $p_1$ and $p_2$. It is a $C^{\infty}$ map transversal to $\W^s_g(x)$.
Moreover, since $v^2$ agrees with the pre-gluing $w_{\rho}$ near  $\{p_1,p_2\}$, $\Gamma_{1,\rho}(w_{\rho}) = \Gamma_1(v^2) \in \W^s_g(x)$.
Therefore, we obtain a Banach manifold $\mathscr{W}_{1,\rho} \ftimes{\Gamma_{1,\rho}}{i^s} \W^s_g(x) $ which contains $(w_{\rho},\Gamma_1(v^2))$ and
a $C^{\infty}$ map on the Banach manifold
\[\mathscr{F} \colon \mathscr{W}_{\rho} \ftimes{\Gamma_{1,\rho}}{i^s} \W^s_g(x)  \to \mathscr{E}_{1,\rho} \colon (w,y)  \mapsto \Dbar_{J'} (w) . \]

Let us denote the kernel of the linearization of the Cauchy-Riemann operator at $v^1 \in \hat{\M}_J(a;b)$ by $\mathscr{K}^1$ and the kernel of that at $v^2\in \M_1(b)$ by $\mathscr{K}^2$.
By \cite[Lemma 10.12]{CELN}, the linearization of $\Dbar_{J'}$ at $w_{\rho}$ 
\[D \Dbar_{J'} \colon \mathscr{H}_{2,\rho}(w_{\rho}) \times B_{\rho}  \to \mathscr{E}_{1,\rho}\]
admits a right inverse which is uniformly bounded as $\rho \to \infty$.
For the proof, we consider the $L^2$-orthogonal complement in $\mathscr{H}_{2,\rho}(w_{\rho}) \times B_{\rho} $ of a vector space spanned by elements from $\mathscr{K}^1$ and $\mathscr{K}^2$ (we need to cut off the elements of  $\mathscr{K}^1$ and $\mathscr{K}^2$. See \cite[page 776]{CELN}),
and then, we use the fact that the linearization of the Cauchy-Riemann operator at $v^1$ (resp. $v^2$) is an isomorphism on the $L^2$-orthogonal complement of $\mathscr{K}^1$ (resp. $\mathscr{K}^2$).
We remark that \cite[Lemma 10.12]{CELN} treats a different case (gluing constant disks \cite[Section 10.3]{CELN}), so we need to modify the argument to the present case as described in \cite[page 776]{CELN}.

Instead of $\mathscr{K}^2$, let us consider a fiber product
\[ \mathscr{K}^2 \ftimes{d\Gamma_1}{di^s} T_{\Gamma_1(v^2)}\W^s_g(x) ,\]
which is $0$-dimensional in the present case,
and use the fact from the third assertion of Lemma \ref{lem-length-filtration} that the linearization of the Cauchy-Riemann operator at
\[ (v_2 , \Gamma_1(v_2)) \in \M_1(b) \ftimes{\Gamma_1}{i^s}\W^s_g(x) = \mathcal{N}_{g}(b;x) \] is an isomorphism.
Then,  we can argue as in \cite[Lemma 10.12]{CELN} to show that the linearization of $\mathscr{F}$ at $(w_{\rho},\Gamma_1(v^2))$
\[ D\mathscr{F} \colon  \left( \mathscr{H}_{2,\rho}(w_{\rho}) \times B_{\rho} \right)    \ftimes{d\Gamma_{1,\rho}}{di^s} T_{\Gamma_1(v^2)}\W^s_g(x) \to \mathscr{E}_{1,\rho} \]
 admits a right inverse which is uniformly bounded as $\rho \to \infty$.
The rest of arguments, including using Floer’s Picard lemma \cite[Lemma 10.10]{CELN}, are parallel to those in \cite[Section 10.4, Proof of Theorem 10.3]{CELN}. (The present case corresponds to the boundary point of type (b) in \cite[Theorem 10.3]{CELN}.)
Then, it follows that $\overline{\M}_1^1(a)^{\geq \epsilon_0}  \ftimes {\overline{\Gamma}_1}{i^s} \W^s_g(x)$
is a $1$-dimensional manifold with boundary $\coprod_{|b|=|a|-1} \M_{J}(a;b) \times \mathcal{N}_g(b;x) $.
\end{proof}

Now we define a $\Z/2$-linear map
$ \Phi_g \colon CL_* \to CM_{*+d-2} $
by
\[ \Phi_g(a) \coloneqq \sum_{ \ind x = |a|- d+2} \#_{\Z/2}\mathcal{N}_g (a;x) \cdot x \]
for every $a\in \mathcal{R}(\Lambda_K).$
$\Phi_g$ is a chain map since
\[  d_g \circ \Psi_g (a) + \Psi_g \circ d_J(a) =0  \]
for every $a \in \mathcal{R}(\Lambda_K)$ by Lemma \ref{lem-boundary-NV}.

\begin{thm}\label{thm-isom-LM}
$\Psi_g$ is an isomorphism.
\end{thm}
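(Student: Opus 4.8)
The plan is to build a filtration on the chain complexes $(CL_*, d_J)$ and $(CM_*, d_g)$ by the length function, and show that $\Phi_g$ is filtered and induces an isomorphism on the associated graded pieces. Recall the bijection $\mathcal{R}(\Lambda_K)\to \mathcal{C}(K)\colon a\mapsto x_a$ from \eqref{bijection-RC}. For each threshold $\ell>0$, let $CL_*^{<\ell}$ (resp.\ $CL_*^{\leq \ell}$) be the subspace of $CL_*$ spanned by those $a\in\mathcal{R}(\Lambda_K)$ with $\len(x_a)<\ell$ (resp.\ $\leq\ell$), and define $CM_*^{<\ell}$, $CM_*^{\leq \ell}$ analogously using $\len(x)$ for $x\in\mathcal{C}(K)$. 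The first step is to check these are subcomplexes: for $d_J$ this follows from Proposition~\ref{prop-length} applied to strips in $\M_J(a;b)$ (a $J$-holomorphic strip in $\R\times U^*\R^n$ with boundary on $\R\times\Lambda_K$ projects, under the analogue of the argument there, to a curve whose boundary length bounds $\len(x_b)\leq\len(x_a)$); for $d_g$ it follows because negative gradient flow of $E$ decreases $\len$. The subtlety is whether the inequality is strict when $a\neq b$; since $\mathcal{C}(K)$ is finite (Reeb chords are non-degenerate) the set of values $\{\len(x)\}$ is finite, so after choosing $\ell$ to avoid these finitely many values there is no harm, and on the associated graded at a fixed value $\len(x)=\ell$ both differentials are strictly length-decreasing and hence vanish.

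The second step is to show $\Phi_g$ respects the filtration, i.e.\ $\Phi_g(CL_*^{\leq\ell})\subset CM_*^{\leq\ell}$. This is exactly the content of Lemma~\ref{lem-length-filtration}: if $\mathcal{N}_g(a;x)\neq\emptyset$ then $\len(x)\leq\len(x_a)$, with equality forcing $x=x_a$ and $\mathcal{N}_g(a;x_a)=\{(u_a,x_a)\}$ a single point (using Lemma~\ref{lem-trivial-strip} for transversality at the trivial strip). Consequently, on the associated graded complex at the length value $\len(x_a)=\ell$, the induced map sends the generator $a$ (in degree $|a|=\ind x_a + d-2$) to $x_a$ (in degree $\ind x_a$), shifted by $d-2$, with coefficient $\#_{\Z/2}\mathcal{N}_g(a;x_a)=1$. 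Since the associated graded differentials vanish, $\gr\Phi_g$ is simply the degree-shifting bijection $a\mapsto x_a$ on basis elements, hence an isomorphism of (graded) vector spaces in each filtration level.

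The third step is the standard homological-algebra conclusion: a filtered chain map between complexes with exhaustive, bounded-below, finite filtrations that induces an isomorphism on each associated graded piece induces an isomorphism on homology. Here the filtration is finite (finitely many length values), exhaustive (every generator has some finite length), and $\Phi_g$ is a chain map by Lemma~\ref{lem-boundary-NV}, so a five-lemma / spectral-sequence comparison argument on the finitely many steps gives that $(\Phi_g)_*\colon HL_*(\Lambda_K)\to HM_{*+d-2}(g)$ is an isomorphism.

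The main obstacle I anticipate is precisely the strictness in the length inequality for the Floer differential $d_J$: one must be sure that a rigid $J$-holomorphic strip counted in $d_J$ from $a$ to $b$ with $\len(x_a)=\len(x_b)$ cannot exist, or more carefully, that it does not contribute on the associated graded. This is where one invokes (the analogue in $T^*\R^n$ of) Proposition~\ref{prop-length} together with the fact that equality of lengths forces the strip to be trivial — but a trivial strip is not rigid, so it is not counted. Making this rigorous requires passing through the compactification $\M_m(a)$ and the length estimate \cite[Proposition 8.9]{CELN}, exactly the tools already set up in Subsection~\ref{subsec-switching}; the rest of the argument is formal.
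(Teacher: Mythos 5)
Your argument is correct (reading the statement as being about $\Phi_g\colon CL_*\to CM_{*+d-2}$, as the surrounding text and proof make clear; ``$\Psi_g$'' in the theorem statement is a typo), but it is considerably more elaborate than the paper's, and it reaches a weaker conclusion.

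The paper's argument is purely linear-algebraic and never looks at the differentials. Order $\mathcal{C}(K)=\{x_1,\dots,x_N\}$ by increasing $E$-value and transport this order to $\mathcal{R}(\Lambda_K)$ via $a\mapsto x_a$. The first two assertions of Lemma~\ref{lem-length-filtration} say exactly that $\mathcal{N}_g(a_i;x_j)=\emptyset$ whenever $\len(x_j)>\len(x_i)$, and that when $\len(x_j)=\len(x_i)$ the space $\mathcal{N}_g(a_i;x_j)$ is empty unless $j=i$, in which case it is the single point $(u_{a_i},x_{a_i})$. Thus, in these ordered bases, $\Phi_g$ is an upper-triangular $N\times N$ matrix over $\Z/2$ with all diagonal entries equal to $1$, hence a bijective linear map; since $\Phi_g$ is already known to be a chain map, it is an isomorphism of chain complexes. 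That is the entire proof.

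Your filtration/spectral-sequence route is logically sound, but it forces you to establish that $d_J$ and $d_g$ respect the filtration and that the induced maps on the associated graded pieces vanish---precisely the ``main obstacle'' you flag (strictness of the length inequality for rigid strips, Stokes' theorem, triviality of length-preserving strips, etc.). None of that is needed: the invertibility of a unipotent upper-triangular matrix is independent of what the differentials do. Moreover, you only conclude that $(\Phi_g)_*$ is an isomorphism on homology, whereas the direct argument gives the stronger statement that $\Phi_g$ itself is a chain isomorphism. (Your filtration argument could in principle also give this---a filtered linear map inducing a bijection on every associated graded piece of a finite filtration is itself bijective---but you stopped at the quasi-isomorphism.) So the extra machinery is not wrong, but it duplicates work and introduces a needless dependence on properties of $d_J$ and $d_g$ that the statement does not actually require.
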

\begin{proof}
 We fix an order on the set $\mathcal{C}(K)$ as $\mathcal{C}(K)=\{x_1,\dots ,x_N\}$ so that $E(x_i)\leq E(x_j)$ if $i\leq j$.
By using the bijection (\ref{bijection-RC}), we define $a_i\in \mathcal{R}(\Lambda_K)$ for $i=1,\dots ,N$ to be $x_i=x_{a_i}$. We consider the $\Z/2$-linear isomorphisms $CL_* \to (\Z/2)^N$ and $CM_*\to (\Z/2)^N$ which map $a_i$ and $x_i$ to $(0,\dots ,0,1,0,\dots ,0)\in(\Z/2)^N$ whose $i$-th component is $1$ and the other components are $0$. Through these isomorphisms, $\Phi_g$ is regarded as an $N\times N$-matrix with entries in $\Z/2$. The first two assertions of Lemma \ref{lem-length-filtration} imply that $\Phi_g$ is an upper triangular matrix all of whose diagonal entries are $1\in \Z/2$. Therefore, $\Phi_g$ is a linear isomorphism over $\Z/2$.
\end{proof}

\subsection{Compatibility with coproduct operations}\label{subsec-compatibility}

Choose an admissible triple $(K,g,g')$ and $J \in \mathcal{J}_{U^*B^n_l}$ satisfying the condition of Proposition \ref{prop-transverse}.
The purpose of this subsection is to prove the following theorem.

\begin{thm}\label{thm-diagram}
By generic perturbations of $J$ in $ \mathcal{J}_{U^*B^n_l}$ and $g'$ in $ \mathcal{G}_K$ if necessary,
 the following diagram commutes up to chain homotopy:
\begin{align}\label{diagram-coproduct}
\begin{split}
\xymatrix{
CL_{*+1} \ar[r]^-{\delta_J} \ar[d]^-{\Phi_g} & CL_{*}^{\otimes 2} \ar[d]^-{\Phi_{g'}\otimes \Phi_{g'}} \\
CM_{*-d+3} \ar[r]^-{\delta_{g,g'}} & CM_{*}^{\otimes 2}[-2d+4].
}\end{split}
\end{align}
\end{thm}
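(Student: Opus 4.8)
The plan is to realise all four composites in the diagram as parts of the boundary of a single one-dimensional moduli space, and thereby extract a chain homotopy. For $a\in\mathcal{R}(\Lambda_K)$ and $x_1,x_2\in\mathcal{C}(K)$, take the moduli space $\M_2(a)$ of \S\ref{subsubsec-def-Mm} with its evaluation map $\Gamma_2\colon\M_2(a)\to(K\times K)^{\times 2}$, and set
\[
\mathcal{Q}_{g'}(a;x_1,x_2)\coloneqq\M_2(a)\ftimes{\Gamma_2}{i^s_2}\bigl(\W^s_{g'}(x_1)\times\W^s_{g'}(x_2)\bigr),
\]
which by Proposition \ref{prop-vdim} has formal dimension $|a|-2(d-2)-\ind x_1-\ind x_2$. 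Transversality is obtained by a generic perturbation of $J$ (hence of $J'$) together with $g'$: as in Proposition \ref{prop-transverse} and Lemma \ref{lem-length-filtration} the nontrivial curves in $\M_2(a)$ carry injective points, and the length bound of Proposition \ref{prop-length} — generalised to $\M_2(a)$ via \cite[Proposition 8.9]{CELN}, exactly as in Lemma \ref{lem-trivial-strip} — shows that a curve whose $\Gamma_2$-image meets $\W^s_{g'}(x_1)\times\W^s_{g'}(x_2)$ lies in the truncation $\M_2(a)^{\geq\epsilon_0}$ on which both $\R^n$-boundary arcs have length $\geq\sqrt{2\epsilon_0}$, and in particular is not a trivial strip. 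One then defines $H\colon CL_{*+1}\to CM^{\otimes 2}$ by $H(a)\coloneqq\sum_{\ind x_1+\ind x_2=|a|-2(d-2)}\#_{\Z/2}\mathcal{Q}_{g'}(a;x_1,x_2)\cdot x_1\otimes x_2$.

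The core is the compactness and boundary analysis of the one-dimensional spaces $\mathcal{Q}_{g'}(a;x_1,x_2)$, combining the Cieliebak--Ekholm--Latschev compactness and gluing for curves with switching Lagrangian boundary (\cite{CEL}, as used for $\M_1(a)$ in \S\ref{subsec-isom-chain-cpx}) with the Morse-theoretic fiber-product gluing of \S\ref{sec-Morse}. Using Proposition \ref{prop-no-rigid} to discard every stratum containing a rigid $\M_0(b)$-factor — too large to contribute, just as in the proof of Lemma \ref{lem-boundary-NV} — the codimension-one boundary should be the disjoint union of: (a) $\coprod_{|b|=|a|-1}\M_J(a;b)\times\mathcal{Q}_{g'}(b;x_1,x_2)$, accounting for $H\circ d_J$; (b) $\coprod_{\ind x_1'=\ind x_1+1}\mathcal{Q}_{g'}(a;x_1',x_2)\times\T_{g'}(x_1',x_1)\ \sqcup\ \coprod_{\ind x_2'=\ind x_2+1}\mathcal{Q}_{g'}(a;x_1,x_2')\times\T_{g'}(x_2',x_2)$, accounting for $(d_{g'}\otimes\id+\id\otimes d_{g'})\circ H$; (c) $\coprod_{|b_1|+|b_2|=|a|-1}\M_J(a;b_1,b_2)\times\mathcal{N}_{g'}(b_1;x_1)\times\mathcal{N}_{g'}(b_2;x_2)$, the full degeneration into one symplectization level (a $\delta_J$-curve with one positive and two negative Reeb-chord punctures) sitting over two $T^*\R^n$-levels, accounting for $(\Phi_{g'}\otimes\Phi_{g'})\circ\delta_J$; and (d) the constant-disk bubbling on an $\R^n$-boundary arc (as in \cite[Section 4.2]{CEL}), producing a limiting $\M_1(a)$-curve with a marked point on its $\R^n$-boundary mapping into $K$, the two resulting sub-arcs being constrained by $\W^s_{g'}(x_1)$ and $\W^s_{g'}(x_2)$ — the constant-disk bubbles on the conormal ($L_K$) boundary arcs carry no $\R^n$-length and stay disjoint from $\M_2(a)^{\geq\epsilon_0}$, just as in Remark \ref{rem-disjoint-component}. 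Over $\Z/2$ this yields
\[
H\circ d_J+(d_{g'}\otimes\id+\id\otimes d_{g'})\circ H+(\Phi_{g'}\otimes\Phi_{g'})\circ\delta_J+\Lambda_{g'}=0,
\]
where $\Lambda_{g'}$ is the map counting the configurations of type (d).

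It then remains to identify $\Lambda_{g'}$ with $\delta_{g,g'}\circ\Phi_g$ up to a further chain homotopy. By construction, $\Lambda_{g'}$ is the Morse-theoretic splitting of the truncated chain $\Gamma_1\colon\M_1(a)^{\geq\epsilon_0}\to K\times K$: the marked point lying on $K$ plays the role of $\ev^{-1}(K)$ of \S\ref{subsubsec-coproduct-singular}, and the two stable-manifold conditions play the role of $i^s_2$. Running the arguments of \S\ref{subsec-coprod} — in particular the parametrised moduli space $\mathcal{S}_c$ with the gradient-flow time parameter $\rho\in[0,\infty]$, whose $\rho=0$ end is $\Lambda_{g'}$ and whose $\rho=\infty$ end is $\mathcal{N}_g(a;x)$ followed by $\T_{g,g'}(x;x_1,x_2)$, i.e.\ $\delta_{g,g'}\circ\Phi_g$ — with the closed manifold $P$ replaced by the manifold-with-corners $\M_1(a)^{\geq\epsilon_0}$ (whose strip-breaking boundary contributes further $d_J$-terms absorbed into $H\circ d_J$, and whose length-truncation boundary maps into $E^{-1}([0,\epsilon_0])$ and drops out, exactly as the $\epsilon_1$-truncation in Lemma \ref{lem-geometric-intersection}), and using the $\tau_0$-cutoff together with the mod-$2$ homotopy invariance of intersection numbers to replace the holomorphic $\R^n$-boundary arc of an $\M_1(a)$-curve by the linear path recorded by $\Gamma_1$, produces the desired chain homotopy $\Lambda_{g'}\simeq\delta_{g,g'}\circ\Phi_g$. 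Combining with the displayed relation gives $(\Phi_{g'}\otimes\Phi_{g'})\circ\delta_J\simeq\delta_{g,g'}\circ\Phi_g$, which is the theorem.

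I expect the main obstacle to lie in the gluing analysis for the strata (c) and (d). Stratum (c) requires gluing a symplectization level to two $T^*\R^n$-levels simultaneously at two negative Reeb-chord punctures, each carrying an independent $\W^s_{g'}$-constraint; this is the two-puncture, two-constraint analogue of the delicate uniformly-bounded-right-inverse and Floer--Picard argument behind Lemma \ref{lem-boundary-NV} (following \cite[Section 10]{CELN}), and one must check that the relevant $L^2$-orthogonal decomposition and the bound on the right inverse survive the two constraints and the doubled neck. Stratum (d) requires the \cite{CEL} analysis of constant-disk bubbling on the $\R^n$-boundary together with the reconciliation, just described, of the holomorphic-arc intersection count with the linear-path intersection count underlying \S\ref{sec-Morse}. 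The remaining ingredients — transversality via injective points, the length bounds excluding trivial strips, and the $\Z/2$ bookkeeping — are routine adaptations of \S\ref{subsec-switching}--\S\ref{subsec-isom-chain-cpx} and \S\ref{sec-Morse}.
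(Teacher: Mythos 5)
Your proposal reproduces the paper's proof of Theorem~\ref{thm-diagram} essentially verbatim: your $\mathcal{Q}_{g'}(a;x_1,x_2)$ is the paper's $\mathcal{N}_{g'}(a;x_1,x_2)$, your $H$ is $\kappa_1$, your boundary types (a)--(d) are exactly the decomposition in Lemma~\ref{lem-boundary-NV2}, your $\Lambda_{g'}$ is $\sigma^0$, and your two closing homotopies (replacing the holomorphic $\R^n$-arc by the linear path, then the gradient-flow parameter $\rho$) are the paper's $\kappa_2$ and $\kappa_3$. The only difference is presentational: the paper factors the last stage through the explicit intermediate chain map $\sigma^1$ before flowing to $\delta_{g,g'}\circ\Phi_g$, whereas you compress these two chain homotopies into a single paragraph.
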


\subsubsection{Two chain maps from $CL_*$ to $CM_*^{\otimes 2}[-2d+3]$}\label{subsubsec-two-chain-map}
We fix an admissible triple $(K,g,g')$.
For every $a \in \mathcal{R}(\Lambda_K)$, 
we define an evaluation map $\ev^0_a$ and its variant $\ev^1_a$ by
\begin{align*}
\ev^0_a& \colon  \M_1(a)\times (0,1) \to \R^n \colon (u,\tau) \mapsto u\circ \psi(0,\tau) , \\
\ev^1_a& \colon  \M_1(a) \times (0,1) \to \R^n \colon (u,\tau) \mapsto (1-\tau) \cdot u(p_1) +\tau \cdot u(p_2),
\end{align*}
where $\psi$ is the biholomorphic map (\ref{bihol}). See Figure \ref{figure-homotopy}.
\begin{figure}
\centering
\begin{overpic}[height=5cm]{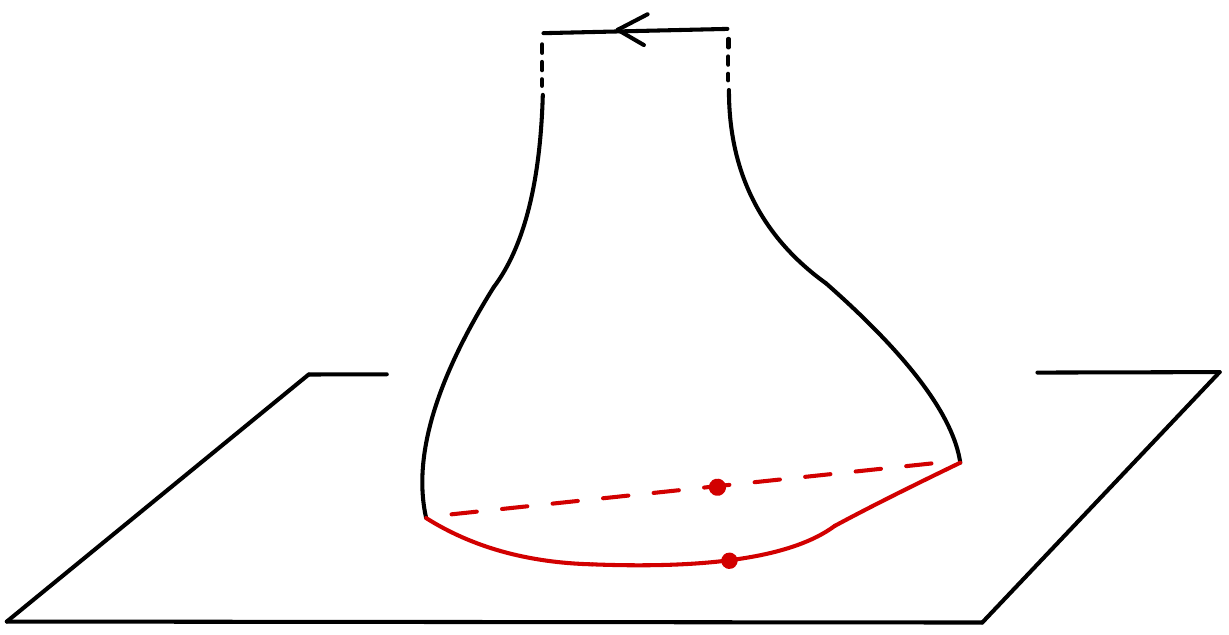}
\put(50,52){$a$}
\put(77,11){$u(p_1)$}
\put(27,6){$u(p_2)$}
\put(10,4){$\R^n$}
\put(10,40){$T^*\R^n$}
\put(57,2.5){$\ev^0_a(u,\tau)$}
\put(50,14){$\ev^1_a(u,\tau)$}
\end{overpic}
\caption{For $(u,\tau)\in \M_1(a)\times (0,1)$, $\ev^0_a(u,\tau)$ is a point on the curve $u(\partial_2 D_3)$ and $\ev^1_a(u,\tau)$ is a point on the linear path from $u(p_1)$ to $u(p_2)$.}\label{figure-homotopy}
\end{figure}
On $(\ev^j_a)^{-1}(K)$, we define
\[
\sp^j_a  \colon (\ev^j_a)^{-1}(K) \to (K\times K)^{\times 2} \colon (u,\tau) \mapsto ((u(p_1) , \ev^j_a(u, \tau) ) ,  (\ev^j_a(u, \tau)  , u(p_2))) .
\]
Let us consider the fiber product over $(K\times K)^{\times 2}$
\begin{align*}
\mathcal{S}^j_{g'}(a;x_1,x_2) \coloneqq (\ev^j_a)^{-1}(K) \ftimes{\sp^j_a}{i^s_2}  (\W^s_{g'}(x_1)\times \W^s_{g'}(x_2)).
 \end{align*}
Since $(K,g,g')$ is admissible,  $(1-\tau)\cdot q+\tau \cdot q'\notin K$ for any $(q,q')\in \mathcal{C}(K)$ and $0<\tau <1$.
Thus, $u\neq u_a$ for any $(u,\tau)\in (\ev_a^j)^{-1}(K)$ for $j=0,1$.
It should also be noted that for generic $J$, 
\[\begin{array}{cc} (u(p_0), u\circ \psi(0,\tau)) \in K\times \R^n , &  (u\circ \psi(0,\tau), u(p_1)) \in \R^n\times K \end{array}\]
are not contained in $\mathcal{C}(K)$ for every $(u,\tau) \in \M_1(a) \times (0,1) \cong \tilde{\M}_{1,2}(a)$ since $\dim \tilde{\M}_{1,2}(a) = \ind x_a + 1 < n + (n-d)$.
By similar arguments as in the proofs of Lemma \ref{lem-admissible} and the third assertion of Lemma \ref{lem-length-filtration}, we can show that by generic perturbations of $J$ and $g'$ outside $\mathcal{C}(K)$, $\mathcal{S}^j_{g'}(a;x_1,x_2)$ is cut out transversely and
its dimension is
\[ \dim \M_1(a) +1 -d - (\ind x_1+\ind x_2) = |a|-2d+3 - (\ind x_1+\ind x_2). \]
\begin{lem}\label{lem-boundary-S}
For $j=0,1$, the following hold:
If $\ind x_1+\ind x_2 = |a| - 2d+3$, then $\mathcal{S}^j_{g'}(a;x_1,x_2)$ is a compact $0$-dimensional manifold. If  $\ind x_1+\ind x_2 = |a|-2d+2$, then $\mathcal{S}^j_{g'}(a;x_1,x_2)$ is compactified to a compact $1$-dimensional manifold with boundary
\begin{align}\label{boundary-Sj}
\begin{split}
&\coprod_{\ind x'_1=\ind x_1+1} \mathcal{S}^j_{g'}(a;x'_1,x_2)\times \T_{g'} (x'_1;x_1) \\
\sqcup &\coprod_{\ind x'_2=\ind x_2+1} \mathcal{S}^j_{g'}(a;x_1,x'_2)\times \T_{g'} (x'_2;x_2) \\
\sqcup & \coprod_{|b|=|a|-1} \M_{J}(a;b) \times \mathcal{S}^j_{g'}(b;x_1,x_2).
\end{split}
\end{align}
\end{lem}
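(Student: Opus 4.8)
The plan is to establish Lemma~\ref{lem-boundary-S} by analyzing the compactification of the moduli spaces $\mathcal{S}^j_{g'}(a;x_1,x_2)$ for $j=0,1$, which is entirely parallel to the compactness arguments already carried out in Lemma~\ref{lem-boundary-T} and Lemma~\ref{lem-boundary-NV}. First I would rewrite $\mathcal{S}^j_{g'}(a;x_1,x_2)$ in terms of a subset of $\M_1(a)$ on which the energy along $\partial_2 D_3$ is bounded below, exactly as in Subsection~\ref{subsubsec-compact-NV}: since $\W^s_{g'}(x_1)\times \W^s_{g'}(x_2)$ is disjoint from $U_{\epsilon_0}$ by (\ref{stable-contained}), any $(u,\tau)\in \mathcal{S}^j_{g'}(a;x_1,x_2)$ must satisfy $\len(\rest{u}{\partial_2 D_3})\geq \sqrt{2\epsilon_0}$, so we may replace $\M_1(a)$ by $\M_1(a)^{\geq \epsilon_0}$ and work with the compactification $\overline{\M}_1(a)^{\geq\epsilon_0}$ and its codimension-$\leq 1$ subset $\overline{\M}^1_1(a)^{\geq\epsilon_0}$ described in (\ref{building-M1}). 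One also needs to cut off the parameter $\tau\in(0,1)$ away from the endpoints, arguing that if $\tau$ is too close to $0$ or $1$ then $\sp^j_a(u,\tau)$ lands in $U_{\epsilon_0}$ (because the corresponding path segment is short), so the relevant parameter domain is compact.

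Next I would list the codimension-one degenerations. There are three sources: (i) breaking of a $J$-holomorphic strip off the positive puncture of $u$, contributing $\M_J(a;b)\times \mathcal{S}^j_{g'}(b;x_1,x_2)$ with $|b|=|a|-1$; (ii) a negative Morse trajectory of $V_{g'}$ breaking off $\W^s_{g'}(x_1)$, contributing $\mathcal{S}^j_{g'}(a;x'_1,x_2)\times \T_{g'}(x'_1;x_1)$ with $\ind x'_1 = \ind x_1+1$; and (iii) the symmetric breaking at $x_2$. The $\M_0$-bubbling and the disk-nodal strata (as in (\ref{building-M1}) for $m\geq 2$ and as in Remark~\ref{rem-disjoint-component}) do not contribute, since $\M_0(b')$ has positive dimension by Proposition~\ref{prop-no-rigid}; and the node-bubbling strata $\tilde{\M}_{0,1}(a)\ftimes{\tilde{\ev}_1}{\ev_0}\M_{L_K,\R^n,L_K}$ are disjoint from $\overline{\M}_1(a)^{\geq\epsilon_0}$ for the same length reason as in Remark~\ref{rem-disjoint-component}. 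One must also check that the endpoints $\tau\to 0,1$ of the interval parameter do not produce genuine boundary, which follows from the cutoff observation above. Transversality of all the fiber products involved is achieved by the generic perturbations of $J$ and $g'$ already invoked in the statement preceding the lemma (the injective-point argument as in Lemma~\ref{lem-length-filtration} and Lemma~\ref{lem-admissible}).

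For the gluing direction---namely that each stratum in (\ref{boundary-Sj}) is actually a boundary point and that $\overline{\mathcal{S}}^j_{g'}(a;x_1,x_2)$ is a $1$-manifold with boundary---I would invoke the gluing analysis already set up in the proof of Lemma~\ref{lem-boundary-NV}, which is modeled on \cite[Section~10.4]{CELN}: for the strip-breaking stratum, the linearized Cauchy--Riemann operator of the pre-glued curve, fiber-producted over the evaluation constraints cutting out $\mathcal{S}^j_{g'}$, admits a uniformly bounded right inverse, and Floer's Picard lemma then yields the local model $[0,\epsilon)$ near each such point; for the two Morse-breaking strata the statement is the usual compactness-with-boundary for the negative gradient flow cut out transversely against $i^s_2$, as in \cite[Lemma~4.2]{Sc-eq}. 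The only novelty relative to $\mathcal{N}_g(a;x)$ is the extra $(0,1)$-factor carrying $\tau$ and the two evaluation maps $\ev^0_a,\ev^1_a$; since $\ev^1_a$ is literally the composition of $\Gamma_1$ with the linear-path evaluation and $\ev^0_a$ is an honest $C^\infty$ evaluation of $J'$-holomorphic maps at interior boundary points (which extends continuously to the buildings of $\overline{\M}^1_1(a)^{\geq\epsilon_0}$), these fit into the same framework with no essential change. The main obstacle I anticipate is bookkeeping the $\tau$-parameter: one must verify carefully that no boundary arises from $\tau\to 0$ or $\tau\to 1$ (handled by the energy cutoff) and that the compactification in the $\M_1$-direction is compatible with the compactness of the $\tau$-interval, i.e.\ that sequences cannot simultaneously break a strip and have $\tau$ escape to an endpoint in a way producing a codimension-one phenomenon---but the length bound $\len(\rest{u}{\partial_2 D_3})\geq\sqrt{2\epsilon_0}$ rules this out, so the argument reduces to the two already-established cases.
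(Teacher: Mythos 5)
Your proposal is correct and follows essentially the same route as the paper's proof: the energy cutoff $\M_1(a)^{\geq\epsilon_0}$ together with restricting $\tau$ to a compact subinterval $[\tau_1,1-\tau_1]$, the identification of exactly the three boundary strata in (\ref{boundary-Sj}) after excluding $\M_0$-bubbling (positive dimension, Proposition \ref{prop-no-rigid}) and the nodal strata (as in Remark \ref{rem-disjoint-component}), and the gluing via the setup of Lemma \ref{lem-boundary-NV} following \cite[Section 10.4]{CELN} with a uniformly bounded right inverse and Floer's Picard lemma. The only detail the paper adds beyond your sketch is the technical choice of $L^p$-norms ($p>2$) to get $C^1$-bounds on $\mathscr{W}_{\rho}$ and the observation that $\ev^j_{\rho}$, $\sp^j_{\rho}$ depend only on the restriction to $\partial_2 D_3$ away from the gluing region, so the transversality constant can be chosen independently of the gluing parameter.
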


\begin{proof}
We first give a compactification of $\mathcal{S}^j_{g'}(a;x_1,x_2)$ in a way parallel to Lemma \ref{lem-boundary-T}.
Consider any $(u,\tau)\in (\ev^j_a)^{-1}(K)$. If  $u\notin \M_1(a)^{\geq \epsilon_0}$, then
\[\sp^j_a(u,\tau) \in \{(y_1,y_2)\in (K\times K)^{\times 2} \mid \min \{ E(y_1), E(y_2) \} \leq \epsilon_0 \} \]
and thus $\sp^j_a(u,\tau) \notin \W^s_{g'}(x_1) \times \W^s_{g'}(x_2)$ for any $x_1,x_2\in \mathcal{C}(K)$.
We define
\[ \overline{\ev}^j_a \colon \overline{\M}_1(a)^{\geq \epsilon_0} \times (0,1) \to \R^n \]
to be the continuous extension of $\ev^j_a\colon \M_1(a)^{\geq \epsilon_0} \times (0,1) \to \R^n$.
We extend $\sp^j_a$ on $(\overline{\ev}^j_a)^{-1}(K)$ continuously and denote it by $\overline{\sp}^j_a$.
From the compactness of $\overline{\M}_1(a)^{\geq \epsilon_0}$, there exists $\tau_1\in (0,\frac{1}{2})$ such that for every $(\bold{u},\tau) \in (\overline{\ev}^j_a)^{-1}(K)$ with $\tau \in [0,\tau_1] \sqcup [1-\tau_1,1]$, we have
\[\overline{\sp}^j_a(\bold{u},\tau) \in \{ (y_1,y_2) \in (K\times K)^{\times 2}\mid \min \{E(y_1),E(y_2)\} \leq \epsilon_0 \} \]
and thus $\overline{\sp}^j_a(\bold{u},\tau) \notin \W^s_{g'}(x_1) \times \W^s_{g'}(x_2)$ for any $x_1,x_2\in \mathcal{C}(K)$.

We define
\[\overline{\ev}^j_{a,[\tau_1,1-\tau_1]} \colon \overline{\M}_1(a)^{\geq \epsilon_0} \times [\tau_1,1-\tau_1] \to \R^n \]
to be the restriction of $\overline{\ev}^j_a$. It is clear that $(\overline{\ev}^j_{a,[\tau_1,1-\tau_1]})^{-1}(K)$ is a compact set.
From the above observations,
\begin{align}\label{compact-Sj}
\overline{\mathcal{S}}^j_{g'}(a;x_1,x_2) \coloneqq  (\overline{\ev}^j_{a,[\tau_1,1-\tau_1]})^{-1}(K) \ftimes{ \overline{\sp}^j_a}{i^s_2} (\overline{\W}^s_{g'}(x_1) \times \overline{\W}^s_{g'}(x_2) )
\end{align}
contains $\mathcal{S}^j_{g'}(a;x_1,x_2)$ and
it is a compactification of $\mathcal{S}^j_{g'}(a;x_1,x_2)$.

We observe the case where the dimension of $\mathcal{S}^j_{g'}(a;x_1,x_2)$ is $0$ or $1$. The argument is similar to the case of $\overline{\mathcal{N}}_g(a;x)$ in the proof of  Lemma \ref{lem-boundary-NV}.

If $\ind x_1 + \ind x_2 =|a| -2d+3$, then $\overline{\mathcal{S}}^j_{g'}(a;x_1,x_2) = \mathcal{S}^j_{g'}(a;x_1,x_2)$. If $\ind x_1 + \ind x_2 = |a| - 2d+2$,
$\overline{\mathcal{S}}^j_{g'}(a;x_1,x_2) $ is the union of its open subsets
\begin{align*}
& (\ev^j_a)^{-1}(K) \ftimes{\sp^j_a}{i^s_2}  \left(\overline{\W}^{s,1}_{g'}(x_1) \times \W^s_{g'}(x_2) \cup \W^s_{g'}(x_1) \times \overline{\W}^{s,1}_{g'}(x_2) \right) , \\
& \left( (\overline{\ev}^j_{a,[\tau_1,1-\tau_1]})^{-1}(K) \cap  \left( \overline{\M}^1_1(a)^{\geq \epsilon_0} \times (0,1) \right) \right) \ftimes{ \overline{\sp}^j_a}{i^s_2} (\W^s_{g'}(x_1) \times \W^s_{g'}(x_2) ) .
\end{align*}
It is easy to see that
\begin{align}\label{boundary-Sj-Morse}
\begin{split}
& (\ev^j_a)^{-1}(K) \ftimes{\sp^j_a}{i^s_2}  \left(\overline{\W}^{s,1}_{g'}(x_1) \times \W^s_{g'}(x_2) \cup \W^s_{g'}(x_1) \times \overline{\W}^{s,1}_{g'}(x_2) \right) \\
=\ & \mathcal{S}^j_{g'}(a;x_1,x_2) \\
 & \sqcup \coprod_{\ind x'_1=\ind x_1+1} \mathcal{S}^j_{g'}(a;x'_1,x_2)\times \T_{g'} (x'_1;x_1) 
\sqcup \coprod_{\ind x'_2=\ind x_2+1} \mathcal{S}^j_{g'}(a;x_1,x'_2)\times \T_{g'} (x'_2;x_2) .
\end{split}
\end{align}
On the other hand,
\begin{align}\label{boundary-Sj-hol}
\begin{split}
 & \left( (\overline{\ev}^j_{a,[\tau_1,1-\tau_1]})^{-1}(K) \cap  \left( \overline{\M}^1_1(a)^{\geq \epsilon_0} \times (0,1) \right) \right) \ftimes{ \overline{\sp}^j_a}{i^s_2} (\W^s_{g'}(x_1) \times \W^s_{g'}(x_2) ) \\
 =\ & \mathcal{S}^j_{g'}(a;x_1,x_2) \sqcup \coprod_{|b|=|a|-1} \M_{J}(a;b) \times \mathcal{S}^j_{g'}(b;x_1,x_2).
 \end{split}
 \end{align}
To see this, we note that among the components of (\ref{building-M1}) for $m\in \Z_{\geq 1}$, only the case of $m=1$ can contribute to (\ref{boundary-Sj-hol}).
The reason is the same as for (\ref{boundary-N-1}) in Lemma \ref{lem-boundary-NV}, that is, $\M_0(b')$ has a positive dimension for every $b'\in \mathcal{R}(\Lambda_K)$.
Therefore, $\overline{\mathcal{S}}^j_{g'}(a;x_1,x_2)$ is the disjoint union of $\mathcal{S}^j_{g'}(a;x_1,x_2)$ and (\ref{boundary-Sj}).

It remains to show that
when $\ind x_1 + \ind x_2 = |a| - 2d+2$,
$\overline{\mathcal{S}}^j_{g'}(a;x_1,x_2)$ is a $1$-dimensional manifold with boundary (\ref{boundary-Sj}).
We will refer to the case of $\overline{\mathcal{N}}_g(a;x)$ in the proof of Lemma \ref{lem-boundary-NV}.

From the fact that $\overline{\W}^{s,1}_{g'}(x_i)$ for $i=1,2$ is a $C^{\infty}$ manifold with boundary and $i^s_2$ is a $C^{\infty}$ map,
we can show that (\ref{boundary-Sj-Morse}) is a $1$-dimensional manifold with boundary 
\[ \coprod_{\ind x'_1=\ind x_1+1} \mathcal{S}^j_{g'}(a;x'_1,x_2)\times \T_{g'} (x'_1;x_1) 
\sqcup \coprod_{\ind x'_2=\ind x_2+1} \mathcal{S}^j_{g'}(a;x_1,x'_2)\times \T_{g'} (x'_2;x_2) . \]

On the other hand, we take any
\begin{align*}
(([v^1],v^2),\tau_2) &  \in (\overline{\ev}^j_{a,[\tau_1,1-\tau_1]})^{-1}(K) \cap \left( \left( \M_J(a;b) \times \M_1(b)^{\geq \epsilon_0} \right) \times (0,1) \right) \\
&\subset \overline{\M}_1^1(a)^{\geq \epsilon_0} \times [\tau_1,1-\tau_1]
\end{align*}
such that $|b|=|a|-1$ and $\overline{\sp}^j_a(([v^1],v^2),\tau_2) = \sp^j_b(v^2,\tau_2) \in \W^s_{g'}(x_1) \times \W^s_{g'}(x_2)$.
Let us use the notations introduced in the proof of Lemma \ref{lem-boundary-NV}: The pre-gluing $w_{\rho} \in \mathscr{W}_{\rho}$ for $\rho \gg 0$
and the Cauchy-Riemann operator
$ \Dbar_{J'} \colon \mathscr{W}_{\rho} \to \mathscr{E}_{1,\rho}$.
Here, let us use $L^p$-norm for $p>2$ instead of $L^2$-norm in \cite[Section 10.4]{CELN}  to define the weighted Sobolev spaces so that $\mathscr{W}_{\rho}$ is $C^1$-bounded.

By the same formula as $\ev^j_a$ and $\sp^j_{a}$, we define
\[\begin{array}{rl}
\ev^j_{\rho} & \colon \{ w\in \mathscr{W}_{\rho}\mid || \rest{w}{\partial_2 D_3} - \rest{v^2}{\partial_2 D_3} ||_{C^1}<\epsilon_2 \} \times (\tau_2-\epsilon_2, \tau_2+\epsilon_2) \to \R^n, \\
\sp^j_{\rho} & \colon (\ev^j_{\rho})^{-1}(K) \to  (K\times K)\times (K\times K) ,
\end{array}\]
for $\epsilon_2>0$.
Since $\mathcal{S}^j_{g'}(b;x_1,x_2)$ is cut out transversely at $((v^2, \tau_2),  \sp^j_b(v^2,\tau_2))$,
 there exists $\epsilon_2>0$ such that $\ev^j_{\rho} $ is transversal to $K$ and $\sp^j_{\rho}$ is transversal to $\W^s_{g'}(x_1)\times \W^s_{g'}(x_2)$.
We remark that the maps $\ev^j_{\rho}(w,\cdot)$ and $\sp^j_{\rho}(w, \cdot )$ are determined by the restriction of $w$ on $\partial_2D_3$ which is away from the gluing region, so we may take  $\epsilon_2$ independently on $\rho$.
%

Consider a Banach manifold
$(\ev^j_{\rho})^{-1}(K)\ftimes{\sp^j_{\rho}}{i^s_2} \left( \W^s_{g'}(x_1) \times \W^s_{g'}(x_2) \right)$
and a $C^{\infty}$ map on it
\[ \mathscr{F}\colon (\ev^j_{\rho})^{-1}(K) \ftimes{\sp^j_{\rho}}{i^s_2} \left( \W^s_{g'}(x_1) \times \W^s_{g'}(x_2) \right) \to \mathscr{E}_{1,\rho} \colon ((w,\tau), (y_1,y_2)) \mapsto \Dbar_{J'}(w). \]
Using the fact that $\M_J(a;b)$ and $S^j_{g'}(b;x_1,x_2)$ are cut out transversely,
we can argue as in \cite[Lemma 10.10]{CELN} to show that the linearization of $\mathscr{F}$ at $((w_{\rho},\tau_2), \sp^j_b(v^2))$ admits a right inverse which is uniformly bounded as $\rho \to \infty$.
The rest of arguments are parallel to those in \cite[Section 10.4, Proof of Theorem 10.3]{CELN}. Then, it follows that (\ref{boundary-Sj-hol}) is a $1$-dimensional manifold with boundary
$\coprod_{|b|=|a|-1} \M_{J}(a;b) \times \mathcal{S}^j_{g'}(b;x_1,x_2)$.
\end{proof}

For $j=0,1$, we define a $\Z/2$-linear map
$\sigma^j \colon CL_{*+1}(\Lambda_K) \to CM_{*}^{\otimes 2}[-2d+4] $
by
\[\sigma^j(a) \coloneqq \sum_{\ind x_1+\ind x_2 = |a|-2d+3} \#_{\Z/2} \mathcal{S}^j_{g'}(a;x_1,x_2) \cdot x_1\otimes x_2\]
for every $a\in \mathcal{R}(\Lambda_K)$.
By Lemma \ref{lem-boundary-S}, we have
\[ (d_{g'} \otimes \id_{CM_*} +\id_{CM_*}\otimes d_{g'})\circ \sigma^j + \sigma^j \circ d_J=0. \]
This means that $\sigma^j$ is a chain map.

Assuming the following three claims, we prove Theorem \ref{thm-diagram}:
\begin{enumerate}
\item $(\Phi_{g'}\otimes \Phi_{g'}) \circ \delta_J$ is chain homotopic to $\sigma^0$.
\item $\sigma^0$ is chain homotopic to $\sigma^1$. 
\item $\sigma^1$ is chain homotopic to $\delta_{g,g'}\circ \Phi_g$.
\end{enumerate}
These claims will be proved in Subsection \ref{subsubsec-step1}, \ref{subsubsec-step2} and \ref{subsubsec-step3} in order.
\begin{proof}[(Proof of Theorem \ref{thm-diagram} assuming the three claims)]
We include the chain maps $\sigma^0$ and $\sigma^1$ into the diagram (\ref{diagram-coproduct}) as follows:
\[\xymatrix{
CL_{*+1}\ar[r]^-{\delta_J} \ar[d]^-{\Phi_g} \ar@<0.5ex>[rd]^-{\sigma^0} \ar@<-0.5ex>[rd]_-{\sigma^1}& CL_{*}^{\otimes 2} \ar[d]^-{\Phi_{g'}\otimes \Phi_{g'}} \\
CM_{*-d+3} \ar[r]_-{\delta_{g,g'}} & CM_{*}^{\otimes 2}[-2d+4].
}\]
Then, Theorem \ref{thm-diagram} follows from this diagram and the above three claims.
\end{proof}

\subsubsection{Chain homotopy from $(\Phi_{g'}\otimes \Phi_{g'}) \circ \delta_J$ to $\sigma^0$}\label{subsubsec-step1}

For $a\in \mathcal{R}(\Lambda)$ and $x,x'\in \mathcal{C}(K)$, we consider the fiber product over $(K\times K)^{\times 2}$
\[\mathcal{N}_{g'}(a;x_1,x_2) \coloneqq \mathcal{M}_2(a) \ftimes{\Gamma_2}{i^s_2} (\W^s_{g'}(x_1)\times \W^s_{g'}(x_2)).\]

For  generic $J$, $\Gamma_2$ is transversal to $\W^s_{g'}(x_1)\times \W^s_{g'}(x_2)$.
As  a $C^{\infty}$ manifold, the dimension of $\mathcal{N}_{g'}(a;x_1,x_2) $ is
\[\dim \M_2(a) - (\ind x_1+\ind x_2) = |a|-2d+4 - (\ind x_1 + \ind x_2) . \]

\begin{figure}
\centering
\begin{overpic}[width=16cm]{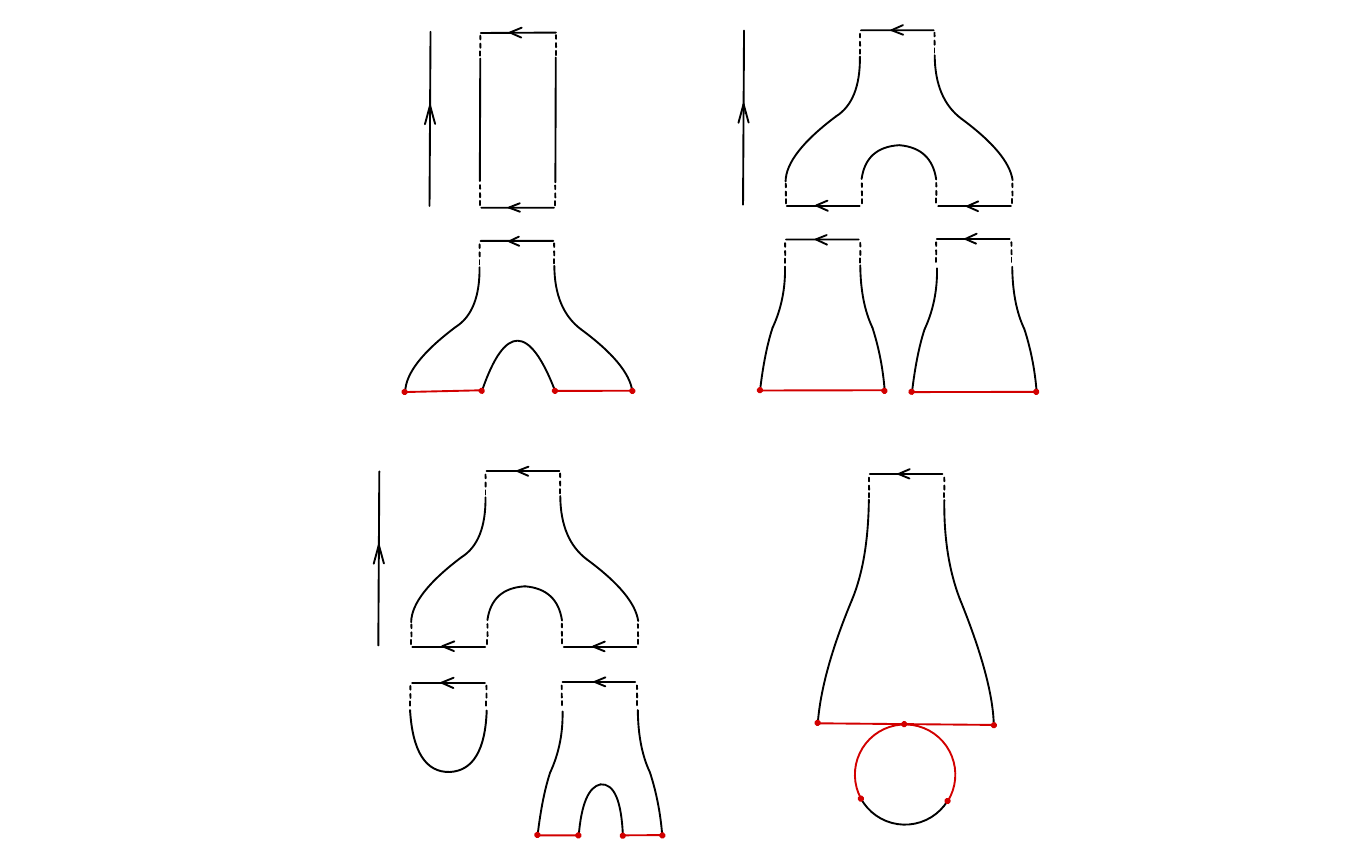}
\put(37,60.6){$a$}
\put(37,45){$b$}
\put(20,53){$\R\times U^*\R^n$}
\put(29,59.5){$\infty$}
\put(27,47){$-\infty$}
\put(24,40){$T^*\R^n$}
\put(64.7,60.6){$a$}
\put(58.5,45.4){$b_2$}
\put(69.5,45.4){$b_1$}
\put(37.2,28.1){$a$}
\put(31.5,12.5){$b'$}
\put(43,12.6){$b$}
\put(16.5,20.5){$\R\times U^*\R^n$}
\put(20,8){$T^*\R^n$}
\put(65.3,28){$a$}
\put(54,15){$T^*\R^n$}
\end{overpic}
\caption{$J'$-holomorphic buildings in the codimension $1$ strata of $\overline{\M}_2(a)^{\geq \epsilon_0}$.
The color of the boundary components are determined by the same rule as in Figure \ref{figure-hol-bdry-1}.
The the lower right one belongs to (\ref{curve-with-Rn-node}) for $m=1$ and already appeared in Figure \ref{figure-bubble}.
The the lower left one represents $J'$-holomorphic buildings which contain at least one $J'$-holomorphic curves in $\coprod_{b'\in \mathcal{R}(\Lambda_K)}\M_0(b')$.
}\label{figure-hol-bdry}
\end{figure}

\begin{lem}\label{lem-boundary-NV2}
If $\ind x_1+\ind x_2 = |a| - 2d+4$, $\N_{g'}(a;x_1,x_2)$ is a compact $0$-dimensional manifold. If $\ind x_1+\ind x_2= |a| -2d+3$, $\N_{g'}(a;x_1,x_2)$ is compactified to a compact $1$-dimensional manifold with boundary
\begin{align}\label{boundary-NV-2}
\begin{split}
&\coprod_{\ind x'_1=\ind x_1+1} \N_{g'}(a;x'_1,x_2)\times \mathcal{T}_{g'}(x'_1;x_1) \sqcup \coprod_{\ind x'_2=\ind x_2 +1}  \N_{g'}(a;x_1,x'_2)\times \mathcal{T}_{g'}(x'_2;x_2)  \\
\sqcup &\coprod_{|b|=|a|-1} \M_{J}(a;b) \times \N_{g'}(b;x_1,x_2) \\
\sqcup & \coprod_{|b_1|+|b_2|=|a|-1}\M_{J}(a;b_1,b_2)\times (\N_{g'}(b_1;x_1)\times \N_{g'}(b_2;x_2)) \\
 \sqcup  &\ \mathcal{S}^0_{g'}(a;x_1,x_2).
 \end{split}
\end{align}
\end{lem}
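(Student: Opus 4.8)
\noindent The plan is to mimic the proofs of Lemma \ref{lem-boundary-NV} and Lemma \ref{lem-boundary-S}, now for the moduli space $\M_2(a)$ with its switching Lagrangian boundary condition and the evaluation map $\Gamma_2$ at $p_1,p_2,p_3,p_4$. First I would pass to a ``length $\geq \epsilon_0$'' locus: set $\M_2(a)^{\geq \epsilon_0}\coloneqq \{u\in \M_2(a)\mid \len(\rest{u}{\partial_2 D_5})\geq \sqrt{2\epsilon_0}\text{ and }\len(\rest{u}{\partial_4 D_5})\geq \sqrt{2\epsilon_0}\}$. If $\len(\rest{u}{\partial_2 D_5})< \sqrt{2\epsilon_0}$ then $E(u(p_1),u(p_2))<\epsilon_0$, so by (\ref{stable-contained}) the first factor of $\Gamma_2(u)$ is not in $\W^s_{g'}(x_1)$, and similarly for $\partial_4 D_5$ and $x_2$; hence $\N_{g'}(a;x_1,x_2)=\M_2(a)^{\geq \epsilon_0}\ftimes{\Gamma_2}{i^s_2}(\W^s_{g'}(x_1)\times \W^s_{g'}(x_2))$. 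Let $\overline{\M}_2(a)^{\geq \epsilon_0}$ be the closure of $\M_2(a)^{\geq \epsilon_0}$ in the compactification of $\M_2(a)$ by $J'$-holomorphic buildings (\cite[Theorem 1.1]{CEL}), let $\overline{\Gamma}_2$ be the continuous extension of $\Gamma_2$, and put $\overline{\N}_{g'}(a;x_1,x_2)\coloneqq \overline{\M}_2(a)^{\geq \epsilon_0}\ftimes{\overline{\Gamma}_2}{i^s_2}(\overline{\W}^s_{g'}(x_1)\times \overline{\W}^s_{g'}(x_2))$; compactness is then immediate, and the $0$-dimensional case follows since there $\overline{\N}_{g'}=\N_{g'}$.

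For the $1$-dimensional case I would enumerate the codimension $\leq 1$ strata of $\overline{\M}_2(a)^{\geq \epsilon_0}$. The building strata with breaking at the positive puncture have the form (\ref{building-M1}) sitting above a product of pieces in $T^*\R^n$; by the positivity $\dim \M_0(b')>0$ (Proposition \ref{prop-no-rigid}) any $\M_0(b')$-factor would have positive dimension and so cannot occur in a codimension $1$ stratum of $\overline{\N}_{g'}$, which forces exactly one negative puncture $b$ carrying an $\M_2(b)^{\geq\epsilon_0}$-piece, or two negative punctures $b_1,b_2$ each carrying an $\M_1(b_i)^{\geq\epsilon_0}$-piece. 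Tracking how $p_1,p_2,p_3,p_4$ are distributed (so that $\Gamma_2$ restricts to $\Gamma_1$ on each $\M_1$-piece) yields, together with the Morse breakings of $\overline{\W}^{s,1}_{g'}(x_i)$, the first three lines of (\ref{boundary-NV-2}), with $|b|=|a|-1$ resp.\ $|b_1|+|b_2|=|a|-1$ forced by the index count. The remaining codimension $1$ strata of $\overline{\M}_2(a)$ are the constant-disk bubblings $\tilde{\M}_{1,k}(a)\ftimes{\tilde{\ev}_k}{\ev_0}\M_{\cdots}$ for $k\in\{1,2,3\}$: for $k$ odd the bubble lies in $\M_{L_K,\R^n,L_K}$ and, exactly as in Remark \ref{rem-disjoint-component}, forces an adjacent $\R^n$-arc to degenerate, so it is disjoint from $\overline{\M}_2(a)^{\geq\epsilon_0}$; for $k=2$ one obtains (\ref{curve-with-Rn-node}) with $m=1$, i.e.\ a constant disk $v\in\M_{\R^n,L_K,\R^n}$ glued at an interior node of the arc $\partial_2 D_3$, whose two new $L_K$-marked points both equal $\ev_0(v)=u\circ\psi(0,\tau)$ for the biholomorphism $\psi$ of (\ref{bihol}). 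Unwinding the definition of $\Gamma_2$ then shows that the fibre product of this stratum with $\W^s_{g'}(x_1)\times \W^s_{g'}(x_2)$ is precisely $\mathcal{S}^0_{g'}(a;x_1,x_2)$, the last line of (\ref{boundary-NV-2}).

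Finally, to upgrade $\overline{\N}_{g'}(a;x_1,x_2)$ to a compact $1$-manifold with boundary along each of these strata, I would reuse the gluing analysis from the proof of Lemma \ref{lem-boundary-NV}: combine the transversality of $\overline{\Gamma}_2$ on a neighbourhood of $\overline{\Gamma}_2^{-1}(\W^s_{g'}(x_1)\times\W^s_{g'}(x_2))$ with uniformly bounded right inverses for the glued Cauchy--Riemann operators of \cite[Section 10.4]{CELN}, using $L^p$-norms ($p>2$) so that the relevant Banach manifolds are $C^1$-bounded and $\Gamma_2$ descends to them. For the breakings at the positive puncture this is the same computation as before (the boundary points of type (b) in \cite[Theorem 10.3]{CELN}); for the $\mathcal{S}^0$-stratum one instead invokes the constant-disk gluing of \cite[Section 10.3]{CELN}, i.e.\ gluing $\M_{\R^n,L_K,\R^n}$ at a boundary node, again intersected with the stable-manifold constraint. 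I expect this last step---the gluing near $\mathcal{S}^0_{g'}(a;x_1,x_2)$, where a switching constant disk is inserted at an interior point of an $\R^n$-boundary arc while the stable-manifold conditions are imposed simultaneously---to be the main obstacle; however all of the required analytic input is already available in \cite{CEL} and \cite[Sections 10.3--10.4]{CELN}, so that the only genuinely new content is the combinatorial bookkeeping of marked points identifying this bubbling stratum with $\mathcal{S}^0_{g'}(a;x_1,x_2)$.
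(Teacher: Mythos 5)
Your proposal is correct and follows essentially the same route as the paper's proof: the same $\geq\epsilon_0$ truncation of $\M_2(a)$ and fiber-product compactification, the same enumeration of codimension-one strata (excluding $\M_0(b')$-pieces by positivity of dimension and the $L_K$-node bubbles by the Remark \ref{rem-disjoint-component} argument), the same identification of the $\M_{\R^n,L_K,\R^n}$-bubble stratum with $\mathcal{S}^0_{g'}(a;x_1,x_2)$ via the constancy of the bubble, and the same gluing input from \cite[Sections 10.3--10.4]{CELN} (types (a) and (b) of Theorem 10.3 there) for the manifold-with-boundary structure. The step you flag as the main obstacle, gluing the constant disk at an interior point of the $\R^n$-arc while imposing the stable-manifold constraints, is exactly the point the paper handles by adapting \cite[Section 10.3]{CELN}, so no essential difference remains.
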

\begin{proof}
We define
\[ \M_2(a)^{\geq \epsilon_0} \coloneqq \{(u,\kappa)\in \mathcal{M}_2(a) \mid \len (\rest{u}{\partial_k D_5}) \geq \sqrt{ 2 \epsilon_0} \text{ for }k=2,4\} ,\]
then in the same way as in Subsection \ref{subsubsec-compact-NV}, we can show that
\[\N_{g'}(a;x_1,x_2) =  \M_2(a)^{\geq \epsilon_0} \ftimes{\Gamma_2}{i^s_2} (\W^s_{g'}(x_1) \times \W^s_{g'}(x_2)) .\]
Let $\overline{\M}_2(a)^{\geq \epsilon_0}$ be the closure of $\M_2(a)^{\geq \epsilon_0}$ in the compactification of $\M_2(a)$ from \cite[Theorem 1.1]{CEL}.
The $J'$-holomorphic buildings which appear in the codimension $1$ strata of $\overline{\M}_2(a)^{\geq \epsilon_0}$ are illustrated in Figure \ref{figure-hol-bdry}.

We note that in the codimension 1 strata of the compactification of $\M_2(a)$, there are several types of $J'$-holomorphic curves with a boundary node, including those of (\ref{curve-with-L_K-node}) and (\ref{curve-with-Rn-node}) for $m=1$ illustrated in Figure \ref{figure-bubble}.
However, these boundary strata except (\ref{curve-with-Rn-node})
are disjoint from $\overline{\M}_2(a)^{\geq \epsilon_0}$ (and (\ref{curve-with-Rn-node}) corresponds to the lower right one in Figure \ref{figure-hol-bdry}).
This can be checked in the same way as in Remark \ref{rem-disjoint-component}. 

We define $\overline{\Gamma}_2\colon \overline{\M}_2(a)^{\geq \epsilon_0}\to K\times K$ to be the continuous extension of $\Gamma_2$.
Then,
\[\overline{\N}_{g'}(a;x_1,x_2) \coloneqq \overline{\M}_2(a)^{\geq \epsilon_0} \ftimes{\overline{\Gamma}_2}{i^s_2} (\overline{\W}^s_{g'}(x_1) \times \overline{\W}^s_{g'}(x_2))\]
is a compact set and it is a compactification of $\N_{g'}(a;x_1,x_2)$.

Let us observe $\overline{\N}_{g'}(a;x_1,x_2)$ when the dimension of $\N_{g'}(a;x_1,x_2)$ is $0$ or $1$.
If $\ind x_1 + \ind x_2 = |a|-2d +4$, then $\overline{\N}_{g'}(a;x_1,x_2) = \N_{g'}(a;x_1,x_2)$.
If $ \ind x_1 + \ind x_2 = |a| -2d +3$, we note that the $J'$-holomorphic curves as in the lower left of Figure \ref{figure-hol-bdry} does not contribute to  $\overline{\N}_{g'}(a;x_1,x_2)$
since $\dim \M_0(b') >0$ for any $b' \in \mathcal{R}(\Lambda_K)$.
The remaining codimension $1$ strata of $\overline{\M}_2(a)^{\geq \epsilon_0}$ are 
\begin{align}\label{boundary-M2-1}
\begin{split}
&\coprod_{b\in \mathcal{R}(\Lambda_K)} \M_J(a;b) \times \M_2(b)^{\geq \epsilon_0} \\
\sqcup & \coprod_{b_1,b_2 \in \mathcal{R}(\Lambda_K)} \M_J(a;b_1,b_2) \times \left( \M_1(b_1)^{\geq \epsilon_0} \times \M_1(b_2)^{\geq \epsilon_0} \right) ,
\end{split}
\end{align}
which consists of $J'$-holomorphic buildings drawn by upper two pictures in Figure \ref{figure-hol-bdry}, and
\begin{align}\label{boundary-M2-3}
 \left( \tilde{\M}_{1,2}(a) \ftimes{\tilde{\ev}_2}{\ev_0} \mathcal{M}_{\R^n,L_K,\R^n} \right) \cap \overline{\M}_2(a)^{\geq \epsilon_0} ,
\end{align}
which consists of  $J'$-holomorphic curves with a boundary node drawn in the lower right of  Figure \ref{figure-hol-bdry}.
Therefore,
$\overline{\N}_{g'}(a;x_1,x_2)$ is the disjoint union of
\begin{align*}
 &\M_2(a) \ftimes{\overline{\Gamma}_2}{i^s_2} (\overline{\W}^{s,1}_{g'}(x_1)\times \overline{\W}^{s,1}_{g'}(x_2)) \\
 =\ &\N_{g'}(a;x_1,x_2) \\ 
& \sqcup \coprod_{\ind x'_1=\ind x_1+1} \N_{g'}(a;x'_1,x_2)\times \mathcal{T}_{g'}(x'_1;x_1) \sqcup \coprod_{\ind x'_2=\ind x_2 +1}  \N_{g'}(a;x_1,x'_2)\times \mathcal{T}_{g'}(x'_2;x_2)  
\end{align*}
and the fiber product
\[ \left( (\ref{boundary-M2-1}) \sqcup (\ref{boundary-M2-3}) \right) \ftimes{\overline{\Gamma}_2}{i^s_2} (\W^s_{g'}(x_1)\times \W^s_{g'}(x_2)) . \]

From the definitions of $\mathcal{N}_{g'}(b;x)$ and $\mathcal{N}_{g'}(b;x_1,x_2)$,
\begin{align*}
(\ref{boundary-M2-1}) \ftimes{\overline{\Gamma}_2}{i^s_2} (\W^s_{g'}(x_1)\times \W^s_{g'}(x_2)) =& \coprod_{|b|=|a|-1} \M_{J}(a;b) \times \N_{g'}(b;x_1,x_2)  \\
&\sqcup \coprod_{|b_1|+|b_2|=|a|-1}\M_{J}(a;b_1,b_2)\times (\N_{g'}(b_1;x_1)\times \N_{g'}(b_2;x_2)).
\end{align*}
Recall that $\M_{\R^n,L_K,\R^n}$ consists only of constant disks and $\ev_0 \colon \M_{\R^n,L_K,\R^n} \to K$ is a diffeomorphism onto $K$. We have a diffeomorphism
\[ (\ev^1_a)^{-1}(K) \to (\ref{boundary-M2-3}) \colon (u,\tau) \mapsto ( (u,\psi(0,\tau)), v_{(u,\tau)} ) ,  \]
where $v_{(u,\tau)}\in \M_{\R^n,L_K,\R^n}$ is the constant disk at $\ev^0_a(u,\tau) \in K=L_K \cap \R^n$.
Via this diffeomorphism, $\overline{\Gamma}_2$ on  (\ref{boundary-M2-3}) coincides with the map
\[ \sp^0_a\colon (\ev^0_a)^{-1}(K) \to (K\times K) \times (K\times K) . \]
Therefore,
\[ (\ref{boundary-M2-3})  \ftimes{\overline{\Gamma}_2}{i^s_2} (\W^s_{g'}(x_1)\times \W^s_{g'}(x_2)) \cong  (\ev^0_a)^{-1}(K)  \ftimes{\sp^0_a}{i^s_2} (\W^s_{g'}(x_1)\times \W^s_{g'}(x_2))   = \mathcal{S}^0_{g'}(a;x_1,x_2) .\]
In summary, $\overline{\mathcal{N}}_{g'}(a;x_1,x_2)$ is the disjoint union of $\mathcal{N}_{g'}(a;x_1,x_2)$ and (\ref{boundary-NV-2}).

It remains to show that when $ \ind x_1 + \ind x_2 = |a| -2d +3$,
$\overline{\mathcal{N}}_{g'}(a;x_1,x_2)$ is a $1$-dimensional manifold with boundary  (\ref{boundary-NV-2}).

For neighborhoods of boundary points which are not in $\mathcal{S}^0_{g'}(a;x_1,x_2)$, the arguments are analogous to the case of the boundary points of $\overline{\mathcal{N}}_g(a;x)$ in the proof of Lemma \ref{lem-boundary-NV}.

For a neighborhood of $\mathcal{S}^0_{g'}(a;x_1,x_2)$, we modify the  argument in \cite[Section 10.3]{CELN} about gluing constant disks as follows:
Recall the Banach manifold $\mathscr{W}_{\rho}\ftimes{\Gamma_{1,\rho}}{i^s_2}\W^s_{g}(x)$ and the Cauchy-Riemann operator $\mathscr{F}$ on it in the proof of Lemma \ref{lem-boundary-NV}.
In the present case,  a pre-gluing $w_{\rho}$ and its neighborhood $\mathscr{W}_{\rho}$ are the ones constructed in \cite[Section 10.3]{CELN}, and we extend $\Gamma_2$ to a $C^{\infty}$ map $\Gamma_{2,\rho}  \colon \mathscr{W}_{\rho} \to (K\times K)^{\times 2}$  by evaluating maps in $\mathscr{W}_{\rho}$ at $p_1,\dots ,p_4$.
Here, $\rho \gg 0$ is a gluing parameter.
Then, we consider the Banach manifold $\mathscr{W}_{\rho} \ftimes{\Gamma_{2,\rho}}{i^s_2} \left(\W^s_{g'}(x_1)\times \W^s_{g'}(x_2)\right)$ and the Cauchy-Riemann operator on it, which maps $(w, (y_1,y_2))$ to $\Dbar_{J'}w$.
Again, we refer to \cite[Lemma 10.12]{CELN} to show that its linearization at $(w_{\rho}, \Gamma_{2,\rho}(w_{\rho}))$ admits a right inverse which is uniformly bounded as $\rho \to \infty$.
The rest of arguments are parallel to those in \cite[Section 10.3, Proof of Theorem 10.3]{CELN}.
(The present case corresponds to type (a) in \cite[Theorem 10.3]{CELN}.)
Then, it follows that $\mathcal{S}^0_{g'}(a;x_1,x_2)$ has an open neighborhood  in $\overline{\mathcal{N}}_{g'}(a;x_1,x_2)$ which is a $1$-dimensional manifold with boundary $\mathcal{S}^0_{g'}(a;x_1,x_2)$.
\end{proof}

We define a $\Z/2$-linear map $\kappa_1 \colon CL_*\to CM^{\otimes 2}_*[-2d+4]$
by
\[\kappa_1(a) \coloneqq \sum_{\ind x_1+\ind x_2 = |a|-2d+4} \#_{\Z/2} \mathcal{N}_{g'}(a;x_1,x_2) \cdot x_1\otimes x_2 \]
for every $a\in \mathcal{R}(\Lambda_K)$.
Lemma \ref{lem-boundary-NV2} implies that
\[ (d_{g'}\otimes \id_{CM_*}+ \id_{CM_*}\otimes d_{g'}) \circ \kappa_1 + \kappa_1 \circ d_g + (\Phi_{g'}\otimes \Phi_{g'}) \circ d_J + \sigma^0 =0.\]
This means that $\kappa_1$ is a chain homotopy from $(\Phi_{g'}\otimes \Phi_{g'}) \circ d_J$ to $\sigma^0$

\subsubsection{Chain homotopy from $\sigma^0$ to $\sigma^1$}\label{subsubsec-step2}

For every $a \in \mathcal{R}(\Lambda_K)$, we take a homotopy $(\ev^{\sigma}_a)_{\sigma \in [0,1]}$ from $\ev^0_a$ to $\ev^1_a$ by
\[  \ev^{\sigma}_a \colon \M_1(a) \times (0,1) \to \R^n \colon (u, \tau)\mapsto (1-\sigma) \cdot \ev^0_a(u,\tau) + \sigma \cdot \ev^1_a(u,\tau)\]
and define a splitting map for every $\sigma\in [0,1]$ by
\[\sp^{\sigma}_a \colon (\ev^{\sigma}_a)^{-1}(K) \to (K\times K)\times (K\times K)\colon (u,\tau) \mapsto ( (u(p_1),  \ev^{\sigma}_a(u,\tau))) , (\ev^{\sigma}_a(u,\tau), u(p_2)). \]
We then define
\[\tilde{\mathcal{S}}_{g'} (a;x_1,x_2) \coloneqq \coprod_{\sigma \in [0,1] } \{\sigma \} \times \left( (\ev^{\sigma}_a)^{-1}(K) \ftimes{\sp^{\sigma}_a}{i^s_2} (\W^s_{g'}(x_1)\times \W^s_{g'}(x_2))  \right) . \]
For generic $J$ and $g'$, it is cut out transversely and its dimension is 
$|a|-2d+4 -(\ind x_1 +\ind x_2)$.
This is proved by modifying the discussion in Subsection \ref{subsubsec-two-chain-map} to the case of the union of this parametrized moduli space.

By modifying the proof of Lemma \ref{lem-boundary-S} to the present case involving the parameter $\sigma\in [0,1]$, we can also show the following: If $\ind x_1 +\ind x_2= |a| - 2d+4$, $\tilde{\mathcal{S}}_{g'} (a;x_1,x_2)$  is a compact $0$-dimensional manifold.
If $\ind x_1 +\ind x_2 = |a| - 2d+3$, $\tilde{\mathcal{S}}_{g'} (a;x_1,x_2)$ is a $1$-dimensional manifold with boundary
\[\coprod_{i=0,1}\mathcal{S}^i_{g'}(a;x_1,x_2) \]
and it is compactified to a compact $1$-dimensional manifold by adding
\begin{align*}
& \coprod_{|b|=|a|-1} \M_J(a;b) \times \tilde{\mathcal{S}}_{g'}(b;x_1,x_2) \\
\sqcup & \coprod_{\ind x'_1=\ind x_1 +1} \tilde{\mathcal{S}}_{g'}(a;x'_1,x_2) \times \mathcal{T}_{g'}(x'_1;x_1) 
\sqcup \coprod_{\ind x'_2 = \ind x_2+1}  \tilde{\mathcal{S}}_{g'}(a;x_1,x'_2) \times \mathcal{T}_{g'}(x'_2;x_2)
\end{align*}
as boundaries.

 We define a $\Z/2$-linear map $\kappa_2 \colon CL_* \to CM_{*}^{\otimes 2}[-2d+4]$ by
\[\kappa_2(a) \coloneqq \sum_{\ind x_1+\ind x_2 = |a|-2d+4} \#_{\Z/2}\tilde{\mathcal{S}}_{g'}(a;x_1,x_2) \cdot x_1\otimes x_2\]
for every $a\in \mathcal{R}(\Lambda_K)$.
By observing the boundary of the compactification of $\tilde{\mathcal{S}}_{g'}(a;x_1,x_2)$ when  $\ind x_1 +\ind x_2 = |a| -2d+3$, we obtain
\[\sigma^0+\sigma^1 + \kappa_2 \circ d_J + (d_{g'}\otimes \id_{CM_*} + \id_{CM_*}\otimes d_{g'}) \circ \kappa_2 =0.\]
This means that $\kappa_2$ is a chain homotopy from $\sigma^0$ to $\sigma^1$.

\subsubsection{Chain homotopy from $\sigma^1$ to $\delta_{g,g'}\circ  \Phi_g$}\label{subsubsec-step3}

First, we note that
\[\ev^1_a = \ev\circ (\Gamma_1\times \id_{(0,1)})\]
for every $a \in \mathcal{R}(\Lambda_K)$.
For any $\rho \in [0,\infty)$, by using the flow $(\varphi^{\rho}_g)_{\rho \in \R}$ of $V_g$, we define
\[ \ev^1_{a,\rho} \colon \M_1(a) \times (0,1) \to \R^n \colon (u,\tau) \mapsto \ev (\varphi_g^{\rho}\circ \Gamma_1(u) , \tau)\]
and
\[ \sp^1_{a,\rho} \colon (\ev^1_{a,\rho})^{-1}(K) \to (K\times K) \times (K\times K) \colon (u,\tau) \mapsto \sp (\varphi^{\rho}_g\circ \Gamma_1(u) ,\tau).\]
We define
\[\mathcal{U}_{g'}(a;x_1,x_2)=\coprod_{\rho \in [0,\infty)} \{\rho\} \times \left( (\ev^1_{a,\rho})^{-1}(K) \ftimes{\sp^1_{a,\rho}}{i^s_2} (\W^s_{g'}(x_1)\times \W^s_{g'}(x_2))\right) .\]

For generic $J$, $\mathcal{U}_{g'}(a;x_1,x_2)$ is cut out transversely and it is a $C^{\infty}$ manifold of dimension $|a|-2d +4 -(\ind x_1 +\ind x_2)$.
Its compactification can be found in a similar manner as Lemma \ref{lem-boundary-NV}, except that we have the boundary component
\[ \coprod_{x \in \mathcal{C}(K) } \overline{\N}_g(a;x) \times \overline{\mathcal{T}}_{g,g'}(x;x_1,x_2) ,\]
which consists of the limit of sequences $(\rho_n, Y_n)_{n=1,2,\dots}$ in $\mathcal{U}_{g'}(a;x_1,x_2)$ such that $\lim_{n\to \infty} \rho_n = \infty$.
Here, $\overline{\mathcal{T}}_{g,g'}(x;x_1,x_2)$ defined by (\ref{compact-T}) is the compactification of $\mathcal{T}_{g,g'}(x;x_1,x_2)$.

We observe the case where the dimension of $\mathcal{U}_{g'}(a;x_1,x_2)$ is $0$ or $1$.
If $\ind x_1 +\ind x_2 = |a| - 2d+4$, then $\mathcal{U}_{g'}(a;x_1,x_2)$ is a compact $0$-dimensional manifold. If $\ind x_1 + \ind x_2 = |a| - 2d +3$, then $\mathcal{U}_{g'}(a;x_1,x_2)$ is a $1$-dimensional manifold with boundary
\[\{0\}\times \left( (\ev^1_{a})^{-1}(K) \ftimes{\sp^1_a}{i^s_2} (\W^s_{g'}(x_1)\times \W^s_{g'}(x_2) )  \right) \cong \mathcal{S}^1_{g'}(a;x_1,x_2),\]
and $\mathcal{U}_{g'}(a;x_1,x_2)$ is compactified to a compact $1$-dimensional manifold by adding
\begin{align*}
 & \coprod_{|b|=|a|-1} \M_J(a;b) \times \mathcal{U}_{g'}(b;x_1,x_2) \\
\sqcup & \coprod_{\ind x'_1=\ind x_1 +1} \mathcal{U}_{g'}(a;x'_1,x_2) \times \mathcal{T}_{g'}(x'_1;x_1) 
\sqcup  \coprod_{\ind x'_2 = \ind x_2+1}  \mathcal{U}_{g'}(a;x_1,x'_2) \times \mathcal{T}_{g'}(x'_2;x_2) \\
 \sqcup & \coprod_{\ind x= |a|-d+2} \mathcal{N}_{g}(a;x) \times \mathcal{T}_{g,g'}(x; x_1,x_2) 
\end{align*}
as boundaries.

We define a $\Z/2$-linear map
$ \kappa_3 \colon CL_*\to CM_{*}^{\otimes 2}[-2d+4]$
by
\[\kappa_3(a) \coloneqq \sum_{\ind x_1 + \ind x_2 = |a|-2d+4} \#_{\Z/2} \mathcal{U}_{g'}(a;x_1,x_2) \cdot x_1\otimes x_2.\]
for every $a\in \mathcal{R}(\Lambda_K)$.
By observing the boundary of the compactification of $\mathcal{U}_{g'}(a;x_1,x_2)$ when $\ind x_1 + \ind x_2 = |a| -2d +3$, we obtain
\[\sigma_1+ \delta_{g,g'} \circ \Phi_g + \kappa_3 \circ d_J + (d_{g'}\otimes \id_{CM_*} + \id_{CM_*}\otimes d_{g'}) \circ \kappa_3 =0.\]
This means that $\kappa_3$ is a chain map from $\sigma^1$ to $ \delta_{g,g'} \circ \Psi_g$.
This finishes the proof of Theorem \ref{thm-diagram}.

\section{Proof of main results}\label{sec-application}


\subsection{General properties of $\Lambda_K$ as a Legendrian submanifold}


\subsubsection{Thurston-Bennequin number}

Let us discuss the Thurston-Bennequin number of unit conormal bundles.
The condition in Definition \ref{def-TB}  that $H_n(Y;\Z)=0$ is satisfied for $Y=U^*\R^n$.
Since the conormal bundle $L_K$ is orientable for any submanifold $K$, $\Lambda_K$ is also an orientable manifold.
\begin{prop}\label{prop-TB}
Let $K$ be a compact submanifold of $\R^n$.
If there exists a nowhere vanishing vector field on $K$, then $\Lambda_K$ is null homologous in $U^*\R^n$ and moreover $\tb(\Lambda_K)=0$.
\end{prop}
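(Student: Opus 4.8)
The statement has two parts: (i) $\Lambda_K$ is null homologous in $U^*\R^n$, and (ii) $\tb(\Lambda_K)=0$. Both will follow from producing an explicit $n$-chain bounding $\Lambda_K$, constructed from a nowhere vanishing vector field $v$ on $K$. The plan is to use $v$ to perturb $\Lambda_K$ off itself inside $L_K$ and to build a Lagrangian-like cobordism in $U^*\R^n$ that realizes the class $D_{\Lambda_K}$ concretely.

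First I would recall that $U^*\R^n$ deformation retracts onto the fiber $U^*_0\R^n\cong S^{n-1}$, so $H_n(U^*\R^n;\Z)=0$ for $n\geq 2$; this is exactly the hypothesis $H_n(Y;\Z)=0$ needed in Definition \ref{def-TB}, and it already shows that if $i_*([\Lambda_K])=0$ in $H_{n-1}(U^*\R^n;\Z)$ then a class $D_{\Lambda_K}$ with $\partial D_{\Lambda_K}=[\Lambda_K]$ exists. Next I would show $i_*([\Lambda_K])=0$: the unit conormal bundle $\Lambda_K\to K$ is an $S^{d-1}$-bundle, and the nowhere vanishing vector field $v$ on $K$ gives a nowhere vanishing section of a trivial line summand, hence $v$ spans a sub-line-bundle of $TK$ whose annihilator gives a corank-one subbundle; dually this produces a nowhere vanishing normal covector field along... more directly, using $v$ one writes down a section $s\colon K\to \Lambda_K$ (a unit conormal covector depending smoothly on the point, obtained by, e.g., projecting a fixed generic covector and using $v$ to avoid the locus where the projection fails), and then $\Lambda_K$ is homologous in $U^*\R^n$ to the image $s(K)$ together with fiber spheres, all of which bound. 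I expect the cleanest route is: $v$ trivializes a line sub-bundle $\ell\subset TK$; then $\Lambda_K=L_K\cap U^*\R^n$ fibers over $K$ with fiber the unit sphere in $(T_qK)^\perp$, and the class $[\Lambda_K]\in H_{n-1}(U^*\R^n)$ is detected by its intersection with a fiber $U^*_q\R^n\cong S^{n-1}$; this intersection number is, up to sign, the Euler number of the bundle $\Lambda_K\to K$ restricted appropriately, which vanishes because $v$ provides a section.

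For part (ii) I would argue that the Thurston-Bennequin number is computed as an intersection $D_{\Lambda_K}\bullet[\varphi^\epsilon_\alpha(\Lambda_K)]$, and that the Reeb flow $\varphi^\epsilon_\alpha$ on $U^*\R^n$ is (under the identification $U^*\R^n\cong U^*\R^n$ via geodesic flow, i.e.\ translation in $\R^n$) the map $(q,p)\mapsto (q+\epsilon p,p)$. Thus $\varphi^\epsilon_\alpha(\Lambda_K)$ is the unit conormal bundle of the pushed-off submanifold $K+\epsilon(\text{normal directions})$, which for small $\epsilon$ is a tube around $K$; I would choose the bounding chain $D_{\Lambda_K}$ to be (a smoothing of) the unit conormal bundle of $K$ "filled in" along the normal disk directions — concretely $D_{\Lambda_K}$ is represented by $\{(q,p): q\in K,\ p\in U_q^*\R^n \text{ with } p|_{T_qK}=\text{(bounded amount)}\}$, an $n$-manifold with boundary $\Lambda_K$ — and then show this is disjoint from $\varphi^\epsilon_\alpha(\Lambda_K)$ for small $\epsilon$, because a point of $D_{\Lambda_K}$ has base point in $K$ whereas $\varphi^\epsilon_\alpha(\Lambda_K)$ has base point at distance $\epsilon$ from $K$. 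This gives intersection number $0$. The nowhere vanishing vector field is what lets us actually smooth the obvious singular chain (the conormal "disk bundle" has a singularity where the normal covector would have to vanish) into a genuine smooth $n$-chain, so the hypothesis enters precisely here.

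\textbf{Main obstacle.} The delicate point is making the "filling" $D_{\Lambda_K}$ genuinely smooth and genuinely bounded by $\Lambda_K$ while using $v$; the naive disk-bundle filling is singular along the zero section of the normal bundle, and one must use $v$ to push the zero locus off to infinity or to cancel it. I expect the honest argument either invokes that an $S^{d-1}$-bundle admitting... rather, that $[\Lambda_K]=0$ because the normal sphere bundle of $K$ has a section (from $v$ one builds such a section, reducing structure group), hence bounds the associated disk bundle inside $U^*\R^n\setminus(\text{zero locus})$; checking that this disk-bundle chain lies in $U^*\R^n$ and is disjoint from the small Reeb pushoff is then elementary. Verifying the section exists from $v$ — i.e.\ the reduction-of-structure-group step — is the part that needs genuine care, and is where codimension/dimension bookkeeping matters.
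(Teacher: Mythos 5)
Your overall strategy matches the paper's: find an explicit $n$-chain $D_{\Lambda_K}$ bounding $\Lambda_K$ whose base-projection lands in $K$, then observe that a small Reeb pushoff of $\Lambda_K$ has base points strictly outside $K$ (because the conormal covector $p$ is a nonzero normal vector), so the intersection number, hence $\tb(\Lambda_K)$, is zero. This second half of your plan is essentially correct and is exactly what the paper does once $D_{\Lambda_K}$ is in hand.

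The genuine gap is precisely where you flag it: you never actually construct $D_{\Lambda_K}$, and the routes you sketch do not close. A nowhere vanishing \emph{tangent} vector field on $K$ does not give a nowhere vanishing \emph{conormal} covector field, so there is no section $K\to\Lambda_K$ from this hypothesis; and intersecting $\Lambda_K$ (dimension $n-1$) with a cotangent fiber $U^*_q\R^n$ (also dimension $n-1$) inside the $(2n-1)$-manifold $U^*\R^n$ is generically empty, so there is no ``intersection number'' to equate with an Euler number. The paper's construction sidesteps both problems with one clean formula. Normalize $v$ to a unit tangent field $\sigma\colon K\to TK$, $|\sigma(q)|=1$, and set
\[
D_{\Lambda_K} := \bigl\{\,(q,\; a\,\sigma(q)+w)\in U^*\R^n \;:\; q\in K,\ a\geq 0,\ w\in (T_qK)^\perp,\ a^2+|w|^2=1\,\bigr\}.
\]
Since $\sigma(q)\perp (T_qK)^\perp$, the covector $p=a\sigma(q)+w$ automatically has $|p|=1$, so this set sits inside $U^*\R^n$; over each $q$ the fiber is the closed upper hemisphere of the unit sphere in $\R\sigma(q)\oplus (T_qK)^\perp\cong\R^{d+1}$, i.e.\ a $d$-disk, whose boundary $\{a=0,|w|=1\}$ recovers the fiber of $\Lambda_K\to K$. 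Thus $D_{\Lambda_K}$ is a smooth (orientable) $n$-manifold with $\partial D_{\Lambda_K}=\Lambda_K$, it lies in $\pi_{\R^n}^{-1}(K)$, and your disjointness argument then applies verbatim. The one idea you were missing is to spend the extra ``filling'' direction in $TK$ (using $\sigma$) rather than in the normal bundle, which is what keeps the chain inside $U^*\R^n$ while still fibering over $K$.
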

\begin{proof}
 Let $\sigma \colon K\to TK$ be a section such that $|\sigma(q)|=1$ for every $q\in K$. Then, we define
\[ D_{\Lambda_K} \coloneqq \{( q, a\cdot \sigma(q) + v ) \in U^*\R^n \mid q\in K,\  a\geq 0 \text{ and }  v\in (T_q K)^{\perp} \text{ such that } a^2+ |v|^2 =1 \}. \]
It is an $n$-dimensional orientable manifold with boundary $\Lambda_K$. Therefore, $\Lambda_K$ is null homologous in $U^*\R^n$.

Next, in order to compute $\tb(\Lambda_K)$, we take $\epsilon>0$ such that $\varphi^t_{\alpha}(\Lambda_K) \subset U^*\R^n \setminus \Lambda_K$ for every $t\in (0,\epsilon]$.
Let $\pi_{\R^n} \colon T^*\R^n \to \R^n$ be the bundle projection.
If $\epsilon>0$ is sufficiently small, then $\pi_{\R^n} \circ \varphi^{\epsilon}_{\alpha}(q,p)= q+\epsilon p \notin K$ for every $(q,p)\in \Lambda_K$ since $p\in (T_qK)^{\perp}\setminus \{0\}$. On the other hand, $D_{\Lambda_K}$ is contained in $(\pi_{\R^n})^{-1}(K)$. This means that $D_{\Lambda_K}$ is disjoint from $\varphi^{\epsilon}_{\alpha}(\Lambda_K)$. Therefore, $\tb(\Lambda_K) = [D_{\Lambda_K}] \bullet [\varphi^{\epsilon}_{\alpha}(\Lambda_K)] =0$. 
\end{proof}

\subsubsection{Coproduct $\delta_K$ as a Legendrian isotopy invariant for $\Lambda_K$}
Hereafter, the coefficient of every singular homology group is $\Z/2$, unless otherwise specified.

Let $K$ be a compact connected submanifold of codimension $d\geq 4$. For generic $J\in \mathcal{J}_{U^*B^n_l}$, $\sigma \in \mathcal{O}_K$ and $g,g'\in \mathcal{G}_{K'}$, where $K'\coloneqq K_{\sigma}$, 
we have the following commutative diagram:
\begin{align}\label{diagram-coproduct-homology-2}
\begin{split}
\xymatrix@R=20pt{
HL_{*+1}(\Lambda_{K'}) \ar[r]^-{(\delta_J)_*} \ar[d]^-{ (\Phi_g)_*} & HL^2_{*}(\Lambda_{K'}) \ar[d]^-{ (\Phi_{g'}\otimes \Phi_{g'})_*} \\
HM_{*-d+3}(g)  \ar[r]^-{(\delta_{g,g'})_*} & HM^2_{*-2d+4}(g') \\
H_{*-d+3}(K'\times K',\Delta_{K'}) \ar[u]_-{\Psi_g\circ j_*} \ar[r]^-{\delta_{K'}} & H_{*-2d+4}((K'\times K',\Delta_{K'})^{\times 2}) \ar[u]_-{\Psi_{g'}^2\circ (j^{\times 2})_*} \\
H_{*-d+3}(K\times K,\Delta_K) \ar[u]_-{(f^{\times 2})_*} \ar[r]^-{\delta_K} & H_{*-2d+4}((K\times K,\Delta_K)^{\times 2}) \ar[u]_-{(f^{\times 4})_*} ,
}\end{split}
\end{align}
The top square is from Theorem \ref{thm-diagram}, the middle square is from Theorem \ref{thm-morse-singular} and the bottom square for $f\colon K\to K' \colon q \mapsto q+ \sigma(q)$ is from Theorem \ref{thm-cobordism} and Example \ref{ex-isotopy}.
Moreover, all of the vertical maps are isomorphisms. See Theorem \ref{thm-isom-LM} for $\Phi_g$ and Proposition \ref{prop-Morse-singular} for $\Psi_g$ and $\Psi^2_{g'}$.

\begin{thm}\label{thm-Leg-delta-K}
Let $K_0$ and $K_1$ be compact connected submanifolds in $\R^n$ with $\codim K_i= d_i \geq 4$ for $i \in \{ 0,1\}$.
If $\Lambda_{K_0}$ and $\Lambda_{K_1}$ are Legendrian isotopic in $U^*\R^n$, then there exists isomorphisms for $m\in \{1,2\}$
\[ \Theta^m \colon H_{*- m d_0}((K_0\times K_0 ,\Delta_{K_0})^{\times m}) \to H_{*- m d_1}(( K_1 \times K_1 ,\Delta_{K_1})^{\times m})\]
such that the following diagram commutes:
\[\xymatrix{
 H_{*-d_0}(K_0\times K_0,\Delta_{K_0}) \ar[r]^-{\delta_{K_0}} \ar[d]^-{\Theta^1} & H_{*-2d_0+1}((K_0\times K_0,\Delta_{K_0})^{\times 2}) \ar[d]^-{\Theta^2}\\ 
 H_{*-d_1}(K_1\times K_1,\Delta_{K_1}) \ar[r]^-{\delta_{K_1}} & H_{*-2d_1+1}((K_1\times K_1,\Delta_{K_1})^{\times 2}) .
}\]
Moreover, via the K\"{u}nneth formula $H_*((K_i\times K_i,\Delta_{K_i})^{\times 2}) \cong H_*(K_i\times K_i,\Delta_{K_i})^{\otimes 2}$ for $i \in \{0,1\}$, $\Theta^2$ coincides with
$\Theta^1\otimes \Theta^1$.
\end{thm}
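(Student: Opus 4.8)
The plan is to assemble the commutative diagram \eqref{diagram-coproduct-homology-2} for each of $K_0$ and $K_1$ and splice them together using the chain-level isotopy invariance from Theorem \ref{thm-Leg-invariant}. Concretely, pick a Legendrian isotopy from $\Lambda_{K_0}$ to $\Lambda_{K_1}$; after a generic perturbation $\sigma_i\in\mathcal{O}_{K_i}$ (so that every $x\in\mathcal{C}(K_i')$ is non-degenerate, where $K_i'\coloneqq (K_i)_{\sigma_i}$) the conormal bundles $\Lambda_{K_i'}$ are still Legendrian isotopic (one composes with the obvious isotopy of $\Lambda_{K_i}$ to $\Lambda_{K_i'}$ induced by $\sigma_i$) and satisfy $(\bigstar)$ since $d_i\ge 4$. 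First I would apply Theorem \ref{thm-Leg-invariant} to the pair $(\Lambda_{K_0'},\Lambda_{K_1'})$: it yields a quasi-isomorphism $\varphi\colon CL_*(\Lambda_{K_0'})\to CL_*(\Lambda_{K_1'})$ with $\varphi\circ\delta_{J_0}$ chain homotopic to $\delta_{J_1}\circ(\varphi\otimes\varphi)$ up to the appropriate grading shift. Passing to homology, $(\varphi)_*$ is an isomorphism $HL_*(\Lambda_{K_0'})\xrightarrow{\cong}HL_*(\Lambda_{K_1'})$ intertwining $(\delta_{J_0})_*$ and $(\delta_{J_1})_*$, and $(\varphi\otimes\varphi)_*=(\varphi)_*\otimes(\varphi)_*$ on $HL^2_*$.

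Next I would define $\Theta^1$ as the composite of the vertical isomorphisms in \eqref{diagram-coproduct-homology-2}. Precisely, for $i\in\{0,1\}$ the bottom two squares of \eqref{diagram-coproduct-homology-2} give an isomorphism
\[
\Xi_i\colon H_{*-d_i}(K_i\times K_i,\Delta_{K_i})\xrightarrow{\ \cong\ } HL_{*+1-?}(\Lambda_{K_i'})
\]
(the composite $(\Phi_{g_i})_*^{-1}\circ(\Psi_{g_i}\circ j_*)\circ(f_i^{\times 2})_*^{-1}$, with $f_i\colon K_i\to K_i'$, $q\mapsto q+\sigma_i(q)$; here I suppress the exact degree bookkeeping which is routine from $\vdim$ and Proposition \ref{prop-correspond}), and likewise $\Xi_i^2$ for the tensor-square side, built from $\Psi^2_{g_i'}$, $(j^{\times 2})_*$, $(f_i^{\times 4})_*$ and $(\Phi_{g_i'}\otimes\Phi_{g_i'})_*$. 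I then set $\Theta^1\coloneqq \Xi_1^{-1}\circ(\varphi)_*\circ\Xi_0$ and $\Theta^2\coloneqq (\Xi_1^2)^{-1}\circ\big((\varphi)_*\otimes(\varphi)_*\big)\circ\Xi_0^2$. Since each of $\Xi_i,\Xi_i^2$ is an isomorphism (Theorem \ref{thm-isom-LM} for $\Phi_g$, Proposition \ref{prop-Morse-singular} for $\Psi_g,\Psi^2_{g'}$, and the excision/Künneth statements for $f_i$), so are $\Theta^1,\Theta^2$. The commutativity of the target square is then a diagram chase: it is the outer rectangle of a large diagram whose inner cells are (reading from the $K_0$ side up, across the top via $(\varphi)_*$, and down the $K_1$ side) precisely the instances of \eqref{diagram-coproduct-homology-2} for $K_0'$ and $K_1'$ together with the homology-level version of the square from Theorem \ref{thm-Leg-invariant}. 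Every cell commutes, hence so does the rectangle.

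For the final sentence, that $\Theta^2=\Theta^1\otimes\Theta^1$ under Künneth, I would unwind the definitions: $(\varphi)_*\otimes(\varphi)_*$ is manifestly a tensor square, and each of the vertical maps defining $\Xi_i^2$ is the tensor square (or the $j^{\times 2}$/$f_i^{\times 4}$ version) of the corresponding map defining $\Xi_i$ — this is already recorded in \eqref{diagram-coproduct-homology-2} where the right-hand vertical maps are $(\Phi_{g'}\otimes\Phi_{g'})_*$, $\Psi^2_{g'}\circ(j^{\times 2})_*$, $(f^{\times 4})_*$, each compatible with Künneth by naturality. So $\Xi_i^2$ agrees with $\Xi_i\otimes\Xi_i$ after the Künneth identification, and therefore $\Theta^2=\Theta^1\otimes\Theta^1$.

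The steps requiring care rather than difficulty are the degree bookkeeping — matching the shifts $*-d_i$, $*-2d_i+1$ against the $|a|=\operatorname{ind}(x_a)+d_i-2$ relation and the $[-2d+4]$ appearing in Theorem \ref{thm-diagram} — and checking that perturbing $K_i$ to $K_i'$ genuinely preserves the Legendrian isotopy class of the conormal bundle (immediate, since $\sigma_i$ can be taken through a small isotopy of $K_i$, which induces a Legendrian isotopy of conormal bundles). The only genuine input is Theorem \ref{thm-Leg-invariant}, whose proof (via exact Lagrangian cobordism and the filtration argument) has already been given; here it is used as a black box. The main obstacle, if any, is purely organizational: ensuring that the large pasted diagram's cells are exactly the ones proved to commute earlier — in particular that the naturality of $\delta_{K}$ under the maps $f_i$ (Theorem \ref{thm-cobordism}, Example \ref{ex-isotopy}) lines up on the nose with the $(f_i^{\times 2})_*$, $(f_i^{\times 4})_*$ appearing in \eqref{diagram-coproduct-homology-2}. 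No new geometric analysis is needed.
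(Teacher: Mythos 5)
Your proof is correct and follows essentially the same route as the paper: perturb each $K_i$ to an admissible $K_i'$ (which preserves the Legendrian isotopy class of the conormal bundle), invoke Theorem~\ref{thm-Leg-invariant} to get a filtered quasi-isomorphism $\varphi$ on the strip complexes, stack the resulting square on top of the two instances of diagram~\eqref{diagram-coproduct-homology-2}, and define $\Theta^1,\Theta^2$ as the vertical composites. One small slip: the chain homotopy from Theorem~\ref{thm-Leg-invariant} says $(\varphi\otimes\varphi)\circ\delta_{J_0}\simeq\delta_{J_1}\circ\varphi$, not $\varphi\circ\delta_{J_0}\simeq\delta_{J_1}\circ(\varphi\otimes\varphi)$ as you wrote (the latter does not even type-check), but this is a transcription error and does not affect the argument.
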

\begin{proof}
For $i\in \{0,1\}$, we choose an admissible triple $(K'_i,g_i,g_i)$ and $J_i\in \mathcal{J}_{U^*B^n_l}$ as above, where $l\in \Z_{\geq 1}$ is sufficiently large, so that the diagram (\ref{diagram-coproduct-homology-2}) commutes for $K=K_i$. Combining with Theorem \ref{thm-Leg-invariant}, we get the following commutative diagram:
\[
\xymatrix@R=20pt{
H_{*-d_0+3}(K_0\times K_0, \Delta_{K_0})  \ar[r]^-{\delta_{K_0}} & H_{*-2d_0+4}((K_0 \times K_0, \Delta_{K_0})^{\times 2}) \\
HL_{*+1}(\Lambda_{K'_0})\ar[u]  \ar[r]^-{(\delta_{J_0})_*} \ar[d]^-{\varphi_*} & HL^2_{*}(\Lambda_{K'_0}) \ar[u]  \ar[d]^-{(\varphi\otimes \varphi)_*} \\
HL_{*+1}(\Lambda_{K'_1}) \ar[r]^-{(\delta_{J_1})_*} \ar[d] & HL^2_{*}(\Lambda_{K'_1}) \ar[d] \\
H_{*-d_1+3}(K_1 \times K_1, \Delta_{K_1}) \ar[r]^-{\delta_{K_1}} & H_{*-2d_1+4}((K_1\times K_1, \Delta_{K_1})^{\times 2}) .
}
\]
Then, $\Theta^1$ (resp. $\Theta^2$) is defined as the composition of the left (resp. the right) vertical isomorphisms. From their constructions, it is easy to see that $\Theta^2$ coincides with $\Theta^1\otimes \Theta^1$ via the K\"{u}nneth formula.
\end{proof}

Moreover, we can compute the dimension of the lower degree part of the Legendrian contact homology of unit conormal bundles.
\begin{thm}\label{thm-LCH-deltaK}
Let $K$ be a compact connected submanifold of $\R^n$ of codimension $d \geq 4$ such that every $x \in \mathcal{C}(K)$ is non-degenerate. Then, for every $p\in \Z$ with $1\leq p\leq 3d-7$,
\begin{align*}
 \dim_{\Z/2} \LCH_p(\Lambda_K) = & \dim_{\Z/2} \ker \left(\delta_K \colon H_{p-d+2}(K\times K,\Delta_K) \to H_{p-2d+3}((K\times K,\Delta_K)^{\times 2})  \right) \\
 & + \dim_{\Z/2} \coker \left( \delta_K \colon H_{p-d+3}(K\times K,\Delta_K) \to H_{p-2d+4}((K\times K,\Delta_K)^{\times 2})  \right).
\end{align*}
\end{thm}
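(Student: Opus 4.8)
The statement is a direct application of Proposition~\ref{prop-LCH-dim} combined with the identification of the strip Legendrian contact homology of $\Lambda_K$ (together with its coproduct) with the topological data on $(K\times K,\Delta_K)$ established in Section~\ref{sec-Floer-Morse}. Concretely, the plan is to set $\Lambda=\Lambda_K$ in Subsection~\ref{subsubsec-degree-bound}. Since $\codim K=d\geq 4$, we showed in Subsection~\ref{subsec-LCH-conormal} that $\Lambda_K$ satisfies the condition ($\bigstar$) and that every $a\in\mathcal{R}(\Lambda_K)$ has $|a|=\ind(x_a)+(d-2)\geq d-2$. Thus we may take the integer $e$ of Proposition~\ref{prop-LCH-dim} to be $e=d-2\geq 2$, so that Proposition~\ref{prop-LCH-dim} applies for all $p$ with $1\leq p\leq 3e-1=3d-7$, giving
\[
\dim_{\Z/2}\LCH_p(\Lambda_K)=\dim_{\Z/2}\ker\!\big((\delta_J)_*\colon HL_p(\Lambda_K)\to HL^2_{p-1}(\Lambda_K)\big)+\dim_{\Z/2}\coker\!\big((\delta_J)_*\colon HL_{p+1}(\Lambda_K)\to HL^2_p(\Lambda_K)\big).
\]

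Next I would translate each term on the right-hand side into singular homology. By Theorem~\ref{thm-isom-LM}, $\Phi_g\colon CL_*\to CM_{*+d-2}$ is an isomorphism of chain complexes (after perturbing $K$ so that every $x\in\mathcal{C}(K)$ is non-degenerate, which is the hypothesis), hence induces an isomorphism $HL_p(\Lambda_K)\cong HM_{p+d-2}(g)$; similarly $\Phi_{g'}\otimes\Phi_{g'}$ induces $HL^2_p(\Lambda_K)\cong HM^2_{p+2d-4}(g')$. Combining with the isomorphisms $\Psi_g\circ j_*\colon H_*(K\times K,\Delta_K)\xrightarrow{\cong} HM_*(g)$ and $\Psi^2_{g'}\circ(j^{\times 2})_*\colon H_*((K\times K,\Delta_K)^{\times 2})\xrightarrow{\cong} HM^2_*(g')$ from Section~\ref{sec-Morse} (Proposition~\ref{prop-Morse-singular}), we obtain
\[
HL_p(\Lambda_K)\cong H_{p-d+2}(K\times K,\Delta_K),\qquad HL^2_{p}(\Lambda_K)\cong H_{p-2d+4}((K\times K,\Delta_K)^{\times 2}).
\]
The commutative diagram of Theorem~\ref{thm-diagram}, passed to homology and combined with Theorem~\ref{thm-morse-singular}, shows that under these identifications the map $(\delta_J)_*\colon HL_{p}(\Lambda_K)\to HL^2_{p-1}(\Lambda_K)$ corresponds to $\delta_K\colon H_{p-d+2}(K\times K,\Delta_K)\to H_{p-2d+3}((K\times K,\Delta_K)^{\times 2})$, and likewise $(\delta_J)_*\colon HL_{p+1}(\Lambda_K)\to HL^2_p(\Lambda_K)$ corresponds to $\delta_K$ in the degrees $H_{p-d+3}\to H_{p-2d+4}$. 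Since kernels and cokernels are preserved under isomorphisms intertwining the two maps, substituting into the formula from Proposition~\ref{prop-LCH-dim} yields exactly the asserted equality.

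The one subtlety to address is a bookkeeping point about index shifts: one must check that the degree $p$ of the Legendrian contact homology, the degree $p+d-2$ of the Morse complex, and the degree $p-d+2$ (respectively $p-2d+4$) of the singular homology line up consistently across Theorem~\ref{thm-diagram} (where the vertical maps are $\Phi_g\colon CL_{*+1}\to CM_{*-d+3}$ and $\Phi_{g'}\otimes\Phi_{g'}\colon CL_*^{\otimes 2}\to CM_*^{\otimes 2}[-2d+4]$) and the diagram (\ref{diagram-coproduct-homology-2}). I do not expect any genuine obstacle here; the degree conventions were fixed precisely so that the four-row diagram (\ref{diagram-coproduct-homology-2}) commutes with all vertical maps isomorphisms, so the main task is simply to record that for $1\leq p\leq 3d-7$ all the relevant isomorphisms and the intertwining of $(\delta_J)_*$ with $\delta_K$ hold in the degrees appearing in the statement, and then invoke Proposition~\ref{prop-LCH-dim}.
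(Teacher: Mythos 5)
Your proposal is correct and follows essentially the same route as the paper: apply Proposition \ref{prop-LCH-dim} with $e=d-2$ (valid since $|a|=\ind x_a+d-2\geq d-2$ by Proposition \ref{prop-correspond}), then transport kernel and cokernel of $(\delta_J)_*$ to $\delta_K$ via the commutative diagram (\ref{diagram-coproduct-homology-2}), whose vertical maps are isomorphisms. The degree bookkeeping you flag checks out ($HL_p\cong H_{p-d+2}$, $HL^2_p\cong H_{p-2d+4}$), so no further work is needed.
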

\begin{proof}
For every $a\in \mathcal{R}(\Lambda_K)$, $|a| \geq d-2$ by Proposition \ref{prop-correspond}. Therefore, the computation by Proposition \ref{prop-LCH-dim} of $\LCH_p(\Lambda_K)$ can be applied if $1\leq p\leq 3(d-2)-1 = 3d-7$. From the diagram (\ref{diagram-coproduct-homology-2}), the equation  is proved.
\end{proof}

\subsection{Non-trivial examples}

Throughout this subsection, we denote the unit sphere in $\R^m$ by $S^{m-1} \coloneqq \{v \in \R^m \mid |v| =1\}$ for any $m\in \Z_{\geq 1}$.

\subsubsection{Connected sum with embedded sphere}\label{subsubsec-connected-sum}
Suppose that we are given the following data:
\begin{itemize}
\item a compact submanifold $K$ of $\R^n$ of codimension $d$.
\item an embedding $f\colon S^{n-d} \to \R^n$. Let us write $S\coloneqq f(S^{n-d})$.
\item $p_0\in K$ and $p_1 = f(0,\dots ,0,-1) \in S$ such that $p_0\neq p_1$.
\item an embedding $\gamma \colon [0,1] \to \R^n$ such that:
\begin{itemize}
\item[(i)] $p_0 = \gamma(0) \in K$ and $p_1 = \gamma(1) \in S$.
\item[(ii)] $\frac{d \gamma}{dt}(0) \in (T_{p_0} K)^{\perp}$, $\frac{d \gamma}{dt}(1) \in (T_{p_1} S)^{\perp}$.
\item[(iii)] $\gamma(t) \notin K \cup S $ for every $t\in (0,1)$.
\end{itemize}
\end{itemize}
We construct a \textit{connected sum} of $K$ and $S$ along $\gamma$.

We choose a family of linearly independent vectors $(b_1(t),\dots ,b_{n-d}(t))$ depending smoothly on $t\in [0,1]$ such that:
\begin{itemize}
\item  $\la b_i(t) , \frac{d \gamma}{dt}(t) \ra = 0$ for $i=1,\dots ,n-d$ and $t\in [0,1]$.
\item  $\{b_i(0) \mid i=1,\dots ,n-d\}$ spans $T_{p_0}K$ and $\{b_i(1) \mid i=1,\dots ,n-d\}$ spans $T_{p_1} S$.
\end{itemize}

Let us take $r_0>0$ and a $C^{\infty}$ embedding
\[[r_0,2r_0] \to (0,2r_0] \times [0,1] \colon r \mapsto (a_1(r), a_2(r))\]
such that $(a_1(r), a_2(r)) = (3 r_0-r  ,1)$ near $r=r_0$ and $(a_1(r),a_2(r)) =(r, 0)$ near $r=2r_0$. See the left picture in Figure \ref{figure-curve}.

\begin{figure}
\centering
\begin{overpic}[height=5cm]{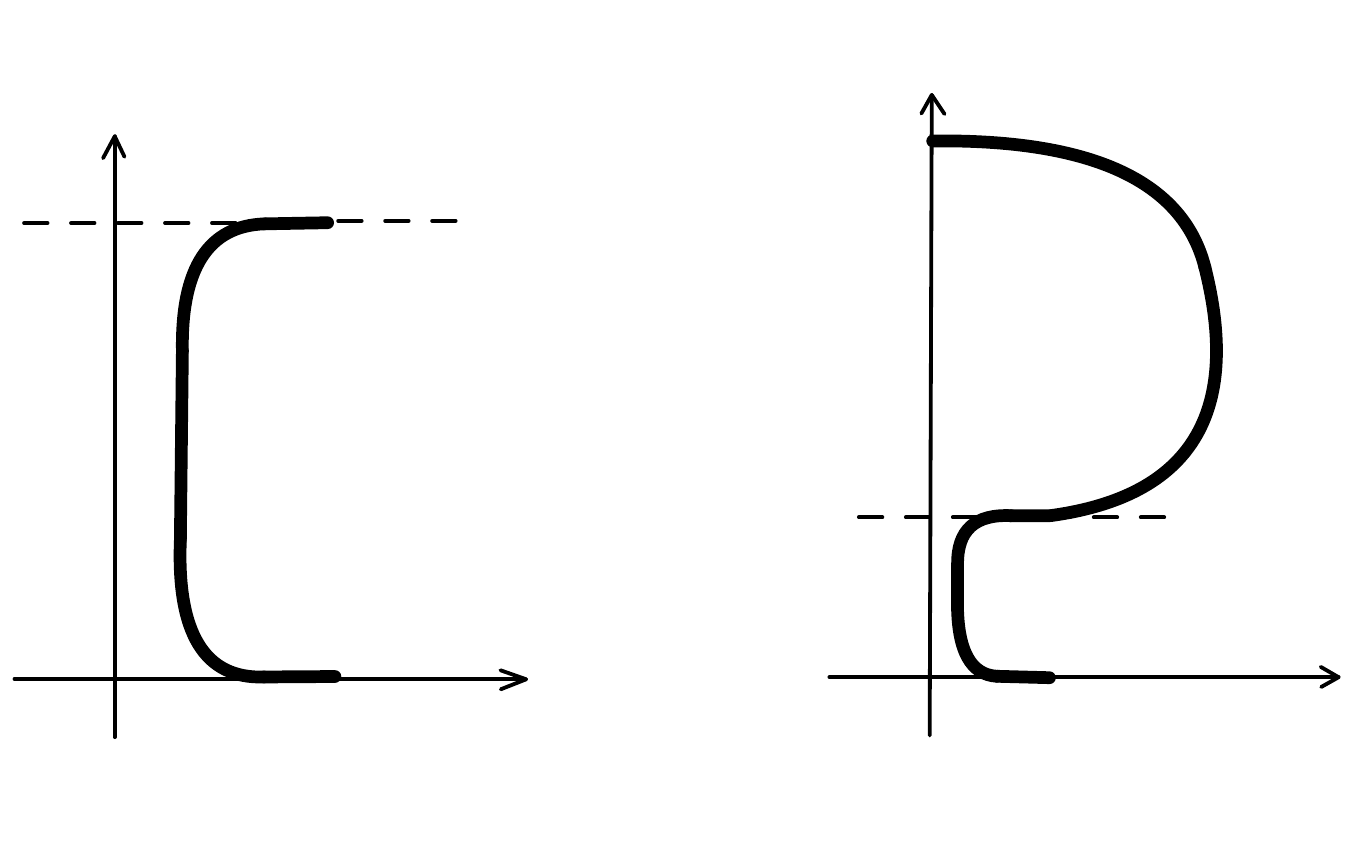}
\put(22,8){$2r_0$}
\put(5.5,41.5){$1$}
\put(5,8){$0$}
\put(76,8){$2r_0$}
\put(65,8){$0$}
\put(65,20){$1$}
\put(55,50.5){$2c+1$}
\end{overpic}
\caption{The left-hand side draws the image of the curve $\{(a_1(r),a_2(r)) \mid r_0\leq r\leq 2r_0\}$. The right-hand side draws the image of the curve  $\{(a'_1(r),a'_2(r)) \mid 0\leq r\leq 2r_0\}$ in the proof of Lemma \ref{lem-isotopy-connected-sum}.}\label{figure-curve}
\end{figure}

By $C^{\infty}$ isotopies, we may change $K$ and $S$ near $p_0$ and $p_1$ so that they are flat near these points.
For sufficiently small $r_0>0$, we take $C^{\infty}$ charts $\varphi$ for $K$ and $\psi$ for $S$ such that:
\begin{itemize}
\item $\varphi \colon \{u\in \R^{n-d} \mid |u|<3r_0 \}\to K$ with $\varphi(0)=p_0$
is determined by
\begin{align}\label{phi-flat}
\varphi (u)= p_0+ \sum_{i=1}^{n-d}u_i \cdot b_i(0)
\end{align}
for every $u=(u_1,\dots ,u_{n-d})$ with $|u| < 3r_0$.
\item $\psi \colon \{u\in \R^{n-d} \mid |u|<3r_0 \} \to S$ satisfies $\Im \psi \sqcup \{p_1\}= S$ and
\[ \psi  (u)= p_1+ (3r_0-|u|) \cdot\sum_{i=1}^{n-d} \frac{u_i}{|u|} \cdot b_i(1) 
 \]
for every $u=(u_1,\dots ,u_{n-d}) \in \R^{n-d}$ with $r_0\leq |u| < 3 r_0$.
\end{itemize}
Since $p_0\neq p_1$, we may assume that $\Im \varphi \cap \Im \psi =\emptyset$.

We define a map $\iota_f\colon K \to \R^n$ explicitly by
\[ \iota_f (q) \coloneqq \begin{cases} q& \text{ if }q \in K \setminus \{\varphi(u) \mid |u| \leq 2 r_0 \} , \\
 \displaystyle{ \gamma( a_2(|u|) ) + a_1(|u|) \cdot  \sum_{i=1}^{n-d} \frac{ u_i}{|u|} \cdot b_i(a_2(|u|)) }
  & \text{ if } q=\varphi(u) \text{ with } r_0\leq |u| \leq 2r_0 , \\
\psi(u) & \text{ if }q= \varphi(u) \text{ with } |u| \leq r_0. 
\end{cases}\]
Then, $\iota_f$ is an immersion when $r_0>0$ is sufficiently small. Moreover, if $K \cap S =\emptyset$, then $\iota_f$ is an embedding.

\begin{lem}\label{lem-isotopy-connected-sum}
Suppose that $T_{p_0}K = T_{p_1}S$ as subspaces of $\R^n$ and  $\gamma$ is a linear path, that is, $\gamma(t) =(1-t)\cdot p_0 + t\cdot p_1 $ and $p_1-p_0\in (T_{p_0}K)^{\perp} = (T_{p_1}S)^{\perp}$.
Moreover, suppose that there exists a linear injection $l\colon \R^{n-d+1} \to \R^m$ such that:
\begin{itemize}
\item $l(1,0,\dots ,0) = \frac{p_1-p_0}{|p_1-p_0|} $ and $l(\{0\}\times \R^{n-d}) = T_{p_0}K = T_{p_1}S$. 
\item For every $v\in S^{n-d}\subset \R^{n-d+1}$,
\begin{align}\label{emb-f}
f(v) = p_1 + c\cdot l(v + (1,0,\dots ,0)) ,
\end{align}
where $c>0$ is a constant. 
\item $K$ is disjoint from $\{p_1 + c\cdot l(v + (1,0,\dots ,0)) \mid |v|\leq 1+\epsilon \}$, where $\epsilon$ is a constant such that $0 < \epsilon < c^{-1}|p_1-p_0|$.
\end{itemize}
Then, $\iota_f$ is an embedding and the submanifold $\iota_f(K)$ is isotopic $K$ in $\R^n$.
\end{lem}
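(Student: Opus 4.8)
The plan is to exhibit an explicit $C^\infty$ isotopy carrying $K$ to $\iota_f(K)$, by viewing the construction of $\iota_f$ as the result of pushing a small disk of $K$ around $p_0$ outward along the line $l(\R\cdot(1,0,\dots,0))$ and then folding it around to form the attached sphere $S$. The hypotheses are rigged precisely so that the whole picture lies in the affine $(n-d+1)$-plane $p_0 + l(\R^{n-d+1})$ (together with the normal directions to $K$ there), which reduces the problem to an essentially $2$-dimensional geometric deformation thickened by the fibers of a trivial bundle.

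First I would set up coordinates: using the linear injection $l$, identify $p_0 + l(\R^{n-d+1})$ with $\R^{n-d+1}$ so that $p_0 = 0$, $\frac{p_1-p_0}{|p_1-p_0|} = (1,0,\dots,0)$, and $T_{p_0}K = T_{p_1}S = \{0\}\times\R^{n-d}$; by the flatness arrangement already made in the construction, near $p_0$ the submanifold $K$ agrees with $\{0\}\times\R^{n-d}$, and by $(\ref{emb-f})$ the sphere $S$ is the standard round sphere of radius $c$ centered at $p_1 + c\cdot(1,0,\dots,0)$ inside this plane. The key point is that the formula defining $\iota_f$ on the annulus $\{r_0\le|u|\le 2r_0\}$ — replacing the flat disk $\varphi(u)$ by $\gamma(a_2(|u|)) + a_1(|u|)\sum u_i/|u|\cdot b_i(a_2(|u|))$ — depends only on $|u|$ and on the direction $u/|u|\in S^{n-d-1}$, and since $\gamma$ is the linear path and the $b_i$'s may be taken constant (as $T_{p_0}K = T_{p_1}S$ lets us choose $b_i(t) \equiv b_i$), the entire image $\iota_f(K)$ is obtained from $K$ by a deformation that is rotationally symmetric in the $u$-variable and confined to the plane $\R^{n-d+1}$. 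So I would rewrite $\iota_f$ on the relevant chart as $\varphi(u)\mapsto \Phi(|u|,u/|u|)$ for an explicit profile curve in the $(a_1,a_2)$-half-plane, and observe that the profile curve shown on the left of Figure~\ref{figure-curve} can be deformed, rel its endpoints and through embedded curves avoiding the forbidden region dictated by the last hypothesis on $K$, to the trivial profile $r\mapsto(r,0)$ whose associated map is the inclusion of the flat disk.

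Next I would build the isotopy $(F_s)_{s\in[0,1]}$ of $\R^n$: take a $1$-parameter family of embedded profile curves $r\mapsto(a_1^s(r),a_2^s(r))$ from the curve used to define $\iota_f$ ($s=0$) to the trivial one ($s=1$), each agreeing with the endpoint data near $r=r_0$ and $r=2r_0$, sweeping out an ambient isotopy supported in a tubular neighborhood of $p_0 + l(\R^{n-d+1})$ in $\R^n$ obtained by rotating the half-plane deformation around the $\R^{n-d}$-factor and extending by the identity in the normal directions; the right-hand picture of Figure~\ref{figure-curve} (the curve of ``height'' $2c+1$) indicates the intermediate stage where the disk has been pushed out past the sphere. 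One must check: (a) each $F_s$ is a diffeomorphism of $\R^n$ — this follows since each profile curve is embedded and the construction is a standard ``push along a collar'' thickened by a trivial bundle; (b) $F_0|_K = \iota_f$ up to reparametrization and $F_1|_K = \mathrm{id}_K$; (c) the family avoids self-intersections, i.e.\ $\iota_f$ is genuinely an embedding and $F_s(K)$ stays embedded — this is exactly where the disjointness hypothesis $K\cap\{p_1+c\,l(v+(1,0,\dots,0)): |v|\le 1+\epsilon\}=\emptyset$ is used, since it guarantees the part of $K$ being pushed around never collides with the rest of $K$ during the deformation. Then $\iota_f(K) = F_0(K)$ is isotopic to $F_1(K) = K$ in $\R^n$.

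The main obstacle I anticipate is item (c): verifying rigorously that the swept-out family of embeddings has no collisions for the \emph{entire} parameter range $s\in[0,1]$, not just at the endpoints. The endpoints are easy — $\iota_f$ is an embedding by the general discussion preceding the lemma once $K\cap S=\emptyset$, and the identity is obviously an embedding — but the intermediate stages require controlling where the growing ``finger'' of $K$ travels. The hypotheses pin down the geometry exactly enough that the finger stays inside the convex hull of $\{p_0\}\cup\{p_1+c\,l(v+(1,0,\dots,0)):|v|\le 1+\epsilon\}$ within the plane $p_0+l(\R^{n-d+1})$, and the last bullet says $K$ is disjoint from this region; making that quantitative — choosing the family $(a_1^s,a_2^s)$ so that its image stays in the allowed region of the half-plane for all $s$, and checking the normal-direction thickening does not spoil this — is the one place where care is genuinely needed, though it is ultimately a routine if slightly tedious explicit construction.
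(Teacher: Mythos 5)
Your overall strategy is essentially the paper's: normalize so that everything happens in the affine $(n-d+1)$-plane $p_0+l(\R^{n-d+1})$, use the rotational symmetry in $u/|u|$ to rewrite $\iota_f$ on the chart as a profile curve in the $(a_1,a_2)$-half-plane, and isotope that profile curve rel its boundary behavior to the trivial profile $r\mapsto (r,0)$ (the paper phrases this as an isotopy of embeddings of $K$ rather than an ambient isotopy, an immaterial difference). However, the step you yourself identify as the crux, your item (c), is justified by a claim the hypotheses do not give. You assert that the moving finger stays in the convex hull of $\{p_0\}\cup\{p_1+c\cdot l(v+(1,0,\dots,0)) \mid |v|\le 1+\epsilon\}$ and that ``the last bullet says $K$ is disjoint from this region.'' The last bullet only excludes $K$ from the ball $\{p_1+c\cdot l(v+(1,0,\dots,0)) \mid |v|\le 1+\epsilon\}$, not from its convex hull with $p_0$: that hull contains $p_0\in K$ itself together with a cone emanating from $p_0$, and nothing in the hypotheses prevents $K$ from meeting that cone away from the ball. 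As written, the non-collision argument for the intermediate stages therefore does not go through.

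The repair --- and what the paper actually does --- is to confine the isotopy of profile curves to the region $(0,2r_0]\times[0,1]\cup\{z\in\R^2 \mid |z-(0,c+1)|\le c(1+\epsilon)\}$ of Figure \ref{figure-curve}, whose counterpart in $\R^n$ is the union of the solid tube of radius $2r_0$ around $\gamma([0,1])$ and the enlarged ball around the center of $S$. The ball misses $K$ by the third bullet, while the tube misses $K\setminus\{\varphi(u)\mid |u|\le 2r_0\}$ because $\gamma((0,1])$ is disjoint from $K$, near $p_0$ the only nearby piece of $K$ is the flat chart being deformed, and $r_0$ was already chosen small in the construction of $\iota_f$ --- the same facts that make $\iota_f$ an embedding in the first place; since each intermediate profile curve stays in this strip-union-disk, every intermediate embedding avoids the rest of $K$, which is exactly the quantitative control you were missing. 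A secondary, more minor point: you say the $b_i(t)$ ``may be taken constant,'' but $\iota_f$ is handed to you with some possibly non-constant frame, so one must interpolate; the paper does this by a family of linear injections $l_s$ producing an isotopy $\iota_{f_s}$ from the given $\iota_f$ to the one built from a constant frame. That part is routine, but it should be said.
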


\begin{proof}
Without loss of generality, we may assume that $T_{p_0}K = T_{p_1}S= \R^{n-d}\times \{0\}$ and $\frac{p_1-p_0}{|p_1-p_0|}= (0,\dots ,0,1) \in \R^n$.
In addition, we may assume that the linearly independent vectors $(b_1(t),\dots ,b_{n-d}(t))$ used in the construction of $\iota_f$ is constant on $t\in [0,1]$.
Indeed, given any $(b_1(t),\dots ,b_{n-d}(t))$ satisfying the required conditions, we take a $C^{\infty}$ family of linear injections $(l_s)_{s\in [0,1]}$ such that $l_0=l$ and for every $s\in [0,1]$, $l_s(1,0,\dots ,0)= \frac{p_1-p_0}{|p_1-p_0|}$ and $l_s(\{0\}\times \R^{n-d}) = \Span ( b_1(1-s),\dots ,b_{n-d}(1-s))$.
Replacing $l$ in (\ref{emb-f}) by $l_s$, we define embeddings $f_s$ for all $s\in [0,1]$ and an isotopy $(\iota_{f_s})_{s\in [0,1]}$ such that $\iota_{f_0} = \iota_f$ and
$\iota_{f_1}$ is defined from linearly independent vectors constant on $[0,1]$.

Via an affine transformation on $\R^n$, we may assume that $\gamma(t) = (0,\dots ,0,t)$ for every $t\in [0,1]$ and $(b_1(t),\dots ,b_{n-d}(t))= (b_1(0),\dots ,b_{n-d}(0))$ is a standard basis of $\R^{n-d}$.
Then, on $\{u\in \R^{n-d} \mid |u|\leq 2r_0 \}$, $\iota_f\circ \varphi(u)$ has the form
\begin{align}\label{iotaf-phi}
\iota_f\circ \varphi(u) = \left( a'_1(|u|) \frac{u}{|u|}, (0,\dots,0, a'_2(|u|)\right) \in \R^{n-d}\times \R^d
\end{align}
for an embedded curve
\[(a'_1,a'_2)\colon [0,2r_0] \to (0,2r_0]\times [0,1] \cup \{ z\in \R^2 \mid |z-(0,c+1)|\leq c(1+\epsilon) \}\]
which is an extension of $(a_1,a_2)$ used in the construction of $\iota_f$.
It is drawn as in the right-hand side of Figure \ref{figure-curve}.
Via a $C^{\infty}$ isotopy near $0\in [0,2r_0]$, we change $(a'_1,a'_2)$ to satisfy the two conditions:
\begin{itemize}
\item Near $r=0$, $a'_2(r)$ is constant and  $a'_1(r) =r$.
\item Near $r=2r_0$, $(a'_1(r) ,a'_2(r)) = (a_1(r),a_2(r)) = (r,0)$.
\end{itemize}
We take a $C^{\infty}$ isotopy of embedded curves
\[(a'_{1,s},a'_{2,s})\colon [0,2r_0] \to (0,2r_0]\times [0,1] \cup \{ z\in \R^2 \mid |z-(0,c+1)|\leq c(1+\epsilon) \},\]
preserving the above two conditions, such that $(a'_{1,0},a'_{2,0}) = (a'_{1},a'_{2})$ and $(a'_{1,1},a'_{2,1}) = (r,0)$ for every $r\in [0,2r_0]$.
If we replace $(a'_1,a'_2)$ in (\ref{iotaf-phi}) by $(a'_{1,s},a'_{2,s})$, then we obtain an embedding which coincides with $\varphi$ near $\{u\in \R^{n-d} \mid |u| = 2r_0\}$ and disjoint from $K\setminus \{\varphi(u) \mid |u|\leq 2r_0\}$, and moreover when $s=1$, it agrees with $\varphi$ on $\{u\in \R^{n-d} \mid |u| \leq 2r_0\}$ given by (\ref{phi-flat}).
This shows that $\iota_f$ is isotopic to the inclusion map $K \to R^n \colon x \mapsto x$.
\end{proof}

\subsubsection{Construction of two submanifolds $K_0,K_1$}\label{subsubsec-const-K0K1}
Fix $n,d\in \Z$ such that $\frac{n}{2} \geq d \geq 2$.
Let $M$ be a compact connected submanifold of $\R^{n-d}$ of codimension $d-1$.
We change $M$ by $C^{\infty}$ isotopy and assume the following:
\begin{itemize}
\item $\max_{w \in M} |w| \leq 1$.
\item Consider the function $h \colon M\to \R \colon (w_1,\dots ,w_{n-d}) \mapsto w_{n-d}$. Then $\max_{w\in M} h(w) = 1$ and $h^{-1}(1) = \{ (0,\dots ,0,1) \}$.
\end{itemize}
We take the product with $S^{d-1}\subset \R^d$ to define a submanifold of  $\R^n$ by
\[ K_0\coloneqq  S^{d-1}\times M \subset \R^{n}= \R^{d} \times \R^{n-d}.\]
It is a compact connected submanifold of codimension $d$. See Figure \ref{figure-K0}.

\begin{figure}
\centering
\begin{overpic}[height=5cm]{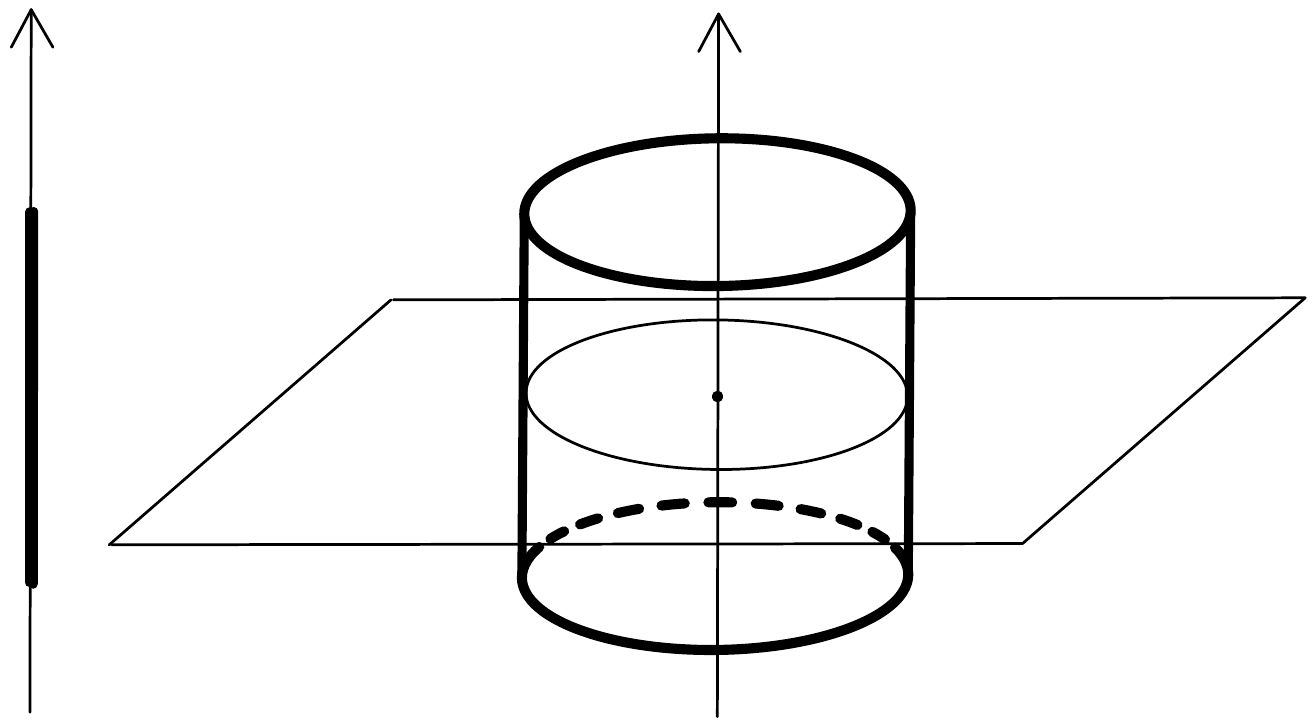}
\put(-3,25){$M$}
\put(5,52){$\R^{n-d}$}
\put(57,52){$\R^{n-d}$}
\put(15,15){$\R^d$}
\put(57.5,20.5){$S^{d-1}$}
\put(51.5,23.5){$0$}
\put(67,43){$K_0$}
\put(90,48){$\R^n$}
\end{overpic}
\caption{Let us depict $M$ by a thick line as in the left-hand side, although $M$ is a compact submanifold of $\R^{n-d}$ without boundary. Then, $K_0=S^{d-1}\times M$ can be illustrated as in the right-hand side. We remark that $M$ does not necessarily contain $0\in \R^{n-d}$.}\label{figure-K0}
\end{figure}

Let $e_1 \coloneqq (1,0,\dots ,0)\in \R^d$ and $e_{n-d} \coloneqq (0,\dots ,0,1) \in \R^{n-d}$.
We choose a $C^{\infty}$ function
$\mu \colon [0,3] \to [0,1]$ such that $\mu (r) = \begin{cases} \sqrt{9-r^2} & \text{ if } \sqrt{\frac{17}{2}} \leq r \leq 3 , \\ 1 & \text{ if } r\leq \sqrt{8} .\end{cases}$ Then we define an embedding $f \colon S^{n-d}\to \R^d\times \R^{n-d}$ by
\begin{align}\label{embedding-S_R}
f  (y_0,y_1,\dots,y_{n-d}) \coloneqq
\begin{cases}((1+3 y_0)\cdot e_1 ,(3y_1,\dots ,3 y_{n-d})) & \text{ if }y_0\geq 0 , \\
((1-\mu (3\sqrt{1- y_0^2}) ) \cdot e_1 , (3y_1,\dots ,3 y_{n-d})) & \text{ if }y_0\leq 0 . \end{cases}
\end{align}
Its image $S=f(S^{n-d})$ is illustrated in the left-hand side of Figure \ref{figure-K1}.
We note that $S$ is disjoint from $K_0$. Indeed, for any $((1+r)\cdot e_1,w) \in S$ with $-1 \leq r\leq 3$, if we suppose that $(1+r)\cdot e_1 \in S^{d-1}$ and $w\in M$, then $r=0$ and $|w|=3$ follows, which contradicts with the assumption on $M$.

We consider the connected sum of $K$ and $S$ along the segment
\begin{align}\label{segment-gamma}
 \gamma \colon [0,1]\to \R^n \colon s \mapsto (e_1 , (1+2s) \cdot e_{n-d} ).
 \end{align}
The boundary points are $p_0\coloneqq (e_1, e_{n-d}) \in K_0$ and $p_1 \coloneqq  ( e_1, 3 e_{n-d} ) \in S$. 
The right-hand side of Figure \ref{figure-K1} illustrates $K_0$, $S$ and $\gamma$.

\begin{figure}
\centering
\begin{overpic}[height=6.5cm]{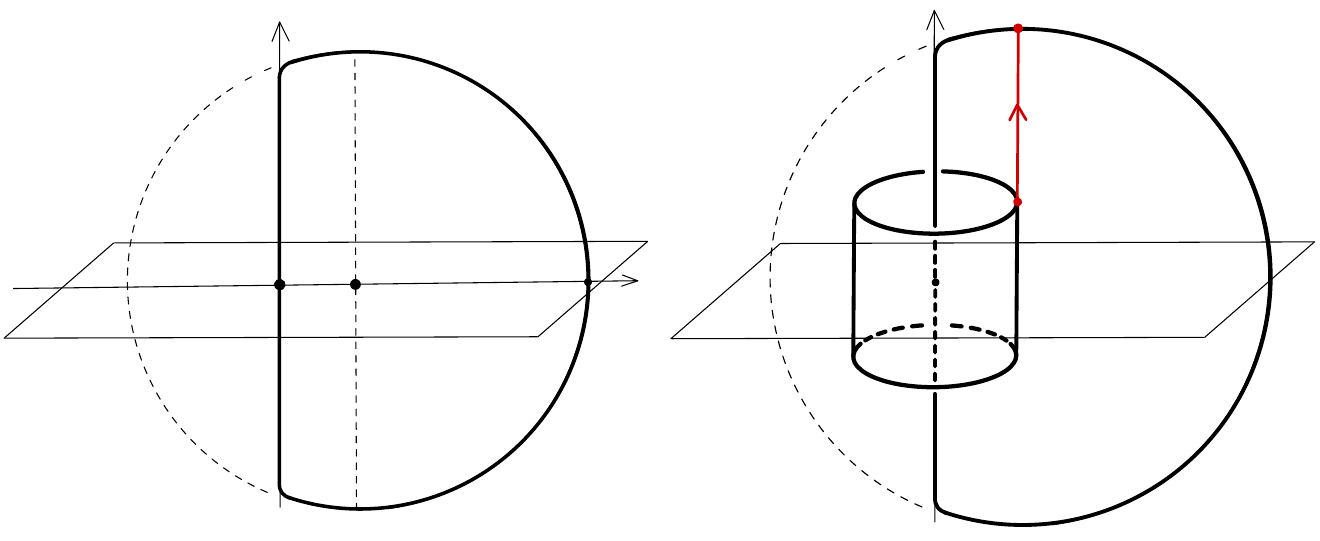}
\put(45.5,17.5){$\R\cdot e_1$}
\put(15.5,38){$\R^{n-d}$}
\put(4,16){$\R^d$}
\put(19,17){$0$}
\put(27.5,17){$(e_1,0)$}
\put(41,31){$S$}
\put(93,31){$S$}
\put(76,40){$p_1$}
\put(78,25.5){$p_0$}
\put(78,32){$\gamma$}
\put(64,28){$K_0$}
\end{overpic}
\caption{As described in the left-hand side, $S$ is an ($n-d$)-dimensional sphere embedded in $(\R \cdot e_1) \times \R^{n-d}$.
$K_1$ is obtained by a connected sum of $K_0$ and $S$ along $\gamma$ illustrated in the right-hand side.
}\label{figure-K1}
\end{figure}

We choose an open ball $B$ in $M$ and $a\in (0,1)$ such that $h^{-1}([a,1]) \subset B$ and $1\in [a,1]$ is the only critical value (the maximum value) of $h\colon M\to \R$ in $[a,1]$.
By the connected sum of $K_0$ and $S$ along $\gamma$, which we have given in Subsection \ref{subsubsec-connected-sum}, we obtain an embedding
$\iota_{f} \colon K_0\to \R^n$. From its construction, we have open subsets of $K_0$ defined by
\[\begin{array}{cc}
U \coloneqq \{ \varphi (u) \mid |u|< 3r_0 \},& V \coloneqq \{ \varphi(u) \mid |u|< r_0\},
\end{array}\]
with the following properties if $r_0>0$ is sufficiently small:
\begin{itemize}
\item $p_0 \in V \subset U \subset S^{d} \times B$.
\item $\iota_{f} (q)= q$ for every $q\in K_0\setminus U$.
\item $\iota_f(U \setminus V)$ is contained in a small neighborhood of $\gamma([0,1])$. In particular, it is contained in $\R^d \times \{(w_1,\dots ,w_{n-d}) \in \R^{n-d} \mid w_{n-d}>a \}$.
\item $\iota_f(V) $ is an open subset of $S$ which is the complement of a small neighborhood of $p_1\in S$. In particular, it contains
\[ \{(0, w)\in \R^d\times \R^{n-d} \mid |w| \leq \sqrt{2} \} . \]
\end{itemize}
Let us denote the new submanifold of $\R^n$ by $K_1 \coloneqq \iota_{f}(K_0)$.

\subsubsection{Distinguishing $\Lambda_{K_0}$ and $\Lambda_{K_1}$ up to Legendrian isotopies}\label{subsubsec-Leg-non-isotopic}

We first discuss the Legendrian regular homotopy class and the $C^{\infty}$ isotopy class of $\Lambda_{K_0}$ and $\Lambda_{K_1}$.

\begin{prop}\label{prop-smooth-isotopy}
$\Lambda_{K_0}$ and $\Lambda_{K_1}$ are Legendrian regular homotopic in $(U^*\R^n,\alpha)$.
Moreover, they are  $C^{\infty}$ isotopic in $U^*\R^n$.
\end{prop}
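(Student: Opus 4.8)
The statement breaks into two independent assertions: first that $\Lambda_{K_0}$ and $\Lambda_{K_1}$ are Legendrian regular homotopic, and second that they are $C^\infty$ isotopic as $(n-1)$-submanifolds of $U^*\R^n$. The plan is to deduce both from the explicit construction of $K_1$ as a connected sum of $K_0$ with the sphere $S = f(S^{n-d})$ along the arc $\gamma$.

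For the Legendrian regular homotopy, first I would observe that the connected-sum modification is supported in a small neighborhood of the arc $\gamma([0,1])$, and that over this region the conormal construction $K \mapsto \Lambda_K$ can be analyzed directly. The key point is that the embedding $\iota_f$ was built so that $S$ is a round sphere of codimension $d$ in an affine $(\R\cdot e_1)\times\R^{n-d}$-slice, and (by Lemma~\ref{lem-isotopy-connected-sum} applied with the linear injection $l$ coming from the product structure $\R^d\times\R^{n-d}$ together with the hypotheses on $M$) the tangent data along $\gamma$ match up so that the handle attaching the sphere is geometrically standard. One then constructs a Legendrian regular homotopy by \emph{shrinking the sphere summand}: pull $S$ down along the arc $\gamma$ through a family of embedded spheres, which induces a family of Legendrian immersions $\Lambda_{K_t}$ interpolating between $\Lambda_{K_1}$ and $\Lambda_{K_0}$. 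During this homotopy the submanifold passes through an immersed (non-embedded) stage when $S$ collides with $K_0$, but this is allowed since we only require a Legendrian regular homotopy, i.e. a smooth family of Legendrian immersions $f_t\colon N\to U^*\R^n$ with $f_t^*\alpha=0$. Concretely, $N = \Lambda_{K_0}$ (diffeomorphically $S^{d-1}\times M\times S^{d-1}$ after identifying the unit conormal sphere bundle) and the $f_t$ are obtained by composing with the isotopy $(\iota_{f_s})$ of Lemma~\ref{lem-isotopy-connected-sum} lifted to unit conormal bundles — the crucial fact being that a $C^\infty$ isotopy of $K$ in $\R^n$ lifts canonically to a Legendrian isotopy (hence a fortiori a Legendrian regular homotopy) of $\Lambda_K$.

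For the $C^\infty$ isotopy statement, the natural approach is: although $K_1$ is in general \emph{not} $C^\infty$ isotopic to $K_0$ in $\R^n$ (that is the whole point of the paper — $\delta_{K_1}\neq 0=\delta_{K_0}$), their unit conormal bundles \emph{are} smoothly isotopic as submanifolds of $U^*\R^n$, because forming the unit conormal bundle loses the embedding data. Here I would use the standard fact that $\Lambda_K$, as an abstract manifold, is the unit sphere bundle of the normal bundle $(TK)^\perp$, and $\Lambda_K \cong \Lambda_{K'}$ whenever $K$ and $K'$ are regular homotopic with the same normal bundle data. Since $\iota_f$ is regular homotopic to the inclusion of $K_0$ (this is recorded in the abstract and follows from the connected-sum-with-a-sphere construction, the sphere bounding a disk), the normal bundles are identified, and one gets a diffeomorphism $\Lambda_{K_0}\cong\Lambda_{K_1}$. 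To promote this to an \emph{ambient} $C^\infty$ isotopy in $U^*\R^n$ one observes that the connected-sum region is contained in a ball $B^n\subset\R^n$, so the modification of the unit conormal bundle happens inside $U^*B^n$; there one can write down the isotopy explicitly by sweeping the sphere along $\gamma$ exactly as above, now keeping everything embedded in $U^*\R^n$ even though the projections to $\R^n$ collide (the collision in the base is resolved in the cotangent fiber directions, since distinct sheets of $\Lambda_{K_t}$ over the same base point have distinct unit conormal covectors).

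The main obstacle I anticipate is verifying that the sphere-sweeping family can be made \emph{embedded in $U^*\R^n$} throughout, i.e. controlling the fiber coordinates so that the two colliding sheets of the moving conormal bundle stay disjoint — this requires choosing the isotopy of $S$ carefully near the moment its base projection meets $K_0$, using that at such a collision point the tangent planes of $S$ and $K_0$ are not equal (they are arranged transversally by the genericity built into the construction in Subsection~\ref{subsubsec-const-K0K1}), so the unit conormal spheres over that point are disjoint subsets of the fiber $S^{n-1}$. Once this transversality is pinned down the rest is a routine isotopy-extension argument. A secondary technical point is checking the hypotheses of Lemma~\ref{lem-isotopy-connected-sum} are met by the specific $f$ of~(\ref{embedding-S_R}) and $\gamma$ of~(\ref{segment-gamma}) — but this is a direct computation with the explicit formulas and the normalization $\max_{w\in M}|w|\le 1$.
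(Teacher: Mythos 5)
Your outline for the Legendrian regular homotopy matches the paper's: sweep the sphere $S$ back along $\gamma$ through a one-parameter family of immersions $\iota_s$ (the paper implements this with a family $\mu_s$ of profile functions), lift the whole family of immersions to a family of Legendrian immersions $I_s\colon\Lambda_{K_0}\to U^*\R^n$ (the paper does this via the vector bundle $\widehat L\to K_0\times[0,4]$), accept the non-embedded stage at $s=1$, and finish by invoking Lemma~\ref{lem-isotopy-connected-sum}. That part is fine, modulo the small imprecision that what you actually lift is a regular homotopy, not an isotopy, but you clearly understand this.

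The second half has a genuine gap. You claim that at the collision stage the two sheets of the moving conormal bundle stay disjoint in $U^*\R^n$ because ``the tangent planes of $S$ and $K_0$ are arranged transversally, so the unit conormal spheres over that point are disjoint.'' This is false for the construction at hand, and the reasoning is also flawed in general. Two tangent spaces $T_1,T_2\subset\R^n$ have disjoint unit conormal spheres iff $T_1+T_2=\R^n$, not merely iff $T_1\neq T_2$. At the collision point $\iota_1(q)=(-e_1,w)$ with $q\in V'$, $w\in M$, one sheet has tangent plane $\{0\}\times\R^{n-d}$ (tangent to the swept sphere) and the other has $T_{-e_1}S^{d-1}\times T_wM$; these intersect in $\{0\}\times T_wM$, which has positive dimension $n-2d+1$, so their sum is a proper subspace of $\R^n$. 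Concretely, the covector $(-e_1,0)$ is annihilated by both tangent planes and hence lies in \emph{both} unit conormal spheres, so the sheets collide in $U^*\R^n$ as well. Nor is there any ``genericity built into the construction'': $\iota_s$ is defined by an explicit formula, and at $s=1$ the base intersection $\{-e_1\}\times M$ has excess dimension (it is $\dim M$-dimensional, one more than a generic transverse intersection of two codimension-$d$ submanifolds would allow), so transversality manifestly fails.

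The paper's fix is the missing idea: near $s=1$, deliberately break the Legendrian condition. The family $(I_s)$ is modified over $q\in V$ to a family of $C^\infty$ embeddings $(I'_s)$ such that $I'_1$ pushes the conormal sphere of the $V'$-sheet into the slice $\{p_2=\tfrac12\}$ of the fiber $S^{n-1}$, while the other sheet's image is contained in $\{p_2=\cdots=p_d=0\}$; these are disjoint by construction, so $(I'_s)$ is an embedding for all $s$. Without this (non-Legendrian) perturbation in the fiber direction, the sweeping family does not stay embedded in $U^*\R^n$, and your proof of the second assertion does not go through.
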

\begin{proof}
We choose a family of $C^{\infty}$ functions $\mu_{s} \colon [0, 3]\to [0,1]$ depending smoothly on $s\in [0,2]$ such that:
\begin{itemize}
\item  $\mu_{0}=\mu$ and $\mu_{2}(r) = \sqrt{9 -r^2}$ for every $r\in [0,3]$.
\item For $s\in [0, 1]$, $\mu_{s}(r)= \begin{cases} \sqrt{9 -r^2} & \text{ if } \sqrt{\frac{17}{2}} \leq r \leq 3, \\ 1+ s & \text{ if } r\leq (2-s)\sqrt{2} .\end{cases}$
\item For $s\in (1 ,2]$, $\mu_s(r) > 2$ for every $r\in [0,\sqrt{2}]$.
\end{itemize}
For every $s\in [0,2]$, we define $f_s \colon S^{n-d} \to \R^n=\R^d\times \R^{n-d}$ by replacing $\mu$ in (\ref{embedding-S_R}) by $\mu_s$.
By a connected sum with $K_0$ along $\gamma$, we obtain $\iota_s\coloneqq \iota_{f_s} \colon K_0\to \R^n$ for every $s\in [0,2]$ such that $\iota_0=\iota_f$ and $(\iota_{s})_{s\in [0,2]}$ is a $C^{\infty}$ family of immersions.
From the definition of $f_s$, $f_s(S^{n-d})$ intersects $K_0$ if and only if $s= 1$. See Figure \ref{figure-immersion}. Therefore, $\iota_{s}$ is an embedding if $s \neq 1$.

We claim that  $\iota_2(K_0)$ is $C^{\infty}$ isotopic to $K_0$ in $\R^n$. Indeed, $f_2(S^{n-d})$ is the boundary of $(n-d+1)$-dimensional disk
\[ \tilde{D} \coloneqq \{ ((r_1+1) e_1+r_2 e_2, w ) \in \R^d\times \R^{n-d} \mid (r_1)^2+(r_2)^2+|w|^2= 9 ,\ r_2\geq 1 \}, \]
which is disjoint from $K_0 \cup \gamma([0,1))$. Here, $e_2=( 0,1,0,\dots ,0)\in \R^{d}$. 
There exists an isotopy $(f_s)_{s\in [2,3]}$ in a neighborhood of $\tilde{D}$ from $f_2$ to $f_3 \colon S^{n-d}\to \R^n$ such that $f_s(0,\dots ,0,-1)= p_1$, $\frac{d\gamma}{dt}(1) \in T_{p_1} (f_s(S^{n-d}))^{\perp}$ and $f_s(S^{n-d}) \cap \gamma([0,1)) =\emptyset$ for every $s\in [2,3]$, and $f_3$ satisfies the conditions of Lemma \ref{lem-isotopy-connected-sum}.
Then, $(\iota_s)_{s\in [2,3]} \coloneqq (\iota_{f_s})_{s\in [2,3]}$ is an isotopy of embeddings.
By Lemma \ref{lem-isotopy-connected-sum}, there exists an isotopy $(\iota_s)_{s\in [3,4]}$ such that $\iota_3=\iota_{f_3}$ and $\iota_4(K_0) = K_0$.
Therefore, $\iota_2(K_0)$ is isotopic to $ K_0$ as a $C^{\infty}$ submanifold of $\R^n$.

We consider the vector bundle $\widehat{L} \to K_0\times [0,4]\colon ((q,s),p) \to (q,s)$, where 
\[ \widehat{L} \coloneqq \{ ((q,s) ,p) \in K_0 \times [0,4] \times \R^n \mid  \la p, d\iota_s(v) \ra =0 \text{ for every } v\in T_q K_0 \} ,\]
and define a map $F \colon \widehat{L} \to T^*\R^n \colon ((q,s),p) \mapsto (\iota_s(q),p)$.
We take an isomorphism $\Phi \colon L_{K_0} \times [0,4] \to \widehat{L}$ of vector bundles preserving the metrics and define
\[ I_s\colon \Lambda_{K_0} \to U^*\R^n \colon (q,p) \mapsto F\circ \Phi((q,p), s) . \]
Then, $(I_s)_{s\in [0,4]}$ is a $C^{\infty}$ family of Legendrian immersions. For any $s\neq 1$, $I_s$ is an embedding and $I_s(\Lambda_{K_0}) = \Lambda_{\iota_s(K_0)}$.
In particular, it gives a Legendrian regular homotopy from $\Lambda_{K_1}$ to $\Lambda_{K_0}$
and the first assertion is proved.

\begin{figure}
\centering
\begin{overpic}[height=5cm]{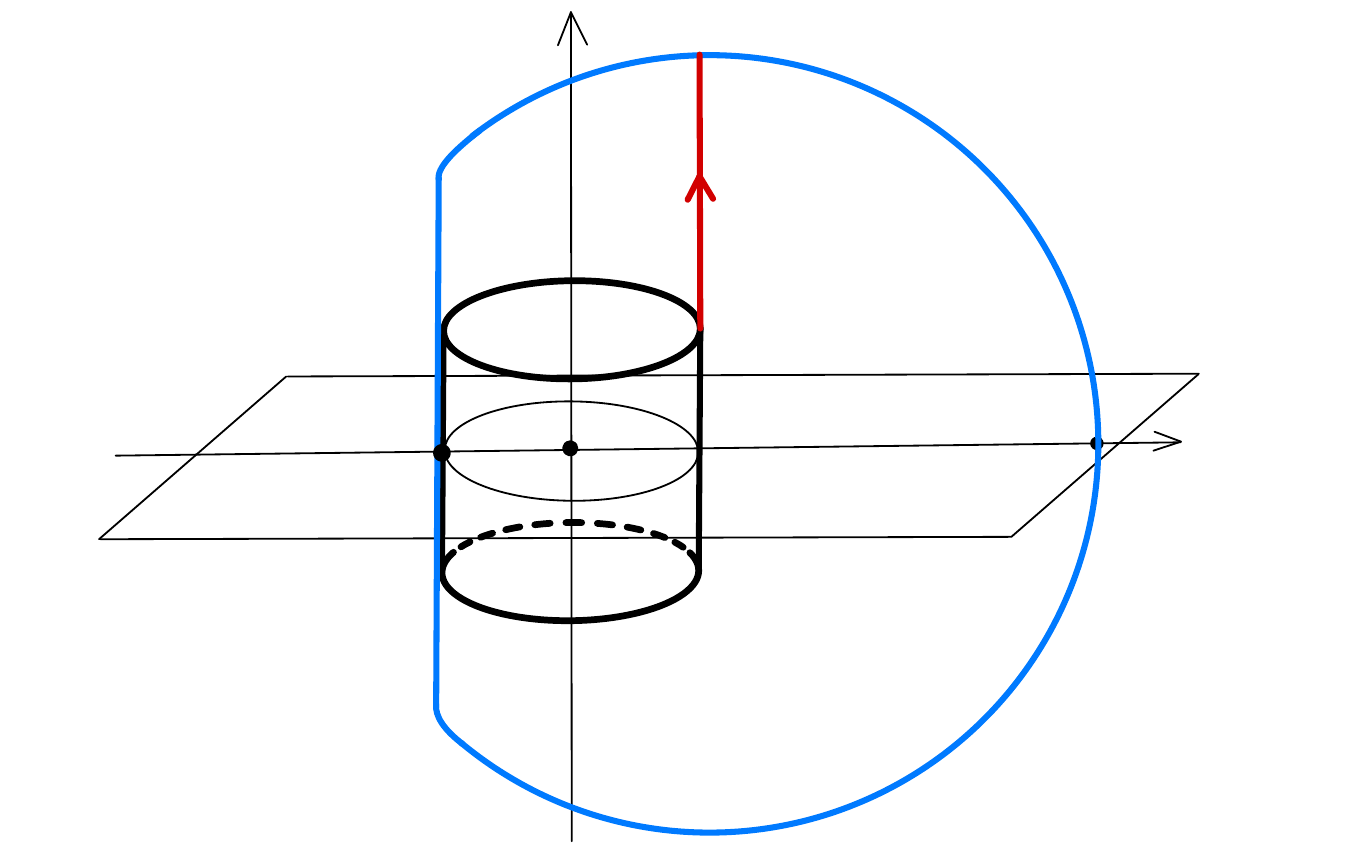}
\put(23.5,26){$-e_1$}
\put(83,25.5){$\R\cdot e_1$}
\put(75,49){$f_1(S^{n-d})$}
\put(53,48){$\gamma$}
\put(35,43){$K_0$}
\end{overpic}
\caption{$\iota_1(K_0)$ is obtained by a connected sum of $K_0$ and $f_1(S^{n-d})$ along $\gamma$. It has a self-intersection along $K_0\cap f_1(S^{n-d}) = \{-e_1\}\times M$.}\label{figure-immersion}
\end{figure}

If $s=1$, $\iota_1(K_0)$ has self-intersections along
$\{ (-e_1 , w) \in \R^d \times \R^{n-d} \mid w\in M \}$ as described in Figure \ref{figure-immersion}. Noting that $\iota_1 (V) $ contains $\{ (-e_1,w) \in \R^d\times \R^{n-d} \mid |w| \leq 1\}$, we define
\[V' \coloneqq V \cap \iota_1^{-1} (\{ (-e_1,w) \in \R^d\times \R^{n-d} \mid |w| \leq 1\}).\] 
Then, for any $q\in V'$,
$I_1(\Lambda_{K_0} \cap U^*_q\R^n)= S^{n-1} \cap (\{0\}\times \R^{n-d})^{\perp} = S^{n-d} \times \{0\}$.

We change $I_s(q,p)$ when $s$ is close to $1$ and $q \in V$ to define an isotopy of $C^{\infty}$ embeddings $(I'_s \colon \Lambda_{K_0} \to U^*\R^n)_{s\in [0,4]}$ so that:
\begin{itemize}
\item $I'_0=I_0$ and $I'_4=I_4$
\item $I'_s(q,p) \in U^*_{\iota_s(q)} \R^n$ for every $s\in [0,4]$ and $(q,p)\in \Lambda_{K_0}$.
\item $I'_s(q,p) = I_s(q,p)$ for every $s\in [0,4]$ if $q \in K_0\setminus V$.
\item For every $q \in V'$,
$I'_1 ( \Lambda_{K_0} \cap U^*_q\R^n) \subset \{ (p_1,\dots ,p_n) \in U_{\iota_1(q)}S^{n-1} \mid p_2=\frac{1}{2} \}$. 
\end{itemize}
Here, we remark that for every $q\in V'$ with $\iota_1(q) \in \{-e_1\}\times M$, the set $\{ (p_1,\dots ,p_n) \in U_{\iota_1(q)}\R^n \mid p_2=\frac{1}{2} \}$ is disjoint from
\begin{align*}
I'_1 (\Lambda_{K_0} \cap U^*_{q'}\R^n) & = \{ ((\iota_1(q) , p) \mid \la p, v \ra=0 \text{ for every }v \in T_{\iota_1(q)} (S^{d-1}\times M) \} \\
& \subset \{(p_1,\dots ,p_n) \in U_{\iota_1(q)} \R^n \mid p_2=\dots =p_d=0\}
\end{align*}
for the unique point $q' \in \iota_1^{-1} (\iota_1(q)) \setminus \{q\}$. Therefore, $\Lambda_{K_1}=I'_s(\Lambda_{K_0})$ is $C^{\infty}$ isotopic to $\Lambda_{K_0}=I'_4(\Lambda_{K_0})$.
\end{proof}

%
By Proposition \ref{prop-TB},  Proposition \ref{prop-smooth-isotopy} and the isomorphism $\Theta^1$ in Theorem \ref{thm-Leg-delta-K} when $d\geq 4$,
$\Lambda_{K_0}$ and $\Lambda_{K_1}$ cannot be distinguished as Legendrian submanifolds of $U^*\R^n$ by the following invariants:
\begin{itemize}
\item The Thurston-Bennequin number when $K_0$ admits a nowhere vanishing vector field.
\item The Legendrian regular homotopy class.
\item The $C^{\infty}$ isotopy class.
\item The isomorphism class of the strip Legendrian contact homology as a $\Z/2$-vector space when $d\geq 4$.
\end{itemize}
When $d\geq 4$ and $M$ satisfies a topological condition, we will prove that $\Lambda_{K_0}$ and $\Lambda_{K_1}$ are not Legendrian isotopic by computing $\delta_{K_0}$ and $\delta_{K_1}$.

\begin{rem}\label{rem-Asp} Fix $x_0\in S^n$ and identify $S^n\setminus \{x_0\}$ with $\R^n$. Then, for $i=0,1$, we may think of $K_i$ as a submanifold of $S^n$. Let $\Omega (S^n\setminus K_i)$ denote the space of loops in the complement $S^n\setminus K_i$ based at $x_0$. By \cite[Theorem 1]{Asp}, if we can show that there is no isomorphism $H_*(\Omega(S^n\setminus K_0);\Z) \to H_*(\Omega (S^n \setminus K_1);\Z)$ preserving the Pontryagin products, then $\Lambda_{K_0}$ is not Legendrian isotopic to $\Lambda_{K_1}$ in $U^*\R^n$. However, the author does not have an idea how to show $H_*(\Omega(S^n\setminus K_0);\Z) \ncong H_*(\Omega (S^n \setminus K_1);\Z)$. \end{rem}

\begin{prop}\label{prop-coprod-zero}
$\delta_{K_0}$ is the zero map
\end{prop}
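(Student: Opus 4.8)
The claim is that $\delta_{K_0}=0$ for $K_0 = S^{d-1}\times M$. I would prove this by a direct geometric computation using the description of $\delta_{K_0}$ provided by Proposition~\ref{prop-geometric-shift}, choosing the perturbing section $\sigma$ cleverly. The key point is that $K_0$ is invariant under the antipodal map on the $S^{d-1}$-factor, and more importantly it lies inside the ``cylinder'' $S^{d-1}\times\R^{n-d}$: for any $(q,q')\in K_0\times K_0$ and any $\tau\in(0,1)$, the point $\ev((q,q'),\tau) = (1-\tau)q + \tau q'$ has its $\R^d$-component strictly inside the open unit ball $B^d$ (unless $q$ and $q'$ have the same $S^{d-1}$-component), whereas $K_0$ projects onto $S^{d-1}$. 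So a generic chord can only meet $K_0$ along the ``diagonal direction'' in the sphere factor.

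First I would fix a class $x \in H_*(K_0\times K_0, \Delta_{K_0})$ and, using Thom's theorem as in the proof of Theorem~\ref{thm-morse-singular}, represent it (via the surjection $H_*(K_0\times K_0) \to H_*(K_0\times K_0,\Delta_{K_0})$ from the split exact sequence \eqref{ex-seq-diagonal}) as $c_*([P])$ for a closed manifold $P$ and a smooth map $c\colon P \to K_0\times K_0$, $z\mapsto (q_z,q'_z)$. Then I would pick the perturbing section $\sigma\in\mathcal O_{K_0}$ so that $K_0^\sigma$ is obtained by pushing $K_0$ \emph{radially outward} in the $\R^d$-factor: concretely, $K_0^\sigma = (1+\epsilon')S^{d-1}\times M$ for a small $\epsilon'>0$ (or more precisely a section of the normal bundle realizing such a radial push-off, which is admissible since the normal bundle of $S^{d-1}\subset\R^d$ is trivialized by the radial direction). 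By Proposition~\ref{prop-geometric-shift}, $\delta_{K_0}(c_*([P])) = (\sp_{c,\sigma})_*([\ev_c^{-1}(K_0^\sigma)])$ provided $\ev_c$ is transverse to $K_0^\sigma$ and $\{q_z,q'_z\}\cap K_0^\sigma=\emptyset$ for all $z$; the latter holds automatically since $K_0^\sigma$ is strictly outside $K_0$ in the $\R^d$-radial direction.

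The heart of the argument is then to show $\ev_c^{-1}(K_0^\sigma)=\emptyset$. For $(z,\tau)\in P\times(0,1)$, write $q_z = (s,m)$, $q'_z=(s',m')$ with $s,s'\in S^{d-1}$, $m,m'\in M$. Then the $\R^d$-component of $\ev_c((z,\tau)) = (1-\tau)q_z+\tau q'_z$ is $(1-\tau)s+\tau s'$, a point on the chord of the unit sphere joining $s$ and $s'$, hence of norm $\le 1$ with equality iff $s=s'$. Meanwhile the $\R^d$-component of any point of $K_0^\sigma$ has norm $1+\epsilon'>1$. So $\ev_c((z,\tau))\notin K_0^\sigma$, giving $\ev_c^{-1}(K_0^\sigma)=\emptyset$ and hence $\delta_{K_0}(c_*([P]))=0$. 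Since such classes $c_*([P])$ span $H_*(K_0\times K_0,\Delta_{K_0})$ over $\Z/2$, this gives $\delta_{K_0}=0$.

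The main obstacle is a bookkeeping issue rather than a conceptual one: one must check that the radial push-off $\sigma$ is genuinely admissible in the sense required by Proposition~\ref{prop-geometric-shift} and Remark~\ref{rem-sp-emb}, i.e.\ that $|\sigma(q)| < r_{K_0}/2$ for the fixed tubular-neighborhood radius and that $\ev_c$ is transverse to $K_0^\sigma$ --- but transversality here is vacuous because the preimage is empty, and the size condition is arranged by taking $\epsilon'$ small. One should also confirm that $\delta_{K_0}$ is independent of the perturbation used, which follows from the $C^\infty$-isotopy invariance in Example~\ref{ex-isotopy} (radial push-off is isotopic to the identity). I would also remark that the same computation works verbatim for \emph{any} $K$ of the form $S^{d-1}\times M$ regardless of the topology of $M$, which matches the statement of Theorem~\ref{thm-1} that $\delta_{K_0}=0$ for all $M$.
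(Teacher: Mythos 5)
Your proof is correct and follows essentially the same route as the paper: the paper also pushes $K_0$ radially outward in the $\R^d$-factor via $\sigma(v,w)=(\tfrac{r}{4}v,0)$, so that $K_{0,\sigma}=\{|v|=1+\tfrac{r}{4}\}\times M$ misses the region $\{|v|\le 1\}\times\R^{n-d}$ containing all chords $\ev(y,\tau)$, and concludes $\ev_c^{-1}(K_{0,\sigma})=\emptyset$ and $\delta_{K_0}(c_*([P]))=0$ by Proposition~\ref{prop-geometric-shift} together with Thom representability.
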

\begin{proof}
We take a section $K_0\to (TK_0)^{\perp} \colon q \mapsto (q, \sigma(q))$ with $\sup_{q\in K} |\sigma(q)| < \frac{r}{2}$ as Proposition \ref{prop-geometric-shift} such that $\sigma(v,w) = (\frac{r}{4}\cdot v,0)$ for every $(v,w) \in K_0$. We then obtain
\[K_{0,\sigma}\coloneqq \{q+\sigma(q) \mid q\in K_0\} = \{v\in \R^d \mid |v| = 1+ \textstyle{\frac{r}{4}}\} \times  M .\]
This satisfies
\begin{align}\label{K0sigma}
K_{0,\sigma} \cap (\{ v\in \R^d \mid |v|\leq 1 \} \times \R^{n-d}) =\emptyset .\end{align}
On the other hand, for every $(y,\tau) \in (K_0 \times K_0) \times I$, $\ev(y,\tau) $ is contained in $\{ v\in \R^d \mid |v|\leq 1 \} \times \R^{n-d}$.

Any homology class in $H_p(K_0\times K_0,\Delta_{K_0})$ with $\Z/2$-coefficient is written as $c_*([P])$ for some closed $p$-dimensional $C^{\infty}$ manifold $P$ and a $C^{\infty}$ map $c\colon P\to K_0 \times K_0$ (see the proof of Theorem \ref{thm-morse-singular}).
Since $\ev_c(P\times [0,1])$ is disjoint from $K_{0,\sigma}$ by (\ref{K0sigma}), we have $\ev_c^{-1}(K_{0,\sigma})=\emptyset$ and thus $\delta_{K_0}(c_*([P])) =0$ by Proposition \ref{prop-geometric-shift}.
\end{proof}

Next, suppose that $H_k(M;\Z/2)\neq 0$ for some $k\in \Z$ with $0< k <\dim M=n-2d+1$.
Then, by \cite[Th\'{e}or\`{e}me III.2]{T}, there exists a closed $k$-dimensional manifold $Q$ and a $C^{\infty}$ map $b\colon Q\to M$ such that $0 \neq b_*([Q]) \in H_k(M;\Z/2)$.
Let us use the notations in Subsection \ref{subsubsec-const-K0K1}.
Since $\dim Q < \dim M$, we may assume  that $b(Q) \subset M\setminus B$.
We define a homology class $c_*([P])\in H_{2k+d-1}(K_1\times K_1,\Delta_{K_1})$ by the closed manifold
\[ P\coloneqq S^{d-1}\times Q\times Q\]
and the map
\[c \colon P \to K_1\times K_1 \colon (v ,z_1,z_2) \mapsto ( (v, b(z_1) ), (e_1, b(z_2) )).\]
Since $b(Q) \subset M\setminus B \subset h^{-1}((-\infty, a] ) $,
\[ c(P) \subset ( S^{d-1} \times h^{-1}((-\infty, a] ) ) \times  (S^{d-1} \times h^{-1}((-\infty, a] ) ).\]
This implies that for every $(z,\tau) \in P\times I$,
\[ \ev_c(z,\tau) \in \{v \in \R^d \mid |v| \leq 1\} \times \{ w=(w_1,\dots , w_{n-d} )\in \R^{n-d} \mid w_{n-d} \leq a,\ |w|\leq 1\}. \]

We take a section $K_1 \to (TK_1)^{\perp}\colon q\mapsto (q,\sigma(q))$ with $\sup_{q\in K} |\sigma(q)| <\frac{r}{2}$ as Proposition \ref{prop-geometric-shift} which moreover satisfies
\begin{itemize}
\item $\sigma(v,w) \in (\frac{r}{4}\cdot v,0)$ if $(v,w) \notin U$.
\item $\sigma(\iota_{f}(q))=0$ if $q\in V$. 
\end{itemize}
Then, we define an embedding $\iota_{f,\sigma} \colon K_0\to \R^n \colon q \mapsto \iota_f(q) + \sigma(q) $ and a submanifold $K_{1,\sigma} \coloneqq \iota_{f,\sigma} (K_0) = \{q+\sigma(q) \in \R^n \mid q\in K_1\}$. If $\sup_{q\in U\setminus V} |\sigma(\iota_f(q))|$ is sufficiently small, then \begin{align}\label{intersection-K'1}
\begin{split}
& K_{1,\sigma} \cap (\{v\in \R^d \mid |v| \leq 1 \}\times  \{w= (w_1,\dots ,w_{n-d}) \in \R^{n-d} \mid w_{n-d} \leq a ,\ |w| \leq 1\}) \\
= 
& \{0\} \times \{w= (w_1,\dots ,w_{n-d}) \in \R^{n-d} \mid w_{n-d} \leq a , |w|\leq 1 \} .
\end{split}
\end{align}
\begin{lem}\label{lem-intersection-K1}
The conditions of Proposition \ref{prop-geometric-shift} are satisfied for $K=K_1$, $K_{\sigma}=K_{1,\sigma}$ and $c\colon P\to K_1\times K_1$.
Moreover, $\delta_K(c_*([P]))$ is equal to  
$(\sp'_{c})_*([Q\times Q]) \in H_{2k}((K_1\times K_1,\Delta_{K_1})^{\times 2})$, where
\[ \sp'_c \colon Q \times Q \to (K_1\times K_1)\times (K_1\times K_1) \colon (z_1,z_2) \mapsto ( ((-e_1,b(z_1)), 0), (0, (e_1,b(z_2))) ).\]
\end{lem}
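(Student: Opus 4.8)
The plan is to apply Proposition \ref{prop-geometric-shift} with $K=K_1$, $K_\sigma = K_{1,\sigma}$ and the chosen $c\colon P \to K_1\times K_1$, so the first task is to verify the two hypotheses of that proposition: that $\ev_c$ is transversal to $K_{1,\sigma}$ and that $\{q_z,q'_z\}\cap K_{1,\sigma} = \emptyset$ for every $z\in P$. The disjointness is immediate from the construction: for $z=(v,z_1,z_2)\in P$ we have $q_z = (v,b(z_1))$ with $b(z_1)\in M\setminus B$, hence $q_z \in K_1$ lies outside $V$ where $\sigma$ has the form $(\tfrac r4 v, 0)$, so $q_z$ is not of the form $q+\sigma(q)$ (its $\R^d$-component has norm $1$, not $1+\tfrac r4$); the same applies to $q'_z = (e_1,b(z_2))$. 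For transversality of $\ev_c$ to $K_{1,\sigma}$, I would combine the computation of $\ev_c(P\times I)$ established just above (it lands in the box $\{|v|\le 1\}\times\{w_{n-d}\le a, |w|\le 1\}$) with the intersection formula (\ref{intersection-K'1}): inside this box $K_{1,\sigma}$ is exactly $\{0\}\times\{w_{n-d}\le a, |w|\le 1\}$, a piece of the $\R^{n-d}$-slice. So the transversality question reduces to whether the $\R^d$-component of $\ev_c$ vanishes transversally, and one writes out $\ev_c((v,z_1,z_2),\tau) = ((1-\tau) v + \tau e_1, (1-\tau) b(z_1) + \tau b(z_2))$; the $\R^d$-part $(1-\tau)v + \tau e_1$ with $v\in S^{d-1}$ vanishes only at $v=-e_1$, $\tau=\tfrac12$, and the derivative in $(v,\tau)$ there is surjective onto $\R^d$ (one checks $\partial_\tau$ gives $e_1 - v = 2e_1$ and the $v$-directions tangent to $S^{d-1}$ at $-e_1$ fill out the orthogonal complement after scaling by $1-\tau = \tfrac12$). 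This gives transversality, and then $\ev_c^{-1}(K_{1,\sigma})$ is a compact manifold without boundary of dimension $\dim(P\times I) - d = (2k+d-1+1) - d = 2k$, as required.

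Next I would identify $\ev_c^{-1}(K_{1,\sigma})$ explicitly. From the analysis above, a point $((v,z_1,z_2),\tau)$ lies in $\ev_c^{-1}(K_{1,\sigma})$ iff $v=-e_1$, $\tau=\tfrac12$, and $\tfrac12(b(z_1)+b(z_2)) \in \{w : w_{n-d}\le a, |w|\le 1\}$ together with the condition that the resulting point of $\R^n$ actually lies on $K_{1,\sigma}$. But on the box, $K_{1,\sigma}$ is the slice $\{0\}\times\{\cdots\}$, and the $\R^d$-component of $\ev_c$ at $v=-e_1$, $\tau=\tfrac12$ is $\tfrac12(-e_1) + \tfrac12 e_1 = 0$ automatically; so the condition is simply that $\tfrac12(b(z_1)+b(z_2))$ lies in that slice — which holds for \emph{all} $(z_1,z_2)$ since $b(Q)\subset M\setminus B \subset h^{-1}((-\infty,a])$ and $\max|w|\le 1$ on $M$. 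Hence $\ev_c^{-1}(K_{1,\sigma}) = \{-e_1\}\times\{\tfrac12\}\times Q\times Q$, naturally diffeomorphic to $Q\times Q$, and its fundamental class is $[Q\times Q]$ under this identification.

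Finally I would compute the map $\sp_{c,\sigma}$ on this locus and match it with $\sp'_c$. By definition $\sp_{c,\sigma}((v,z_1,z_2),\tau) = \widehat{\sp}(c(v,z_1,z_2),\tau) = ((q_z, \pi_{K_1}(\ev_c(z,\tau))),(\pi_{K_1}(\ev_c(z,\tau)), q'_z))$. On $\ev_c^{-1}(K_{1,\sigma})$ we have $\ev_c = ((1-\tfrac12)(-e_1) + \tfrac12 e_1,\, \tfrac12(b(z_1)+b(z_2))) = (0, \tfrac12(b(z_1)+b(z_2)))$, which lies in $N$ and whose $\pi_{K_1}$-image is the nearest point of $K_1$; by (\ref{intersection-K'1}) and the flatness of $K_1$ near the $\{0\}\times\R^{n-d}$ slice this nearest point is $(0, \tfrac12(b(z_1)+b(z_2)))$ itself — or more precisely the relevant point $0\in K_1$ in the notation of the statement, once one records that the $\widehat{\sp}$-projection collapses the $\R^{n-d}$-factor to the point labelled $0$ in $K_1\times K_1$. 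So $\sp_{c,\sigma}$ sends $(z_1,z_2)\in Q\times Q$ to $(((-e_1,b(z_1)),0),(0,(e_1,b(z_2))))$, i.e.\ to $\sp'_c(z_1,z_2)$. Proposition \ref{prop-geometric-shift} then yields $\delta_{K_1}(c_*([P])) = (\sp_{c,\sigma})_*([\ev_c^{-1}(K_{1,\sigma})]) = (\sp'_c)_*([Q\times Q])$, which is the claim. The main obstacle I anticipate is the transversality verification of $\ev_c$ to $K_{1,\sigma}$ at the single intersection point $v=-e_1$, $\tau=\tfrac12$ — one must be careful that the box analysis genuinely pins $K_{1,\sigma}$ down to the flat slice there, so that transversality of the full map reduces cleanly to the $\R^d$-component, and that the $\R^{n-d}$-direction derivatives do not obstruct it; everything else is bookkeeping with the explicit formulas.
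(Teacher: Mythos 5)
Your overall strategy is the paper's: land $\ev_c(P\times I)$ in the box $\{|v|\le 1\}\times\{w_{n-d}\le a,\ |w|\le 1\}$, use (\ref{intersection-K'1}) to see that inside this box $K_{1,\sigma}$ is the flat slice $\{0\}\times\{\cdots\}$, pin the intersection down to $v=-e_1$, $\tau=\tfrac12$, and check transversality through the $\R^d$-component of $d\ev_c$; all of this matches the paper's argument. The genuine gap is in the last step. On $\ev_c^{-1}(K_{1,\sigma})\cong Q\times Q$ the map $\sp_{c,\sigma}$ is \emph{not} equal to $\sp'_c$: since $\sigma=0$ on $V$ and $\ev_c$ hits the flat piece $\iota_f(V)\subset K_1$, the projection $\pi_{K_1}$ fixes the point $(0,\tfrac{b(z_1)+b(z_2)}{2})$, so
\[ \sp_{c,\sigma}(z_1,z_2)=\Bigl( \bigl((-e_1,b(z_1)),\ (0,\tfrac{b(z_1)+b(z_2)}{2})\bigr),\ \bigl((0,\tfrac{b(z_1)+b(z_2)}{2}),\ (e_1,b(z_2))\bigr) \Bigr), \]
whose middle entries vary with $(z_1,z_2)$ and are not the point $0\in K_1$. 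Your justification that ``the $\widehat{\sp}$-projection collapses the $\R^{n-d}$-factor to the point labelled $0$'' is false: nothing in the definition of $\widehat{\sp}$ collapses anything. Proposition \ref{prop-geometric-shift} therefore only gives $\delta_{K_1}(c_*([P]))=(\sp_{c,\sigma})_*([Q\times Q])$, and you still need an argument identifying this class with $(\sp'_c)_*([Q\times Q])$. The paper supplies it: the disk $\{(0,w)\in\R^d\times\R^{n-d}\mid |w|\le 1\}$ sits inside $K_1$ and is contractible in $K_1$ to the point $0$, so $\sp_{c,\sigma}$ is homotopic, as a map into $(K_1\times K_1)^{\times 2}$, to $\sp'_c$, and homotopic maps induce the same map on homology into the relative groups. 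Without this homotopy (or some substitute) the statement about $\sp'_c$ is unproved, even though your final formula happens to be the correct one.

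A smaller point: your verification that $\{q_z,q'_z\}\cap K_{1,\sigma}=\emptyset$ only excludes the part of $K_{1,\sigma}$ lying over $K_1\setminus \iota_f(U)$ (the points whose $\R^d$-component has norm $1+\tfrac r4$). You must also exclude $\iota_{f,\sigma}(U\setminus V)$, which lies over $\{w_{n-d}>a\}$ while $b(z_i)\in h^{-1}((-\infty,a])$ (this uses that $\sup_{q\in U\setminus V}|\sigma(\iota_f(q))|$ is small), and $\iota_f(V)\subset S$, whose points with $\R^d$-component of norm $1$ have $|w|=3>1$. The cleanest fix, as in the paper, is to observe that $q_z,q'_z\in S^{d-1}\times h^{-1}((-\infty,a])$ lies in the box, and by (\ref{intersection-K'1}) the box meets $K_{1,\sigma}$ only in $\{0\}\times\{\cdots\}$, which has $\R^d$-component $0\neq v,e_1$.
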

\begin{proof}
We check the conditions of Proposition \ref{prop-geometric-shift}. For any $z=(v,z_1,z_2)\in P$,
$q_z=(v,b(z_1))$ and $q'_z=(e_1,b(z_2))$ are contained in $S^{d-1} \times h^{-1}((-\infty,a])$, which is disjoint from $K_{1,\sigma}$ by (\ref{intersection-K'1}).
Therefore $\{q_z,q'_z\}\cap K_{1,\sigma} =\emptyset$.

In order to check the condition about transversality,
we observe the intersection of $K_{1,\sigma}$ and the map
\[\begin{array}{rl}
\ev_c \colon P\times (0,1) \to \R^n \colon ((v,z_1,z_2),\tau) \mapsto & (1-\tau) \cdot (v,b(z_1)) + \tau\cdot (e_1,b(z_2))  \\
 = & ((1-\tau)\cdot v + \tau \cdot e_1 , (1-\tau)\cdot b(z_1) + \tau \cdot b(z_2) ) .
\end{array}\]
One can refer to the middle picture of Figure \ref{figure-intersection}.
Its image $\ev_c(P\times (0,1))$ is contained in
\[ \{v\in \R^d \mid |v|\leq 1 \} \times \{w=(w_1,\dots ,w_{n-d})\in \R^{n-d}\mid  w_{n-d}\leq a,\ |w|\leq 1 \}.\]
It is disjoint from $\iota_{f,\sigma}(U\setminus V)$, which is a subset of $\R^d \times \{(w_1,\dots ,w_{n-d}) \in \R^{n-d} \mid w_{n-d} > a\}$.
It is also disjoint from $\iota_{f,\sigma}(K_0\setminus U)$, which is a subset of $\{v\in \R^d \mid |v|=1+\frac{r}{4}\} \times M$.

\begin{figure}
\centering
\begin{overpic}[height=5.8cm]{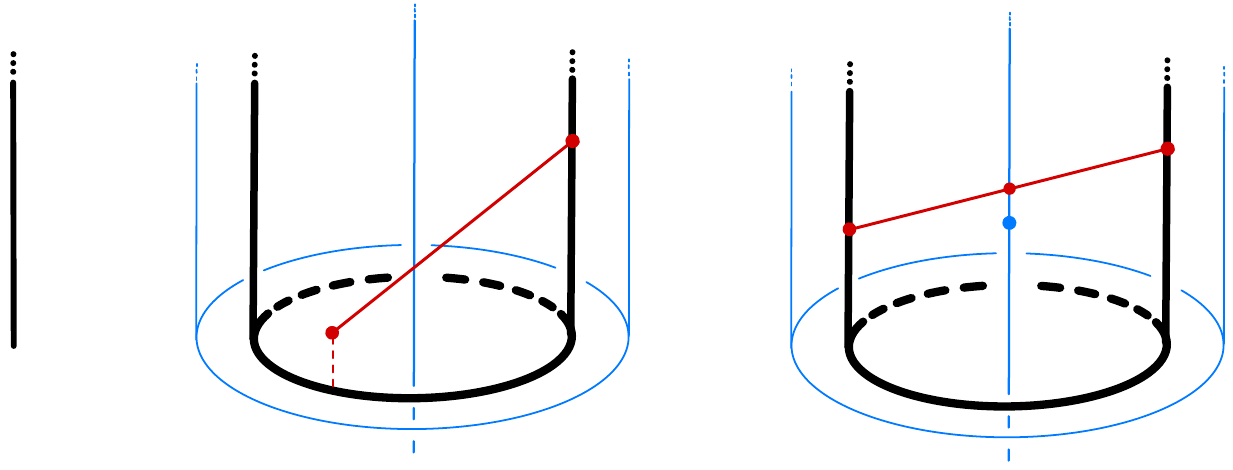}
\put(2,20){$M\setminus B$}
\put(11,15){\color{blue}$K_{1,\sigma}$}
\put(21.5,25){$K_1$}
\put(35,26.5){$(e_1,b(z_1))$}
\put(32,9.5){\color{white}\Huge$\bullet$}
\put(32,8){\color{white}\Huge$\bullet$}
\put(27.5,9.5){$(u,b(z_2))$}
\put(62.5,19){\color{white}\Huge$\bullet$}
\put(62.5,17.5){\color{white}\Huge$\bullet$}
\put(55.5,19){$(-e_1,b(z_2))$}
\put(79.5,18.5){$0$}
\put(83.5,26.5){$(e_1,b(z_1))$}
\end{overpic}
\caption{The linear path from $(e_1,b(z_1))$ to $(u,b(z_2))$ in the middle picture intersects $K_{1,\sigma}$ if and only if $u=-e_1$ as described in the right-hand side.}\label{figure-intersection}
\end{figure}

It remains to consider the intersection $\ev_c$ with $\iota_{f,\sigma}(V) = \iota_f(V)$, which is contained in $S$. Take any $(v,z_1,z_2) \in \ev_c^{-1}(S) $.
Then, we have
$(1-\tau)\cdot u +\tau \cdot e_1 =0$, which implies that $u=-e_1$ and $\tau=\frac{1}{2}$. See Figure \ref{figure-intersection}.
This is a transversal intersection since
\begin{align*}
   &(d \ev_c) (T_{-e_1}S^d \times \mathbb{O} \times T_{\frac{1}{2}}(0,1) )= \R^d \times \{0\}, \\
   &T_{\ev_c((-e_1,z_1,z_2),\frac{1}{2})} \iota_{f}(V)= \{0\}\times \R^{n-d},
   \end{align*}
where $\mathbb{O}$ is the zero subspace of $ T_{z_1}Q\times T_{z_2}Q$.

Therefore, $P$ and $c$ satisfy the conditions of Proposition \ref{prop-geometric-shift} for $K=K_1$ and $K_{\sigma} = K_{1,\sigma}$.
We obtain that $\ev_c^{-1}(K_{1,\sigma}) = \{-e_1\}\times Q\times Q$ and
\[ \sp_{c,\sigma} \colon  Q\times Q \cong \ev_c^{-1}(K_{1,\sigma}) \to (K_1\times K_1)^{\times 2}\]
which maps $(z_1,z_2) \in Q\times Q$ to
\[ \textstyle{ \left( ( (-e_1,b(z_1)), (0,\frac{b(z_1)+b(z_2)}{2} ) ) ,  ( (0,\frac{b(z_1)+b(z_2)}{2} ), ( e_1,b(z_2)) ) \right) } . \]
By Proposition \ref{prop-geometric-shift}, $\delta_{K_1}(c_*([P])) =( \sp_{c,\sigma})_* ([Q\times Q]) $.
Since the disk $ \{(0,w)\in \R^d \times \R^{n-d} \mid |w|\leq 1 \}$ in $K_1$ is contractible to the point $0\in K_1$, $\sp_{c,\sigma}$ is homotopic to the map
\[ \sp'_c \colon Q\times Q \to (K_1\times K_1)^{\times 2} \colon (z_1,z_2) \mapsto ( ((-e_1,b(z_1)), 0), (0, (e_1,b(z_2))) ).\]
Therefore, $(\sp_{c,\sigma})_*([Q\times Q] )= (\sp'_c)_*([Q\times Q])$.
\end{proof}

\begin{prop}\label{prop-coprod-nonzero}
If $H_k (M)\neq 0$ for some $ k\in \Z$ with $1\leq k\leq n-2d$, then
\[ \delta_{K_1} \colon H_{2k+d-1} (K_1\times K_1,\Delta_{K_1}) \to H_{2k} ((K_1\times K_1,\Delta_{K_1})^{\times 2}) \]
is not the zero map.
\end{prop}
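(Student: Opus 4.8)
The plan is to show that the homology class $c_*([P]) \in H_{2k+d-1}(K_1 \times K_1, \Delta_{K_1})$ constructed above is mapped by $\delta_{K_1}$ to a non-zero class. By Lemma \ref{lem-intersection-K1}, it suffices to prove that
\[ (\sp'_c)_*([Q\times Q]) \neq 0 \in H_{2k}((K_1\times K_1,\Delta_{K_1})^{\times 2}), \]
where $\sp'_c(z_1,z_2) = (((-e_1,b(z_1)),0),(0,(e_1,b(z_2))))$. Via the K\"{u}nneth isomorphism $H_{2k}((K_1\times K_1,\Delta_{K_1})^{\times 2}) \cong \bigoplus_{i+j=2k} H_i(K_1\times K_1,\Delta_{K_1}) \otimes H_j(K_1\times K_1,\Delta_{K_1})$, the class $(\sp'_c)_*([Q\times Q])$ is the image under a cross product of $(\beta_1)_*([Q]) \otimes (\beta_2)_*([Q])$, where $\beta_1 \colon Q \to K_1\times K_1 \colon z \mapsto ((-e_1,b(z)),0)$ and $\beta_2 \colon Q \to K_1\times K_1 \colon z \mapsto (0,(e_1,b(z)))$. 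So the key step is to show that $(\beta_1)_*([Q])$ (equivalently $(\beta_2)_*([Q])$) is a non-zero class in $H_k(K_1\times K_1,\Delta_{K_1})$, and then invoke the fact that the cross product of non-zero classes in a K\"{u}nneth decomposition over the field $\Z/2$ is non-zero.

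First I would analyze $H_*(K_1\times K_1,\Delta_{K_1})$ using the split short exact sequence (\ref{ex-seq-diagonal}), which gives $H_*(K_1\times K_1,\Delta_{K_1}) \cong \ker\big(H_*(K_1\times K_1) \to H_*(\Delta_{K_1})\big)$ where the map is induced by the retraction $(q,q')\mapsto (q,q)$; equivalently, using the first projection $\pr_1 \colon K_1\times K_1 \to K_1$ and the diagonal inclusion, one has a direct sum splitting and $H_*(K_1\times K_1,\Delta_{K_1})$ is identified with the ``reduced in the second factor'' part $H_*(K_1) \otimes \widetilde{H}_*(K_1)$ by the K\"{u}nneth theorem over $\Z/2$. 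Concretely, a class $c_*([P]) = \beta_*([Q])$ with $\beta(z) = (x_0, b'(z))$ for a fixed point $x_0 \in K_1$ and a map $b' \colon Q \to K_1$ maps to the class $[x_0]\otimes b'_*([Q])$ under this identification, which is non-zero precisely when $b'_*([Q]) \neq 0 \in \widetilde{H}_k(K_1) = H_k(K_1)$ for $k\geq 1$. Thus the whole problem reduces to showing that the map $b' \colon Q \to K_1$, $z \mapsto (e_1, b(z))$ (landing in the region $\iota_f(K_0 \setminus U) = K_0\setminus U \subset K_1$, identified with $S^{d-1}\times(M\setminus B)$ via $\iota_f = \mathrm{id}$ there), represents a non-zero class in $H_k(K_1;\Z/2)$.

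The remaining step is a homological computation of $H_k(K_1)$ together with a tracking of the class $b'_*([Q])$. Here I would use that $K_1$ is a connected sum of $K_0 = S^{d-1}\times M$ with the embedded sphere $S^{n-d}$ along the arc $\gamma$; more precisely, $K_1$ is diffeomorphic to the connected sum $K_0 \# S^{n-d}$ as abstract manifolds. Since the connected sum with a sphere of dimension $n-d = \dim K_0$ in the range $1\leq k \leq n-2d = \dim M - 1 < \dim K_0 - 1$ does not affect homology in degrees $k$ with $0<k<n-d$ (the connect-sum region only contributes a class in top degree and changes $H_0$ trivially), we get $H_k(K_1;\Z/2) \cong H_k(K_0;\Z/2) = H_k(S^{d-1}\times M;\Z/2)$ for $1\leq k < n-d$, and under this isomorphism the image of the inclusion $K_0\setminus U \hookrightarrow K_1$ on $H_k$ is an isomorphism for $k$ in this range (since $U$ is a small disk times a ball, removing it does not change $H_k$ for $k < \dim K_0$). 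By K\"{u}nneth over $\Z/2$, $H_k(S^{d-1}\times M;\Z/2) = \bigoplus_{i+j=k} H_i(S^{d-1};\Z/2)\otimes H_j(M;\Z/2) \supseteq H_0(S^{d-1})\otimes H_k(M;\Z/2)$, and $b'_*([Q]) = [e_1]\times b_*([Q])$ corresponds exactly to $1 \otimes b_*([Q])$, which is non-zero by the choice of $Q$ and $b$ (recall $b_*([Q]) \neq 0 \in H_k(M;\Z/2)$, valid since $1\leq k\leq n-2d < \dim M$).

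The main obstacle I anticipate is not any single deep point but rather the careful bookkeeping needed to verify that the inclusion $K_0\setminus U \hookrightarrow K_1$ induces an isomorphism (or at least an injection on the relevant class) on $H_k$, i.e. that neither the deleted neighborhood $U$ nor the attached sphere $S^{n-d}$ destroys the class $[e_1]\times b_*([Q])$. This requires a Mayer--Vietoris argument for the decomposition $K_1 = (K_0\setminus V) \cup \iota_f(\text{neighborhood of the connect-sum region and }S)$, using the dimension constraints $k < \dim M \leq \dim K_0 - (d-1)$ and $k < n-d$ to kill all correction terms. Once this is in place, combining it with Lemma \ref{lem-intersection-K1}, the K\"{u}nneth formula over the field $\Z/2$, and Proposition \ref{prop-coprod-zero} (giving $\delta_{K_0}=0$) immediately yields that $\delta_{K_1}$ is non-zero, completing the proof; this in turn, via Theorem \ref{thm-Leg-delta-K}, forces $\Lambda_{K_0}$ and $\Lambda_{K_1}$ not to be Legendrian isotopic when $d\geq 4$, which is Theorem \ref{thm-1}.
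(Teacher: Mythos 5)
Your proposal is correct, and after the common first step (reducing, via Lemma \ref{lem-intersection-K1}, to showing $(\sp'_c)_*([Q\times Q])\neq 0$) it detects the class by a genuinely different mechanism than the paper. The paper never looks at $H_*(K_1)$: it applies the map of pairs $p_M\colon (K_1\times K_1,\Delta_{K_1})\to (M\times M,\Delta_M)$, $(\iota_f(u,v),\iota_f(u',v'))\mapsto (v,v')$, under which the class becomes $(b_1)_*([Q])\otimes(b_2)_*([Q])$ with $b_1(z)=(b(z),q_0)$, $b_2(z)=(q_0,b(z))$, and then detects each factor by the map $H_k(M\times M,\Delta_M)\to H_k(M)$ induced by $(\pr_1)_*-(\pr_2)_*$, which is well defined on the relative group and sends $(b_i)_*([Q])$ to $b_*([Q])\neq 0$ (using $k\geq 1$). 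You instead detect inside $H_*(K_1\times K_1,\Delta_{K_1})$ itself, using the splitting coming from (\ref{ex-seq-diagonal}) plus the K\"unneth theorem over $\Z/2$, which reduces everything to $b'_*([Q])\neq 0$ in $H_k(K_1)$; this works, and the ``main obstacle'' you anticipate is actually a non-issue: no Mayer--Vietoris bookkeeping is needed, because $\iota_f\colon K_0\to\R^n$ is an embedding with image $K_1$, hence a diffeomorphism $K_0\to K_1$, and $b'=\iota_f\circ\tilde b$ with $\tilde b(z)=(e_1,b(z))$, so $b'_*([Q])=(\iota_f)_*\bigl([e_1]\times b_*([Q])\bigr)\neq 0$ immediately from the K\"unneth theorem for $S^{d-1}\times M$. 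One small point to tighten: since the cross-product argument needs \emph{both} tensor factors nonzero, note that your identification of $H_*(K_1\times K_1,\Delta_{K_1})$ with $H_*(K_1)\otimes\widetilde{H}_*(K_1)$ (reduced in the second factor) directly handles $(\beta_2)_*([Q])$, whose constant coordinate is the first one; for $(\beta_1)_*([Q])$ use the symmetric identification (reduced in the first factor), or check directly that $b''_*([Q])\times[0]\notin\operatorname{im}\Delta_*$ by applying the two projections, where $b''(z)=(-e_1,b(z))$. In exchange for needing the (easy) identification $H_k(K_1)\cong H_k(S^{d-1}\times M)$, your route is self-contained in $K_1$; the paper's route buys independence from any computation of $H_*(K_1)$, using only the $M$-coordinate projection furnished by the product structure of $K_0$.
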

\begin{proof}
By Lemma \ref{lem-intersection-K1}, it suffices to show that $(\sp'_c)_*([Q\times Q]) \neq 0$.
Consider a map between pairs of spaces
\[ p_M \colon ( K_1\times K_1,\Delta_{K_1})  \to (M \times M , \Delta_M) \colon (\iota_{f}(u,v) , \iota_f(u',v') )\mapsto (v,v') .\]
This induces
\[\begin{array}{rl}
 (p_M^{\times 2})_*\colon H_{2k}((K_1\times K_1,\Delta_{K_1})^{\times 2}) \to & H_{2k}((M\times M,\Delta_M)^{\times 2}) \\
 \cong & \displaystyle{ \bigoplus_{k_1+k_2=2k} H_{k_1}(M\times M,\Delta_M) \otimes H_{k_2}(M\times M,\Delta_M) . }
 \end{array}\]
By this map, $(\sp'_c)_*([Q\times Q])$ is mapped to
\[ (b_1)_* ([Q]) \otimes (b_2)_*([Q]) \in  H_k(M\times M,\Delta_M) \otimes H_k(M\times M,\Delta_M) ,\]
where $b_1\colon Q\to M\times M \colon z \mapsto (b(z),q_0)$ and $b_2\colon Q\to M\times M \colon z \mapsto (q_0,b(z))$ for $q_0 \in M$ determined by $\iota_f(e_1,q_0) =0 \in \R^n$.
The projections $\pr_i \colon M \times M \to M \colon (z_1,z_2) \mapsto z_i$ for $i=1,2$ defines a map
\[(\pr_1)_* - (\pr_2)_* \colon H_k(M\times M) \to H_k(M)  \]
From the exact sequence for $(M\times M,\Delta_M)$ as (\ref{ex-seq-diagonal}), this induces a well-defined map
from $H_k(M\times M,\Delta_M)$ to $H_k(M)$, by which $(b_i)_*([Q])$ is mapped to $b_*([Q]) \in H_k(M)$ for $i=1,2$. Here, note that $k\geq 1$.
Since $0\neq b_*([Q]) \in H_k(M)$, the assertion follows.
\end{proof}

We obtain the main result of this section.

\begin{thm}\label{thm-Leg-non-isotopic}
If $d\geq 4$ and $H_k(M) \neq 0$ for some $k\in \Z$ with $1\leq k\leq n-2d$, then $\Lambda_{K_1}$ is not Legendrian isotopic to $\Lambda_{K_0}$ in $U^*\R^n$.
\end{thm}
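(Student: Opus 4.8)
The plan is to combine the computations of $\delta_{K_0}$ and $\delta_{K_1}$ from Proposition \ref{prop-coprod-zero} and Proposition \ref{prop-coprod-nonzero} with the invariance result Theorem \ref{thm-Leg-delta-K}. First I would observe that both $K_0$ and $K_1$ are compact connected submanifolds of $\R^n$ of codimension $d$, and $d\geq 4$ by hypothesis, so Theorem \ref{thm-Leg-delta-K} applies: if $\Lambda_{K_0}$ and $\Lambda_{K_1}$ were Legendrian isotopic in $U^*\R^n$, there would exist $\Z/2$-linear isomorphisms $\Theta^1\colon H_{*-d}(K_0\times K_0,\Delta_{K_0}) \to H_{*-d}(K_1\times K_1,\Delta_{K_1})$ and $\Theta^2 = \Theta^1\otimes\Theta^1$ fitting into a commutative square with $\delta_{K_0}$ and $\delta_{K_1}$ as horizontal arrows. (One has to be slightly careful with the grading shifts, but since $d_0 = d_1 = d$ the shifts match on both sides.)

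The key point is then a contradiction argument. By Proposition \ref{prop-coprod-zero}, $\delta_{K_0}$ is identically zero, so the commutativity of the square forces $\delta_{K_1}\circ\Theta^1 = \Theta^2\circ\delta_{K_0} = 0$; since $\Theta^1$ is an isomorphism, this would force $\delta_{K_1} = 0$ as well. But by Proposition \ref{prop-coprod-nonzero}, the hypothesis $H_k(M;\Z/2)\neq 0$ for some $1\leq k\leq n-2d$ guarantees that $\delta_{K_1}\colon H_{2k+d-1}(K_1\times K_1,\Delta_{K_1}) \to H_{2k}((K_1\times K_1,\Delta_{K_1})^{\times 2})$ is nonzero in the degree $* = 2k+2d-1$. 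This contradiction shows $\Lambda_{K_0}$ and $\Lambda_{K_1}$ cannot be Legendrian isotopic.

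Two preliminary remarks are needed before invoking Theorem \ref{thm-Leg-delta-K}. First, that theorem and the results it depends on (Theorem \ref{thm-diagram}, Theorem \ref{thm-isom-LM}) are stated for submanifolds such that every $x\in\mathcal{C}(K)$ is non-degenerate; so I would note that by Remark \ref{rem-sp-emb} we may perturb $K_0$ and $K_1$ within their $C^{\infty}$ isotopy classes by a generic $\sigma\in\mathcal{O}_K$ to achieve this, and that $\delta_K$ is a $C^{\infty}$ isotopy invariant by Example \ref{ex-isotopy}, so the values of $\delta_{K_0}$ and $\delta_{K_1}$ computed in Propositions \ref{prop-coprod-zero} and \ref{prop-coprod-nonzero} are unaffected. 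Second, I would point out that the condition $1\leq k\leq n-2d = \dim M - 1$ indeed places $k$ strictly between $0$ and $\dim M$, which is what Proposition \ref{prop-coprod-nonzero} (via the construction in Subsection \ref{subsubsec-const-K0K1}, which requires $b(Q)\subset M\setminus B$) needs.

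I do not expect a serious obstacle here: the theorem is essentially the formal combination of three already-proved statements. The only thing requiring a moment's care is bookkeeping of the grading conventions so that the nonvanishing of $\delta_{K_1}$ in the specific degree supplied by Proposition \ref{prop-coprod-nonzero} is transported by $\Theta^1,\Theta^2$ to a genuine contradiction with $\delta_{K_0}=0$ in the corresponding degree — but since $\Theta^1$ and $\Theta^2=\Theta^1\otimes\Theta^1$ are isomorphisms in every degree, this is automatic. Thus the proof is just: assume Legendrian isotopic, apply Theorem \ref{thm-Leg-delta-K}, use $\delta_{K_0}=0$ and invertibility of $\Theta^1$ to conclude $\delta_{K_1}=0$, contradict Proposition \ref{prop-coprod-nonzero}.
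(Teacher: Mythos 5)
Your proposal is correct and follows exactly the same approach as the paper, which proves the theorem as a direct combination of Theorem~\ref{thm-Leg-delta-K}, Proposition~\ref{prop-coprod-zero}, and Proposition~\ref{prop-coprod-nonzero}. The extra remark about perturbing to achieve non-degeneracy of $\mathcal{C}(K_i)$ is in fact already built into the proof of Theorem~\ref{thm-Leg-delta-K} via the passage to the admissible perturbation $K'_i = (K_i)_\sigma$ in diagram~(\ref{diagram-coproduct-homology-2}), so it is not needed as a separate step, but it does no harm to note it.
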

\begin{proof}
This assertion directly follows from Theorem \ref{thm-Leg-delta-K}, Proposition \ref{prop-coprod-zero} and Proposition \ref{prop-coprod-nonzero}.
\end{proof}

In a more special case, $\Lambda_{K_0}$ and $\Lambda_{K_1}$ are distinguished by the Legendrian contact homology.
\begin{thm}\label{thm-LCH-conormal}
If $d\geq 4$ and $H_k(M) \neq 0$ for some $k\in \Z$ with $1\leq k \leq \min \{ n-2d, \frac{d-3}{2} \}$, then
\[\dim_{\Z/2} \LCH_{2k +2d-4}(\Lambda_{K_0}) > \dim_{\Z/2} \LCH_{2k+2d-4}(\Lambda_{K_1}). \]
\end{thm}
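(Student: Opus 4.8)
The plan is to deduce the statement directly from Theorem~\ref{thm-LCH-deltaK}, combined with the computations $\delta_{K_0}=0$ (Proposition~\ref{prop-coprod-zero}) and $\delta_{K_1}\neq 0$ in the relevant degree (Proposition~\ref{prop-coprod-nonzero}). Since $\dim_{\Z/2}\LCH_p(\Lambda_K)$ depends only on the Legendrian isotopy class of $\Lambda_K$, hence only on the $C^{\infty}$ isotopy class of $K$, I would first replace $K_0$ and $K_1$ by $C^{\infty}$-close submanifolds so that every binormal chord is non-degenerate (Remark~\ref{rem-sp-emb}); by Example~\ref{ex-isotopy} and Theorem~\ref{thm-cobordism}, such a perturbation changes neither $\dim_{\Z/2}\LCH_p$ nor the maps $\delta_{K_i}$ (up to isomorphisms of source and target intertwining them), so Proposition~\ref{prop-coprod-zero} and Proposition~\ref{prop-coprod-nonzero} remain in force.

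Next I would set $p=2k+2d-4$ and verify that $1\le p\le 3d-7$: the lower bound holds since $k\ge 1$ and $d\ge 4$ give $p\ge 6$, while $p\le 3d-7$ is equivalent to $2k\le d-3$, i.e.\ $k\le\frac{d-3}{2}$, which is precisely the hypothesis. Thus Theorem~\ref{thm-LCH-deltaK} applies at degree $p$ to both $K_0$ and $K_1$, and after substituting indices it reads
\[
\dim_{\Z/2}\LCH_{2k+2d-4}(\Lambda_K)=\dim_{\Z/2}\ker\bigl(\delta_K\colon H_{2k+d-2}\to H_{2k-1}\bigr)+\dim_{\Z/2}\coker\bigl(\delta_K\colon H_{2k+d-1}\to H_{2k}\bigr),
\]
where $H_j$ abbreviates $H_j(K\times K,\Delta_K)$ on the source and $H_j((K\times K,\Delta_K)^{\times 2})$ on the target (the degree shift $1-d$ of $\delta_K$ makes all four subscripts consistent). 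For $K=K_0$, Proposition~\ref{prop-coprod-zero} gives $\delta_{K_0}=0$, so the right-hand side equals $\dim_{\Z/2}H_{2k+d-2}(K_0\times K_0,\Delta_{K_0})+\dim_{\Z/2}H_{2k}((K_0\times K_0,\Delta_{K_0})^{\times 2})$.

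For $K=K_1$ I would bound each summand. The kernel term satisfies $\dim\ker(\delta_{K_1}\colon H_{2k+d-2}\to H_{2k-1})\le\dim_{\Z/2}H_{2k+d-2}(K_1\times K_1,\Delta_{K_1})$, and since $\iota_f\colon K_0\to K_1$ is a diffeomorphism the pairs $(K_0\times K_0,\Delta_{K_0})$ and $(K_1\times K_1,\Delta_{K_1})$ are homeomorphic, so this dimension equals $\dim_{\Z/2}H_{2k+d-2}(K_0\times K_0,\Delta_{K_0})$. For the cokernel term, Proposition~\ref{prop-coprod-nonzero} says exactly that $\delta_{K_1}\colon H_{2k+d-1}(K_1\times K_1,\Delta_{K_1})\to H_{2k}((K_1\times K_1,\Delta_{K_1})^{\times 2})$ is nonzero, hence $\rank\delta_{K_1}\ge 1$, so $\dim\coker(\delta_{K_1}\colon H_{2k+d-1}\to H_{2k})\le\dim_{\Z/2}H_{2k}((K_1\times K_1,\Delta_{K_1})^{\times 2})-1=\dim_{\Z/2}H_{2k}((K_0\times K_0,\Delta_{K_0})^{\times 2})-1$. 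Summing the two bounds yields $\dim_{\Z/2}\LCH_{2k+2d-4}(\Lambda_{K_1})\le\dim_{\Z/2}\LCH_{2k+2d-4}(\Lambda_{K_0})-1$, which is the claimed strict inequality.

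All of the analytic content has already been assembled in Theorem~\ref{thm-LCH-deltaK} and Propositions~\ref{prop-coprod-zero} and~\ref{prop-coprod-nonzero}, so there is no substantive obstacle in this final argument. The only points that need care are the index bookkeeping that forces the hypothesis $k\le\frac{d-3}{2}$ (to stay within the range $p\le 3d-7$ in which Theorem~\ref{thm-LCH-deltaK} is valid) and the observation that the relevant homology groups attached to $K_0$ and to $K_1$ agree because $K_0$ and $K_1$ are diffeomorphic.
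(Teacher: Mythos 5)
Your proposal is correct and follows the same high-level strategy as the paper (combine Theorem \ref{thm-LCH-deltaK} with Propositions \ref{prop-coprod-zero} and \ref{prop-coprod-nonzero}, using that $K_0\cong K_1$ so the auxiliary homology groups agree), but your degree bookkeeping is actually \emph{more} careful than the paper's and in fact fixes a small inconsistency there. The paper's own proof identifies the nonzero map $\delta_{K_1}\colon H_{2k+d-1}\to H_{2k}$ with the \emph{kernel}-term map $H_{p-d+2}\to H_{p-2d+3}$, which forces $p=2k+2d-3$; it then asserts strict inequality at that degree, which does not match the degree $2k+2d-4$ in the theorem's statement, and moreover the range check $p\le 3d-7$ then needs $2k\le d-4$ rather than the stated $2k\le d-3$. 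You instead set $p=2k+2d-4$ and identify the nonzero map as the \emph{cokernel}-term map $H_{p-d+3}\to H_{p-2d+4}$, which gives the strict inequality exactly at the degree in the statement and under exactly the hypothesis $k\le\frac{d-3}{2}$. Your additional preliminary step (perturbing $K_0,K_1$ so that all binormal chords are non-degenerate, invoking Example \ref{ex-isotopy} and Theorem \ref{thm-cobordism} to see this changes nothing) is a sound and welcome addition, as Theorem \ref{thm-LCH-deltaK} is only stated under that genericity hypothesis. In short, your argument is a correct and slightly tightened version of the paper's proof.
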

\begin{proof}
By Theorem \ref{thm-LCH-deltaK}, for $i=0,1$
\begin{align}\label{ineq-LCH-deltaK} \dim_{\Z/2} \LCH_p(\Lambda_{K_i}) \leq \dim_{\Z/2}  H_{p-d+2}(K_i\times K_i,\Delta_{K_i}) + \dim_{\Z/2} H_{p-2d+4}( (K_i\times K_i,\Delta_{K_i} )^{\times 2})  
\end{align}
if $1\leq p \leq 3d-7$.
For $i=0$, the equality holds because $\delta_{K_0}$ is the zero map. On the other hand, for $i=1$,  
$\delta_{K_1} \colon H_{p-d+2}(K_1\times K_1,\Delta_{K_1}) \to H_{p-2d+3}( (K_1\times K_1,\Delta_{K_1})^{\times 2} ) $
is not the zero map if $p-d+2 = 2k -d +1 $, i.e. $p=2k+ 2d-3$.
Therefore, if $ 2k \leq d-3$, then
the equality of (\ref{ineq-LCH-deltaK}) does not hold for $p= 2k+2d-3$.
\end{proof}

\appendix

\section{Appendix}\label{appendix}

\subsection{Morse index and Conley-Zehnder index}\label{subsec-A1}

Fix an arbitrary Reeb chord $(a \colon [0,T] \to U^*\R^n)\in \mathcal{R}(\Lambda_K)$. Since the Reeb vector field is $(R_{\alpha})_{(q,p)} = \sum_{i=1}^np_i\partial_{q_i}$, there exists $q_0\in K$ and $p_0\in \Lambda_K\cap T^*_{q_0}\R^n$ such that $a(t)= q_0+t  p_0$ for every $t\in [0,T]$. Let us denote $q_1 \coloneqq q_0+ T p_0$.
Without loss of generality, we may assume that $p_0=(0,\dots ,0,1)\in \R^n$.
In addition, the contact distribution $\xi$ is written by
\[\xi_{(q,p)} = \{(u,v)\in \R^n\times \R^n \mid u\perp p,\ v\perp p\}\]
for every $(q,p)\in U^*\R^n$.
Since $p_0= (0,\dots,0,1)$, we may write $\xi_{a(t)} =\R^{n-1}\times \R^{n-1}$ for every $t\in[0,1]$ by identifying $\R^{n-1}\times \{0\}\subset \R^n$ with $\R^{n-1}$.

We associate to $a$ a path on $[0,1]$
\[\hat{a}\colon [0,1]\to T^*\R^n \colon t\mapsto  (q_0+(Tt)p_0,T p_0).\]
Then, it is a Hamiltonian chord with its end points in $L_K$ with respect to the Hamiltonian
$H\colon T^*\R^n\to \R\colon (q,p) \mapsto \frac{1}{2}|p|^2$.
Namely, $\hat{a}(0),\hat{a}(1)\in L_K$ and $\hat{a}$ satisfy $\frac{d \hat{a}}{dt}(t)=(X_H)_{\hat{a}(t)}$ for every $t\in [0,1]$, where the vector field $X_H$ is characterized by $\sum_{i=1}^n (dp_i\wedge dq_i) (\cdot, X_H)=dH$.
The flow of $X_H$ is given by
\[\varphi_H^t \colon T^*\R^n \to T^*\R^n \colon (q,p) \mapsto (q+tp, p).\]

We recall the definition in \cite[page 272]{APS} of the nullity and the index of Hamiltonian chords with conormal boundary condition.
Here, we identify the symplectic vector space $\R^n \times (\R^n)^*$ in \cite[Section 1]{APS} with $\C^n$ via the map
\[\C^n \to \R^n \times (\R^n)^*\colon q+\sqrt{-1}p \mapsto (p, \la q, \cdot \ra ).\]
Let $d$ be the codimension of $K$.
\begin{defi}{\cite[page 272, Definition 3.1]{APS}}\label{def-ind-APS}
For the Hamiltonian chord $\hat{a}$, we choose real subspaces $U_0,U_1\subset \R^n$ of codimension $d$ and a symplectic trivialization
\[\Phi \colon \hat{a}^*T(T^*\R^n) \to \C^n\]
such that $\Phi_{t}(\{0\}\times \R^n)=\R^n$ for every $t\in [0,1]$ and
$\Phi_i(T_{\hat{a}(i)}L_K) = \sqrt{-1} U_i\oplus U_i^{\perp}$ for $i\in \{0,1\}$. Then, we obtain a path
\[ C \colon [0,1]\to \mathcal{L}(\C^n) \colon t\mapsto  \Phi_t\circ d\varphi_H^t (T_{\hat{a}(0)}L_K). \]
We define the nullity $\nu'(\hat{a})$ and index $\mu'(\hat{a})$ of $\hat{a}$ by
\begin{align*}
\nu'(\hat{a}) & \coloneqq  \dim (T_{\hat{a}(1)}L_K \cap d\varphi^1_H(T_{\hat{a}(0)}L_K) ), \\
\mu' (\hat{a}) & \coloneqq \mu (C, \sqrt{-1} U_1\oplus U_1^{\perp}) + \frac{1}{2}(n-2d).
\end{align*}
\end{defi}
\begin{rem}
By \cite[Proposition 3.2]{APS}, $\mu'(\hat{a})$ is independent of the choice of $U_i$ ($i=0,1$) and $\Phi$.
The shift $\frac{1}{2}(n-2d)$ comes from $\frac{1}{2}(\dim K + \dim K -n)$ in \cite[Definition 3.1]{APS}.
\end{rem}
In order to choose $\Phi$ and $U_i$ ($i=0,1$) explicitly, we use the description
\begin{align}\label{tangent-LK}
T_{(q,p)}L_K= \{ (u, v- h_{(q,p)}(u) )  \mid u\in T_{q}K , v\in (T_qK)^{\perp}\},
\end{align}
where $h_{(q,p)}$ is a symmetric operator on $T_qK$ determined by $\la h_{(q,p)} (u), u' \ra = \la \mathrm{I\hspace{-1.2pt}I}_{q}(u, u') , p \ra $ for every $u,u' \in T_qK$,
where $\mathrm{I\hspace{-1.2pt}I}_{q} \colon T_q K\times T_q K \to (T_qK)^{\perp}$ is the second fundamental form of $K$ at $q\in K$.
Let us extend $h_{(q,p)}$ to a symmetric matrix $h_{(q,p)}\in M_{n\times n}(\R)$ by $h_{(q,p)}(u+u')=h_{(q,p)}(u)$ for every $u\in T_qK$ and $u'\in (T_qK)^{\perp}$.
We choose a continuous family $(h^t)_{t\in [0,1]}$ of symmetric matrices   such that $h^0=h_{\hat{a}(0)}$ and $h^1=h_{\hat{a}(1)}$ and define
\[\Phi_t \colon T_{\hat{a}(t)} (T^*\R^n) \to \C^n \colon (u,v) \mapsto \sqrt{-1} u+ (v+h^t(u)) .\]
Then, $\Phi$ satisfies the condition of Definition \ref{def-ind-APS}.
for $U_i=T_{q_i} K$ ($i=0,1$).

Next, we recall Definition \ref{def-ind-cap}.
As in Subsection \ref{subsubsec-capping}, we choose a capping path $\gamma_a \colon[0,1]\to \Lambda_K\colon t\mapsto (q(t),p(t))$ such that $\gamma_a(0)=(q_1,p_0)$ and $\gamma_a(1)=(q_0,p_0)$.
We define symplectic isomorphisms $\Psi_t\colon \xi_{\gamma_a(t)}\to \C^{n-1}$ for $t\in \{0,1\}$ by
\[\Psi_0 \colon \R^{n-1}\times \R^{n-1} \to \C^{n-1}\colon (u,v) \mapsto \sqrt{-1} T^{-\frac{1}{2}}u + T^{\frac{1}{2}}(v+h_{(q_1,p_0)}(u)).\]
and $\Psi_1 \coloneqq \Psi_0 \circ d\varphi^T_{\alpha}$.
Note that for $t\in \{0,1\}$,
\[\Psi_t(\xi_{\gamma_a(t)} \cap (\R^n\times\{0\})) = \Psi_t(\R^{n-1}\times\{0\}) = \{ \sqrt{-1} T^{-\frac{1}{2}}u + T^{\frac{1}{2}} h_{(q_1,p_0)}(u)\mid u\in \R^{n-1}\} . \]
We extend $\Psi_t$ for $t\in \{0,1\}$ to $\Psi\colon \gamma_a^*\xi \to \C^{n-1}$ so that $\Psi_t(\xi_{\gamma_a(t)}\cap(\R^n\times \{0\})) \in \mathcal{L}(\C^{n-1})$ is transversal to $\R^{n-1}$ for every $t\in [0,1]$.
By setting $\Psi_{t+1}\coloneqq \Psi_0\circ d\varphi^{T-t}_{\alpha}$ for $t\in [0,T]$, we obtain a trivialization $\Psi \colon (l_a)^*\xi \to \C^{n-1}$ over $\R/ (T+1)\Z$.
\begin{lem}\label{lem-triv-extend}
$\Psi$ extends to a symplectic trivialization of $\xi$ on a disk which bounds $l_a$.
\end{lem}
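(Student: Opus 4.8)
The statement to prove is Lemma \ref{lem-triv-extend}: the trivialization $\Psi$ of $(l_a)^*\xi$ over $\R/(T+1)\Z$ extends to a symplectic trivialization of $f^*\xi$ over a surface $f\colon \Sigma\to U^*\R^n$ bounding the loop $l_a$. Since the Conley-Zehnder index $\mu(a)$ of Definition \ref{def-ind-cap} is exactly what we want to compare with $\mu'(\hat a)+\tfrac12(n-2d)$ (via Proposition \ref{prop-correspond}), this lemma is the essential compatibility statement: it says the particular $\Psi$ built from the symmetric operators $h_{(q_1,p_0)}$ is an \emph{admissible} trivialization in the sense of Definition \ref{def-ind-cap}, so that the index computed with it agrees with the intrinsic $\mu(a)$.

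\textbf{Approach.} The plan is to exploit that $U^*\R^n$ is, via the diffeomorphism $U^*\R^n\cong \R\times T^*S^{n-1}$ recorded in Subsection \ref{subsec-LCH-conormal}, homotopy equivalent to $T^*S^{n-1}$, whose first Chern class vanishes ($2c_1(TP)=0$ and in fact $c_1=0$ since $T^*S^{n-1}$ retracts to $S^{n-1}$ which has $H^2=0$ for $n\geq 3$; one treats $n=2$ separately or notes it is excluded by $\codim K\geq 2$ giving $n>2$). Concretely, first I would observe that the loop $l_a$ is contractible in $U^*\R^n$: indeed $l_a$ is the concatenation of the Reeb chord $a$ (a short arc since the Reeb flow has no periodic orbits, cf. Subsection \ref{subsec-LCH}) and a path $\gamma_a$ lying in the connected Legendrian $\Lambda_K$, so $[l_a]$ lies in the image of $\pi_1(\Lambda_K)\to\pi_1(U^*\R^n)$, and in any case $H_1(U^*\R^n;\Z)=0$, so some $f\colon\Sigma\to U^*\R^n$ bounds it. Then the obstruction to extending the \emph{complex-linear} trivialization $\Psi$ of $(l_a)^*\xi$ over $\partial\Sigma$ to one of $f^*\xi$ over $\Sigma$ is the relative first Chern number $c_1(f^*\xi,\Psi)\in H^2(\Sigma,\partial\Sigma;\Z)\cong\Z$; this vanishes precisely because the pullback of $c_1(\xi)$ to $\Sigma$ is zero (as $c_1(\xi)=0$ on $U^*\R^n$, again using the retraction onto $S^{n-1}$ after adding the trivial $\R$-factor $\partial_z$). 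The proof of Definition \ref{def-Maslov-class} and Definition \ref{def-ind-cap} already invoke exactly this kind of argument under the hypothesis $2c_1(\xi)=0$; here I would upgrade it to $c_1(\xi)=0$ over the relevant surface, so that not merely is $\mu(a)$ well-defined but the specific $\Psi$ is one of the allowed trivializations.

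\textbf{Key steps, in order.} (1) Fix a disk or surface $f\colon\Sigma\to U^*\R^n$ with $\partial\Sigma=S^1$ and $f|_{\partial\Sigma}=l_a$, using $H_1(U^*\R^n;\Z)=0$. (2) Choose \emph{any} symplectic trivialization $\Phi_0\colon f^*\xi\to\C^{n-1}$ over $\Sigma$; this exists since $\xi$ is a symplectic (hence $U(n-1)$-) bundle over the surface $\Sigma$, which is homotopy equivalent to a wedge of circles (if $\partial\Sigma\neq\emptyset$) or has $H^2$ detecting $c_1$, and $c_1(f^*\xi)=f^*c_1(\xi)=0$. (3) Over $\partial\Sigma$, compare $\Phi_0|_{\partial\Sigma}$ with the prescribed $\Psi$: the difference is a loop $S^1\to Sp(2n-2,\R)\simeq U(n-1)$, classified up to homotopy by its degree under $U(n-1)\to U(1)$, i.e. an integer $\delta$. (4) Show $\delta=0$: this is again the relative Chern number $c_1(f^*\xi,\Psi)$, and since $f^*\xi\cong\C^{n-1}$ globally over $\Sigma$ and the loop $\Psi$ is the boundary value of such a global trivialization only if $\delta=0$ — equivalently, $\delta$ equals $\langle c_1(\xi), [\bar f]\rangle$ for the closed surface $\bar f$ obtained by capping, which is $0$ because $c_1(\xi)=0\in H^2(U^*\R^n;\Z)$. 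Here one must check that the construction of $\Psi$ in the paragraphs preceding the lemma — built from $\Psi_0$ on the zero slice $\R^{n-1}\times\{0\}$, extended so that $\Psi_t(\xi_{\gamma_a(t)}\cap(\R^n\times\{0\}))$ stays transverse to $\R^{n-1}$, and propagated by $d\varphi^{T-t}_\alpha$ along the chord — indeed yields a loop with trivial total winding; this is where the explicit choice of $\Psi_{t+1}=\Psi_0\circ d\varphi^{T-t}_\alpha$ matters, because $d\varphi^t_\alpha$ on $\xi$ along a straight Reeb chord in $T^*\R^n$ is a \emph{unipotent} shear (as $\varphi^t_H(q,p)=(q+tp,p)$ suggests) and contributes no winding to the determinant. (5) Conclude that $\Psi$ extends over $\Sigma$, possibly after modifying $\Phi_0$ by a map $\Sigma\to U(n-1)$ that is trivial on $\partial\Sigma$ (always possible since $\pi_1(U(n-1))$ surjects and relative obstructions vanish once $\delta=0$).

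\textbf{Main obstacle.} The analytic content is negligible; the real care is bookkeeping in step (4). One has to verify that the prescription for $\Psi$ given just before the lemma genuinely produces the \emph{same} homotopy class of boundary trivialization that the relative Chern number computation assigns zero to — in other words, that no "hidden" winding is introduced by the two ingredients that are chosen somewhat ad hoc: the interpolation $(h^t)$ / the extension of $\Psi_t$ off the real subspace for $t\in[0,1]$, and the conjugation by $d\varphi^{T-t}_\alpha$ for $t\in[0,T]$. The first is harmless because the space of compatible complex structures (equivalently, of such extensions) is contractible, so any two choices are homotopic rel endpoints. The second requires identifying the linearized Reeb flow on $\xi$ along $a$ with the linearized Hamiltonian flow of $H=\tfrac12|p|^2$ on the contact hyperplane, which is a path of \emph{symplectic shears} $\begin{pmatrix}I & tI\\ 0 & I\end{pmatrix}$ (up to the $h$-conjugation that realizes the splitting), hence contractible to the identity through symplectic matrices and contributing $0$ to $\det_{\C}$. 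Granting these two homotopy-invariance observations, the relative Chern number is $f^*c_1(\xi)=0$ and the extension exists; I would present step (4) as the one deserving a careful line or two, and dispatch the rest by citing the standard obstruction-theoretic facts about $U(n-1)$-bundles over surfaces already used implicitly in Definition \ref{def-Maslov-class} and Definition \ref{def-ind-cap}.
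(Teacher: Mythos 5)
Your plan correctly identifies the structure of the problem (bound $l_a$ by a disk, pick a reference trivialization over the disk, and show the comparison loop with $\Psi$ has zero winding), but step (4) as written contains a genuine error, and the ``Main obstacle'' paragraph does not repair it.

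The claim that $\delta = \langle c_1(\xi), [\bar f]\rangle$ is wrong. Over a disk $D$, the bundle $f^*\xi$ is automatically trivial regardless of $c_1(\xi)$; the integer $\delta$ is the winding number of the \emph{specific} boundary trivialization $\Psi$ relative to any global trivialization of $f^*\xi$, and this depends essentially on $\Psi$. There is no closed surface $\bar f$ that one can cap off to reduce $\delta$ to a Chern number evaluation: two different boundary trivializations over the same bounding disk give different $\delta$'s even though $c_1(\xi)=0$. Your subsequent attempt to compute $\delta$ directly — ``the Reeb shear contributes zero to $\det_{\C}$'' and ``the space of extensions on $[0,1]$ is contractible, so any two choices are homotopic'' — is a pair of homotopy-invariance observations, not a computation: they show the winding is well-defined and independent of certain auxiliary choices, but they do not tell you what that winding is.

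The paper closes the gap by making a careful choice of reference trivialization. Since $D$ is contractible, one can pick $\Psi'\colon f^*\xi\to\C^{n-1}$ over the whole disk sending the Lagrangian subbundle $\xi\cap(\R^n\times\{0\})$ (the intersection of the contact hyperplane with the tangent to the zero section) to the fixed Lagrangian $\sqrt{-1}\,\R^{n-1}$ at every point. The crucial observation you need — and do have all the ingredients for but do not assemble — is that the transversality condition $\Psi_t\bigl(\xi_{l_a(t)}\cap(\R^n\times\{0\})\bigr)\pitchfork\R^{n-1}$ holds along the \emph{entire} loop $l_a$: on the capping path $[0,1]$ by construction, and along the Reeb chord because $d\varphi^s_\alpha(u,v)=(u+sv,v)$ preserves the subbundle $\xi\cap(\R^n\times\{0\})$ (this is exactly your ``shear'' remark, used to the right end). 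Consequently the comparison loop $H_t=\Psi_t\circ(\Psi'_t)^{-1}$, pushed forward by $P\colon\mathrm{Sp}(\C^{n-1})\to\mathcal{L}(\C^{n-1})$, $A\mapsto A(\sqrt{-1}\,\R^{n-1})$, lands in the contractible space of Lagrangians transversal to $\R^{n-1}$, hence $P\circ H$ is nullhomotopic; and since $P_*\colon\pi_1(\mathrm{Sp}(\C^{n-1}))\to\pi_1(\mathcal{L}(\C^{n-1}))$ is injective, $H$ itself is nullhomotopic, so $\Psi$ extends. Both the special choice of $\Psi'$ and the injectivity of $P_*$ are missing from your proposal, and without them the vanishing of $\delta$ is asserted rather than proved.
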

\begin{proof}
If we take any map $f\colon D\to U^*\R^n$ which bounds $l_a\colon \R / (T+1)\Z \to U^*\R^n$, we can find a trivialization $\Psi'\colon f^*\xi \to \C^{n-1}$ such that $\Psi'_z(\xi_{f(z)} \cap(\R^n\times \{0\})) = \sqrt{-1} \R^{n-1}$ for every $z\in D$ since $D$ is contractible. Then, we get a loop
$H\colon \R/(T+1)\Z \to \mathrm{Sp}(\C^{n-1}) \colon t\mapsto \Psi_t \circ (\Psi'_t)^{-1}$, where $\mathrm{Sp}(\C^{n-1})$ is the group of $\R$-linear maps preserving the symplectic form on $\C^{n-1}$. Composing it with the map $P\colon \mathrm{Sp}(\C^{n-1})\to \mathcal{L}(\C^{n-1})\colon A\to A(\sqrt{-1}\R^{n-1})$, we get a loop
\[P\circ H \colon\R/(T+1)\Z \to \{V\in \mathcal{L}(\C^{n-1})\mid V\text{ is transversal to }\R^{n-1}\}. \]
It is contractible since the target space is contractible. Moreover, since $P_*\colon\pi_1(\mathrm{Sp}(\C^{n-1})) \to \pi_1(\mathcal{L}(\C^{n-1}))$ is injective, $H$ is also contractible. This means that $\Psi$ can be extended to a symplectic trivialization of $f^*\xi$ as well as $\Psi'$.
\end{proof}

We define $\Gamma_a \colon [0,1]\to \mathcal{L}(\C^{n-1})\colon t\mapsto \Psi_t(T_{\gamma_a(t)} \Lambda_K)$ as in Subsection \ref{subsubsec-capping}.

Next, we take a capping path of the Hamiltonian chord $\hat{a}$ by
\[\hat{\gamma}_a\colon [0,1]\to L_K \colon t\mapsto (q(t),T \cdot p(t)).\] 
We define a symplectic trivialization $\hat{\Psi}\colon (\hat{\gamma}_a)^*T(T^*\R^n)\to \C^n$ such that for every $t\in [0,1]$,
\begin{align*}
&\hat{\Psi}_t(p(t),0) = \sqrt{-1} p_0 ,\ \hat{\Psi}_t(0,p(t)) = (1 + \sqrt{-1}t)p_0, \\
& \hat{\Psi}_t(u,v) = (\Psi_t(T^{\frac{1}{2}}u,T^{-\frac{1}{2}}v) , 0 ) \in \C^{n-1}\times \C,
\end{align*}
for every $(u,v)\in \R^n\times \R^n$ with $u \perp p(t)$ and  $v \perp p(t)$.
It is straightforward to check that
\begin{align}\label{Psi-Phi}
\begin{array}{cc} \hat{\Psi}_1= \hat{\Psi}_0\circ d\varphi^1_H, &
\hat{\Psi}_0 = \Phi_1 .\end{array}
\end{align}
(To check this, note that $h_{\hat{a}(1)} = T\cdot h_{(q_1,p_0)}$.)
Via $\hat{\Psi}$, we obtain a path of Lagrangian subspaces
\[\hat{\Gamma}_a\colon  [0,1]\to \mathcal{L}(\C^n)\colon t\mapsto \hat{\Psi}_t(T_{\hat{\gamma}_a(t)}L_K). \]
We can explicitly compute from (\ref{tangent-LK}) that $\hat{\Gamma}(t) = \Gamma_a(t) \oplus V(t)\subset \C^{n-1}\oplus \C$ for every $t\in [0,1]$, where $V$ is a path
\[V\colon [0,1]\to \mathcal{L}(\C) \colon t \mapsto (1 + \sqrt{-1} t)\R .
\]
Therefore, $\Gamma_a(0)$ is transversal to $\Gamma_a(1)$ if and only if $\hat{\Gamma}_a(0) \cap \hat{\Gamma}_a(1)=0$, and the letter condition is equivalent to that $\nu'(\hat{a})=0$.
Moreover, from the definition of \cite[page 831]{RS}, $\mu(V,V(0)) = \frac{1}{2}$.
Therefore, 
\begin{align}\label{hat-Gamma}
\mu(\hat{\Gamma}_a, \hat{\Gamma}_a(0)) = \mu(\Gamma_a,\Gamma_a(0))+\frac{1}{2}. 
\end{align}
In addition, we note that $\hat{\Gamma}_a(0) = \Phi_1(T_{\hat{a}(1)}L_K)=\sqrt{-1}U_1 \oplus U_1^{\perp}$.

\begin{lem}\label{lem-hat-Gamma}
$\mu(\hat{\Gamma}_a, \hat{\Gamma}_a(0)) = \mu(C,\sqrt{-1} U_1\oplus U_1^{\perp})$.
\end{lem}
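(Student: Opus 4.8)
The statement to prove is $\mu(\hat{\Gamma}_a,\hat{\Gamma}_a(0)) = \mu(C,\sqrt{-1}U_1\oplus U_1^\perp)$, where $C(t) = \Phi_t\circ d\varphi^t_H(T_{\hat a(0)}L_K)$ is the path from Definition \ref{def-ind-APS} and $\hat\Gamma_a(t) = \hat\Psi_t(T_{\hat\gamma_a(t)}L_K)$ is the path built from the capping path of $\hat a$. The key point is that these two paths of Lagrangians are two different descriptions of ``the path traced by $T L_K$ along the Hamiltonian chord $\hat a$, pushed to the boundary'', and both are closed loops in $\mathcal{L}(\C^n)$ with the same basepoint $\hat\Gamma_a(0) = \Phi_1(T_{\hat a(1)}L_K) = \sqrt{-1}U_1\oplus U_1^\perp$; by homotopy invariance of the Maslov index it suffices to show the two loops are homotopic in $\mathcal{L}(\C^n)$. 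I would first record that $C$ is indeed a loop: $C(0) = \Phi_0(T_{\hat a(0)}L_K) = \sqrt{-1}U_0\oplus U_0^\perp$ while $C(1) = \Phi_1\circ d\varphi^1_H(T_{\hat a(0)}L_K)$; these need not literally coincide, so strictly the relevant object for \cite{APS} is $\mu(C,\sqrt{-1}U_1\oplus U_1^\perp)$, an index of a \emph{path} relative to a fixed Lagrangian, which is what appears on the right-hand side. Correspondingly $\hat\Gamma_a$ is a path from $\hat\Gamma_a(0)$ to $\hat\Gamma_a(1)$, and the Maslov index on the left is taken relative to $\hat\Gamma_a(0)$.

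The core of the argument is to produce an explicit homotopy of paths from $C$ to $\hat\Gamma_a$ rel endpoints, or more precisely to identify $\hat\Gamma_a$ with the concatenation of $C$ (reparametrized/reversed appropriately along the chord) with a contractible piece coming from the capping path in $L_K$. Concretely: the Hamiltonian flow $d\varphi^t_H$ carries $T_{\hat a(0)}L_K$ to $T_{\hat a(t)}(T^*\R^n)$, and trivializing by $\Phi_t$ gives $C(t)$; on the other hand $\hat\gamma_a$ is a loop in $L_K$ from $\hat a(1)$ to $\hat a(0)$, and $\hat\Psi$ trivializes $T L_K$ along it. The relation $\hat\Psi_0 = \Phi_1$ from \eqref{Psi-Phi} shows the two trivializations agree at the concatenation point $\hat a(1)$, and $\hat\Psi_1 = \hat\Psi_0\circ d\varphi^1_H$ is precisely the compatibility needed so that the loop $l_{\hat a}$ (capping path then Hamiltonian chord) is trivialized consistently. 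Then, exactly as in Lemma \ref{lem-triv-extend}, the trivialization $\hat\Psi$ over $l_{\hat a}$ extends over a bounding disk; this forces the loop $t\mapsto \hat\Psi_t(T_{l_{\hat a}(t)}L_K)$ (which is $\hat\Gamma_a$ concatenated with the Hamiltonian-chord part trivialized by $\Phi$) to be contractible in $\mathcal{L}(\C^n)$ \emph{through Lagrangians transverse to} $\sqrt{-1}\R^n$ is too strong — rather, nullhomotopic — so its total Maslov index (relative to any fixed Lagrangian) vanishes. Additivity of the Maslov index under concatenation (RS \cite[Theorem 2.3]{RS}) then gives $\mu(\hat\Gamma_a,\hat\Gamma_a(0)) = \mu(C,\hat\Gamma_a(0))$, which is the claim once we note $\hat\Gamma_a(0) = \sqrt{-1}U_1\oplus U_1^\perp$.

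In more detail, the steps in order: (1) identify the basepoint $\hat\Gamma_a(0) = \sqrt{-1}U_1\oplus U_1^\perp$, already noted in the text just before the lemma; (2) observe that the path $t\mapsto \Phi_t\circ d\varphi^t_H(T_{\hat a(0)}L_K) = C(t)$ and the path $t\mapsto \hat\Psi_t(T_{\hat\gamma_a(t)}L_K) = \hat\Gamma_a(t)$ can be concatenated — reversing one — into the loop obtained by trivializing $T L_K$ along $l_{\hat a}$ using the trivialization $\Psi$ (equivalently $\hat\Psi$ on the chord part, $\Phi$ on the capping part, glued by \eqref{Psi-Phi}); (3) invoke the disk-extension argument of Lemma \ref{lem-triv-extend}: since $\Psi$ extends over a bounding disk of $l_{\hat a}$ and $T L_K$ restricted to that disk is a Lagrangian subbundle of the trivialized symplectic bundle $\C^n$, the resulting loop in $\mathcal{L}(\C^n)$ bounds a disk, hence is nullhomotopic and has zero Maslov number; (4) apply RS additivity/homotopy invariance to deduce $\mu(\hat\Gamma_a,\hat\Gamma_a(0)) + \mu(\bar C,\hat\Gamma_a(0)) = 0$ where $\bar C$ is the appropriately reversed/reparametrized copy of $C$, and use $\mu(\bar C,V) = -\mu(C,V)$ together with reparametrization-invariance to conclude. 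The main obstacle I anticipate is bookkeeping: getting the orientations, reversals, and the precise role of the Hamiltonian flow versus the capping path right so that the concatenated object really is the boundary-trivialized loop of Lemma \ref{lem-triv-extend}, and checking that the factor $\Phi$ uses to trivialize along the chord matches $\hat\Psi$ along the chord (the relation $\hat\Psi_{t+1} = \hat\Psi_0\circ d\varphi^{T-t}_\alpha$ versus $d\varphi^t_H$ — recall the reparametrization $\hat a(t) = a(Tt)$ and $d\varphi^t_H(q,p) = (q+tp,p)$, so $d\varphi^t_H$ on $\hat a$ corresponds to $d\varphi^{Tt}_\alpha$ on $a$). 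Once these identifications are pinned down the Maslov-index computation is formal.
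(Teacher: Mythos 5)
Your plan has the right conceptual starting point — concatenate the two paths of Lagrangians, produce a loop, show that loop has zero Maslov index via the vanishing of the Maslov class of $L_K$, and conclude by additivity — but there is a genuine gap in the bookkeeping that the paper resolves by introducing a third path segment, not just by "reversing and concatenating."

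The issue is that $\hat\Gamma_a$ and $C$ do not share both endpoints. You correctly note $\hat\Gamma_a(0)=\Phi_1(T_{\hat a(1)}L_K)=\sqrt{-1}U_1\oplus U_1^\perp$, and from $\hat\Psi_1=\hat\Psi_0\circ d\varphi^1_H=\Phi_1\circ d\varphi^1_H$ one gets $\hat\Gamma_a(1)=C(1)$, so the two paths agree at one end. But at the other end $C(0)=\Phi_0(T_{\hat a(0)}L_K)=\sqrt{-1}U_0\oplus U_0^\perp$, which in general is \emph{not} equal to $\hat\Gamma_a(0)=\sqrt{-1}U_1\oplus U_1^\perp$. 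So $\hat\Gamma_a$ followed by $C$ reversed is a path from $\sqrt{-1}U_1\oplus U_1^\perp$ to $\sqrt{-1}U_0\oplus U_0^\perp$, not a loop; "homotopy rel endpoints" between $C$ and $\hat\Gamma_a$ is impossible, and the disk-extension/Maslov-class argument cannot be applied yet. The paper closes the loop with a third segment, $\tilde\Gamma|_{[2,3]}(t)=\sqrt{-1}\,T_{\pi_{\R^n}\circ\gamma_a(3-t)}K\oplus(T_{\pi_{\R^n}\circ\gamma_a(3-t)}K)^\perp$, and then must show separately (using \cite[Proposition 1.2]{APS}) that this piece contributes zero Maslov index relative to $\sqrt{-1}U_1\oplus U_1^\perp$. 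Your plan omits this segment entirely, so the additivity identity you write down is off by exactly that term.

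A second, related problem: you describe the underlying loop as $l_{\hat a}$, i.e.\ capping path followed by the Hamiltonian chord $\hat a$. That loop leaves $L_K$ along the chord, so $TL_K$ is simply undefined there, and the sentence "$TL_K$ restricted to that disk is a Lagrangian subbundle" does not parse — there is no Lagrangian subbundle to restrict. The "Maslov class of $L_K$ vanishes" argument requires a loop lying entirely in $L_K$. The paper's loop $\hat l_a$ therefore runs the capping path, then sits at the \emph{constant} point $\hat a(0)\in L_K$, then runs the capping path back, staying in $L_K$ throughout; the Hamiltonian flow enters only through the choice of trivialization $\hat\Psi_{t+1}=\Phi_{1-t}\circ d\varphi_H^{1-t}$ on the constant stretch, which is how $C$ appears in the decomposition. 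This replacement — chord by constant, with a compensating twist of the trivialization — is precisely what makes the $L_K$ Maslov class argument legal, and it is also what generates the third segment you are missing. If you fix these two points (use the loop in $L_K$, and account for the extra segment), your argument becomes the paper's.
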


\begin{proof}
We consider a loop in $L_K$
\[\hat{l}_a \colon \R/3\Z \to L_K \colon t \mapsto 
\begin{cases}
\hat{\gamma}_a(t) & \text{ if }0\leq t\leq  1, \\
\hat{a}(0) & \text{ if }1\leq t\leq 2 , \\
\hat{\gamma}_a(3-t) & \text{ if }2\leq t\leq 3.
\end{cases}\]
$\hat{\Psi}$ gives a trivialization of $(\hat{l}_a)^*T(T^*\R^n)$ on $[0,1]$. We extend it to a symplectic trivialization on $[0,3]$ as follows: First, by (\ref{Psi-Phi}), we can extend $\hat{\Psi}$ on $[1,2]$ by
\[\hat{\Psi}_{t+1} \coloneqq \Phi_{1-t} \circ d\varphi^{1-t}_H\colon T_{\hat{a}(0)} (T^*\R^n) \to \C^n\]
for every $t\in [0,1]$.
Since $\hat{\Psi}_2= \Phi_0$, we can extend $\hat{\Psi}$ on $[2,3]$ by
\[ \hat{\Psi}_{t+2}(u,v) \coloneqq \sqrt{-1} u+ (v +h_{\hat{\gamma}_a(1-t)}(u))\]
for every $t\in [0,1]$. Then, $\hat{\Psi}_3 = \hat{\Psi}_0$. Therefore, we get a symplectic trivialization $\hat{\Psi}$ of $(\hat{l}_a)^*T(T^*\R^n)$ on $\R/3\Z$.

Since $\hat{\Psi}_t(\R^n \times \{0\})$ is transversal to $\R^n$ for every $t\in \R/3\Z$,
in a similar way as Lemma \ref{lem-triv-extend}, we can check that
$\hat{\Psi}$ extends to a trivialization of $T^*\R^n$ on a disk which bounds $\hat{l}_a$.
As a Lagrangian submanifold of $T^*\R^n$, the Maslov class of $L_K$ vanishes. Therefore, the Maslov index of the loop
\[ \tilde{\Gamma} \colon \R/3\Z \mapsto \mathcal{L}(\C^n) \colon t \mapsto \hat{\Psi} (T_{\hat{l}_a(t)} L_K)\]
is equal to $0$.
By definition,
$\tilde{\Gamma}(t)=\hat{\Gamma}_a(t)$ if $0\leq t\leq 1$ and $\tilde{\Gamma}(t)= C(2-t) $ if $1\leq t\leq 2$. Therefore,
\[ 0 = \mu(\tilde{\Gamma}) = \mu(\hat{\Gamma}_a, \hat{\Gamma}_a(0)) - \mu (C, \sqrt{-1} U_1\oplus U_1^{\perp}) + \mu(  \tilde{\Gamma}|_{[2,3]} , \sqrt{-1} U_1\oplus U_1^{\perp}).\]
If $2\leq t \leq 3$, we can compute from (\ref{tangent-LK}) that
\[ \tilde{\Gamma}(t) = \sqrt{-1} T_{\pi_{\R^n} \circ \gamma_a(3-t)}K \oplus (T_{\pi_{\R^n}\circ \gamma_a(3-t)}K)^{\perp}, \] where $\pi_{\R^n} \colon T^*\R^n \to \R^n$ is the bundle projection.
By this description and \cite[Proposition 1.2]{APS},
\[\mu(  \tilde{\Gamma}|_{[2,3]} , \sqrt{-1} U_1\oplus U_1^{\perp})=0 .\]
Hence, $\mu(\hat{\Gamma}_a, \hat{\Gamma}_a(0)) - \mu (C, \sqrt{-1} U_1\oplus U_1^{\perp}) =0$.
\end{proof}
Summarizing the above computations, we get the following:
$a\in \mathcal{R}(\Lambda_K)$ is non-degenerate if and only if $\nu'(\hat{a})=0$. Moreover, the Conley-Zehnder index $\mu(a)$ of $a$ is
\begin{align*}
\mu(a)& = \mu(\Gamma_a,\Gamma_a(1)) + \frac{n-1}{2}\\
&= \mu(\hat{\Gamma}_a ,\hat{\Gamma}_a(1))- \frac{1}{2} + \frac{n-1}{2}\\ &=\mu'(\hat{a})-\frac{1}{2}(n-2d)-\frac{1}{2}+\frac{n-1}{2} \\
&= \mu'(\hat{a})+d-1.
\end{align*}
Here, the second equality follows from (\ref{hat-Gamma}) and the third equality follows from Lemma \ref{lem-hat-Gamma} and the definition of $\mu'(\hat{a})$.

As in \cite[Section 6]{APS}, let $\mathcal{P}_K$ be the Hilbert manifold which consists of paths $c \colon [0,1]\to \R^n$ such that $c(0),c(1)\in K$ and as an $\R^n$-valued function,  $c $ belongs to the Sobolev space $W^{1,2}([0,1],\R^n)$.
By the Fenchel transformation defined in \cite[Section 4]{APS}, the time-independent Hamiltonian $H\colon T^*\R^n \to \R$ defines a time-independent Lagrangian
$ L\colon T\R^n\to \R \colon (q,v) \mapsto \frac{1}{2} |v|^2$.
Given any Lagrangian on $T\R^n$,  a functional on $\mathcal{P}_K$ is defined as described in \cite[Section 4]{APS}. In the case of  $L$, we get
\begin{align}\label{energy-functional}
 \mathcal{E}\colon \mathcal{P}_K \to \R\colon c \mapsto \frac{1}{2}\int_{[0,1]} \left| \frac{d c}{dt} \right|^2 dt  . 
 \end{align}
Then, $c_a \coloneqq \pi_{\R^n} \circ \hat{a}$ is a critical point of $\mathcal{E}$.

\begin{proof}[Proof of Proposition \ref{prop-correspond}]

\cite[Corollary 4.2]{APS} shows the following:
\begin{itemize}
\item $\nu'(\hat{a})$ is equal to the nullity of $c_a$ as a critical point of $\mathcal{E}$.
\item If $\nu'(\hat{a}) =0$, then $\mu'(\hat{a})$ is equal to the Morse index of $c_a$ as  a critical point of $\mathcal{E}$.
\end{itemize}
It is a standard fact that the nullity (resp. Morse index) of $c_a$ with respect to $\mathcal{E}$ is equal to the nullity (resp. Morse index) of $x_a =(c_a(0),c_a(1)) \in \mathcal{C}(K)$ with respect to $E$.
Therefore, $a\in \mathcal{R}(\Lambda_K)$ is a non-degenerate Reeb chord if and only if the nullity of $x_a$ is $0$. Moreover, if $a$ is non-degenerate, then
\[\mu(a) = \mu'(\hat{a}) +d-1 = \ind x_a +d-1 . \]
This finishes the proof.
\end{proof}

\subsection{Proof of Proposition \ref{prop-Morse-singular}}\label{subsec-A2}

Fix $g\in \mathcal{G}_K$ such that the Morse-Smale condition holds for $V_g$. We give filtrations on $CS_*(g)$ and $CM_*$ by the function $E \colon K\times K\to \R_{\geq 0}$ as follow:
For every $l\in \R_{>0}$ and $p\in \Z$, we define $CS^{\leq l}_p(g)$ to be the subcomplex of $CS_p(g)$ which is generated by chains represented by $c\colon \Delta^p \to K\times K$ satisfying the condition (T) and $c(\Delta^p) \subset E^{-1}([0,l ])$. (For the condition (T), see Subsection \ref{subsubsec-singular-Morse})
We also define $CM^{\leq l}_*$ to be the subcomplex of $CM_*$ spanned by the set $\{x \in \mathcal{C}(K) \mid E(x) \leq l\}$. We denote their homology by $HS^{\leq l}_*$ and $HM_*^{\leq l}$ respectively.
It is clear from the definition that the chain map $F_g \colon CS_*(g) \to CM_*$ preserves the filtrations.

For any $l'>l$, we define the quotient complexes
\[
\begin{array}{cc} CS^{(l,l']}_*(g) \coloneqq CS^{\leq l'}_*(g)/ CS_*^{\leq l}(g), & CM^{(l,l']}_*\coloneqq CM^{\leq l'}_*/CM^{\leq l}_*. \end{array}\]
We denote their homology by $HS^{(l,l']}_*$ and $HM_*^{(l,l']}$ respectively.

\begin{proof}[Proof of Proposition \ref{prop-Morse-singular}]
The second assertion is proved by taking a triangulation of $P$ such that the restriction of $c$ on each triangle satisfy the condition (T). The third assertion is proved in the same way.

We prove the first assertion for $\Phi_g$. 
We take a finite sequence 
$\{l_i\}_{i=0,\dots , m}$ in $\R_{>0} \setminus E(\mathcal{C}(K))$ such that
$0<\epsilon_0 = l_0<l_1<\dots <l_{m-1} < \max  E  < l_m$ and $( l_i,l_{i+1}]$ contains only a single critical value of $E$.
Then,
\begin{align*}
 (I_g)_* \colon HS_*^{(l_i,l_{i+1}]} \to & H_*( E^{-1}([0, l_{i+1}]) , E^{-1}([0,l_i]) ) \\
&\cong    H_*( E^{-1}([l_i, l_{i+1}]) , E^{-1}(l_i) ) 
 \end{align*}
is an isomorphism and
\[ HM_*^{(l_i,l_{i+1}]}= CM_*^{(l_i,l_{i+1}]} = \bigoplus_{x\in \mathcal{C}(K) \cap E^{-1}((l_i,l_{i+1}]) } \Z/2 \cdot  x.\]

For every $x\in \mathcal{C}(K) \cap E^{-1}((l_i,l_{i+1}])$, we define \[D_x\coloneqq \W_g^u(x) \cap E^{-1}([l_i,l_{i+1}]) . \]
Then, $D_x$ is an embedded disk with boundary $\partial D_x$ in $E^{-1}(l_i)$. By a standard argument in Morse theory, a basis of the singular homology $H_*( E^{-1}([l_i, l_{i+1}]) , E^{-1}(l_i) )$ is given by the set of the homology classes
\[ \{ [D_x] \mid x \in \mathcal{C}(K) \cap E^{-1}((l_i,l_{i+1}]) \} .\]

 $D_x$ and $\partial D_x$ are transversal to $\W^s_g(x')$ for every $x' \in \mathcal{C}(K)$. Therefore, in the same way as the second assertion, $(F_g)_*\circ (I_g)_*^{-1}([D_x]) \in HM_*^{(l_i,l_{i+1}]} = CM_*^{(l_i,l_{i+1}]} $ is equal to
\[\sum_{y\in \mathcal{C}(K) \cap E^{-1}((l_i,l_{i+1}])} \#_{\Z/2} (D_x \cap \W^s_g(y)) \cdot y = x \]
for every $x \in \mathcal{C}(K)\cap E^{-1}((l_i,l_{i+1}])$. This means that
\begin{align}\label{isom-FV}
(F_g)_* \colon HS^{(l_i,l_{i+1}]}_* \to HM_*^{(l_i,l_{i+1}]}\colon (I_g)_*^{-1}([D_x]) \mapsto x
\end{align}
 is an isomorphism.

Clearly, we have $HS_*^{\leq l_0}=0 =HM_*^{\leq l_0}$.
For each  $i\in \{0,\dots ,m-1\}$, we have a commutative diagram
\[\xymatrix{
HS_{*+1}^{(l_i,l_{i+1}]} \ar[r] \ar[d]_-{(F_g)_*} & HS_{*}^{\leq l_i} \ar[r] \ar[d]_-{(F_g)_*} &  HS_{*}^{\leq l_{i+1}} \ar[r] \ar[d]_-{(F_g)_*} & HS_{*}^{(l_i,l_{i+1}]}  \ar[r] \ar[d]_-{(F_g)_*} & HS_{*-1}^{\leq l_i} \ar[d]_-{(F_g)_*}  \\
HM_{*+1}^{(l_i,l_{i+1}]} \ar[r] &HM_{*}^{\leq l_i} \ar[r]  &  HM_{*}^{\leq l_{i+1}} \ar[r] & HM_{*}^{(l_i,l_{i+1}]} \ar[r] & HM_{*-1}^{\leq l_i} ,
}\]
where the horizontal sequences are exact.
If $(F_g)_*\colon HS^{\leq l_i}_* \to HM_*^{\leq l_i}$ is an isomorphism,
then (\ref{isom-FV}) and the five lemma show that the middle map $(F_g)_*\colon HS^{\leq l_{i+1}}_* \to HM_*^{\leq l_{i+1}}$ is also an isomorphism. 
Therefore, by an induction on $i=0,1,\dots ,m-1$, we can show that  $(F_g)_*\colon HS^{\leq l_i}_* \to HM_*^{\leq l_i}$ is an isomorphism for every $i=0,\dots ,m$.
In particular,
\[(F_g)_* \colon H_*(CS_*(g),\partial^{\sing}) =  HS^{\leq l_m}_* \to HM^{\leq l_m}_* = HM_*(g)\]
is an isomorphism.
This finishes the proof that $\Psi_g= (F_g)_* \circ (I_g)_*^{-1} \colon H_*(K\times K,\Delta_K ) \to HM_*(g)$ is an isomorphism.
%
In a similar way, we can show that $\Phi_g^2$ is an isomorphism.
\end{proof}

\subsection{Transversality at the trivial strip}\label{subsec-A3}

Let us introduce several notations for analysis.  Recall the map (\ref{diffeo-symp-cot}) $F\colon \R\times U^*\R^n \to T^*\R^n$. We take a Riemannian metric $g_0$ on $T^*\R^n$ such that:
\begin{itemize}
\item $F^*g_0$ on $\R_{\geq 0} \times U^*\R^n$ is preserved by $\tau_s$ for every $s\geq 0$.
\item Both $L_K$ and $\R^n$ (identified with the image of the zero section) are totally geodesic.
\end{itemize}
Fix a Reeb chord $(a\colon [0,T]\to U^*\R^n)\in \mathcal{R}(\Lambda_K)$ which is non-degenerate and denote $x_a \in \mathcal{C}(K)$ by $x_a =(q_1,q_2)$.
Since we have the biholomorphic map (\ref{bihol}), let us rewrite
\[D_3 \coloneqq [0,\infty)\times [0,1] \setminus \{(0,0), (0,1)\}.\]
In addition, recall the trivial strip $u_a\colon D_3 \to T^*\R^n$ over $a$ defined in Subsection \ref{subsubsec-def-Mm}.

Let $p>2$ and $\delta>0$. We fix them later.
For $k\in \Z_{\geq 0}$, we define the weighted Sobolev space
\[W^{k,p}_{\delta}(D_3,u_a^*T(T^*\R^n))\]
which consists of sections of $u_a^*T(T^*\R^n)$ with weight $e^{\delta s}$ on the positive strip $\psi_0\colon [0,\infty) \times [0,1] \to D_3$ and $e^{-\delta s }$ on the negative strip $\psi_i\colon  (-\infty,0] \times [0,1] \to D_3$ for $i=1,2$. For more details, see \cite[Section 7.2]{W}. Here, $s$ is the coordinate of $[0,\infty)$ and $(-\infty,0]$.
We use the Riemannian metric $g_0$ on $T^*\R^n$ and the Levi-Civita connection with respect to it in order to define this weighted Sobolev space 
and its norm $||\cdot ||_{k,p}$.

Let $C^{\infty}(D_3, u_a^*T(T^*\R^n))$ be the vector space of $C^{\infty}$ sections of $u^*T(T^*\R^n)$.
We fix $\R$-linear maps $\zeta_0\colon \R \to C^{\infty}(D_3, u_a^*T(T^*\R^n))$ and $\zeta_i \colon T_{q_i}K \to  C^{\infty}(D_3, u_a^*T(T^*\R^n))$ for $i=1,2$ satisfying the following:
\begin{itemize}
\item For $\sigma\in \R$, $\zeta_0(\sigma)$ is supported in $\Im \psi_0$ and $\zeta_0(\sigma)_{\psi_0(s,t)} = F_*(\sigma\cdot \partial_r) \in  T_{u_a\circ \psi_0(s,t)}(T^*\R^n)$ if $s \gg 0$. Here, $r$ is the $\R$-coordinate of $\R\times U^*\R^n$.
\item For $v\in T_{q_i}K$, $\zeta_i(v)$ is supported in $\Im \psi_i$ and $\zeta_i(v)_{\psi_i(s,t)}$ is constant to $v$ if $s \ll 0$ via the identification $T^*\R^n \cong \R^n\times \R^n$ by (\ref{identify-TRn}).
\end{itemize}
For any $\zeta\in W^{1,p}_{\delta}(D_3,u_a^*T(T^*\R^n))$, $\sigma \in \R$ and $v_i\in T_{q_i}K$ ($i=1,2$), let us denote
\[\zeta_{(\sigma ,v_1,v_2)} \coloneqq \zeta +\zeta_0(\sigma) + \zeta_1(v_1) +\zeta_2(v_2). \]
Then, we define
\begin{align*}
H_1 &\coloneqq \left\{ \eta =\zeta_{(\sigma,v_1,v_2)} \ \middle|
\begin{array}{cll}
  &\zeta\in W^{1,p}_{\delta}(D_3,u_a^*T(T^*\R^n)),\ \sigma\in \R,\ v_i\in T_{q_i}K \ (i=1,2) ,\\
  &\eta(z)\in \begin{cases}  T_{u_a(z)}L_K & \text{ if }z\in \partial_1 D_3\cup \partial_3 D_3 , \\ T_{u_a(z)}\R^n & \text{ if }z\in \partial_2 D_3. 
  \end{cases}
\end{array} \right\} , \\
H_2 & \coloneqq W^{0,p}_{\delta} (D_3, u_a^*T(T^*\R^n)).
\end{align*}
\begin{rem}\label{rem-eta-bounded}
The norm $||\eta ||$ of $\eta = \zeta_{(\sigma,v_1,v_2)}\in H_1$ is defined by
$|| \eta || = ||\zeta||_{p,1} + |\sigma| + |v_1| +|v_2|$.
We note that any $\eta = \zeta_{(\sigma,v_1,v_2)}\in H_1$ is $C^0$-bounded and $\eta(s,t)$ is asymptotic to $F_*(\sigma\cdot \partial_r) \in \R \cdot (\partial_su_a)$  when $s\to \infty$.
\end{rem}

We consider the linearlization at $u_a$ of the Cauchy-Riemann operator $\Dbar_{J'}$ with respect to $J'$. It is a bounded operator
\[  D_{u_a} \Dbar_{J'} \colon H_1\to H_2.\]
It is a Fredholm operator if $\delta>0$ is sufficiently small. See, for instance, \cite[Lemma 7.10]{W}. The Fredholm index of $D_{u_a} \Dbar_{J'}$ is equal to $\vdim \M_1(a)= |a|-d+2$ by Proposition \ref{prop-vdim}.
Note that $\delta>0$ can be taken independently on $p>2$. Let us fix $p\coloneqq \frac{2\pi}{\pi -\delta}$. For the reason, see Remark \ref{rem-boundedness}.
\begin{lem}\label{lemA-trivial-strip}
If $\eta \in\ker D_{u_a} \Dbar_J$ satisfies $\eta_z \in  \Im (du_a)_{z}$ for every $z\in (0,\infty)\times [0,1]$, then $\eta=0$.
\end{lem}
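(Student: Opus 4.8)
The statement says: any element $\eta$ of $\ker D_{u_a}\Dbar_{J'}$ whose value at every interior point of the trivial strip lies in the image of $du_a$ must vanish. The idea, following \cite[Proposition 8.9]{CELN} and \cite[Proposition 2]{Asp}, is that such an $\eta$ is a ``tangent'' infinitesimal deformation, so it is generated by reparametrizations and by the $\R$-translation together with the lengthening of the Reeb chord; but the weighted-Sobolev/boundary constraints and the non-degeneracy of $a$ leave no room for a nonzero such vector field. Concretely, first I would use the explicit description of $u_a$ from \eqref{eq-trivial-strip}: on $[0,\infty)\times[0,1]$ it is $(s,t)\mapsto (q_0+Tt\,p_0,\ h(s)\,p_0)$, so $\Im(du_a)_{(s,t)}$ is the real span of $\partial_s u_a = h'(s)\,\partial_{p}$-direction and $\partial_t u_a = T p_0$-direction in $T(T^*\R^n)\cong\R^n\oplus\R^n$. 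Writing $\eta_{(s,t)} = \phi(s,t)\,\partial_s u_a + \psi(s,t)\,\partial_t u_a$ for real functions $\phi,\psi$, the equation $D_{u_a}\Dbar_{J'}\eta = 0$ reduces to the Cauchy--Riemann equations for the complex-valued function $\phi + \sqrt{-1}\,\psi$ on $D_3$ (here one uses that $u_a$ is $J'$-holomorphic and that $J'$ maps the $\partial_s$-direction to the $\partial_t$-direction along $u_a$, which is exactly the third condition imposed on $J'$; the totally geodesic hypotheses on $g_0$ make the lower-order connection terms vanish on the boundary).

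Next I would translate the boundary conditions and the weighted-space membership into conditions on the holomorphic function $w \coloneqq \phi + \sqrt{-1}\,\psi$. Along $\partial_2 D_3$ we need $\eta\in T u_a\R^n$, which forces the $\partial_p$-component to vanish there, i.e.\ $\phi = 0$ on $\partial_2 D_3$; along $\partial_1 D_3\cup\partial_3 D_3$ we need $\eta\in T u_a L_K$, and since $p_0\in(T_{q_i}K)^\perp$ while the fiber direction $\partial_p$-part of $T L_K$ is $(T_qK)^\perp$, the constraint becomes $\psi$-type, i.e.\ $\mathrm{Re}$ of $w$ restricted appropriately is prescribed by the geometry of $L_K$ over $K$; I would work out that this gives $\psi$ (the boundary value governing $\partial_t$, i.e.\ motion along the Reeb direction within $K$) lies in a fixed Lagrangian-type boundary condition determined by $T_{q_i}K$. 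The non-degeneracy of the Reeb chord $a$ is precisely the statement that the corresponding path of Lagrangian subspaces has transverse endpoints, so the associated boundary value problem for $w$ has no nonzero solutions that also satisfy the decay (for the $W^{1,p}_\delta$-part) at the two negative ends and the asymptotic condition at the positive end. Here Remark \ref{rem-eta-bounded} is used: at the positive end $\eta$ is asymptotic to $F_*(\sigma\,\partial_r)\in\R\cdot\partial_s u_a$, i.e.\ $w$ tends to the constant $\sigma\in\R$; the weight $\delta$ and the choice $p = \frac{2\pi}{\pi-\delta}$ (cf.\ Remark \ref{rem-boundedness}) force $\sigma = 0$ and then exponential decay of $w$.

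Finally, with $w$ holomorphic on the half-strip, decaying at all three ends, real on $\partial_2 D_3$ and with the Lagrangian boundary condition coming from $T_{q_i}K$ on the other two edges, a maximum-principle / unique-continuation argument (or, equivalently, the fact that the Fredholm boundary-value operator has trivial kernel because $a$ is non-degenerate, combined with the Schwarz-reflection or the positivity of the relevant Maslov-type index) shows $w\equiv 0$, hence $\eta = 0$. I would phrase the last step via reflection across $\partial_2 D_3$ to double the strip and then invoke that a bounded holomorphic function on a strip that decays at both ends and satisfies a transverse Lagrangian boundary condition must be identically zero.

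\textbf{Main obstacle.} The delicate point is the bookkeeping at the positive puncture: showing that the constant asymptotic value $\sigma$ of $w$ is forced to be zero, and that this is compatible with $\eta$ lying in the domain $H_1$ (which allows a nonzero $\sigma$ a priori). This is where the precise weight $\delta$ and exponent $p$ enter, and where one must be careful that the ``extra'' directions $\zeta_0(\sigma),\zeta_i(v_i)$ built into $H_1$ do not secretly produce a nonzero tangent kernel element; I expect the argument to hinge on matching the cokernel of the asymptotic operator at the positive end with the translation direction $\partial_s u_a$ and noting that the tangency hypothesis kills precisely the complementary (lengthening) direction. The boundary analysis along $L_K$ versus $\R^n$, using that $p_0\perp T_{q_i}K$, is routine once set up but must be done carefully to see that non-degeneracy of $a$ is exactly the transversality that rules out a nonzero solution.
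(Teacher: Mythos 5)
Your reduction is the same as the paper's: since $\eta_z\in\Im(du_a)_z$, the section $\eta$ is tangent to the $J'$-holomorphic plane $H=\{(q_1+k\,p_0,\,l\,p_0)\}$ containing the trivial strip, and in the holomorphic chart on $H$ the linearized operator becomes the standard $\bar\partial$, so $\eta$ becomes a complex-valued holomorphic function $w$ on the half-strip; the boundary conditions are read off from $T\R^n\cap TH$ and $TL_K\cap TH$ using $p_0\perp T_{q_i}K$ (so $w$ is purely imaginary on the vertical edge $\partial_2D_3$ and real on the two horizontal $L_K$-edges --- note in your last step you wrote ``real on $\partial_2D_3$'', contradicting your own earlier deduction $\phi=0$ there; this is only a labeling slip).

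The genuine problem is your treatment of the positive puncture. You claim that the weight $\delta$ and the choice $p=\tfrac{2\pi}{\pi-\delta}$ ``force $\sigma=0$ and then exponential decay of $w$'', and your final Liouville/reflection step assumes $w$ ``decays at all three ends''. This is not available: the domain $H_1$ is built precisely so as to contain the non-decaying summand $\zeta_0(\sigma)$, so a kernel element is only asymptotic at the positive end to the constant $\sigma$ (Remark \ref{rem-eta-bounded}); nothing in the functional-analytic setup forces $\sigma=0$, and a step relying on decay there fails. Likewise, the non-degeneracy of $a$ plays no role in this lemma: it governs the directions transverse to $H$ (the $T_{q_i}K$-directions), which are exactly the ones killed by the hypothesis $\eta_z\in\Im(du_a)_z$, so ``transversality of the endpoints of the path of Lagrangians'' cannot be what rules out a nonzero $w$ --- indeed $\sinh(\pi z)$ satisfies the same boundary conditions and is only excluded by boundedness. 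The correct (and paper's) conclusion of the argument uses nothing more than the uniform $C^0$-bound on $\eta$: a bounded holomorphic $w$ on the half-strip, real on the two horizontal edges and purely imaginary on the vertical edge, extends by Schwarz reflections to a bounded entire function, hence is a constant, and a constant that is both real and purely imaginary is $0$; in particular $\sigma=0$ comes out of this Liouville argument rather than going into it. With that replacement your proof closes; as written, the decay claim and the appeal to non-degeneracy are unjustified.
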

\begin{proof}
Referring to definition (\ref{eq-trivial-strip}) of $u_a$, consider the $2$-plane $H\coloneqq  \{ (q_1+ k p_0  , l p_0) \in T^*\R^n \mid k,l\in \R  \}$ containing $\Im u_a$ and a diffeomorphism
\[ \Phi\colon \{z\in \C \mid \Re z \geq 0\} \to H \colon (s+\sqrt{-1}t) \mapsto (q_1 + Tt\cdot p_0, h(s)p_0).\]
 (Note that $a(0)$ is denoted here by $(q_1,p_0)$.)
$\Phi$ is $J'$-holomorphic and coincides with $u_a$ on $\{z\in \C \mid \Re z > 0 ,  0  \leq \Im z \leq 1 \}\cong (0,\infty)\times [0,1]$. If  $\eta_z \in  \Im (du_a)_{z}$ for every $z\in (0,\infty)\times [0,1]$, $\eta$ is tangent to $H$ and via the map $\Phi$,
$\eta \in\ker D_{u_a} \Dbar_{J'} $ can be regarded as a holomorphic function $\eta \colon [0,\infty) \times [0,1] \to  \C$ such that $\eta(\R\times \{0,1\}) \subset \R$, $\eta (\{0\}\times [0,1]) \subset \sqrt{-1}\R$ and $\eta(s,t)$ is uniformly bounded by Remark \ref{rem-eta-bounded}. These properties imply that $\eta$ must be constant to $0$.
\end{proof}

Let us give a lemma inspired by a construction of Kuranishi chart \cite[Subsection 7.2.1]{fooo}.
To state this, we note that $\M_1(a)$ is the zero locus of $\Dbar_{J'}$, which is a section of a Banach bundle $\mathfrak{E}$ over a Banach manifold $\mathfrak{B}$. A local coordinate of $\mathfrak{B}$ near $u_a$ is given by
$\{ \eta \in H_1 | || \eta|| < \epsilon \} \to \mathfrak{B} \colon \eta \mapsto \exp_{u_a} \eta$ for small $\epsilon>0$, where $\exp$ is the exponential map with respect to $g_0$. The fiber $\mathfrak{E}_{u}$ at $u\in \mathfrak{B}$ is equal to $H_2$ when $u=u_a$.
\begin{lem}\label{lem-kuranishi}
For any $\eta \in \ker D_{u_a} \Dbar_{J'}$, there exist $R_1>\delta_1>0$ and a $C^{\infty}$ path $[-\epsilon_1,\epsilon_1] \to \mathfrak{B} \colon \epsilon \mapsto u_{\epsilon}$ such that:
\begin{enumerate}
\item[(i)] $u_{\epsilon} \colon D_3 \to T^*\R^n$ is of class $C^{\infty}$.
\item[(ii)] $\Dbar_{J'} (u_{\epsilon}) =0 $ outside $[\delta_1,R_1]\times [0,1]$.
\item[(iii)] $ \rest{ \frac{d u_{\epsilon}}{d\epsilon} }{\epsilon=0} = \eta $.
\end{enumerate}
\end{lem}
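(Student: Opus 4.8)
\textbf{Proof proposal for Lemma \ref{lem-kuranishi}.}

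The plan is to produce the desired family $u_\epsilon$ by a standard implicit-function-theorem construction on a perturbed section, combined with a cut-off argument to guarantee the local support property (ii). First I would fix a finite-dimensional obstruction space: choose a finite-dimensional subspace $\mathfrak{F} \subset C^\infty_0((\delta_1,R_1)\times[0,1], u_a^*T(T^*\R^n)) \subset H_2$, consisting of smooth sections compactly supported in the interior strip region $(\delta_1,R_1)\times[0,1]$, such that $\mathrm{Im}(D_{u_a}\Dbar_{J'}) + \mathfrak{F} = H_2$. Such $\mathfrak{F}$ exists because $D_{u_a}\Dbar_{J'}$ is Fredholm, hence has closed image of finite codimension, and smooth compactly supported sections are dense in $H_2$; this is exactly the device used in Kuranishi chart constructions, e.g. \cite[Subsection 7.2.1]{fooo}. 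Then the map
\[
\mathcal{G} \colon \{\eta' \in H_1 \mid \|\eta'\| < \epsilon\} \times \mathfrak{F} \to H_2 \colon (\eta', f) \mapsto \Dbar_{J'}(\exp_{u_a}\eta') - f
\]
has surjective differential at $(0,0)$, namely $(\zeta, f)\mapsto D_{u_a}\Dbar_{J'}(\zeta) - f$, with kernel containing $\ker D_{u_a}\Dbar_{J'} \times \{0\}$. By the implicit function theorem on Banach spaces, for each sufficiently small $\epsilon$ the element $\eta \in \ker D_{u_a}\Dbar_{J'}$ (rescaled as $\epsilon\eta$) extends to a $C^\infty$ path $\epsilon\mapsto (\eta'_\epsilon, f_\epsilon)$ with $\mathcal{G}(\eta'_\epsilon, f_\epsilon)=0$, $\eta'_0=0$, $f_0=0$, and $\rest{\tfrac{d}{d\epsilon}(\eta'_\epsilon)}{\epsilon=0} = \eta$, $\rest{\tfrac{d}{d\epsilon}f_\epsilon}{\epsilon=0}=0$ (after projecting along a complement, using that $\eta$ lies in the kernel factor).

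Next I would set $u_\epsilon \coloneqq \exp_{u_a}(\eta'_\epsilon)$. By elliptic regularity applied to the equation $\Dbar_{J'}(u_\epsilon) = f_\epsilon$ with smooth right-hand side and smooth (totally geodesic, hence smooth) Lagrangian boundary conditions $L_K$ and $\R^n$, the map $u_\epsilon$ is of class $C^\infty$ on $D_3$, giving (i); smoothness in $\epsilon$ follows from smoothness of the implicit-function solution. Property (ii) is immediate from the construction: since $f_\epsilon \in \mathfrak{F}$ is supported in $[\delta_1,R_1]\times[0,1]$, we have $\Dbar_{J'}(u_\epsilon) = f_\epsilon = 0$ outside $[\delta_1,R_1]\times[0,1]$. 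Property (iii) is the chain-rule computation $\rest{\tfrac{d u_\epsilon}{d\epsilon}}{\epsilon=0} = (d\exp_{u_a})_0\big(\rest{\tfrac{d\eta'_\epsilon}{d\epsilon}}{\epsilon=0}\big) = \eta$, using $(d\exp_{u_a})_0 = \mathrm{id}$.

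The main technical point — and the step I would be most careful about — is the choice of the interval $[\delta_1,R_1]$ and checking that the construction genuinely confines the defect $f_\epsilon$ to a compact region strictly inside the strip, away from the three punctures and from the outgoing end. This requires that the cokernel of $D_{u_a}\Dbar_{J'}$ can be represented by sections supported in such a fixed compact interior region; this in turn uses unique continuation / the asymptotic decay built into the weighted space $H_2$ (the weight $e^{\pm\delta s}$ with $\delta>0$ and $p = \frac{2\pi}{\pi-\delta}$ as fixed before the lemma), so that no cokernel contribution is forced out to $s\to\pm\infty$. Concretely one first picks any finite spanning set of the cokernel in $H_2$, approximates each element in $L^p_\delta$ by a smooth compactly supported section — possible by density together with the exponential weight — and then takes $[\delta_1,R_1]$ large enough to contain all these supports while $\delta_1>0$ keeps them off the punctures $(0,0),(0,1)$; the surjectivity $\mathrm{Im}(D_{u_a}\Dbar_{J'}) + \mathfrak{F} = H_2$ then persists since $\mathfrak{F}$ still surjects onto the cokernel. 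Everything else is the routine Banach-space implicit function theorem and elliptic bootstrapping, which I would not spell out in detail.
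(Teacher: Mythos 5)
Your proposal is essentially the paper's own proof: you choose a finite-dimensional obstruction space of smooth sections supported in $[\delta_1,R_1]\times[0,1]$ (obtained by density/approximation of a cokernel complement, just as the paper replaces $\theta_k$ by compactly supported smooth $\theta'_k$), you solve a transversal/implicit-function problem to get a finite-dimensional solution manifold of the perturbed equation, and you invoke elliptic bootstrapping for (i) and the support of the obstruction for (ii), with the tangent direction (iii) coming from the fact that $\eta\in\ker D_{u_a}\Dbar_{J'}$ lies in the tangent space of the solution set. The only small omission is that $\Dbar_{J'}(\exp_{u_a}\eta')$ lives in the fiber over $\exp_{u_a}\eta'$, not over $u_a$, so the map $\mathcal{G}$ needs the parallel-transport trivialization $\mathrm{Par}_u$ (as the paper makes explicit following \cite[Subsection 7.2.1]{fooo}) before the Banach-space implicit function theorem applies; this is routine and does not affect the argument.
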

\begin{proof}
Since $\coker D_{u_a} \Dbar_{J'}$ is finite dimensional, there exists finite elements $\theta_1,\dots ,\theta_m \in H_2$ such that $\Im D_{u_a} \Dbar_{J'} + \mathrm{Span}(\theta_1, \dots ,\theta_m) = H_2$.
Then, we take $\theta'_1,\dots ,\theta'_m \in H_2$ and $R_1\gg \delta_1>0$ such that for every $k\in \{1,\dots ,m\}$: 
\begin{itemize}
\item $||\theta'_k -\theta_k||_{0,p}$ is so small that $\Im D_{u_a} \Dbar_{J'} + \mathrm{Span}(\theta'_1, \dots ,\theta'_m) = H_2$
\item $\theta'_k$ is of class $C^{\infty}$.
\item $\theta'_k(s,t)=0$ for every $(s,t) \in D_3 \setminus ([\delta_1, R_1]\times [0,1])$.
\end{itemize}
On a small neighborhood $\mathfrak{U} \subset \mathfrak{B}$ of $u_a$, we trivialize $\rest{\mathfrak{E}}{\mathfrak{U}}$ by parallel transport as in \cite[Subsection 7.2.1]{fooo}. Let us denote it by $\mathrm{Par}_{u} \colon \mathfrak{E}_{u} \to \mathfrak{E}_{u_a} = H_2$ for every $u\in \mathfrak{U}$.

We consider the map
\[ \mathfrak{D}\colon \mathfrak{U} \to H_2 \colon u\mapsto \mathrm{Par}_u ( \Dbar_{J'}u).\]
This map is transversal to $S\coloneqq \mathrm{Span}(\theta'_1,\dots ,\theta'_m)$ at $u_a$, so if we take $\mathfrak{U}$ smaller if necessary, $\mathfrak{D}$ is transversal to $S$ on $\mathfrak{U}$ and  
$\mathfrak{M}\coloneqq \mathfrak{D}^{-1}(S)$ is a finite dimensional $C^{\infty}$ manifold.
From the conditions on $\{\theta'_k\mid k=1,\dots ,m\}$ and elliptic bootstrapping, every $u\in \mathfrak{M}$ satisfies the conditions (i) and (ii). 
Since the tangent space $T_{u_a} \mathfrak{M}= (D_{u_a}\Dbar_{J'})^{-1}(S)$ at $u_a$ contains $\ker D_{u_a} \Dbar_{J'}$, for every $\eta \in \ker D_{u_a} \Dbar_{J'}$, there exists a $C^{\infty}$ path $[-\epsilon_1,\epsilon_1]\to \mathfrak{M} \colon \epsilon \mapsto u_{\epsilon}$ such that $\rest{ \frac{d u_{\epsilon}}{d \epsilon} }{\epsilon=0}=\eta$.
\end{proof}


\begin{proof}[Proof of Lemma \ref{lem-trivial-strip}]
The transversality for $\N_g(a;x_a)$ at the trivial strip $u_a$ is equivalent to the following: The restriction of $ D_{u_a} \Dbar_{J'} $ on the subspace
\[ \{ \eta= \zeta_{(\sigma,v_1,v_2)}\in H_1 \mid   (v_1,v_2) \in T_{x_a} \W^s_g(x_a) \} \]
is a surjection onto $H_2$. The Fredholm index of the restriction of $D_{u_a} \Dbar_{J'}$ on this subspace is $0$, so it suffices to show that 
\begin{align}\label{kernel-Dbar}
\ker  D_{u_a} \Dbar_{J'}  \cap \{ \eta= \zeta_{(\sigma,v_1,v_2)} \in  H_1  \mid   (v_1,v_2) \in T_{x_a} \W^s_g(x_a) \} 
\end{align}
is the zero vector space.

Let us assume that there exists a non-zero element $\eta =  \zeta_{(\sigma,v_1,v_2)} \neq 0$ of (\ref{kernel-Dbar}). We will deduce a contradiction.
Let $f_{\eta} \in T_{c_a}\mathcal{P}_K$ be an element determined by $f_{\eta}(t) = \eta(0,t)$ for every $t\in (0,1)$. (A priori, we only have $C^0$-boundedness of $f_{\eta}$. For the $C^{\infty}$ boundedness of $f_{\eta}$ on $[0,1]$, see Remark \ref{rem-boundedness} below.) We denote $X_{\eta}\coloneqq (f_{\eta}(0), f_{\eta}(1)) = (v_1,v_2)$.

We take a family $(u_{\epsilon})_{\epsilon\in [-\epsilon_1,\epsilon_1]}$ for $\eta$ from Lemma \ref{lem-kuranishi}. Then, we get a path
$c_{\epsilon} \in \mathcal{P}_K$ defined by $c_{\epsilon}(t) \coloneqq u_{\epsilon}(0,t)$ for every $t\in [0,1]$. (Its $C^{\infty}$-boundedness follows by a similar argument as Remark \ref{rem-boundedness}.)
Let us consider the function
\[\mathcal{L} \colon [-\epsilon_1 ,\epsilon_1] \to \R \colon \epsilon \mapsto \len (c_\epsilon) = \int_{[0,1]} \left| \frac{d c_{\epsilon}}{dt} \right| dt .\]
Here, $|\cdot |$ is the norm on $\R^n$ with respect to the standard metric $\la\cdot ,\cdot \ra$.
$T p_0 =\frac{d c_a}{dt}$ is a constant vector with $|p_0|=1$ and $\frac{d c_{\epsilon}}{dt} = Tp_0 + O(\epsilon) \neq 0$ if $|\epsilon|$ is sufficiently small. Hence $\mathcal{L}$ is of class $C^{\infty}$ near $\epsilon=0$.
Its derivative at $\epsilon =0$ vanishes since
\[\rest{ \frac{d \mathcal{L}}{d\epsilon} }{\epsilon=0} = \int_{[0,1]} |Tp_0|^{-\frac{3}{2}} \left\la  Tp_0 , \frac{d f_{\eta}}{dt} \right\ra dt = T^{-\frac{1}{2}} \left\la p_0 , \frac{d f_{\eta}}{dt}(1) - \frac{d f_{\eta}}{dt} (0) \right\ra  \]
and both $\frac{d f_{\eta}}{dt}(0)\in T_{q_1}K $ and $\frac{d f_{\eta}}{dt} (1) \in T_{q_2}K$ are perpendicular to $p_0$.
Likewise, we consider
\[ L \colon [-\epsilon_1,\epsilon_1] \to \R\colon \epsilon \mapsto \sqrt{ 2 E( c_{\epsilon}(0) , c_{\epsilon}(1))} = | c_{\epsilon}(1) - c_{\epsilon}(0)|.\]
Its first derivative at $\epsilon=0$ vanishes. The second derivative $\frac{d^2 L}{d\epsilon^2} $ is non-negative  at $\epsilon=0$ since
$X_{\eta}= ( \rest{ \frac{d}{d\epsilon}}{\epsilon=0} c_{\epsilon}(0) ,\rest{ \frac{d}{d\epsilon}}{\epsilon=0}  c_{\epsilon}(1))$ belongs to $T_{x_a} \W^s_g(x_a) $.
It is obvious that $\mathcal{L}(0) = L(0) =T$ and $\mathcal{L}(\epsilon) \geq L(\epsilon)$ for every $\epsilon\in [-\epsilon_1,\epsilon_1]$. This implies that
\begin{align}\label{ineq-hessian}
 \rest{ \frac{d^2 \mathcal{L}}{d\epsilon^2}}{\epsilon=0} \geq \rest{ \frac{d^2 L}{d\epsilon^2}}{\epsilon=0} \geq 0.
 \end{align}

On the other hand, we claim that there exists a positive constant $A$ such that
\[ \mathcal{L} (\epsilon) \leq \mathcal{L}(0) -A \epsilon^2 .\]
when $\epsilon$ is close to $0$.
We leave its proof to Lemma \ref{lem-approx} below.
This inequality means that $ \rest{ \frac{d^2 \mathcal{L}}{d\epsilon^2}}{\epsilon=0} \leq-A<0$, which contradicts (\ref{ineq-hessian}). Therefore, (\ref{kernel-Dbar}) is the zero vector space.
\end{proof}

\begin{rem}\label{rem-boundedness}
On $R\coloneqq [0,s'_0)\times [0,1]$ for small $s'_0>0$, let $(s',t')$ denote the coordinate. For $\eta = \zeta_{(\sigma , v_1,v_2)}\in \ker D_{u_a} \Dbar_{J'}$, we set $\eta(0,0) \coloneqq v_1$ and $\eta_{(0,1)} \coloneqq v_2$, then $\eta$ is defined on $R$. Obviously, it is $C^0$-bounded. We claim that $\eta$ is $C^{\infty}$ bounded on $[0,\frac{s'_0}{2})\times [0,1]$.
By elliptic bootstrapping argument, it suffices to show the $L^p$-boundedness of $\nabla_{s'} \eta$ and $\nabla_{t'}\eta$. We want to deduce it from the $W^{1,p}$-boundedness of $e^{-\delta s} (\eta\circ \psi_i )$ on $ (-\infty,s_i] \times [0,1] $ for $i=1,2$.
We can take explicitly the holomorphic map $\psi_1 \colon (-\infty, s_1]\times [0,1]\colon (s,t)\mapsto (e^{\frac{\pi}{2}s}\cos (\frac{\pi}{2} t) , e^{\frac{\pi}{2}s} \sin(\frac{\pi}{2} t))$ for $s_i<0$. From our choice $p= \frac{2\pi}{\pi-\delta}$,
\begin{align*}
\int_{\Im \psi_1 } |\nabla_{s'} \eta |^p ds'dt' & \leq C \int_{(-\infty,s_1]\times [0,1]}  |\nabla_{s} \eta \cdot e^{-\frac{\pi}{2}s} |^p (e^{\frac{\pi}{2} s})^2 dsdt \\
&= C \int_{(-\infty,s]\times [0,1]} | e^{-\delta s } \nabla_{s} \eta |^p dsdt <\infty
\end{align*}
for $C\coloneqq  (\frac{\pi}{2})^{2-p}$.
By similar estimates on $\Im \psi_2$ and for $\nabla_{t'} \eta$, the $L^p$-boundedness of $\nabla_{s'} \eta$ and $\nabla_{t'}\eta$ are proved.
\end{rem}

The proof of Lemma \ref{lem-trivial-strip} is completed by the next lemma.
\begin{lem}\label{lem-approx}
There exists $A>0$ and $\epsilon'_2\in (0,\epsilon_1]$ such that $\len (c_{\epsilon}) \leq T -A\epsilon^2$ for any $\epsilon \in [-\epsilon'_2,\epsilon'_2]$.
\end{lem}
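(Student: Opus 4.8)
\textbf{Proof plan for Lemma \ref{lem-approx}.}
The strategy is to show that the length functional $\len(c_\epsilon)$ strictly drops to second order along the family $(c_\epsilon)$, by comparing $c_\epsilon$ with the linear path $\gamma_\epsilon$ from $c_\epsilon(0)$ to $c_\epsilon(1)$. The elementary fact underlying everything is that for a $C^\infty$ path $c\colon[0,1]\to\R^n$ with fixed endpoints, $\len(c)=\len(\gamma)$ (where $\gamma$ is the straight segment with the same endpoints) if and only if $c$ is a reparametrization of $\gamma$; moreover the ``length deficit'' $\len(c)-\len(\gamma)$ is controlled below by the $L^2$-size of the component of $\dot c$ orthogonal to the segment direction. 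Concretely, writing $\dot c_\epsilon(t)=\lambda(t)\,p_0+w_\epsilon(t)$ with $w_\epsilon(t)\perp p_0$, one has $|\dot c_\epsilon(t)|\le \lambda(t)+\tfrac{1}{2\lambda(t)}|w_\epsilon(t)|^2$ plus higher order terms, while $|c_\epsilon(1)-c_\epsilon(0)|$ picks up only the $p_0$-components; integrating gives $\len(c_\epsilon)-|c_\epsilon(1)-c_\epsilon(0)|\ge \tfrac{1}{2T}\int_0^1 |w_\epsilon(t)|^2\,dt + O(\|w_\epsilon\|^3)$.

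\textbf{Key steps, in order.} First I would expand $c_\epsilon(t)=c_a(t)+\epsilon f_\eta(t)+O(\epsilon^2)$ in $C^1([0,1])$, using Lemma \ref{lem-kuranishi} (its conclusion (iii)) together with the $C^\infty$-boundedness of $f_\eta$ established in Remark \ref{rem-boundedness}; so $\dot c_\epsilon(t)=T p_0+\epsilon \dot f_\eta(t)+O(\epsilon^2)$ and hence $w_\epsilon(t)=\epsilon\, \mathrm{pr}_{p_0^\perp}\dot f_\eta(t)+O(\epsilon^2)$. Second, I would argue that $\mathrm{pr}_{p_0^\perp}\dot f_\eta$ is not identically zero: if it vanished on all of $[0,1]$ then $f_\eta(t)$ would take the form $(\text{const})+ (\text{scalar function of }t)\,p_0$, i.e.\ $f_\eta$ is tangent to the $2$-plane $H$ spanned by $c_a$; combined with the boundary conditions $f_\eta(0)=v_1\in T_{q_1}K$, $f_\eta(1)=v_2\in T_{q_2}K$ (both perpendicular to $p_0$ since $x_a\in\mathcal{C}(K)$ is a binormal chord) and the asymptotic condition at $s\to\infty$, this would put $\eta$ into the situation of Lemma \ref{lemA-trivial-strip} and force $\eta=0$, contradicting $\eta\neq 0$. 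Third, set $A_0:=\tfrac{1}{2T}\int_0^1 |\mathrm{pr}_{p_0^\perp}\dot f_\eta(t)|^2\,dt>0$ by step two. Fourth, I would assemble the estimate: $\len(c_\epsilon)\ge |c_\epsilon(1)-c_\epsilon(0)| + A_0\epsilon^2 + O(\epsilon^3)$, while the proof of Lemma \ref{lem-trivial-strip} already knows $L(\epsilon)=|c_\epsilon(1)-c_\epsilon(0)|$ has a critical point at $\epsilon=0$ with value $T$, hence $|c_\epsilon(1)-c_\epsilon(0)|\le T + O(\epsilon^3)$ --- wait, this is not quite enough since $L$ need not be bounded above by $T$. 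Instead I would invoke the already-established facts directly: $\mathcal{L}(0)=L(0)=T$, $\mathcal{L}(\epsilon)\ge L(\epsilon)$ for all $\epsilon$, and both are $C^\infty$ near $0$ with vanishing first derivatives; the refined lower bound from the length-deficit inequality gives $\mathcal{L}(\epsilon)-L(\epsilon)\ge A_0\epsilon^2+O(\epsilon^3)$, so $\tfrac{d^2}{d\epsilon^2}\big|_0(\mathcal{L}-L)\ge 2A_0>0$. Combined with the a priori sign $\tfrac{d^2}{d\epsilon^2}\big|_0 L\ge 0$ --- no, that pushes the wrong way.

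\textbf{Resolution and main obstacle.} The correct route is to bound $\mathcal L$ from above rather than from below, so the comparison with $L$ is not the right tool. Instead: expand $|\dot c_\epsilon(t)|$ \emph{exactly} to second order. Since $\dot c_\epsilon(t)=T p_0+\epsilon\dot f_\eta(t)+O(\epsilon^2)$ and $\langle p_0,\dot f_\eta(t)\rangle$ integrates (by the computation $\int_0^1\langle p_0,\dot f_\eta\rangle\,dt=\langle p_0, v_2-v_1\rangle=0$, though the pointwise function need not vanish) --- here lies the subtlety --- I would instead integrate by parts using that $f_\eta$ satisfies the linearized geodesic (Jacobi-type) equation coming from $\eta\in\ker D_{u_a}\Dbar_{J'}$ restricted to $\partial_2 D_3$, under the totally geodesic hypothesis on $\R^n$ and $L_K$ for the metric $g_0$. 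This identifies $\tfrac12\tfrac{d^2}{d\epsilon^2}\big|_0\len(c_\epsilon)$ with the Hessian of the energy $\mathcal E$ of \eqref{energy-functional} evaluated on $f_\eta$, which by the nondegeneracy of $x_a$ (Proposition \ref{prop-correspond}) and the condition $(v_1,v_2)\in T_{x_a}\W^s_g(x_a)$ together with step two (the off-$p_0$ component is nonzero) is strictly negative; call its value $-A<0$. Then Taylor's theorem with the $C^3$-control on $\epsilon\mapsto\len(c_\epsilon)$ gives $\len(c_\epsilon)\le T-\tfrac{A}{2}\epsilon^2+|\epsilon|^3\cdot C$, so for $\epsilon'_2$ small enough $\len(c_\epsilon)\le T-\tfrac{A}{4}\epsilon^2$, which is the claim (relabel $A/4$ as $A$). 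The main obstacle is making the second-order expansion of the length integrand rigorous and genuinely strict: one must rule out that $\dot c_\epsilon$ stays a pointwise positive multiple of $p_0$ to first order (handled by step two via Lemma \ref{lemA-trivial-strip}), and one must keep uniform $C^3$-control of $\epsilon\mapsto c_\epsilon$ near $\epsilon=0$, which requires citing the $C^\infty$-boundedness from Remark \ref{rem-boundedness} and the smooth dependence of the Kuranishi family in Lemma \ref{lem-kuranishi}.
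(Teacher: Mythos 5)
Your proposal has a genuine and fatal gap, and it is hidden inside the ``resolution'' paragraph. Let me explain where it breaks.

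The claim in the resolution is that $\tfrac12\tfrac{d^2}{d\epsilon^2}\big|_0\len(c_\epsilon)$ coincides with the Hessian of the energy $\mathcal E$ evaluated on $f_\eta$ and that this quantity is strictly \emph{negative}. But consider what is known about the second variation of the length of a straight segment in $\R^n$ whose endpoints slide along $K$. The length functional (equivalently, $\mathcal E$, up to reparametrization) with \emph{fixed} endpoints has $c_a$ as its absolute minimizer, so its Hessian contributes a non-negative term. The only negative contribution could come from the endpoint variation $X_\eta=(v_1,v_2)$, and the endpoint contribution is controlled by the Hessian of the finite-dimensional function $E$. By hypothesis $(v_1,v_2)\in T_{x_a}\W^s_g(x_a)$, which is the \emph{positive} eigenspace of $\operatorname{Hess}_E(x_a)$ (stable directions of the negative gradient flow). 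So both contributions are $\ge 0$, and your direct second-variation computation yields $\mathcal L''(0)\ge 0$, the \emph{opposite} of what Lemma \ref{lem-approx} asserts. Indeed, this non-negativity is exactly the inequality (\ref{ineq-hessian}) that the surrounding proof of Lemma \ref{lem-trivial-strip} establishes \emph{before} invoking Lemma \ref{lem-approx}; your argument reproves that half of the contradiction and cannot produce the other half. The informal ``Jacobi-type equation for $f_\eta$'' does not rescue this: the boundary restriction $\eta|_{\partial_2 D_3}$ of a solution of $D_{u_a}\Dbar_{J'}\eta=0$ does not satisfy any autonomous ODE in $t$, and in any case there is no mechanism in the boundary data alone that could flip the sign.

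What is actually missing is the pseudo-holomorphicity of the \emph{interior} of $u_\epsilon$. The paper proves Lemma \ref{lem-approx} by a symplectic action argument: it introduces the cut-off $1$-form $\lambda_\tau=\tfrac{\tau(|p|)}{|p|}p\,dq$, applies Stokes' theorem to rewrite $\int_{[\delta,\infty)\times[0,1]}u_\epsilon^*d\lambda_\tau = T-\len(c_\epsilon)+O(\delta)$, splits the domain into $([\delta,\delta_1]\cup[R_1,\infty))\times[0,1]$ (where $u_\epsilon$ is genuinely $J'$-holomorphic, so $u_\epsilon^*d\lambda_\tau\ge 0$ by \cite[Lemma 8.8]{CELN}) and $[\delta_1,R_1]\times[0,1]$ (where it expands $u_\epsilon^*d\lambda_1=\epsilon^2|\nabla_s\eta|_1^2+O(\epsilon^3)$ using $\Dbar_{J'}u_\epsilon=O(\epsilon^2)$ from the Kuranishi construction), and feeds in the existence of an interior point $z_0$ with $(\nabla_s\eta)_{z_0}\notin\Im(du_a)_{z_0}$ --- itself a consequence of Lemma \ref{lemA-trivial-strip} applied to $\eta$ over $(0,\infty)\times[0,1]$, not just to the boundary path $f_\eta$ as in your step two. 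None of this apparatus appears in your proposal; once you pass from $u_\epsilon$ to the boundary curve $c_\epsilon$, you have thrown away exactly the information that makes the second variation of $\len(c_\epsilon)$ \emph{strictly negative} rather than non-negative. A related, smaller gap in your step two: tangency of $f_\eta$ to the $2$-plane $H$ on the boundary $\{0\}\times[0,1]$ alone does not immediately place $\eta$ into the hypotheses of Lemma \ref{lemA-trivial-strip}, which requires tangency of $\eta$ to $\Im(du_a)$ on the open strip $(0,\infty)\times[0,1]$; the paper circumvents this by arguing about $\nabla_s\eta$ at an interior point.
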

\begin{proof}
Recall the assumption that $\eta \neq 0$.  Since  $\eta(s,t)$ is asymptotic to $F_*(\sigma\cdot \partial_r) \in \R \cdot (\partial_su_a)$ for some $\sigma\in \R$ when $s\to \infty$ by Remark \ref{rem-eta-bounded}, if we suppose that $(\nabla_s\eta)_z \in \Im (d u_a)_z$ for every $z\in (0,\infty)\times [0,1]$, then $\eta_z \in \Im (d u_a)_z$ for every $z\in (0,\infty)\times [0,1]$, which contradicts with Lemma \ref{lemA-trivial-strip}.
Hence, there exists $z_0 \in (0,\infty)\times [0,1]$ such that $(\nabla_s \eta)_{z_0} \notin \Im (du_a)_{z_0}$.
We may take $R_1>\delta_1>0$ of Lemma \ref{lem-kuranishi} so that $z_0\in [\delta_1,R_1]\times [0,1]$.

Since $u_a([\delta_1,\infty) \times [0,1])$ is disjoint from the zero section, there exists $\epsilon_2>0$ such that $u_{\epsilon} ([\delta_1,\infty)\times [0,1])$ is disjoint from the zero section for every $\epsilon \in [-\epsilon_2, \epsilon_2]$. Let us take $r_1>0$ such that
\[ \bigcup_{\epsilon\in  [-\epsilon_2,\epsilon_2]} u_{\epsilon}([\delta_1,\infty)\times [0,1]) \subset \{ (q,p) \in T^*\R^n \mid |p| >r_1 \}.\]

Fix $\epsilon\in [-\epsilon_2,\epsilon_2]$.
Since $\rest{ u_{\epsilon}}{[0,\delta_1]\times [0,1]}$ is $J'$-holomorphic, we can choose arbitrarily small $\delta\in (0,\delta_1)$ such that $u_{\epsilon}(\{\delta\}\times [0,1])$ is disjoint from the zero section.
Associated to $\delta$, we take a $C^{\infty}$ function $\tau \colon [0,\infty) \to [0,1]$ such that:
\begin{itemize}
\item  $\frac{d \tau_{\delta}}{dr}(r)\geq 0$ for every $r \in [0,\infty)$.
\item $\tau(r)=0$ near $0\in [0,\infty)$.
\item There exists $r_{\delta}\in (0,r_1)$ such that $\tau(r)=1$ for every $r\geq r_{\delta}$ and \[u_{\epsilon}(\{\delta\} \times [0,1]) \subset \{(q,p)\in T^*\R^n \mid |p|>r_{\delta}\}.\]
 \end{itemize}
 Then, as in \cite[Section 8.2]{CELN} and \cite[Section 4.3]{Asp}, we define a $1$-form $\lambda_{\tau} \coloneqq \frac{\tau(|p|)}{|p|} p dq$ on $T^*\R^n$.
Now, we consider the integral $\int_{[\delta, \infty) \times [0,1]} (u_{\epsilon})^* d\lambda_{\tau}$. By Stokes' theorem and the estimate
\[\int_{\{\delta\} \times [0,1]} (u_{\epsilon})^* \lambda_{\tau} = \int_{\{\delta\} \times [0,1]} (u_{\epsilon})^* \left( \frac{p}{|p|} dq\right) = \len (c_{\epsilon}) + O(\delta)\]
as in the proof of \cite[Proposition 8.9]{CELN}, we obtain
\[ \int_{[\delta, \infty) \times [0,1]} (u_{\epsilon})^* d\lambda_{\delta} = T- \len (c_{\epsilon}) + O(\delta). \]
Moreover, $\int_{[\delta,\infty) \times [0,1]} (u_{\epsilon})^* d\lambda_{\tau} $ is equal to
\begin{align}\label{int-approx}
 \int_{ ([\delta,\delta_1]\cup [R_1,\infty)) \times [0,1]} (u_{\epsilon})^* d\lambda_{\tau} +   \int_{[\delta_1,R_1] \times [0,1]} (u_{\epsilon})^* d\lambda_1, 
\end{align}
where $\lambda_1 \coloneqq \frac{p}{|p|} dq$ is a $1$-form on $\{ (q,p) \in T^*\R^n \mid |p| > r_1 \}$.
The first term of (\ref{int-approx}) is non-negative by \cite[Lemma 8.8]{CELN} since $u_{\epsilon}$ is $J'$-holomorphic on $([\delta,\delta_1]\cup [R_1,\infty)) \times [0,1]$. Moreover, since $\Dbar_{J'}u_{\epsilon} = O(\epsilon^2)$ and $ \partial_s u_{\epsilon} = \partial_s u_a + O(\epsilon)$ on $[\delta_1,R_1]\times [0,1]$,
\begin{align*}
 (u_{\epsilon})^*(d\lambda_1) (\partial_s,\partial_t)&= d\lambda_1 ( \partial_s u_{\epsilon}, \partial_t u_{\epsilon})  \\
 &=d\lambda_1( \partial_s u_{\epsilon}, J' \partial_s u_{\epsilon}) + d\lambda_1( \partial_s u_a + O(\epsilon), O(\epsilon^2)) \\
&=\epsilon^2 | \nabla_s \eta |^2_1 + O(\epsilon^3),
 \end{align*}
where $|v |^2_1\coloneqq d\lambda_1(v, J'v)$, which is non-negative. Here, we use the facts that $| \partial_s u_a|^2_1 =0$ and $d\lambda_1 (\partial_s u_a,\cdot )=0$ since $u_a$ is tangent to $\mathrm{Span}(p\partial_p, p\partial_q)$ on $[\delta_1,R_1]\times [0,1]$. See \cite[Lemma 8.8]{CELN}. Moreover, $|\nabla_s \eta|^2_1$ is positive at $z_0\in[\delta_1,R_1]\times [0,1]$.
These estimates imply that if we take $A$ to be
\[0< A < \int_{[\delta_1,R_1]\times [0,1]} |\nabla_s \eta|_1^2\]
and $\epsilon'_2\in (0,\epsilon_2]$ to be sufficiently small, then the second term of (\ref{int-approx}) is bounded from below by $A\epsilon^2$ for any $\epsilon \in [-\epsilon'_2,\epsilon'_2]$.
 Note that $A$ and $\epsilon'_2$ are independent of $\delta$. Therefore,
\[ T- \len (c_{\epsilon}) + O(\delta) \geq A\epsilon^2  .\]
By taking the limit of $\delta \to 0$, it follows that $T- \len (c_{\epsilon}) \geq A\epsilon^2$ for any $\epsilon \in [-\epsilon'_2,\epsilon'_2]$.
\end{proof}

\bibliography{reference.bib}

\end{document}